\documentclass[a4paper,twoside]{amsart}

\usepackage{amscd,amsthm,amsfonts,amssymb,amsmath,esint}
\usepackage{mathrsfs}
\usepackage[colorlinks=true,linkcolor=black]{hyperref}
\usepackage[margin=1in, headheight=14.5pt]{geometry} 
\usepackage{fancyhdr}
\usepackage[all]{xy}
\usepackage{CJK}
\usepackage{tikz-cd}

\newcommand{\un}{\mathrm{un}}

\newcommand{\rig}{\mathrm{rig}}

\newcommand{\wt}{\mathrm{wt}}

    \newcommand{\BA}{{\mathbb {A}}} 
    \newcommand{\BC}{{\mathbb {C}}} 
     
    \newcommand{\BG}{{\mathbb {G}}} 
     
     \newcommand{\BL}{{\mathbb {L}}}
     \newcommand{\BN}{{\mathbb {N}}}
     
    \newcommand{\BQ}{{\mathbb {Q}}} \newcommand{\BR}{{\mathbb {R}}}
     \newcommand{\BT}{{\mathbb {T}}}

     \newcommand{\BZ}{{\mathbb {Z}}}

     \newcommand{\CB}{{\mathcal {B}}}
    \newcommand{\CC}{{\mathcal {C}}} \renewcommand{\CD}{{\mathcal {D}}}
    \newcommand{\CE}{{\mathcal {E}}} 
     \newcommand{\CH}{{\mathcal {H}}}
    \newcommand{\CI}{{\mathcal {I}}}

    \newcommand{\CO}{{\mathcal {O}}}

    \newcommand{\CU}{{\mathcal {U}}} 
    \newcommand{\CW}{{\mathcal {W}}} 
     \newcommand{\CZ}{{\mathcal {Z}}}

    \newcommand{\fc}{{\mathfrak{c}}}

     \newcommand{\fp}{{\mathfrak{p}}}
    \newcommand{\fq}{{\mathfrak{q}}}

     \newcommand{\fP}{{\mathfrak{P}}}

      \newcommand{\bB}{{\mathbf {B}}}

\newcommand{\End}{{\mathrm{End}}}                   
\newcommand{\Ext}{\mathrm{Ext}}                     
\newcommand{\Frob}{{\mathrm{Frob}}}                 
\newcommand{\Gal}{{\mathrm{Gal}}}                   
\newcommand{\GL}{{\mathrm{GL}}}                     
\newcommand{\Hom}{{\mathrm{Hom}}}                   
\newcommand{\pr}{\mathrm{pr}}                       

\newcommand{\Sp}{{\mathrm{Sp}}}
\newcommand{\Res}{{\mathrm{Res}}}                   
\newcommand{\tors}{\mathrm{tors}}                   
\newcommand{\tr}{{\mathrm{tr}}}                     
\newcommand{\Tr}{\mathrm{Tr}}                       

\newcommand{\Qp}{\BQ_p}

\newcommand{\bQp}{\bar{\BQ}_p}

\newtheorem{thm}{Theorem}[section]

\newtheorem{theorem}[thm]{Theorem}

\newtheorem{corollary}[thm]{Corollary}

\newtheorem{lemma}[thm]{Lemma}

\newtheorem{proposition}[thm]{Proposition}

\theoremstyle{definition}

\newtheorem{definition}[thm]{Definition}

\theoremstyle{remark}

\newtheorem{remark}[thm]{Remark}

\numberwithin{equation}{subsection}

\renewcommand{\ss}{\mathrm{ss}}

\newcommand{\Drig}{D_{\mathrm{rig}}^\dagger}
\newcommand{\Drigs}{D_{\mathrm{rig}}^{\dagger, s}}
\newcommand{\Ddif}{D_{\mathrm{dif}}}
\newcommand{\Dcrysplus}{D_{\mathrm{crys}}^+}
\newcommand{\Dcrys}{D_{\mathrm{crys}}}
\newcommand{\DdRplus}{D_{\mathrm{dR}}^+}
\newcommand{\Dsen}{D_{\mathrm{Sen}}}

\newcommand{\bdR}{\bB_{\mathrm{dR}}}
\newcommand{\bcrys}{\bB_{\mathrm{crys}}}

\newcommand{\Fib}{\operatorname{Fib}}

\usepackage{enumitem}

\newcommand{\coh}{\mathrm{coh}}

\title{Properness of the Cuspidal Hilbert Eigenvariety}
\author{Yi Zhu}
\email{zhuyi22@mails.tsinghua.edu.cn}
\date{}

\begin{document}
\begin{abstract}
We prove that for any prime $p$ and tame level $N$, the weight map from the cuspidal Hilbert eigenvariety constructed by Andreatta--Iovita--Pilloni for a totally real field $F$ to the weight space is proper in the sense of Buzzard--Calegari, under a non-vanishing hypothesis on certain Sen polynomials of Galois representations at the primes above \(p\). 
In particular, this yields the properness of the eigenvariety away from non-positive integral Sen weights. 
Our approach extends the \(p\)-adic Hodge-theoretic argument of Diao--Liu. 
\end{abstract}
\maketitle
\setcounter{tocdepth}{1}
\tableofcontents

\section*{Introduction}

\subsection{Background}
Eigenvarieties provide a geometric framework for studying $p$-adic families of automorphic forms and their systems of Hecke eigenvalues. 
Since the construction of the eigencurve by Coleman and Mazur \cite{ColemanMazur}, they have played an important role in the study of $p$-adic automorphic forms and the Galois representations attached to them. 
A basic geometric question concerning an eigenvariety is whether its weight map has ``holes''. 
More precisely, one asks whether a family of finite slope eigensystems parametrized by a punctured disc, whose weights extend across the puncture, necessarily extends to the whole disc. 
This is the rigid-analytic valuative criterion for properness considered by Buzzard and Calegari \cite{BuzzardCalegari}. 
We use the word \emph{properness} in this sense throughout the paper.

For the Coleman--Mazur eigencurve, properness was first established by Buzzard and Calegari for $p=2$ and tame level $1$, and subsequently by Calegari at integral weights \cite{BuzzardCalegari,Calegari}. 
Diao and Liu eventually proved properness for arbitrary $p$ and tame level by a different method based on $p$-adic Hodge theory and the theory of trianguline representations in families \cite{DiaoLiu}. 
A central input in their argument is Liu's theory of finite slope subspaces for families of $p$-adic Galois representations \cite{Liu}. 
In the Hilbert modular setting, Andreatta, Iovita, and Pilloni constructed a cuspidal Hilbert eigenvariety parametrizing finite slope systems of Hecke eigenvalues occurring in spaces of overconvergent Hilbert modular cusp forms \cite{AIP}. 
Hattori proved a properness result for this eigenvariety at integral weights under certain restrictions on the primes above $p$, using the geometric method of Buzzard and Calegari \cite{Hattori}. 
The purpose of the present paper is to study the properness problem for the Andreatta--Iovita--Pilloni's cuspidal Hilbert eigenvariety from the $p$-adic Hodge-theoretic point of view of Diao and Liu.

\subsection{Main result}

Let $F$ be a totally real field and let $p$ be a rational prime. 
We denote by $\wt\colon \CE\rightarrow \CW$ the cuspidal Hilbert eigenvariety associated with $F$, together with its weight map. 
They are rigid analytic spaces over some finite extension $E$ of $\BQ_p$. 
The eigenvariety $\CE$ carries a two-dimensional pseudo-character of $G_F$. 
For every prime $\fp\mid p$, we restrict this pseudo-character to $G_{F_{\fp}}$ and make a suitable twist arising from the universal character on the weight space. 
We show that the Sen polynomials of the resulting semisimple representations interpolate over $\CE$ to a polynomial of the form $TQ_{\fp}(T)\in\bigl(F_{\fp}\otimes_{\Qp}\CO(\CE)\bigr)[T]$.
For an embedding $\tau\colon F_{\fp}\hookrightarrow \bQp$, we denote by $Q_{\fp}(T)_\tau$ the corresponding $\tau$-component. 
This notation allows us to state the main result of the paper.

\begin{theorem}[Theorem~\ref{thm:main}]\label{thm:intro-main}
Let $\CD$ be the closed unit disc over a finite extension $E'/E$, and let
$\CD^*=\CD\setminus\{0\}$ be the punctured disc. 
Suppose that $h\colon \CD^*\rightarrow \CE$ is a morphism of rigid analytic spaces such that $\wt\circ h$ extends to $\CD$.
Assume that, for every prime $\fp\mid p$ of $\CO_F$, there exists an embedding  $\tau_{0,\fp}\colon F_{\fp}\hookrightarrow\bQp$ such that, for every $j\in\BZ_{\leq 0}$, the pullback 
$h^*\bigl(Q_{\fp}(j)_{\tau_{0,\fp}}\bigr)$ is not identically zero on $\CD^*$. 
Then $h$ extends to a morphism $\widetilde h\colon \CD\rightarrow\CE$ compatible with the given extension of $\wt\circ h$.
\end{theorem}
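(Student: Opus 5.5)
Following the strategy of Diao--Liu \cite{DiaoLiu}, we reduce the extension problem to showing that the Hecke eigenvalues pulled back along $h$ extend across the puncture, with the $U_{\fp}$-eigenvalues extending to \emph{invertible} functions on $\CD$. Granting this, the local structure of $\CE$ over $\CW$ gives the extension: over an affinoid $U\subset\CW$ containing $\wt\circ h(\CD)$ and with slope bound $\le c$, $\CE$ is locally finite over $U$ and is cut out by the Hecke algebra acting on a Banach module with compact $U_p$. By Buzzard--Calegari's formalism, a point of $\CE$ over $w$ is the same as a system of eigenvalues occurring in the cusp forms of weight $w$. So it suffices to show two things. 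First, the traces of Frobenius $\Tr(\Frob_v)$ (for $v\nmid Np$) and the $U_{\fp}$-eigenvalues $a_{\fp}$ extend to bounded functions on $\CD$, with the $a_{\fp}$ nonvanishing at $0$. Second, the limiting system at $0$ actually occurs. The second point follows by a Zariski-closure argument: the image of $h$ lands in the finite-over-$U$ piece of $\CE$ of bounded slope, since slopes are controlled once $a_{\fp}$ is bounded away from $0$. That piece is finite, hence proper, over $U$, so the extension is immediate from the valuative criterion for finite maps.

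Boundedness of the $\Tr(\Frob_v)$ is automatic, since the pseudo-character takes values in $\CO^\circ$, and bounded functions on $\CD^*$ extend to $\CD$. Likewise $a_{\fp}$ is bounded, because slopes are nonnegative. The real content is to show that $a_{\fp}$ does not tend to $0$ at the puncture, i.e.\ that the slopes of $h$ are bounded near $0$.

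For this, fix $\fp\mid p$. We pull back the twisted pseudo-character restricted to $G_{F_\fp}$ to $\CD^*$. After shrinking to an annulus and passing to a finite cover, we obtain a family of Galois representations $V$ over $\CD^*$ extending over $\CD$; this uses the extension of the pseudo-character traces just established. By Kisin/Liu's finite slope subspace theory \cite{Liu}, at classical points the representation has a crystalline period on which $\varphi^{f}$ acts by (a twist of) $a_{\fp}$. So $V$ is a family of finite slope with global triangulation over $\CD^*$, i.e.\ a rank-one submodule $\Drig(\delta)\subset\Drig(V)$ with $\delta(\pi)$ related to $a_{\fp}$. Suppose instead that $a_{\fp}(0)=0$. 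Then, following Diao--Liu, we consider the limit at $0$. The triangulation degenerates, and the saturation of the limiting rank-one submodule forces $\delta$ to be twisted by $t^{k}$-type factors, with Sen weight shifted by a nonpositive integer $j$ in the $\tau_{0,\fp}$-component. Concretely, comparing $\varphi$-slopes with Hodge--Tate weights via the weak admissibility/Kedlaya slope bounds shows that along $\CD^*$ the Sen operator must have $j$ as an eigenvalue in some component for the points near $0$. Rigidity then gives this on all of $\CD^*$. Hence $h^*(Q_\fp(j)_{\tau})\equiv0$, and running the argument at every $\tau$ simultaneously gives vanishing at $\tau_{0,\fp}$ in particular. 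This contradicts the hypothesis.

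The main obstacle is this last step, the Hilbert analogue of the Diao--Liu slope-bound argument. One has to work with $F_\fp$-analytic (Lubin--Tate-type) structure and the $\tau$-decomposition of the Sen operator, and show that the degeneration is detected at a single embedding $\tau_{0,\fp}$. I would first try to prove, via Liu's finite slope subspace applied to $\mathrm{Res}$-twists over each embedding, that vanishing of $a_\fp$ in the limit forces a nonzero map $\Drig(\delta t^{-j}) \to \Drig(V)$ whose $\tau_{0,\fp}$-Sen weight is $j\le0$. Then I would use the factorization $TQ_\fp(T)$ to conclude that this weight is a root of $Q_\fp(T)_{\tau_{0,\fp}}$, and not merely of the trivial factor $T$.
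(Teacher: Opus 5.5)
Your outer reduction is fine and agrees with the paper. Bounded functions on $\CD^*$ extend to $\CD$, so the pseudo-character, $h^*U_\fp$, $h^*\alpha_\fp$ and the twist $h^*\nu^{\un}_\fp$ all extend. Once $h^*U_\fp(0)\neq0$ for every $\fp$, the map $(h',h^*U_p^{-1})$ factors through the Zariski closed spectral variety $\CZ$, and finiteness of $\CE\to\CZ$ gives $\widetilde h$. You should route the last step this way: your claim that the slope-$\le c$ part of $\CE$ over an arbitrary affinoid $U$ is finite over $U$ is not true without a slope-adapted pair.

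The gap is the middle step, $h^*\alpha_\fp(0)\neq0$, which is the entire content. First, the input you take for granted is not available. The image $h(\CD^*)$ is a curve in the $(g+1)$-dimensional $\CE$ and may contain no classical point at all. So ``at classical points there is a crystalline period'' says nothing about $\CD^*$, and nothing gives you a global triangulation $\Drig(\delta)\subset\Drig(V)$ over $\CD^*$. What the paper proves instead, and what suffices, is the pointwise statement $\Drig(V_{\fp,x})^{\varphi^{f_\fp}=\alpha_\fp(x),\Gamma_{F_\fp}=1}\neq0$ at \emph{every} $x\in\CE$ (Theorem~\ref{thm:pointwise trianguline}). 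This needs very Zariski density of classical points of cohomological weight in $\CE$. It also needs upper semicontinuity of $x\mapsto\dim(\cdot)^{\varphi^f=\alpha,\Gamma_K=1}$, i.e.\ perfectness of the $\varphi^f$-Herr complex for invertible $\alpha$ (Theorem~\ref{thm:herr-main}), applied on a cover carrying an honest family.

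Second, your contradiction argument does not go through. The assumption $a_\fp(0)=0$ concerns only the point $0\notin\CD^*$. A degenerate limiting triangulation at $0$ cannot by itself force the Sen operator to have an eigenvalue $j\le0$ at points of $\CD^*$, and ``rigidity'' would need this on a Zariski-dense subset of $\CD^*$, which you have no way to produce. Pointwise slope considerations do not help either. They only bound the slope of a saturated rank-one submodule from below, and trianguline points with non-integral weight and arbitrarily large slope exist, so large slope near $0$ constrains nothing. Even granting all this, you would get vanishing ``in some component'', with no way to land on $\tau_{0,\fp}$.

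The missing idea is the Diao--Liu period comparison, carried out one embedding at a time:
\begin{itemize}
\item The hypothesis at $\tau_{0,\fp}$ makes $0$ a simple root of the $\tau_0$-Sen polynomial on a dense set, so $\dim(\Ddif^{+}(V_x))^{\Gamma_K}_{\tau_0}\le1$ there. The pointwise trianguline property gives the matching lower bound.
\item Hence $\CD^*$ is its own $\tau_0$-finite slope subspace, and $(K\otimes_{K_0}\Dcrysplus(V_R)^{\varphi^f=\alpha})_{\tau_0}\cong\DdRplus(V_R)_{\tau_0}$, locally free of rank one, on affinoids of $\CD^*$.
\item The intersection lemma for $K\otimes_{K_0}\bcrys^+$ inside $\bdR^+$ over $S_n$ versus $S_{n,n'}$ upgrades this to $(K\otimes_{K_0}\Dcrysplus(V_\CD))_{\tau_0}=\DdRplus(V_\CD)_{\tau_0}$ on the whole disc.
\item Take an element $e$ of it with nonzero fibre at $0$, viewed in $\Dcrysplus(V_\CD)$ via $(K\otimes_{K_0}M)_{\tau_0}\simeq M_{\sigma_0}\hookrightarrow M$. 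It satisfies $\varphi^f e=\alpha e$ on an annulus, hence on $\CD$.
\item Injectivity of $\varphi$ on $\Dcrysplus(V_0)$ then forces $\alpha(0)\neq0$.
\end{itemize}
This is a direct argument, not a contradiction, and it is exactly why a single embedding $\tau_{0,\fp}$ suffices.
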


Therefore, the weight map of the eigenvariety $\CE\to \CW$ is proper under the above non-vanishing hypothesis on the Sen polynomials at the primes above $p$, and it yields the properness away from non-positive integral Sen weights. 
Heuristically speaking, this tells us that there are no holes in the eigenvariety under our assumption. 
See Section~\ref{sec:Gal rep} and Theorem~\ref{thm:main} for the precise formulation and the notation used in the statement. 

\subsection{\texorpdfstring{$\tau_0$}{tau0}-finite slope subspaces}

The non-vanishing hypothesis in Theorem~\ref{thm:intro-main} is natural from the point of view of $\tau_{0,\fp}$-finite slope subspaces, which we will explain in a moment.  

The key step in Diao--Liu's proof on the properness is to show that, the $p$-adic family of Galois representations $V_{\CD}$ of $\Gal(\overline{\BQ_p}/\BQ_p)$ on $\CD^*$, obtained from pullback the one on the normalization of the eigencurve, together with the pullback of the $U_p$-eigenvalue function $\alpha$, have the whole space $\CD^*$ as the finite slope subspace. 
This allows them to establish comparisons between the De Rham periods and the crystalline periods
\[
\Dcrysplus(V_{R})^{\varphi=\alpha}=\DdRplus(V_{R}), \quad \Dcrysplus(V_{\CD})=\DdRplus(V_{\CD}),
\]
where $\Sp(R)$ is any affinoid subdomain of $\CD^*$.
It would imply $\alpha(0)\neq0$ and give the desired properness result. 
In our case, there is a family of odd pseudo-character
\[
T: G_F\to \CO(\CE). 
\]
Using the oddness, a result of Coleman--Mazur then allows one to realize the pullback of $T$ to $\CD^*$, which can be extended to $\CD$, as an actual family of Galois representations. We denote it by $V_{\CD}$. 
To imitate Diao--Liu's argument, it turns out that it suffices to have a comparison of the $\tau_0$-component of the various periods. 
More precisely, let $K=F_\fp$, $E$ be a finite extensions of $\BQ_p$ in $\overline{\BQ_p}$ containing the Galois closure of $K$, $G_K=\Gal(\overline{\BQ_p}/K)$ and $H_K$ be the set of $\BQ_p$-embeddings of $K$ into $\overline{\BQ_p}$ (hence into $E$). 
For a $K\otimes_{\BQ_p}E$-module $M$, set 
\[M_\tau=M\otimes_{K\otimes_{\BQ_p}E}(K\otimes_{K, \tau}E)\]
for any $\tau\in H_K$, then we may identify $M$ with $\oplus_{\tau\in H_K}M_\tau$. 
Given a $p$-adic family of Galois representations $V_{\CD}$ of $G_K$ on $\CD^*$ and $\alpha\in\CO(\CD)\cap \CO(\CD^*)^\times$, we will see that if there are comparisons
\[
(K\otimes_{K_0} \Dcrysplus(V_{R})^{\varphi^f=\alpha})_{\tau_0}=\DdRplus(V_{R})_{\tau_0}, \quad
(K\otimes_{K_0} \Dcrysplus(V_{\CD}))_{\tau_0}=\DdRplus(V_{\CD})_{\tau_0}, 
\]
for some $\tau_0\in H_K$, where $K_0$ is the maximal unramified extension of $\BQ_p$ contained in $K$ and $f=[K_0:\BQ_p]$, then $\alpha(0)\neq0$. 
Under the assumptions in Theorem~\ref{thm:intro-main}, for each prime $\fp$ above $p$, we will take $\alpha$ to be certain twist of the $U_{\fp}$-eigenvalue function on $\CE$ and prove the comparisons for certain twist (to make the Sen polynomial have the form $TQ(T)$) of $V_{\CD}$.
These would imply that the pullback of $U_{\fp}$-eigenvalue function, which can be extended to $\CD$, is non-zero at the origin, hence the properness follows. 

To achieve the above comparison result, we make the following notion of $\tau_0$-finite slope subspace. 
Assume that $X$ is a rigid analytic space over $E$, and let $V_X$ be a family of $p$-adic representations of $G_K$ of dimension $d$ over $X$. 
Suppose the Sen polynomial for $V_X$ is $TQ(T)$ for some $Q(T)\in (K\otimes_{\BQ_p}\CO(X))[T]$. 
Let $\alpha\in\CO(X)^\times$. Fix $\tau_0\in H_K$. 
We define the $\tau_0$-finite slope subspace with respect to the triple $(X, \alpha, V_X)$ as follows. 
\begin{definition}\label{intro-def:X tau0-fs}
For such a quadruple $(X,\alpha,V_X, \tau_0)$, we call a Zariski closed $E$-subspace $X_{\tau_0-fs}\subset X$ a \emph{$\tau_0$-finite slope subspace} of $X$ with respect to the triple $(\alpha, V_X, \tau_0)$ if the following conditions hold.
\begin{enumerate}
\item For every integer $j\le0$, the subspace $(X_{fs})_{Q(j)_{\tau_0}}$, which denotes the complement of the vanishing locus of $Q(j)_{\tau_0}$, is Zariski-open and dense in $X_{fs}$.
\item For any affinoid $E$-algebra $R$ and morphism $g\colon \Sp(R)\longrightarrow X$ which factors through $X_{Q(j)_{\tau_0}}$ for every $j\le0$, 
the morphism $g$ factors through
$X_{\tau_0-fs}$ if and only if the
natural map
\begin{equation}\label{intro-eq:tau0-fs isom}
\iota_{n,K, \tau_0}\colon \left(K\otimes_{K_0}(D_{\mathrm{rig}}^\dagger(V_R))^{\varphi^f=g^*\alpha,\Gamma_K=1}\right)_{\tau_0} 
\to (D_{\mathrm{dif}}^{+,fn}(V_R)^{\Gamma_K}_{\tau_0}
\end{equation}
is an isomorphism for all sufficiently large $n$.
\end{enumerate}
\end{definition}
Then for each prime $\fp$ above $p$, we will see the non-vanishing hypothesis in Theorem~\ref{thm:intro-main} is an equivalent condition for $\CD^*$ to be the $\tau_{0,\fp}$-finite slope subspace of a corresponding family of Galois representations of $G_{F_\fp}$. 
It is slightly weaker than Liu's definition of finite slope subspace, and helps us to prove the desired comparison of various periods. 

\subsection{Ideas of the proof}

Given the above partial strategy, it remains to show that for each prime $\fp$ above $p$, there is a suitable twist of $V_{\CD}$ and the $U_\fp$-eigenvalue function, such that under the assumptions in Theorem~\ref{thm:intro-main}, the $\tau_0$-finite slope subspace of the family over $\CD^*$ is the whole $\CD^*$. 

In the case of eigencurve, Liu proves that for a weakly refined family of $p$-adic Galois representations, the finite slope subspace is the whole space \cite{Liu}, which finishes the crucial step in the case of the eigencurve if the morphism $h$ is non-constant. 

There are several obstacles to carrying out this argument for the Hilbert eigenvariety. 
The first is that $\CE$ naturally carries a family of pseudo-characters, rather than an actual family of Galois representations. 
Consequently, Liu's notion of a weakly refined family cannot be applied directly on $\CE$. 
This is particularly relevant here since the Hilbert eigenvariety has dimension greater than one. 

This leads to the first general result of the paper. 
We establish a new criterion for the $\tau_0$-finite slope subspace of a family of $p$-adic Galois representations to coincide with the whole base space. 
The criterion isolates the two features of weakly refined families that are relevant to Liu's argument.
The first one is the same as that in the definition of $\tau_0$-finite slope subspace on the Sen polynomial. 
The second one asks that for any $x\in X$, 
\[
\Drig(V_x)^{\varphi^f=\alpha, \Gamma_K=1}\neq0. 
\]
It is closely related to condition (d) in \cite[Definition~1.5]{Liu} of a weakly refined family and may be viewed as a trianguline condition.
In this way, the criterion can be applied directly to a family over $\CD^*$, without first exhibiting a weakly refined family on the ambient eigenvariety.

The two conditions in this criterion play rather different roles in the proof of the main theorem. 
The first one exactly what is needed for the first part of our $\tau_{0,\fp}$-finite slope criterion, so it is responsible for the hypothesis in Theorem~\ref{thm:intro-main}.
The second condition, by contrast, is not imposed as an additional hypothesis: it is established by studying the local Galois representations occurring on the Hilbert eigenvariety.

Some $(\varphi,\Gamma)$-module cohomology is needed for this second step. 
The required results are closely related to the cohomological theory developed by Kedlaya--Pottharst--Xiao \cite{KPX} and, in the pseudorigid setting, by Bellovin \cite{Bellovin}. 
More precisely, let $S$ be an $\BQ_p$-affinoid algebra, $\alpha\in S^\times$ and let $D$ be a finite projective $(\varphi,\Gamma_K)$-module over
$\mathbf B_{\mathrm{rig},K,S}^{\dagger}$. 
We will show that the function
\[
\Sp(S)\to \BZ, \quad x\mapsto \dim_{k(x)} (D\widehat{\otimes}_{S}k(x))^{\varphi^f=\alpha, \Gamma_K=1}
\]
is upper semi-continuous on $\Sp(S)$ with respect to the Zariski topology. 
When the residue degree $f=1$, this follows from the perfectness of the Herr complex proved in \cite{KPX}. 
A closely related statement appears in \cite[Corollary~3.17]{Bellovin}. 
Although the proof follows the general techniques of the cohomology of families of $(\varphi,\Gamma)$-modules, we include the details since the precise form required for our application does not seem to be available in the literature.

We then turn to the Galois-theoretic properties of the Hilbert eigenvariety. 
Using the density of classical points and local--global compatibility at such points, together with an argument inspired by the proof of \cite[Proposition~6.6]{McDonald}, we establish the trianguline property required by the second condition of our criterion.
We may therefore apply our criterion to the actual Galois family over $\CD^*$ and conclude, for every $\fp\mid p$, that the $\tau_{0,\fp}$-finite slope subspace of the family over $\CD^*$ is all of $\CD^*$. 

There is one further foundational points concerning the Hilbert eigenvariety that we make explicit.
The construction of the Galois pseudo-character relies on the density of classical points.
For the applications in this paper we need the stronger statement that classical points with cohomological weights are very Zariski dense.
We prove in Appendix~\ref{app:coh wts} the corresponding very Zariski density statement for cohomological weights in the Hilbert weight space, and deduce from it, together with the classicality results entering the eigenvariety construction, that the classical points of $\CE$ with cohomological weights are very Zariski dense. 
This provides the density needed to interpolate Galois-theoretic information from classical points to the eigenvariety.


\medskip
\noindent\textbf{Structure of the paper.}
In Section~\ref{sec:families of p-adic rep}, we recall the basic facts about families of $p$-adic Galois representations and $p$-adic Hodge theory that will be used throughout the paper, including Sen polynomials, relative $(\varphi,\Gamma)$-modules, and finite slope subspaces. 
In Section~\ref{sec:fs subspace}, we introduce the notion of a $\tau_0$-finite slope subspace and establish our criterion for it to coincide with the whole base space. 
Section~\ref{sec:Diao-Liu} extends the local $p$-adic Hodge-theoretic argument of Diao--Liu from $G_{\Qp}$ to $G_K$ for a general finite extension $K/\Qp$, and proves the non-vanishing of the limiting Frobenius eigenvalue under the $\tau_0$-finite slope hypothesis.
In Section~\ref{sec:phi-f-herr}, we prove the $(\varphi,\Gamma)$-module cohomological results needed later. 

In Section~\ref{sec:eigenvariety}, we recall the cuspidal Hilbert eigenvariety of Andreatta--Iovita--Pilloni and establish the density properties of its classical points with cohomological weights, using the very Zariski density result proved in Appendix~\ref{app:coh wts}. 
Section~\ref{sec:Gal rep} studies the family of Galois pseudo-characters on the eigenvariety.
We establish the trianguline property needed for the $\tau_0$-finite slope criterion and show for every $\fp\mid p$, that the Sen polynomials of the associated semisimple $G_{F_{\fp}}$-representations interpolate over $\CE$. 
Finally, in Section~\ref{sec:properness}, we combine these results and prove Theorem~\ref{thm:main}, using the Sen-polynomial hypothesis involving the chosen embeddings $\tau_{0,\fp}$. 

The appendices collect two auxiliary results used in the above arguments.
In Appendix~\ref{app:coh wts}, we prove the very Zariski density of cohomological weights in the Hilbert weight space. 
In Appendix~\ref{app:global realization}, we clarify the relation between odd pseudo-characters and the notion of pseudo--representations used by Coleman--Mazur, and deduce that an odd pseudo-character on a smooth curve can be realized as an actual family of Galois representations.


\medskip
\noindent\textbf{Acknowledgements.}
I would like to express my deepest gratitude to my advisor, Hansheng Diao, for suggesting this problem and for their constant guidance and encouragement throughout this work. 
I am very grateful to Lynnelle Ye and Shin Hattori for patiently answering my questions. 
I also thank Koji Shimizu for organizing the seminar on $p$-adic modular forms, and Haoran Wang for organizing the seminar on deformations of Galois representations; I benefited greatly from the discussions with the participants in these seminars. 
I am indebted to Ruishen Zhao, Jiawei An, Yu Xiao, Hao Liu, Ningjun Jiang, Zida Wang, and others for helpful discussions. 
I would also like to thank the Re:Zero Division of the THU Student New Era ACG Club for organizing the screenings, the Re:Zero and Kyoto Animation Divisions for co-organizing the song gatherings, and the Cotton Doll Division for organizing the group photo outing.
These activities provided pleasant distractions and helped me relax during the writing of this paper.
Finally, I thank my family and friends for their continuous support.

\medskip
\noindent\textbf{Use of AI tools.}
The author used ChatGPT for exploratory discussions concerning the proof of the density result in Appendix~\ref{app:coh wts}. The author is solely responsible for the mathematical content and has verified all arguments.

\medskip
\noindent\textbf{Notation and conventions.}
Let $p$ be a prime number.
Fix an algebraic closure $\overline{\BQ_p}$ of $\BQ_p$ with $p$-adic completion $\BC_p$. 
We normalize the $p$-adic valuation $v_p$ on $\BC_p$ so that $v_p(p)=1$. 
Choose a compatible system of roots of unity $(\mu_{p^n})_{n\geq0}$. 
For a finite extension $K$ of $\BQ_p$, we write $K_0$ to be the maximal unramified extension of $\BQ_p$ in $K$, $G_K=\Gal(\overline{K}/K)$. 
Let $\CO_K$ be the ring of integers of $K$, $\pi_K\in \CO_K$ be a uniformizer. 
For any $n\geq1$, let $K_n=K(\mu_{p^n})$, then we define $K_{\infty}=\cup_{n\geq1}K_n, \Gamma_K=\Gal(K_{\infty}/K)$.
For a number field $F/\BQ$, we write $G_F=\Gal(\overline{F}/F)$. 

For a topological group $G$ and a rigid analytic space $X$ over $\BQ_p$, by a \emph{family of $p$-adic representations of $G$} of dimension $d$ on $X$, we mean a locally free coherent $\CO_X$-module $V_X$ of rank $d$ equipped with a continuous $\CO_X$-linear $G$-action. 
If $\Sp(R)\to X$ is a morphism of rigid analytic spaces and if $V_X$ is a family of representations on $X$, we write $V_R$ for the pullback of $V_X$ to $\Sp(R)$. 
For every $x\in X$, we write $V_x$ to denote the specialization of $V_X$ at $x$. 
When $X=\Sp(S)$ is an affinoid space over $\BQ_p$, we also call a family of $p$-adic representations of $G$ on $X$ an \emph{$S$-linear representation of $G$. }
For a closed point $x$ of a rigid space, we use $k(x)$ to denote its residue field. 

\section{Families of \texorpdfstring{$p$}{p}-adic representations}
\label{sec:families of p-adic rep}

Let $K$ be a finite extension of $\BQ_p$ in $\overline{\BQ_p}, S$ be an affinoid algebra over $\BQ_p$ and let $V_S$ be a finite locally free $S$-linear representation of $G_K$. 
In this section, we briefly review various $p$-adic Hodge theoretic functors for families of $p$-adic representations of $G_K$. 

\subsection{The modules
  \texorpdfstring{$D_{\mathrm{dR}}^+(V_S)$}{D\_dR\textasciicircum+(V\_S)}
  and
  \texorpdfstring{$D_{\mathrm{crys}}^+(V_S)$}{D\_crys\textasciicircum+(V\_S)}}
\label{sec:modules-DdR-Dcrys}

Let $\bdR^+$ and $\bcrys^+$ be the de Rham and crystalline period rings used in $p$-adic Hodge theory (see \cite{Fontaine} for more details). 
For each $k>0, \bdR^+/t^k$ is naturally a $\BQ_p$-Banach space. 
This gives a Fr\'echet topology on 
\[\bdR^+=\varprojlim_{k}\bdR^+/t^k. \]
Thus, for any $\BQ_p$-affinoid algebra $S$, it makes sense to define $S\widehat{\otimes}_{\BQ_p}\bdR^+=\varprojlim_{k}S\widehat{\otimes}_{\BQ_p}\bdR/t^k$. 
On the other hand, the definition of $S\widehat{\otimes}_{\BQ_p}\bcrys^+$ is the natural one since $\bcrys^+$ has a $\BQ_p$-Banach space structure. 
Moreover, for an $S$-linear representation $V_S$ of $G_K$, we set
\[
\DdRplus(V_S)=\left((S\widehat{\otimes}_{\BQ_p}\bdR^+)\otimes_S V_S\right)^{G_K}
\]
and 
\[
\Dcrysplus(V_S)=\left((S\widehat{\otimes}_{\BQ_p}\bcrys^+)\otimes_S V_S\right)^{G_K}. 
\]

\begin{proposition}\label{prop:flat bc of DdRplus}
If $f\colon S\to S'$ is a flat morphism of $\BQ_p$-affinoid algebras, then
\[
\DdRplus(V_S)\otimes_{S}S'\xrightarrow{\sim}\DdRplus(V_S\otimes_SS'). 
\]
\end{proposition}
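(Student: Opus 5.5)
The plan is to reduce to the truncated pieces $\bdR^+/t^k$, where flat base change is a statement about finite $S$-modules computed as kernels, and then to pass to the inverse limit. Set $W_k(V_S)=\bigl((S\widehat{\otimes}_{\BQ_p}\bdR^+/t^k)\otimes_S V_S\bigr)^{G_K}$. Then
\[
\DdRplus(V_S)=\varprojlim_k W_k(V_S),
\]
since invariants commute with inverse limits and $V_S$ is finite projective, so that $(S\widehat{\otimes}\bdR^+)\otimes_S V_S=\varprojlim_k (S\widehat{\otimes}\bdR^+/t^k)\otimes_S V_S$.

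\textbf{Step 1: each $W_k(V_S)$ is a finite $S$-module with flat base change.} I would take this from the standard Sen-theoretic analysis in families (Berger--Colmez, as used by Kisin and Liu). By induction on $k$, using the exact sequences $0\to \BC_p(i)\to \bdR^+/t^{i+1}\to \bdR^+/t^i\to 0$, the question reduces to $(S\widehat{\otimes}\BC_p(i)\otimes_S V_S)^{G_K}$. Via Sen's decompletion this equals
\[
(K_n\otimes_K \Dsen^{(n)}(V_S(i)))^{\Gamma_K}=\ker\bigl(\Theta_{\mathrm{Sen}}+i\bigr)\ \text{on}\ \bigl(K_n\otimes\Dsen(V_S)\bigr)^{\Gamma_K}
\]
for $n\gg0$, which is a finite projective $S\otimes K_n$-module. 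More concretely, $W_k(V_S)$ is the $\Gamma_K$-invariants (equivalently, the kernel of $\gamma-1$ for a topological generator $\gamma$ of an open subgroup, followed by finite group invariants) of the finite projective $S$-module $\Ddif^{+,n}(V_S)/t^k$, and this construction is compatible with arbitrary base change $S\to S'$, with $\Ddif^{+,n}(V_S\otimes S')=\Ddif^{+,n}(V_S)\widehat{\otimes}_S S'$. Taking the kernel of an $S$-linear endomorphism of a finite module commutes with flat base change, and so does taking invariants under a finite group in characteristic $0$. Hence $W_k(V_S)\otimes_S S'\xrightarrow{\sim} W_k(V_S\otimes_S S')$.

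\textbf{Step 2: pass to the limit.} The key point is that the transition maps $W_{k+1}\to W_k$ are isomorphisms for $k\gg0$. The obstacle is that the Sen weights vary over $\Sp(S)$, so this has to be handled with care. I would argue as follows. The Sen polynomial $P(T)\in (K\otimes S)[T]$ has only finitely many roots that are constant integers on components. Away from such roots, $\Theta_{\mathrm{Sen}}+i$ becomes invertible once $P(-i)$ is a unit, but in families $P(-i)$ can fail to be a unit at isolated points for arbitrarily large $i$.

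To avoid this, I would use instead that $\DdRplus(V_S)$ is a closed submodule of $\Ddif^{+,n}(V_S)$ cut out by $\gamma=1$. The module $\Ddif^{+,n}(V_S)=\varprojlim_k \Ddif^{+,n}(V_S)/t^k$ is finite over $(S\otimes K_n)[[t]]$, and $S'\otimes_S$ commutes with this inverse limit because $(S\otimes K_n)[[t]]\to (S'\otimes K_n)[[t]]$ is flat: it is $S'$-completion of a noetherian ring with $S'$ flat and finite-type-like conditions. Hence
\[
\DdRplus(V_S)\otimes_S S'=\ker(\gamma-1)\otimes_S S'=\ker\bigl(\gamma-1 \text{ on } \Ddif^{+,n}(V_S)\otimes_S S'\bigr)=\DdRplus(V_S\otimes_S S').
\]
For this it suffices that $\DdRplus(V_S)$ is finite over $S$. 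Finiteness follows because $\DdRplus(V_S)$ injects into $W_k(V_S)$ for $k$ larger than the bound on the Sen weights by the $t$-adic argument, or more simply because $\DdRplus$ is a $\Gamma$-invariant $S[[t]]$-lattice argument as in Berger--Colmez.

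\textbf{Main obstacle.} The hardest step is justifying the identification $\Ddif^{+,n}(V_S)\otimes_S S'\cong \Ddif^{+,n}(V_{S'})$ at the level of the full $t$-adically complete module, which requires $S'$ to be flat and the modules to be finitely presented over a noetherian ring. I would handle this by working modulo $t^k$ and invoking Mittag-Leffler, since the transition maps are surjective.
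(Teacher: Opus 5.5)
\textbf{There is a genuine gap in Step~2. Moreover, the statement fails in the stated generality, so the gap cannot be filled as it stands.}

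\textbf{What works.} Your Step~1 is essentially fine. For each $k$, $W_k(V_S)=(\Ddif^{+,n}(V_S)/t^k)^{\Gamma_K}$ is obtained from a finite $S$-module by applying a finite-group projector and then taking $\ker(\gamma-1)$. Since these modules are finite, $\Ddif^{+,n}(V_{S'})/t^k=(\Ddif^{+,n}(V_S)/t^k)\otimes_S S'$, so $W_k$ commutes with flat base change. (The claim that $\ker(\Theta+i)$ is projective is false in general, but you do not need it.)

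\textbf{Where Step~2 breaks.}
\begin{itemize}
\item The identification $\Ddif^{+,n}(V_S)\otimes_S S'\cong\Ddif^{+,n}(V_{S'})$ is false. Already for $S=\Qp$ and $S'=\Qp\langle X\rangle$ one has $\Qp[[t]]\otimes_{\Qp}\Qp\langle X\rangle\subsetneq\Qp\langle X\rangle[[t]]$; for instance $\sum_j X^jt^j$ is not in the left side.
\item Flatness of $(S\otimes K_n)[[t]]\to(S'\otimes K_n)[[t]]$ only computes $\Ddif^{+,n}(V_S)\otimes_{(S\otimes K_n)[[t]]}(S'\otimes K_n)[[t]]$, which is a different module.
\item Mittag-Leffler concerns exactness of $\varprojlim$. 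It does not make $\varprojlim_k$ commute with $-\otimes_S S'$, and that commutation fails unless $S'$ is finitely presented over $S$.
\item Finiteness of $\DdRplus(V_S)$ does not rescue the argument either. For $k\ge k_0$ the maps $W_{k+1}\to W_k$ are injective: their kernels sit in $\ker(\Theta+k)$ on $\Dsen^n(V_S)$, are killed by $P(-k)$, and $P(-k)$ is a non-zero-divisor for $k\gg0$. Hence $\DdRplus(V_S)=\bigcap_{k\ge k_0}\operatorname{im}(W_k\to W_{k_0})$, a countable intersection of submodules.
\item Flat base change only commutes with finite intersections. As you noted yourself, $P(-k)$ can be a non-unit for infinitely many $k$, so these images need not stabilize.
\end{itemize}

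\textbf{Counterexample.} Take $K=\Qp$ with $p$ odd, $S=\Qp\langle X\rangle$, and $\delta=\langle\chi\rangle^{X}$, which has Sen weight $X$.
\begin{itemize}
\item Since $H^2(G_{\Qp},S(\delta))=0$, $H^1$ commutes with base change and equals $S$ plus torsion supported at $X=0$. Take $c$ a generator of the free part; it specializes to a non-zero class at every $X\neq0$. Let $0\to S(\delta)\to V_S\to S\to0$ be the extension defined by $c$.
\item At $X=-j$ with $j\ge1$, the fiber is a non-split extension of $1$ by $\chi^{-j}\omega^{j}$. Since $H^1_g(\Qp(-j)\otimes\omega^j)=0$, this fiber is not de Rham.
\item Hence the image of $\DdRplus(V_S)$ in $\DdRplus(S)=S$ consists of functions vanishing at every $X=-j$, so it is zero. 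Since also $\DdRplus(S(\delta))=0$, we get $\DdRplus(V_S)=0$.
\item Now let $S'$ be the rational subdomain $|X^2-p|\le|p|^2$, which contains no point of $\BZ_p$. Every $j+X$ with $j\ge0$ is a unit on $S'$, so $\nabla$ is bijective on $\Ddif^{+}(S'(\delta))$.
\item Therefore the obstruction to lifting $1\in S'$ vanishes, and $\DdRplus(V_{S'})\cong S'\neq0=\DdRplus(V_S)\otimes_S S'$.
\end{itemize}

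\textbf{What Step~1 does prove.} It gives the statement under an extra hypothesis, for example that $P(-k)$ is a unit in $K\otimes_{\Qp}S$ for all $k\ge k_0$. Then $W_{k+1}\to W_k$ is an isomorphism for $k\ge k_0$, so $\DdRplus(V_S)=W_{k_0}(V_S)$ and flat base change follows. The paper itself only points to Bellovin's argument. In view of the example above, some such hypothesis, or special features of the situation where the proposition is applied, is needed before any proof can go through.
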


\begin{proof}
See the proof of \cite[Proposition~4.3.7]{Bellovin15}. 
\end{proof}

\subsection{Robba rings and \texorpdfstring{$(\varphi,\Gamma_K)$}{(phi,Gamma\_K)}-modules}

Recall that $\bB_{K}^{\dagger, [s, r]}, \bB_{\rig, K}^{\dagger, s}$ and $\bB_{\rig, K}^{\dagger}$ are the \emph{Robba-type} period rings in the theory of $(\varphi, \Gamma_K)$-modules introduced in \cite{Berger}. 
More precisely, for any $r\geq s>0$, there are natural norms $\mathrm{val}^{(0, r]}$ on $\bB_K^{\dagger, s}$, and $B_K^{[s, r]}$ is the completion of $\bB_K^{\dagger, s}$ with respect to $\max\{\mathrm{val}^{(0, r]}, \mathrm{val}^{(0, s]}\}$. 
It follows that 
\[
\bB_{\rig, K}^{\dagger, s}=\varprojlim_{r}\bB_K^{[s, r]}.
\]
Put
\[
\bB_{\rig, K}^{\dagger}=\bigcup_{s>0}\bB_{\rig, K}^{\dagger, s}. 
\]

Let $S$ be an $\BQ_p$-affinoid algebra, put
\begin{align*}
\mathbf B_{K,S}^{[s_1,s_2]}&=\mathbf B_K^{[s_1,s_2]}\widehat{\otimes}_{\Qp}S, \\ 
\mathbf B_{\mathrm{rig},K,S}^{\dagger,s}&=\mathbf B_{\mathrm{rig},K}^{\dagger,s}\widehat{\otimes}_{\Qp} S\simeq\varprojlim_{r}\mathbf B_{K,S}^{[s, r]},\\
\mathbf B_{\mathrm{rig},K,S}^{\dagger}&=\mathbf B_{\mathrm{rig},K}^{\dagger}\widehat{\otimes}_{\Qp} S=\bigcup_{s>0}\mathbf B_{\mathrm{rig},K,S}^{\dagger,s}.
\end{align*}
There are commuting $S$-linear actions on $\mathbf B_{K,S}^{[s,r]}$ of $\Gamma_K$ and of an operator 
\[
\varphi\colon\mathbf B_{K,S}^{[s,r]}\longrightarrow\mathbf B_{K,S}^{[ps,pr]}, 
\]
which induce
\[
\varphi\colon \mathbf B_{\mathrm{rig},K,S}^{\dagger,s}\longrightarrow\mathbf B_{\mathrm{rig},K,S}^{\dagger,ps}
\]
and 
\[
\varphi\colon \mathbf B_{\mathrm{rig},K,S}^{\dagger}\longrightarrow\mathbf B_{\mathrm{rig},K,S}^{\dagger}. 
\]
Moreover, $\varphi$ makes $\mathbf B_{K,S}^{[ps,pr]}$ into a free $\mathbf B_{K,S}^{[s,r]}$-module of rank $p$, and we obtain a $\Gamma_K$-equivariant left inverse $\psi\colon  \mathbf B_{K,S}^{[ps,pr]}\to \mathbf B_{K,S}^{[s,r]}$ by the formula $\psi:=p^{-1}\varphi^{-1}\circ\Tr$. 
The map $\psi$ extends to $B_{\mathrm{rig},K,S}^{\dagger,ps}\to B_{\mathrm{rig},K,S}^{\dagger,s}$ and $B_{\mathrm{rig},K,S}^{\dagger}\to B_{\mathrm{rig},K,S}^{\dagger}$. 
See \cite{Berger}, \cite{KedlayaLiu} and \cite{KPX}. 

\begin{definition}
\begin{itemize}
\item 
A \emph{$\varphi$-module over
$\mathbf B_{\mathrm{rig},K,S}^{\dagger,s}$} is a finite projective module $D_S^s$ together with an isomorphism
$\varphi^*D_S^s\xrightarrow{\sim}\mathbf B_{\mathrm{rig},K,S}^{\dagger,ps}\otimes_{\mathbf B_{\mathrm{rig},K,S}^{\dagger,s}}D_S^s$.
\item 
A \emph{$\varphi$-module $D_S$ over $\mathbf B_{\mathrm{rig},K,S}^{\dagger}$} is the base change to $\mathbf B_{\mathrm{rig},K,S}^{\dagger,s}$ of a $\varphi$-module $D_S^s$ over $\mathbf B_{\mathrm{rig},K,S}^{\dagger,s}$ for some $s>0$. 
\item 
A \emph{$(\varphi, \Gamma_K)$-module over $\mathbf B_{\mathrm{rig},K,S}^{\dagger,s}$} is a $\varphi$-module $D_S^s$ over $\mathbf B_{\mathrm{rig},K,S}^{\dagger,s}$ equipped with a commuting $\mathbf B_{\mathrm{rig},K}^{\dagger,s}$-semilinear and $S$-linear continuous action of $\Gamma_K$. 
\item 
A \emph{$(\varphi, \Gamma_K)$-module $D_S$ over $\mathbf B_{\mathrm{rig},K,S}^{\dagger}$} is the base change to $\mathbf B_{\mathrm{rig},K,S}^{\dagger,s}$ of a $(\varphi, \Gamma_K)$-module $D_S^s$ over $\mathbf B_{\mathrm{rig},K,S}^{\dagger,s}$ for some $s>0$. 
\end{itemize}
\end{definition}

\subsection{The modules
  \texorpdfstring{$D_{\mathrm{rig}}^{\dagger}(V_S)$}{D\_rig\textasciicircum{}dagger(V\_S)}
  and
  \texorpdfstring{$D_{\mathrm{dif}}^+(V_S)$}{D\_dif\textasciicircum+(V\_S)}}
\label{sec:Drig-Ddif}

Let $V_S$ be an $S$-linear $G_K$-representation of rank $d$.
When $V_S$ is free over $S$, there is an integer $n(V_S)$ introduced in \cite[\S~2.1]{Liu} to construct the associated $(\varphi, \Gamma_K)$-modules, and we set $s(V_S):=r_{n(V_S)}$, where $r_n:=p^{n-1}(p-1)$. 
For general $V_S$, which is locally free, choose a  finite covering of $\Sp(S)$ by affinoid subdomains $\Sp(S_i)$ such that $V_{S_i}$ is free over $S_i$. 
Let $s_0:=\max_{i}s(V_{S_i})$ and $n(V_S):=\max_{i}n(V_{S_i})$. 
Then for $s\geq s_0$, one can construct a $(\varphi, \Gamma_K)$-module $\Drigs(V_S)$ of rank $d$ over $\mathbf B_{\mathrm{rig},K,S}^{\dagger,s}$ such that for any $x\in\Sp(S), \Drigs(V_S)\otimes_Sk(x)$ is naturally isomorphic to $\Drigs(V_x)$. 
We set
\[\Drig(V_S)=\mathbf B_{\mathrm{rig},K,S}^{\dagger}\otimes_{\mathbf B_{\mathrm{rig},K,S}^{\dagger, s}}\Drigs(V_S)
=\bigcup_{s}\Drigs(V_S), \]
which is a $(\varphi, \Gamma_K)$-module of rank $d$ over $\mathbf B_{\mathrm{rig},K,S}^{\dagger}$, and specializes to $\Drig(V_x)$ for any $x\in\Sp(S)$. 
For details, see \cite{BergerColmez}, \cite{KedlayaLiu} and \cite[\S~2.2]{Liu}. 

For $0<s\leq r_n$, there is a continuous $\Gamma_K$-equivariant injective map
\[\iota_n\colon  \bB_K^{\dagger, s}\to K_n[[t]]\]
which extends to a continuous $\Gamma_K$-equivariant injective map 
\[\iota_n\colon  \mathbf B_{\mathrm{rig},K}^{\dagger,s}\to K_n[[t]]. \]
This induces a continuous map $\mathbf B_{\mathrm{rig},K,S}^{\dagger,s}\to K_n[[t]]\widehat{\otimes}_{\BQ_p}S$. 
Let $n\geq n(V_S), s_0\leq s\leq r_n$. 
We define
\[\Ddif^{+, n}(V_S)=\Drigs(V_S)\otimes_{\mathbf B_{\mathrm{rig},K,S}^{\dagger,s}, \iota_n}\left(K_n[[t]]\widehat{\otimes}_{\BQ_p}S\right). \]
It is clear that $\Ddif^{+, n}(V_S)$ is a locally free $K_n[[t]]\widehat{\otimes}_{\BQ_p}S$-module of rank $d$ equipped with a semilinear $\Gamma_K$-action. 
Abusing the notation, we still denote by $\iota_n$ the natural map $\iota_n\colon  \Drigs(V_S)\to \Ddif^{+, n}(V_S)$. 
We define 
\[\Ddif^{+}(V_S)=\bigcup_{n}\Ddif^{+, n}(V_S). \]

\begin{theorem}\cite[Theorem~4.2.8, 4.2.9]{Bellovin15}\label{thm:comparison of various D}
For an $S$-linear representation $V_S$ of $G_K$, we have $\DdRplus(V_S)=\Ddif^+(V_S)^{\Gamma_K}$ and $\Dcrysplus(V_S)=\Drig(V_S)^{\Gamma_K}$. 
\end{theorem}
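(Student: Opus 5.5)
The plan is to reduce to the case where $V_S$ is free over $S$, and then to run Berger's pointwise arguments \cite{Berger} with $S$-coefficients. Concretely, the first step is to choose a finite affinoid covering $\{\Sp(S_i)\}$ of $\Sp(S)$ on which $V_{S_i}$ is free, with all intersections also affinoid.

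Both sides of each claimed equality are compatible with flat base change $S\to S_i$:
\begin{itemize}
\item for $\DdRplus$ this is Proposition~\ref{prop:flat bc of DdRplus};
\item for $\Drigs$ and $\Ddif^{+,n}$ it follows from their construction as base changes;
\item taking $\Gamma_K$-invariants commutes with flat base change because invariants form a finite limit (a kernel of $\gamma-1$ for a topological generator, after decomposing off the torsion part of $\Gamma_K$).
\end{itemize}
Tate acyclicity then glues the equalities from the $S_i$ to $S$. So I may assume that $V_S$ is free and that $n\ge n(V_S)$.

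For the de Rham statement, the free case uses the construction in \cite{BergerColmez} and \cite[\S2]{Liu}. There $\Drigs(V_S)$ is realized inside $(\widetilde{\bB}_{\rig}^{\dagger,s}\widehat{\otimes}S)\otimes_S V_S$ as the $H_K$-invariants after Tate--Sen decompletion. This gives
\[
\bigl(\widetilde{\bB}_{\rig}^{\dagger,s}\widehat\otimes S\bigr)\otimes_{\bB^{\dagger,s}_{\rig,K,S}}\Drigs(V_S)\;\cong\;\bigl(\widetilde{\bB}_{\rig}^{\dagger,s}\widehat\otimes S\bigr)\otimes_S V_S .
\]
Applying the map $\iota_n\colon\widetilde{\bB}_{\rig}^{\dagger,r_n}\to \bdR^+$ and reducing modulo $t^k$ yields
\[
(S\widehat\otimes \bdR^+/t^k)\otimes_S V_S\cong (S\widehat\otimes\bdR^+/t^k)\otimes_{K_n[[t]]\widehat\otimes S}\Ddif^{+,n}(V_S).
\]
Taking $G_K$-invariants in two stages, first $H_K$ and then $\Gamma_K$, gives the result. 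The inputs are:
\begin{itemize}
\item the Tate--Sen statement $\bigl((\bdR^+/t^k)\widehat\otimes S\bigr)^{H_K}$-invariants of $K_\infty$-type elements recover $(K_n[[t]]/t^k)\widehat\otimes S$ after passing to $\Gamma_K$-finite vectors;
\item $\Gamma_K$-invariants only see the $K_n$-level part, since $H^0$ of $\Gamma_K$ on the complementary pieces vanishes by the usual $\gamma-1$ invertibility estimates.
\end{itemize}
Passing to the inverse limit over $k$ and the union over $n$ gives $\DdRplus(V_S)=\Ddif^+(V_S)^{\Gamma_K}$.

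For the crystalline statement, I will show
\[
\Drig(V_S)^{\Gamma_K}=\bigl((\widetilde\bB^\dagger_{\rig}\widehat\otimes S)\otimes_S V_S\bigr)^{G_K}
\qquad\text{and}\qquad
\bigl((\widetilde\bB^\dagger_{\rig}\widehat\otimes S)\otimes_S V_S\bigr)^{G_K}=\Dcrysplus(V_S).
\]
For the first equality, the comparison isomorphism above together with $H_K$-invariants gives $\bigl((\widetilde\bB^\dagger_{\rig}\widehat\otimes S)\otimes V_S\bigr)^{H_K}=(\widetilde\bB^\dagger_{\rig,K}\widehat\otimes S)\otimes\Drig(V_S)$. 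The $\Gamma_K$-invariants of the "non-decompleted'' part then vanish by Berger's argument: elements of $\widetilde\bB^\dagger_{\rig,K}$ that are $\Gamma_K$-finite lie in $\bB^\dagger_{\rig,K}$.

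For the second equality, I follow Berger's proof that a $G_K$-invariant element of $\widetilde\bB^\dagger_{\rig}\otimes V$ is in fact in $\widetilde\bB^+_{\rig}\otimes V$. The key is that $\iota_n$ of such an element lands in $\DdRplus$ for all $n\gg0$, which forces regularity at every $\varphi^{-n}(\ker\theta)$. Then one uses $\widetilde\bB^+_{\rig}=\bigcap_n\varphi^n(\bcrys^+)$ and the fact that $G_K$-invariants of $\bcrys^+\widehat\otimes S$ and of $\widetilde\bB^+_{\rig}\widehat\otimes S$ coincide after $\varphi$-saturation.

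The main obstacle is making these field-theoretic invariant computations work with $S$-coefficients: $G_K$-invariants must commute with $\widehat\otimes_{\BQ_p}S$, and the Tate--Sen and $\gamma-1$ estimates must hold uniformly. I would first reduce to the case where $S$ is orthonormalizable as a $\BQ_p$-Banach space, replacing $S$ by a Banach space with a countable orthonormal basis into which it embeds. Then $(W\widehat\otimes S)^{G}=W^{G}\widehat\otimes S$ for a Banach $G$-module $W$ follows coordinatewise, and the $\gamma-1$ estimates hold uniformly because the bounds used by Berger are independent of coefficients. The Fr\'echet case of $\bdR^+$ and $\bB_{\rig}$ is handled by passing to the Banach pieces $\bdR^+/t^k$ and $\bB^{[s,r]}$ and then to limits, using that invariants commute with inverse limits.
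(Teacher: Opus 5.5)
The paper does not prove this statement; it quotes Bellovin, who adapts Berger's arguments to families. Your de Rham half is the standard Tate--Sen/d\'evissage argument and is fine as an outline. The crystalline half, however, has a genuine gap, and it is not the one you single out in your last paragraph.

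In Berger's pointwise proof, the passage between $\bcrys^+$, $\widetilde{\bB}^+_{\rig}$ and $\widetilde{\bB}^{\dagger}_{\rig}$ uses the finite-dimensionality of the invariants in two places.

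First, one gets $\Dcrysplus(V)\subset\widetilde{\bB}^+_{\rig}\otimes V$ because $\varphi$ is injective on the finite-dimensional $K_0$-space $\Dcrysplus(V)$, hence surjective. So $\Dcrysplus(V)=\varphi^n(\Dcrysplus(V))\subset\varphi^n(\bcrys^+)\otimes V$ for all $n$.

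Second, one gets $(\widetilde{\bB}^{\dagger}_{\rig}\otimes V)^{G_K}\subset\widetilde{\bB}^+_{\rig}\otimes V$ because this $\varphi$-stable space lies in a single $\widetilde{\bB}^{\dagger,r}_{\rig}\otimes V$. Hence every $x$ in it satisfies $\varphi^n(x)\in\widetilde{\bB}^{\dagger,r}_{\rig}\otimes V$ for all $n\ge 0$. This is not a matter of ``regularity at $\varphi^{-n}(\ker\theta)$'': knowing $\iota_n(x)\in\DdRplus$ for $n\gg0$ says nothing about convergence on the annuli of smaller radius.

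Neither input survives in families:
\begin{itemize}
\item $\Dcrysplus(V_S)$ is a finite $K_0\otimes_{\BQ_p}S$-module with an injective semilinear $\varphi$. An injective endomorphism of a finite module over a Noetherian ring need not be surjective.
\item $((\widetilde{\bB}^{\dagger}_{\rig}\widehat\otimes S)\otimes_S V_S)^{G_K}$ is the increasing union of its intersections with $(\widetilde{\bB}^{\dagger,r}_{\rig}\widehat\otimes S)\otimes_S V_S$. You must show that this union stabilizes.
\end{itemize}
Since $\Dcrysplus$ does not commute with specialization, neither statement follows directly from the fibres.

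Your step (a) has the same defect. Decompletion only places a $\Gamma_K$-invariant of $\widetilde{\bB}^{\dagger}_{\rig,K}\otimes D$ in $\varphi^{-m}(\bB^{\dagger}_{\rig,K})\otimes D$, and removing $\varphi^{-m}$ again requires a $\varphi$-argument of this kind. Your remark that $\Gamma_K$-finite elements of $\widetilde{\bB}^{\dagger}_{\rig,K}$ lie in $\bB^{\dagger}_{\rig,K}$ does not help. When $\Gamma_K$ acts nontrivially on $D$, the coordinates of an invariant vector are not $\Gamma_K$-finite.

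What is missing is exactly the families content of the crystalline comparison: $\varphi$-bijectivity and uniform overconvergence of the invariants over $S$. Commuting invariants with $\widehat\otimes S$ and uniform $\gamma-1$ bounds do not address it.
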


\subsection{The module \texorpdfstring{$D_{\mathrm{Sen}}(V_S)$}{D\_Sen(V\_S)} and the Sen operator}\label{sec:Sen operator}

Keep notation as in the previous section. 
Let 
\[
\Dsen^{n}(V_S)=\Ddif^{+, n}(V_S)/t, \quad \Dsen(V_S)=\bigcup_n \Dsen^n(V_S).
\]
Then there is a natural operator 
\[
\Theta\in \End_{K_{n}\otimes_{\BQ_p}S}(\Dsen^n(V_S)), 
\]
called the \emph{Sen operator} for $V_S$. 
Moreover, $\Theta$ commutes with $\Gamma_K$, hence its characteristic polynomial lies in $K\otimes_{\BQ_p} S$, called the \emph{Sen polynomial} for $V_S$. 

For the rest of this section, we fix a finite extension $E$ of $\BQ_p$ that contains the images of all embeddings of $K$ into $\overline{\BQ_p}$. 
Let $H_K$ be the set of $\BQ_p$-embeddings of $K$ into $\overline{\BQ_p}$ (hence into $E$). 
For a $K\otimes_{\BQ_p}E$-module $M$, set 
\[M_\tau=M\otimes_{K\otimes_{\BQ_p}E}(K\otimes_{K, \tau}E)\]
for any $\tau\in H_K$, then we may identify $M$ with $\oplus_{\tau\in H_K}M_\tau$. 
For any $m\in M$, let $m_\tau$ denote the projection of $m$ onto $M_\tau$. 
We further assume $S$ is an $E$-affinoid algebra. 
In particular, for each $\tau\in H_K$, we have 
\[
\Theta_\tau\colon  \Dsen^n(V_S)_\tau\to \Dsen^n(V_S)_\tau, 
\]
so that $\Theta=\oplus_{\tau\in H_K}\Theta_\tau$. 
Fix $\tau_0\in H_K$. 

\begin{lemma}
For any $k\geq1, n\geq n(V_S)$, the natural map
\[
(\Ddif^{+, n}/t^k)^{\Gamma_K}_{\tau_0}\to (\Dsen^n(V_S))^{\Gamma_K}_{\tau_0}. 
\]
has kernel and cokernel killed by $\prod_{i=1}^{k-1}\det(\Theta_{\tau_0}+iI)$. 
\end{lemma}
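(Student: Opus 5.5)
We will argue by induction on $k$, using the $t$-adic filtration of $\Ddif^{+,n}(V_S)/t^k$ and the behaviour of $\Gamma_K$-invariants on the graded pieces. Write $D=\Ddif^{+,n}(V_S)$. For each $i\geq0$ there is a $\Gamma_K$-equivariant short exact sequence
\[
0\to t^iD/t^{i+1}D\to D/t^{i+1}D\to D/t^iD\to 0,
\]
and $t^iD/t^{i+1}D\simeq \Dsen^n(V_S)\otimes \chi^i$ as $\Gamma_K$-modules over $K_n\otimes_{\BQ_p}S$, since $\gamma(t)=\chi(\gamma)t$. The Sen operator on this twist is $\Theta+i$. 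All maps are $K\otimes_{\BQ_p}E$-linear, because $K\subset K_n$ is fixed by $\Gamma_K$. Hence each sequence splits into $\tau$-components, and the $\tau_0$-component is exact. So it suffices to work throughout with the $\tau_0$-parts and with $\Theta_{\tau_0}$.

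The key local input is the following claim: for $i\geq1$, $\det(\Theta_{\tau_0}+iI)$ kills both $H^0(\Gamma_K,(\Dsen^n\otimes\chi^i)_{\tau_0})$ and $H^1(\Gamma_K,(\Dsen^n\otimes\chi^i)_{\tau_0})$. We would prove it as follows.

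\begin{itemize}
\item Pass to an open subgroup $\Gamma_n=\Gal(K_\infty/K_n)$, or rather to $\Gamma_{n'}$ for $n'\geq n$ large enough that $\Gamma_{n'}$ acts on $\Dsen^n$ through $\exp(\log\chi(\gamma)\Theta)$. Sen's theory in families gives this, since $\Theta$ is defined as $\log\gamma/\log\chi(\gamma)$ for $\gamma$ close to $1$.
\item On the $\chi^i$-twist, a topological generator $\gamma$ of $\Gamma_{n'}$ then acts as $\exp(\log\chi(\gamma)(\Theta+i))$.
\item Put $P=\det(\Theta_{\tau_0}+i)$. The invariants, and the cokernel of $\gamma-1$ (which computes $H^1$ of the procyclic group), are killed by $\gamma-1=(\Theta+i)\,u$ with $u$ commuting with $\Theta$. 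By Cayley--Hamilton, $\Theta+i$ divides $P$ in $S[\Theta]$, so $P$ kills both $\ker(\Theta+i)$ and $\mathrm{coker}(\Theta+i)$.
\item The factor $u=\sum_m(\log\chi(\gamma))^m(\Theta+i)^{m-1}/m!$ is invertible once $n'$ is large. We would justify this by noting that $\log\chi(\gamma)$ is small while $\Theta$ is bounded on the finite module. Alternatively one can avoid $u$ entirely: $\gamma-1$ divides $P(\gamma)$-type expressions, and conversely $P$ lies in the ideal generated by $\gamma-1$ in the commutative algebra $S[\Theta]$ after localization. This invertibility is the step I expect to be the main obstacle.
\item Descend from $\Gamma_{n'}$ to $\Gamma_K$ via Hochschild--Serre with the finite quotient $\Gamma_K/\Gamma_{n'}$. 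In characteristic $0$, taking invariants under a finite group is exact via the averaging idempotent, so $H^j(\Gamma_K,-)=H^j(\Gamma_{n'},-)^{\Gamma_K/\Gamma_{n'}}$ for $j=0,1$, and the annihilator statements persist.
\end{itemize}

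Granting the claim, we run the induction. For $k=1$ there is nothing to prove. Assume the statement for $k$, and take the long exact sequence of $\Gamma_K$-invariants of the $\tau_0$-part of
\[
0\to t^kD/t^{k+1}D\to D/t^{k+1}D\to D/t^kD\to 0.
\]
The map $(D/t^{k+1})^{\Gamma_K}_{\tau_0}\to(D/t^k)^{\Gamma_K}_{\tau_0}$ has kernel inside $H^0$ of the $\chi^k$-twist and cokernel inside $H^1$ of it, so both are killed by $\det(\Theta_{\tau_0}+k)$. Composing with the map to $\Dsen$, the kernel and cokernel of the composite of two maps are extensions built from the kernels and cokernels of the factors. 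Therefore the product $\prod_{i=1}^{k}\det(\Theta_{\tau_0}+iI)$ kills the kernel and cokernel at level $k+1$, which completes the induction.
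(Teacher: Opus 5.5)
Your proposal is correct and follows the paper's route. The paper simply invokes the $\tau_0$-analogues of Liu's Proposition~2.26 and Corollary~2.27; these amount to your key claim, that $\det(\Theta_{\tau_0}+i)$ kills the $\Gamma_K$-invariants and the $H^1$ of the twist $(t^i\Dsen^n)_{\tau_0}$, followed by your d\'evissage along the $t$-adic filtration and composition of the maps. The step you flag is harmless, and you do not even need $u$ to be invertible: $\log\chi(\gamma)\,(\Theta+i)=\log\gamma=(\gamma-1)v$, with $v$ a convergent series in $\gamma-1$, already gives $\ker(\gamma-1)\subset\ker(\Theta+i)$ and $\mathrm{im}(\Theta+i)\subset\mathrm{im}(\gamma-1)$, which is all that is used.
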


\begin{proof}\label{lem:tau0 Liu's cor2.27}
This is the same as the proof of \cite[Corollary~2.27]{Liu}, which follows from the analog of \cite[Proposition~2.26]{Liu} with $\Dsen^n(V_S)$ and $\Theta$ replaced by $\Dsen^n(V_S)_{\tau_0}$ and $\Theta_{\tau_0}$ respectively. 
\end{proof}

We further suppose $\det(\Theta_{\tau_0})=0$ and write $\det(TI-\Theta_{\tau_0})=TQ(T)_{\tau_0}\in S[T]$. 
Put $P(i)_{\tau_0}=\prod_{j=0}^{i-1}Q(j)_{\tau_0}$ for every integer $i\geq1$. 

\begin{lemma}\label{lem:tau0 Liu's cor2.30}
If $f\colon S\to R$ is a morphism of $E$-affinoid algebras, for each $n\geq n(V_S)$, the natural map
\[
\left(\Ddif^{+, n}(V_S)/t^k\right)^{\Gamma_K}_{\tau_0}\otimes_SR\to \left(\Ddif^{+, n}(V_R)/t^k\right)^{\Gamma_K}_{\tau_0}
\]
has kernel and cokernel killed by a power of $f(P(k)_{\tau_0})$. 
In particular, if $f(P(k)_{\tau_0})$ is a unit, the map is an isomorphism. 
\end{lemma}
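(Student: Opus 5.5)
This mirrors \cite[Corollary~2.30]{Liu}, with the whole Sen module replaced by its $\tau_0$-component. The plan is to induct on $k$ using the $t$-adic filtration and the Sen operator on each graded piece.

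\emph{Step 1: reduce to graded pieces.} For each $i\ge0$, multiplication by $t^i$ gives a $\Gamma_K$-equivariant identification
\[
t^i\Ddif^{+,n}(V_S)/t^{i+1}\simeq \Dsen^n(V_S)(i),
\]
on which the Sen operator acts as $\Theta+i$. The $\tau$-decomposition commutes with $\Gamma_K$-invariants and with $\otimes_SR$, because it is cut out by idempotents of $K\otimes_{\Qp}E$. So we have exact sequences
\[
0\to \Dsen^n(V_S)(k-1)_{\tau_0}\to(\Ddif^{+,n}/t^k)_{\tau_0}\to(\Ddif^{+,n}/t^{k-1})_{\tau_0}\to0,
\]
compatible with base change to $R$. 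Moreover $\Ddif^{+,n}(V_S)\otimes_SR\cong\Ddif^{+,n}(V_R)$ by construction. Taking $\Gamma_K$-invariants and using the five lemma (in the form: kernels and cokernels killed by $a$ and $b$ give kernel and cokernel killed by a power of $ab$), it suffices to prove the following. For each $0\le i\le k-1$, the base change map
\[
\bigl(\Dsen^n(V_S)(i)^{\Gamma_K}_{\tau_0}\bigr)\otimes_SR\to \Dsen^n(V_R)(i)^{\Gamma_K}_{\tau_0}
\]
has kernel and cokernel killed by a power of $f(Q(i)_{\tau_0})$, or of $f(P(k)_{\tau_0})$. We also need control of the connecting maps into $H^1(\Gamma_K,-)$.

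\emph{Step 2: compute $\Gamma_K$-cohomology of $\Dsen^n(V_S)(i)_{\tau_0}$ via Sen theory.} Following \cite[Proposition~2.26]{Liu}, restricted to the $\tau_0$-part, $\Gamma_{K_n}$-invariants and coinvariants are governed by $\Theta_{\tau_0}+i$. We take a generator $\gamma$ of an open procyclic subgroup of $\Gamma_K$. Then $\gamma-1$ and $\Theta_{\tau_0}+i$ differ by an invertible factor on $\ker$ and $\operatorname{coker}$, up to the finite group $\Gamma_K/\Gamma_{K_n}$, where invariants are exact in characteristic $0$. Hence $H^0$ and $H^1$ of $\Dsen^n(V_S)(i)_{\tau_0}$ are the kernel and cokernel of $\Theta_{\tau_0}+i$ on a finite projective $K_n\otimes S$-module, followed by Galois descent.

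For $i\ge1$, Cayley--Hamilton together with $\det(TI-\Theta_{\tau_0})=TQ(T)_{\tau_0}$ shows that $\det(\Theta_{\tau_0}+i)=\pm iQ(-i)_{\tau_0}$ kills both groups. This is a unit times the relevant $Q$-value under the paper's sign convention; Lemma~\ref{lem:tau0 Liu's cor2.27} records the same determinant, and I would match signs there. Such groups vanish after inverting that element, so they contribute only torsion killed by that element, and this is stable under base change.

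For $i=0$, $\Theta_{\tau_0}$ is not invertible, and this is the main obstacle. Here I would use the factorization $TQ(T)$ and the idempotent decomposition after inverting $Q(0)_{\tau_0}$. Over $S[1/Q(0)_{\tau_0}]$, the polynomials $T$ and $Q(T)_{\tau_0}$ are coprime, because their resultant is $\pm Q(0)_{\tau_0}$. So $\Dsen_{\tau_0}$ splits as $\ker\Theta_{\tau_0}\oplus\ker Q(\Theta_{\tau_0})_{\tau_0}$. The first summand has $\Theta_{\tau_0}=0$ and is locally free of rank $1$, since $T$ is a simple root. Its kernel and cokernel therefore commute with arbitrary base change. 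The second summand has $\Theta_{\tau_0}$ invertible. Hence, before localizing, kernel and cokernel of the base change map are killed by a power of $Q(0)_{\tau_0}$.

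\emph{Step 3: assemble.} Induct on $k$, using the long exact sequences of $\Gamma_K$-cohomology, $0\to H^0\to H^0\to H^0\to H^1$, for the sequence in Step 1 over $S$ and over $R$. At each step the torsion is killed by a power of $f(Q(j)_{\tau_0})$ for $j\le k-1$. Because $P(k)_{\tau_0}=\prod_{j=0}^{k-1}Q(j)_{\tau_0}$, the product of these annihilators divides a power of $f(P(k)_{\tau_0})$. The statement for $H^0$ also requires the analogous base change statement for $H^1$ of the graded pieces, and this follows by the same Step 2 analysis. If $f(P(k)_{\tau_0})$ is a unit, the kernel and cokernel are killed by a unit and so vanish.
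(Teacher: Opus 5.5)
Your argument is essentially the paper's. The paper just invokes \cite[Corollary~2.30]{Liu}: apply the $\tau_0$-analogue of Liu's Corollary~2.27 (the preceding lemma, annihilation by $\prod_{i=1}^{k-1}\det(\Theta_{\tau_0}+i)$) to $V_S$ and to $V_R$. Combine it with the $\tau_0$-analogue of Liu's Proposition~2.29 (base change for $\Dsen^n(V_S)^{\Gamma_K}_{\tau_0}$ up to a power of $f(Q(0)_{\tau_0})$), and finish with a diagram chase. Your induction on $k$ inlines the first ingredient. Your idempotent splitting over $S[1/Q(0)_{\tau_0}]$, plus finite generation over the noetherian ring $R$ to get a uniform power, is a valid proof of the second. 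No base-change statement for $H^1$ is needed: it suffices that $H^0$ and $H^1$ of $t^i\Dsen^n_{\tau_0}$, $i\ge1$, are killed by $\det(\Theta_{\tau_0}+i)$ over $S$ and by its image over $R$.

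The one step that does not close as written is the sign in Step~3. Your Step~2 correctly gives $\det(\Theta_{\tau_0}+i)=(-1)^{d+1}\,i\,Q(-i)_{\tau_0}$. So the annihilators you actually obtain are $Q(0)_{\tau_0},Q(-1)_{\tau_0},\dots,Q(1-k)_{\tau_0}$. These do not divide a power of $\prod_{j=0}^{k-1}Q(j)_{\tau_0}$, and no choice of sign convention repairs this.

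The problem is in the printed statement. The definition $P(i)_{\tau_0}=\prod_{j=0}^{i-1}Q(j)_{\tau_0}$ is a slip for $\prod_{j=0}^{i-1}Q(-j)_{\tau_0}$. That is how $P(i)_{\tau_0}$ is used in the proof of Theorem~\ref{thm:X tau0-fs}, where $X'_{P(i)_{\tau_0}}=\bigcap_{0\le j\le i-1}X'_{Q(-j)_{\tau_0}}$, and it matches the condition $j\le0$ in Definition~\ref{def:X tau0-fs}.

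With the literal definition the lemma is false. Take $K=\Qp$ and $V_S=\mathbf{1}\oplus\chi_S$ over a small disc in the variable $X$, with $\chi_S$ of Sen weight $X-1$ (e.g.\ $\chi_S=\chi_{\mathrm{cyc}}^{-1}\exp(X\log\chi_{\mathrm{cyc}})$). At $x_0\colon X=0$ the fibre is $\mathbf{1}\oplus\Qp(-1)$; let $e_{-1}$ be a basis of $\Qp(-1)$. Then $Q(T)=T-X+1$, so $Q(0)Q(1)$ is a unit at $x_0$. Yet $(\Ddif^{+,n}(V_S)/t^2)^{\Gamma_K}\otimes_S k(x_0)$ is one-dimensional, because the $\chi_S$-summand has no invariants over the domain $S$. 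On the other hand, $(\Ddif^{+,n}(V_{x_0})/t^2)^{\Gamma_K}$ also contains $t\otimes e_{-1}$, so the base-change map is not surjective.

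So you should state explicitly that you are proving the lemma with $P(k)_{\tau_0}=\prod_{j=0}^{k-1}Q(-j)_{\tau_0}$. With that reading your Step~3 is correct.
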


\begin{proof}
This is the same as the proof of \cite[Corollary~2.30]{Liu}, which follows from the analog of \cite[Proposition~2.29]{Liu} with $\Dsen^n(V_S), Q(0)$ and $P(k)$ replaced by $\Dsen^n(V_S)_{\tau_0}, Q(0)_{\tau_0}$ and $P(k)_{\tau_0}$ respectively. 
\end{proof} 
\section{\texorpdfstring{$\tau_0$}{tau0}-finite slope subspaces}\label{sec:fs subspace}

In this section, we refine the notion of finite slope subspace and give a criterion for the $\tau_0$-finite slope subspace to be the whole space. 

\subsection{Definition and construction}

We keep the notation as in the last part of Section~\ref{sec:Sen operator}. 
Let $X$ be a reduced and separated rigid analytic space over $E$, $V_X$ be a family of $G_K$-representations over $X$. 
Fix $\tau_0\in H_K$, assume that the $\tau_0$-component of the Sen polynomial is $TQ(T)_{\tau_0}\in \CO(X)[T]$. 

Let $\alpha\in\CO(X)^\times, f=[K_0:\BQ_p]$. 
By \cite[Lemma~2.11]{Liu}, for any morphism $\Sp(R)\to X$ and $s\geq s(V_R)$, $(\Drig(V_R))^{\varphi^f=\alpha}$ is contained in $\Drigs(V_R)$. 
Thus for any $n\geq n(V_R)$, we have a natural map
\[
\Drig(V_R)^{\varphi^f=g^*\alpha}\to \Ddif^{+, n}(V_R)
\]
via the localization map $\iota_n$. 

For any rigid analytic space $Y$ over $E$ and $f\in\CO(Y)$, we denote by $Y_f$ the complement of the vanishing locus of $f$ on $Y$; it is a Zariski-open subset of $Y$. 

\begin{definition}\label{def:X tau0-fs}
For such a quadruple $(X,\alpha,V_X, \tau_0)$, we call a Zariski closed $E$-subspace $X_{\tau_0-fs}\subset X$ a \emph{$\tau_0$-finite slope subspace} of $X$ with respect to the triple $(\alpha, V_X, \tau_0)$ if the following conditions hold.
\begin{enumerate}
\item For every integer $j\le0$, the subspace $(X_{fs})_{Q(j)_{\tau_0}}$
is Zariski-open and dense in $X_{fs}$.
\item For any affinoid $E$-algebra $R$ and morphism $g\colon \Sp(R)\longrightarrow X$ which factors through $X_{Q(j)_{\tau_0}}$ for every $j\le0$, 
the morphism $g$ factors through
$X_{\tau_0-fs}$ if and only if the
natural map
\begin{equation}\label{eq:tau0-fs isom}
\iota_{n,K, \tau_0}\colon \left(K\otimes_{K_0}(D_{\mathrm{rig}}^\dagger(V_R))^{\varphi^f=g^*\alpha,\Gamma_K=1}\right)_{\tau_0} 
\to (D_{\mathrm{dif}}^{+,fn}(V_R)^{\Gamma_K}_{\tau_0}
\end{equation}
is an isomorphism for all sufficiently large $n$.
\end{enumerate}
\end{definition}

We are going to prove the following theorem, which is an analogy of \cite[Theorem~4.10]{Liu}. 

\begin{theorem}\label{thm:X tau0-fs}
The rigid analytic space $X$ has a unique $\tau_0$-finite slope subspace $X_{\tau_0-fs}$. 
\end{theorem}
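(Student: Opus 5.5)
Uniqueness and existence will follow Liu's proof of \cite[Theorem~4.10]{Liu}, with Lemmas~\ref{lem:tau0 Liu's cor2.27} and \ref{lem:tau0 Liu's cor2.30} replacing \cite[Corollaries~2.27, 2.30]{Liu}. The plan is to construct $X_{\tau_0-fs}$ locally on an admissible affinoid cover and then glue, using uniqueness.

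\emph{Uniqueness.} Suppose $X_1$ and $X_2$ are two $\tau_0$-finite slope subspaces. By condition (1), each $X_i$ is the Zariski closure of $U_i:=\bigcap_{j\le0}(X_i)_{Q(j)_{\tau_0}}$. Here I would use that a countable intersection of dense Zariski-opens is still dense, which can be checked on affinoids since $X$ is reduced; it is enough to work with affinoid subdomains of the open. Take an affinoid $\Sp(R)\to U_1$. It factors through every $X_{Q(j)_{\tau_0}}$ and through $X_1$. Condition (2) for $X_1$ then says that \eqref{eq:tau0-fs isom} is an isomorphism. Condition (2) for $X_2$ therefore forces $\Sp(R)$ to factor through $X_2$. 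Hence $U_1\subset X_2$, and taking closures gives $X_1\subset X_2$. By symmetry $X_1=X_2$. The same argument shows that the construction is compatible with restriction to affinoid subdomains, so local pieces glue.

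\emph{Existence on an affinoid $X=\Sp(S)$.} Following Liu, I would build a decreasing sequence of Zariski-closed subspaces. For each $k\ge1$, let $I_k\subset S$ be the ideal cutting out the locus where the map
\[
\left(K\otimes_{K_0}\Drig(V_S)^{\varphi^f=\alpha,\Gamma_K=1}\right)_{\tau_0}\to \left(\Ddif^{+,fn}(V_S)/t^k\right)^{\Gamma_K}_{\tau_0}
\]
fails to be surjective "generically". Concretely, I would imitate Liu's construction via the Fredholm/Frobenius-eigenspace cutting: the $\varphi^f=\alpha$ eigenspace together with the $\tau_0$-Sen operator. Then set $X_{\tau_0-fs}$ to be the Zariski closure of $X$ with the bad loci removed, i.e.\ the closure of the union over $k$ of the loci where things hold after inverting $P(k)_{\tau_0}$.

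Lemma~\ref{lem:tau0 Liu's cor2.30} is what makes base change behave. After inverting $P(k)_{\tau_0}$, the $\tau_0$-part of $(\Ddif^{+,fn}/t^k)^{\Gamma_K}$ commutes with base change. Lemma~\ref{lem:tau0 Liu's cor2.27} shows that for $k$ large, that module agrees with the full $\DdRplus(V_R)_{\tau_0}$ up to $P(k)_{\tau_0}$-torsion. This uses that the $\tau_0$-Hodge–Tate weights that obstruct lifting are exactly the roots $-i$ detected by $Q(-i)$, matching the $j\le 0$ in the definition. Condition (2) then reduces to checking that an explicit map of finite modules is an isomorphism after base change to $R$. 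That is a Zariski-closed condition on the $P(k)_{\tau_0}$-invertible locus, namely vanishing of Fitting ideals of the cokernel and kernel. Condition (1) is arranged by throwing away irreducible components on which some $Q(j)_{\tau_0}$ vanishes identically, as Liu does.

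\emph{Main obstacle.} The main difficulty is producing a single Zariski-closed subspace that works for all $k$, or all $n$, simultaneously, and checking that the "if and only if" holds for arbitrary $R$ rather than just points. Liu handles this with the finiteness of $\Drig(V_S)^{\varphi=\alpha}$-type modules and an argument that the locus stabilizes, using \cite[Lemma~2.11]{Liu} and Noetherianity. Here the extra work is making every step $\tau_0$-componentwise, which should go through because $K\otimes_{K_0}(-)$ and $\iota_{n}$ are compatible with the decomposition over $H_K$. I also need to check the $\varphi^f$ and $fn$ indexing: $\iota_{fn}$ versus $\iota_{n}$ with $\varphi^f$-semilinearity must match so that the $\tau_0$-projection is well defined.
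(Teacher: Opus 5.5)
\textbf{Gap: the Zariski-closedness step in existence.} Your uniqueness argument is sound. The fact you need, that a countable intersection of Zariski-dense Zariski-open subsets of a reduced space is Zariski-dense, does hold (e.g.\ via Noether normalization and a Baire argument on $\CO_E^d$). The paper handles gluing through the $\tau_0$-analogues of \cite[Propositions~4.1, 4.2, 4.4]{Liu}, as you suggest.

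The gap is the step where you say condition (2) reduces to an explicit map of finite modules being an isomorphism after base change to $R$, and call this Zariski-closed via vanishing of Fitting ideals of kernel and cokernel. This fails for two reasons.
\begin{enumerate}
\item The source $\bigl(K\otimes_{K_0}\Drig(V_R)^{\varphi^f=g^*\alpha,\Gamma_K=1}\bigr)_{\tau_0}$ does not commute with base change. It can be zero over $S$ and become nonzero after specialization, which is exactly the phenomenon the finite slope subspace detects. So the map in \eqref{eq:tau0-fs isom} over $R$ is not the base change of one map of finite $S$-modules, and no Fitting ideal computed over $S$ controls it.
\item Even for a fixed map $\phi\colon M\to N$ of finite $S$-modules, $\phi\otimes_S R$ is surjective iff $\mathrm{Fitt}_0(\mathrm{coker}\,\phi)R=R$. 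That is a Zariski-\emph{open} condition, not a closed one. Your mechanism would produce an open locus, whereas $X_{\tau_0-fs}$ must be a closed locus where eigenvectors appear.
\end{enumerate}

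\textbf{The missing idea.} What you need is the analogue of \cite[Proposition~4.7]{Liu}, i.e.\ the lemma producing $\Sp(S(k,\alpha,a,n,\tau_0))$. For $k>\log_{|\pi_K|^{-1}}|\alpha^{-1}|_{\mathrm{sp},U}$, the map from eigenvectors to $(\Ddif^{+,fn}/t^k)_{\tau_0}$ is injective (Lemma~\ref{lem:tau0-inj}), so a lift of $g(a)$ is unique if it exists. For each fixed $a$, existence of a lift is then represented by a Zariski-closed subspace of $\Sp(S)$, functorially in \emph{all} affinoid $R$.

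The paper assembles this as follows.
\begin{enumerate}
\item Intersect these closed subspaces over $a$, over $n$ and over $k$ (Corollaries~\ref{cor:n submod in image} and \ref{cor:local input for Xfs}) to get a closed $X'$.
\item Set $X_{\tau_0-fs}=\bigcap_i\overline{X'_{P(i)_{\tau_0}}}$. This stabilizes by Noetherianity and hence satisfies (1).
\item For (2): when all $Q(j)_{\tau_0}$, $j\le 0$, are units on $R$, Lemma~\ref{lem:tau0 Liu's cor2.27} makes $(\Ddif^{+,fn}(V_R))^{\Gamma_K}_{\tau_0}\to(\Ddif^{+,fn}(V_R)/t^k)^{\Gamma_K}_{\tau_0}$ an isomorphism. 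Together with injectivity, isomorphy of \eqref{eq:tau0-fs isom} becomes surjectivity onto the $t^k$-quotient for one suitable $k$.
\item Lemma~\ref{lem:tau0 Liu's cor2.30} says that quotient over $R$ is generated by the image of the $S$-level module. So only elements $a$ coming from $S$ need to lift, which is exactly what the closed subspaces encode.
\end{enumerate}

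Separately, taking the closure of a \emph{union} over $k$, as you first describe, does not enforce (1). It contains the closure of a $Q(0)_{\tau_0}$-nonvanishing locus, which may have components on which some $Q(-j)_{\tau_0}$ vanishes identically. You need the intersection of the closures, which is what your later remark about discarding components actually amounts to.
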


\begin{lemma}\label{lem:glue prop of Xfs}
Let $\{U_j\}_{j\in J}$ be an admissible covering of $X$ by affinoid subdomains. 
Suppose that each $U_j$ has the finite slope subspace $(U_j)_{\tau_0-fs}$. 
Then they glue to form the finite slope subspace of $X$. 
\end{lemma}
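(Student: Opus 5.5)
The plan is to follow the gluing argument for \cite[Lemma~4.4]{Liu}. First I will show that the $\tau_0$-finite slope subspaces are compatible on overlaps. Then I will glue the Zariski closed subspaces $(U_j)_{\tau_0-fs}$ along these identifications and check conditions (1) and (2) of Definition~\ref{def:X tau0-fs} on $X$ locally. Since $X$ is separated, each intersection $U_j\cap U_k$ is again an affinoid subdomain. It is open in both $U_j$ and $U_k$.

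The first step is a restriction statement. Let $U$ be affinoid with $\tau_0$-finite slope subspace $U_{\tau_0-fs}$, and let $W\subset U$ be an affinoid subdomain. I claim that $W\cap U_{\tau_0-fs}$ is a $\tau_0$-finite slope subspace of $W$ with respect to the restricted data $(\alpha|_W, V_W, \tau_0)$.

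Condition (2) for $W$ is immediate. A morphism $g\colon\Sp(R)\to W$ factoring through every $W_{Q(j)_{\tau_0}}$ also factors through every $U_{Q(j)_{\tau_0}}$. It factors through $W\cap U_{\tau_0-fs}$ if and only if it factors through $U_{\tau_0-fs}$, and the map \eqref{eq:tau0-fs isom} depends only on $g$ and the pulled-back data.

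Condition (1) for $W$ needs density of $(W\cap U_{\tau_0-fs})_{Q(j)_{\tau_0}}$ in $W\cap U_{\tau_0-fs}$. This holds because intersecting with an affinoid subdomain, which is open in the rigid (and hence, for reduced spaces, the relevant) sense, preserves Zariski density of a Zariski-open subset. More precisely, the complement of the vanishing locus of $Q(j)_{\tau_0}$ meets every irreducible component of $U_{\tau_0-fs}$ in a dense open set. Each irreducible component of $W\cap U_{\tau_0-fs}$ is contained in such a component, and on it $Q(j)_{\tau_0}$ is not identically zero, because a function vanishing on a nonempty admissible open of an irreducible reduced space vanishes identically.

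The second step is uniqueness on affinoids. I will show that if $Z_1,Z_2\subset U$ are two $\tau_0$-finite slope subspaces, then $Z_1=Z_2$. Take the maximal reduced Zariski closed subspaces. Set $Z_i^\circ=\bigcap_{j\le0}(Z_i)_{Q(j)_{\tau_0}}$, or more precisely work with affinoid subdomains $\Sp(R)$ of $Z_1$ on which all $Q(j)_{\tau_0}$ are invertible. On such a domain, condition (2) for $Z_1$ applied to the inclusion gives the isomorphism \eqref{eq:tau0-fs isom}. Then condition (2) for $Z_2$ shows that $\Sp(R)\to U$ factors through $Z_2$.

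Hence $Z_2$ contains a subset of $Z_1$ that is Zariski dense by condition (1). Here I must be careful that the countable intersection over $j\le0$ remains dense. I expect to handle this, as in Liu, by noting that only finitely many $Q(j)_{\tau_0}$ matter on a given affinoid: for $|j|$ large the values $Q(j)_{\tau_0}$ are units by boundedness of the coefficients. Alternatively, the complement of a countable union of nowhere dense Zariski closed subsets is dense in the rigid topology. Since $Z_2$ is Zariski closed, $Z_1\subset Z_2$, and by symmetry $Z_1=Z_2$.

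Gluing then follows. By steps one and two, $(U_j)_{\tau_0-fs}\cap U_k=(U_k)_{\tau_0-fs}\cap U_j$, so these subspaces glue to a Zariski closed subspace $X_{\tau_0-fs}$. Condition (1) for $X_{\tau_0-fs}$ is local, since Zariski density can be checked on an admissible affinoid covering. For condition (2), given $g\colon\Sp(R)\to X$, I pull the covering back to an admissible affinoid covering $\{\Sp(R_i)\}$ of $\Sp(R)$, refined so that each piece maps into some $U_j$. Factoring through $X_{\tau_0-fs}$ is local on $\Sp(R)$.

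The isomorphism property of \eqref{eq:tau0-fs isom} is also local. Both sides commute with flat base change $R\to R_i$: the target by Proposition~\ref{prop:flat bc of DdRplus}-type arguments, and the source by the compatibility of $\varphi^f$-invariants with flat base change as in \cite[Lemma~4.4]{Liu}. Isomorphy of a map of finite $R$-modules can be checked after a faithfully flat cover, and the finitely many pieces allow a uniform choice of $n$. I expect this locality of condition (2), specifically the flat base change for the $\varphi^f=\alpha$, $\Gamma_K$-invariants and the $\tau_0$-components, to be the main technical point. I will handle it by citing the corresponding arguments of Liu, since taking $\tau_0$-components is an exact operation.
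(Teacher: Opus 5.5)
Your overall architecture is what the paper means by invoking the analogues of \cite[Propositions~4.1 and~4.2]{Liu} and the proof of \cite[Proposition~4.4]{Liu}. That is: restrict a $\tau_0$-finite slope subspace to an affinoid subdomain, prove uniqueness on affinoids, deduce agreement on the affinoid overlaps $U_j\cap U_k$, glue, and check (1) and (2) locally via flat base change and faithful flatness of a finite affinoid cover of $\Sp(R)$. Your restriction step is fine as written, and the gluing step is fine once uniqueness is available.

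The problem is the density argument in your uniqueness step. You claim only finitely many $Q(j)_{\tau_0}$ matter on an affinoid, because $Q(j)_{\tau_0}$ is a unit for $|j|\gg0$. This is false $p$-adically. Every integer satisfies $|j|_p\le 1$, so the leading term never dominates, and $\BZ_{\le 0}$ is dense in $\BZ_p$. In this paper's own application, $Q_\fp(T)_\tau=T-K_\tau$ with $K_\tau$ non-constant. On any affinoid where $K_\tau$ sweeps out a disc meeting $\BZ_p$, infinitely many $Q(j)_{\tau_0}$ have zeros.

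For the same reason, affinoid subdomains of $Z_1$ on which all $Q(j)_{\tau_0}$ are simultaneously invertible need not be Zariski dense, and a Baire argument does not produce them. Suppose $Q(T)_{\tau_0}=T-K$ with $K$ non-constant near $x$ and $K(x)=1$. Then $Q(j)_{\tau_0}(x)\neq 0$ for all $j\le 0$, yet $Q(1-p^m)_{\tau_0}$ vanishes at points arbitrarily close to $x$. So no affinoid neighbourhood of $x$ lies in $\bigcap_{j\le0}U_{Q(j)_{\tau_0}}$.

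The repair is to feed closed points, not affinoid subdomains, into condition (2). Take $x\in Z_1$ with $Q(j)_{\tau_0}(x)\neq0$ for all $j\le 0$. Then $\Sp(k(x))\to U$ factors through every $U_{Q(j)_{\tau_0}}$ and through $Z_1$. Condition (2) for $Z_1$ shows that \eqref{eq:tau0-fs isom} is an isomorphism for $R=k(x)$ and $n\gg0$, and condition (2) for $Z_2$ then gives $x\in Z_2$.

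It remains to show these points are Zariski dense in $Z_1$, and this is where your Baire suggestion belongs. Suppose a function $g$ is non-zero on an irreducible component $C$ of $Z_1$. Pick a smooth point $z_0$ of $Z_1$ lying only on $C$ with $g(z_0)\neq 0$. Base change to $E'=k(z_0)$ so that $z_0$ becomes rational, and take a small polydisc neighbourhood $N$ on which $g$ is invertible. If $N(E')$ were covered by the countably many closed sets $V(Q(j)_{\tau_0})(E')$, Baire would give a ball on which some $Q(j)_{\tau_0}$ vanishes. Then $Q(j)_{\tau_0}|_C=0$, contradicting condition (1).

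With this change the rest of your proposal goes through. For the locality of (2), flat base change of the source is supplied by Corollary~\ref{cor:derived bc of herr cpx}; it is not formal, because of the completed tensor product. Flat base change of the target follows from the base-change lemmas of \S\ref{sec:Sen operator}, since all $Q(j)_{\tau_0}$ are units in $R$ in that situation.
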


\begin{proof}
The argument is the same as that in the proof of \cite[Proposition~4.4]{Liu}. 
More precisely, the analogues of \cite[Proposition~4.1 and~4.2]{Liu} hold in the present setting with fixed $\tau_0$. 
Applying these results as in the proof of \cite[Proposition~4.4]{Liu} gives the assertion. 
\end{proof}

Let $U=\Sp(S)\subset X$ be an affinoid subdomain.
For $f\in\CO(U)$, let $|f|_{\mathrm{sp}, U}$ be the spectral norm of $f$ on $U$. 

\begin{lemma}\label{lem:tau0-inj}
For any integer $n\geq n(V_S)$ and positive integer $k>\log_{|\pi_K|^{-1}}|\alpha^{-1}|_{\mathrm{sp}, U}$, the natural map 
\[
\left(K\otimes_{K_0}(D_{\mathrm{rig}}^\dagger(V_S))^{\varphi^f=g^*\alpha}\right)_{\tau_0} 
\to (D_{\mathrm{dif}}^{+,fn}(V_S)/t^k)_{\tau_0}
\]
is injective. 
\end{lemma}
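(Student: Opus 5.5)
This is the $\tau_0$-analogue of \cite[Lemma~4.7]{Liu} (itself modelled on \cite[Lemma~2.2]{Kisin}-type arguments), and I will follow that proof, isolating the $\tau_0$-component at the end. Write $D=\Drig(V_S)$ and fix $n\geq n(V_S)$. By \cite[Lemma~2.11]{Liu}, $D^{\varphi^f=\alpha}\subset\Drigs(V_S)$ for $s\geq s(V_S)$, so the map $\iota_{fn}$ is defined on it.

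\textbf{Step 1: reduce to a statement about $t$-divisibility.} The map $\iota_{fn}$ is $K_0$-linear. Extending $K$-linearly, it becomes a map of $K\otimes_{\BQ_p}S$-modules (through the $K\subset K_{fn}$-structure), which decomposes as a direct sum over $\tau\in H_K$. So it suffices to show the following. If an element $x=\sum_i a_i\otimes y_i$ of $K\otimes_{K_0}D^{\varphi^f=\alpha}$ maps to something whose $\tau_0$-component lies in $t^k\Ddif^{+,fn}(V_S)_{\tau_0}$, then $x_{\tau_0}=0$.

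Choose a basis $a_i$ of $K$ over $K_0$. Every element of $K\otimes_{K_0}D^{\varphi^f=\alpha}$ can be written as $\sum a_i\otimes y_i$. Since $\varphi^f$ is $K_0$-linear and fixes the image of $K$ inside $\mathbf B^\dagger_{\mathrm{rig},K}$ in the appropriate sense (more precisely, $\varphi^f$ acts $K\otimes$-compatibly after passing to $\iota$), it is cleaner to work with $y:=x$ viewed inside $(K\otimes_{K_0}\mathbf B^{\dagger}_{\mathrm{rig},K,S})\otimes D$, where $\varphi^f$ acts through the second factor.

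\textbf{Step 2: use Frobenius to increase $t$-adic divisibility.} Here I follow Liu's argument. The localization maps satisfy $\iota_{f(n+1)}\circ\varphi^f=\iota_{fn}$ up to the standard identification $K_{fn}[[t]]\hookrightarrow K_{f(n+1)}[[t]]$. This gives
\[\iota_{f(n+m)}(y)=\alpha^{-m}\iota_{f(n+m)}(\varphi^{fm}y)=\alpha^{-m}\iota_{fn}(y),\]
so vanishing mod $t^k$ at level $fn$ propagates to vanishing mod $t^k$ at all levels $f(n+m)$, in the $\tau_0$-component. An element of $\Drigs(V_S)$ whose image under $\iota_{fm'}$ lies in $t^k\Ddif^{+,fm'}$ for all large $m'$ is divisible by $(t/\pi_K\text{-type element})^k$ inside the Robba ring. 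Concretely, it is divisible by $\prod_{m'}(\varphi^{m'}(q)/p)^k$, hence by $t^k$ in $D$. This uses the standard description of the zeros of $t$, localized to the $\tau_0$-factor, since $\iota_m$ for $K\otimes_{K_0}$ splits by embeddings.

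Thus $x_{\tau_0}=t^kz$ with $z\in\bigl(K\otimes_{K_0}D\bigr)_{\tau_0}$. Moreover $\varphi^f(t^kz)=\alpha\, t^kz$, which gives $\varphi^f z=p^{-fk}\alpha z$. In the $\tau_0$-twisted normalization the relevant factor is $\pi_K$-adic, i.e.\ $\varphi^f z=\alpha\,\pi_K^{-k}\cdot u z$ with $u$ a unit. I will need to check this normalization carefully; it is where $|\pi_K|$ rather than $|p|^f$ enters the bound.

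\textbf{Step 3: slope contradiction.} By the hypothesis $k>\log_{|\pi_K|^{-1}}|\alpha^{-1}|_{\mathrm{sp},U}$, the eigenvalue $\pi_K^{-k}\alpha$ has spectral norm of its inverse $<1$, i.e.\ $\varphi^f$ "expands" $z$ too much. I would then invoke the analogue of \cite[Lemma~4.6]{Liu}: a $(\varphi,\Gamma)$-module element $z$ with $\varphi^f z=\beta z$ and $|\beta^{-1}|_{\mathrm{sp}}<1$ must vanish. The argument iterates $z=\beta^{-m}\varphi^{fm}z$ and bounds norms on annuli $[s,r]$ to get $z\in\bigcap_m(\text{small})$, hence $z=0$. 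This is the main obstacle, since one has to confirm that the constant in the norm estimate on $\varphi$ is compatible with the bound measured by $|\pi_K|$. I would try to establish it pointwise at each $x\in\Sp(S)$ via slopes of $\varphi^f$ on $D_x$ and reducedness, then pass to $S$. Concluding $z=0$ gives $x_{\tau_0}=0$, which is the injectivity.
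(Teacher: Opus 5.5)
\textbf{Verdict: genuine gap in Step~2.} Your Step~1 is fine, and it is essentially the paper's whole proof. The map is $K\otimes_{\BQ_p}E$-linear, so it is the direct sum of its $\tau$-components. Hence injectivity of the $\tau_0$-component follows from injectivity of the full map $K\otimes_{K_0}\Drig(V_S)^{\varphi^f=\alpha}\to\Ddif^{+,fn}(V_S)/t^k$, which is \cite[Proposition~4.6]{Liu}. The problem is your attempt to reprove that result.

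From $\varphi^f y=\alpha y$ you only control $\iota_m$ for levels $m$ divisible by $f$. After projecting to $\tau_0$ you only control one embedding of $K$. Neither gives divisibility by the cyclotomic $t^k$, which would require vanishing at every level and in every $\tau$-direction. In fact the claim is false:
\begin{itemize}
\item Take $K=\BQ_{p^2}$, so $f=2$, $e=1$, $\pi_K=p$ and $H_K=\{\tau_0,\tau_1\}$.
\item Let $t_{\mathrm{LT}}\in\bB_{\mathrm{crys}}^+$ be the Lubin--Tate period for the uniformizer $p$, so that $\varphi^2(t_{\mathrm{LT}})=p\,t_{\mathrm{LT}}$ and $t_{\mathrm{LT}}\in t\bdR^+$.
\item Let $V=E(\tau_0\circ\chi_{\mathrm{LT}}^{-k})$ with basis $e_V$, and let $e_{\tau_0}\in K_0\otimes_{\BQ_p}E$ be the idempotent.
\item Then $a=(t_{\mathrm{LT}}^k\otimes 1)e_{\tau_0}\otimes e_V$ is a nonzero element of $\Dcrysplus(V)_{\tau_0}=\bigl(\Drig(V)^{\Gamma_K}\bigr)_{\tau_0}$ with $\varphi^2a=p^ka$.
\item Inside $\bdR^+\otimes_{\BQ_p}V$ we have $\iota_{2m}(a)=\varphi^{-2m}(a)=p^{-km}a$. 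Its $\tau_1$-component is zero and its $\tau_0$-component is divisible by $t_{\mathrm{LT}}^k$. Hence $\iota_{2m}(a)\in t^k\Ddif^{+,2m}(V)$ for all large $m$.
\item Yet $a\notin t^k\Drig(V)$: otherwise $a=t^kz$ with $\varphi^2z=p^{-k}z$ in an \'etale module, which forces $z=0$.
\end{itemize}
Put differently, Steps~2--3 as written would give injectivity whenever $fk>\log_p|\alpha^{-1}|_{\mathrm{sp},U}$, which here reads $2k>k$. This $a$ lies in the kernel. Note that $\log_{|\pi_K|^{-1}}|\alpha^{-1}|=k$ in this example, so the lemma's bound is sharp.

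So the appearance of $|\pi_K|$ is not a normalization issue to be checked later. We have $\varphi^f(t^k)=p^{fk}t^k$ exactly, so your proposed $\varphi^f z=\alpha\pi_K^{-k}uz$ is inconsistent with having divided by $t^k$. What actually governs the $\tau_0$-component is this: an element vanishing to order $k$ only in the $\tau_0$-direction behaves like $t_{\mathrm{LT}}^k$, whose $\varphi^f$-eigenvalue has absolute value $|\pi_K|^k$. That is not visible by dividing by $t^k$ in the cyclotomic Robba ring, and it is the real content of Liu's proposition.

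Separately, the vanishing principle in your Step~3 is false for general $(\varphi,\Gamma_K)$-modules; a rank-one module with $\varphi e=p^{-1}e$ is a counterexample. It needs the fact that $\Drig(V_S)$ is \'etale.

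The fix is to drop Steps~2--3 and, after your Step~1, invoke \cite[Proposition~4.6]{Liu} for the full map.
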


\begin{proof}
This follows from \cite[Proposition~4.6]{Liu}. 
\end{proof}

\begin{lemma}\label{lem:g(a) in image}
For any integer $n\geq n(V_S)$, positive integer $k>\log_{|\pi_K|^{-1}}|\alpha^{-1}|_{\mathrm{sp}, U}$ and $a\in \Ddif^{+, fn}(V_S)/(t^k)_{\tau_0}$, 
there exists a Zariski closed $E$-subspace $\Sp(S(k, \alpha, a, n, \tau_0))$ of $\Sp(S)$, such that for any map $g\colon  S\to R$ of affinoid $E$-algebras, $g(a)\in (\Ddif^{+, fn}(V_R)/t^k)_{\tau_0}$ is contained in the image of 
\[\left(K\otimes_{K_0}D_{\mathrm{rig}}^\dagger(V_R)^{\varphi^f=g(\alpha)}\right)_{\tau_0} 
\to \left(D_{\mathrm{dif}}^{+,fn}(V_R)/t^k\right)_{\tau_0}\]
if and only if the map $\Sp(R)\to \Sp(S)$ factors through $\Sp(S(k, \alpha, a, n, \tau_0))$. 
\end{lemma}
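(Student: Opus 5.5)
This is the $\tau_0$-analogue of \cite[Proposition~4.7]{Liu}, and I would follow Liu's argument, which reduces the problem to a finite-dimensional linear-algebra problem over $S$.

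\textbf{Step 1: the twist by $\alpha$.} First I would replace the condition $\varphi^f=\alpha$ by the condition $\varphi^f=1$ on a twisted module. Let $D_S^{(\alpha)}$ be the $\varphi^f$-module over $\mathbf B_{\mathrm{rig},K,S}^{\dagger}\otimes_{K_0}K$ obtained from $\Drig(V_S)$ by twisting the Frobenius by $\alpha^{-1}$; equivalently, tensor with the rank one object on which $\varphi^f$ acts by $\alpha^{-1}$. Then
\[
\bigl(K\otimes_{K_0}\Drig(V_S)^{\varphi^f=\alpha}\bigr)_{\tau_0}
\]
is the $\varphi^f$-invariants of the $\tau_0$-part. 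Because the map $K\otimes_{K_0}(-)\to \Ddif^{+,fn}$ is $K$-linear, taking $\tau_0$-components is exact and commutes with any base change $S\to R$. So it is enough to work inside the single summand $(\Ddif^{+,fn}/t^k)_{\tau_0}$, which is a finite projective $S$-module.

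\textbf{Step 2: reduction to a finite-level statement.} Next I would use Liu's description of elements of $\Drig^{\varphi^f=\alpha}$ through their images in $\Ddif^{+,fm}/t^k$ for all $m\geq n$. The relevant data are:
\begin{itemize}
\item the family of localization maps $\iota_{fm}$, which are related by $\varphi^f$;
\item the bound $k>\log_{|\pi_K|^{-1}}|\alpha^{-1}|_{\mathrm{sp},U}$, which forces convergence.
\end{itemize}
As in \cite[Lemma~4.5, Proposition~4.6]{Liu}, an element $a$ lies in the image exactly when the sequence $a_m:=\alpha^{-(m-n)}\varphi^{f(m-n)}(a)$ in $\Ddif^{+,fm}/t^k$ glues, via the Mittag-Leffler/"$\varphi$-expansion" construction, to an element of $\Drigs(V_S)[1/t]$ that has no poles. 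This set $\{a_m\}$ is the candidate preimage; by Lemma~\ref{lem:tau0-inj} it is the only possible one. The no-poles condition amounts to requiring that certain explicitly constructed elements $b_m\in \Drigs(V_S)\otimes(\text{a finite-dimensional quotient})$ vanish. Concretely, the principal parts of the candidate at the points corresponding to $\iota_{fm}$ are given by a convergent series whose coefficients are $S$-linear expressions in $a$. Each such condition has the form $c\cdot a\in$ a finite projective $S$-module, and it should vanish. After choosing generators, the vanishing is given by the coordinates of these elements.

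\textbf{Step 3: defining the closed subspace.} I would then define $S(k,\alpha,a,n,\tau_0)$ to be the quotient of $S$ by the ideal generated by all of these coordinates, taking only $\tau_0$-components throughout. Since $S$ is noetherian, this ideal is finitely generated, and it defines a Zariski closed subspace. The construction is compatible with base change along any $g\colon S\to R$, because the following all commute with $\widehat\otimes_S R$, by the base-change property of $\Drigs$ recalled in Section~\ref{sec:Drig-Ddif}:
\begin{itemize}
\item the formation of $\Drigs$;
\item the maps $\iota_{fm}$;
\item the projection to the $\tau_0$-component;
\item the series defining the poles.
\end{itemize}
The inequality on $k$ is preserved under $g$, since spectral norms only decrease under $g$. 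So $g(a)$ lies in the image if and only if the images of the generators vanish in $R$, that is, if and only if $\Sp(R)\to\Sp(S)$ factors through $\Sp(S(k,\alpha,a,n,\tau_0))$.

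\textbf{Main obstacle.} The main obstacle is Step 2. One must check that Liu's pole-cancellation criterion can be phrased purely in terms of the $\tau_0$-components when $f>1$ and $K$ is ramified. The issue is that $\varphi^f$ and the decomposition $K\otimes_{K_0}(-)=\bigoplus_\tau$ must be compatible, i.e.\ $\varphi^f$ must be $K$-linear after extending scalars. I would check this first, using that $\varphi^f$ is $K_0$-linear and that the $K$-structure enters only through the scalar extension.
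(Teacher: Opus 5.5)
Your overall plan agrees with the paper, whose proof is simply a citation of \cite[Proposition~4.7]{Liu}. However, instead of deducing the statement from that proposition, you try to re-run its proof ``with $\tau_0$-components throughout'', and Steps~2--3 do not work as written.

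\textbf{The gap in Steps~2--3.} There is no canonical candidate preimage built from $a$. An element $y\in\Drig(V_R)^{\varphi^f=\alpha}_{\sigma_0}$ (with $\sigma_0=\tau_0|_{K_0}$) is constrained by $a$ only through $\iota_{f(n+j)}(y)_{\tau_0}=\alpha^{-j}a$. Two kinds of data are left free:
\begin{itemize}
\item the components $\iota_{fn}(y)_\tau$ for the other $\tau$ above $\sigma_0$ (when $e>1$);
\item the localizations $\iota_{fn+i}(y)$ for $0<i<f$ (when $f>1$).
\end{itemize}
Moreover, $\Drigs(V_S)$ has no $\tau$-decomposition at all, so ``taking $\tau_0$-components'' is not meaningful where your gluing happens.

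Nor are poles the obstruction: prescribed jets at the points $\iota_m$ can always be interpolated inside $\Drigs(V_S)$ itself (Mittag-Leffler on the closed annuli). What can fail is choosing the interpolant with $\varphi^f y=\alpha y$. That is, the obstruction is the vanishing of a class in a cokernel of $\varphi^f-\alpha$ on a $t^k$-type submodule. That this vanishing is cut out by an ideal of $S$, compatibly with arbitrary non-flat base change $S\to R$, is exactly the nontrivial content of Liu's proposition. You assert it (``each such condition has the form $c\cdot a$ in a finite projective $S$-module'') rather than prove it. Finiteness alone would not suffice: for the element $1$ of the finite module $S/(x)$, its image in $R/g(x)R$ vanishes exactly when $g(x)\in R^\times$, which is an open condition, not a closed one.

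\textbf{The direct route.} None of this is necessary, and your Step~1 observation already gives the argument. Let $\tilde a\in\Ddif^{+,fn}(V_S)/t^k$ be the element with $\tau_0$-component $a$ and all other components $0$, and apply \cite[Proposition~4.7]{Liu} to $\tilde a$. The map $K\otimes_{K_0}\Drig(V_R)^{\varphi^f=g(\alpha)}\to\Ddif^{+,fn}(V_R)/t^k$ is $K\otimes_{\Qp}E$-linear and $g$ is $E$-linear. Hence $g(\tilde a)$ lies in the image of this map if and only if $g(a)$ lies in the image of the $\tau_0$-component map: just project any preimage to its $\tau_0$-component. So $S(k,\alpha,a,n,\tau_0):=S(k,\alpha,\tilde a,n)$ works.

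In particular, the ``main obstacle'' you single out is a non-issue. Since $\varphi^f$ is $K_0$-linear, its $K$-linear extension automatically respects the $\tau$-decomposition.
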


\begin{proof}
This follows from \cite[Proposition~4.7]{Liu}. 
\end{proof}

\begin{corollary}\label{cor:n submod in image}
For any integer $n\geq n(V_S)$ and positive integer $k>\log_{|\pi_K|^{-1}}|\alpha^{-1}|_{\mathrm{sp}, U}$, 
there exists a Zariski closed $E$-subspace $\Sp(S(k, \alpha, n, \tau_0))$ of $\Sp(S)$, such that for any map $g\colon  S\to R$ of affinoid $E$-algebras, the $R$-submodule $g\left((\Ddif^{+, fn}(V_S)/t^k)^{\Gamma_K}_{\tau_0}\right)$ of $(\Ddif^{+, fn}(V_R)/t^k)^{\Gamma_K}_{\tau_0}$ is contained in the image of 
\[\left(K\otimes_{K_0}D_{\mathrm{rig}}^\dagger(V_R)^{\varphi^f=g(\alpha),\Gamma_K=1}\right)_{\tau_0} 
\to \left(D_{\mathrm{dif}}^{+,fn}(V_R)/t^k\right)^{\Gamma_K}_{\tau_0}\]
if and only if the map $\Sp(R)\to \Sp(S)$ factors through $\Sp(S(k, \alpha, n, \tau_0))$. 
\end{corollary}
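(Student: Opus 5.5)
The plan is to apply Lemma~\ref{lem:g(a) in image} to each element of a finite generating set and intersect the resulting Zariski closed subspaces. First I show that $M:=(\Ddif^{+, fn}(V_S)/t^k)^{\Gamma_K}_{\tau_0}$ is a finitely generated $S$-module. The module $\Ddif^{+, fn}(V_S)/t^k$ is finite projective over $(K_{fn}[[t]]/t^k)\widehat{\otimes}_{\BQ_p}S$, which is finite over $S$. So it is a finite $S$-module, and so is its $\tau_0$-component. Since $S$ is noetherian, the $S$-submodule $M$ of $\Gamma_K$-invariants (taking invariants commutes with the $\tau_0$-projection, because $\Gamma_K$ acts $K\otimes_{\BQ_p}E$-linearly on the coefficients) is again finitely generated. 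Choose generators $a_1,\dots,a_m$ of $M$, and set
\[
\Sp(S(k,\alpha,n,\tau_0)) := \bigcap_{i=1}^m \Sp(S(k,\alpha,a_i,n,\tau_0)),
\]
which is a Zariski closed $E$-subspace of $\Sp(S)$, cut out by the sum of the defining ideals.

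Next I check the equivalence. For a map $g\colon S\to R$, the $R$-submodule generated by $g(M)$ is generated by the elements $g(a_i)$. The image of $\bigl(K\otimes_{K_0}D_{\mathrm{rig}}^\dagger(V_R)^{\varphi^f=g(\alpha)}\bigr)_{\tau_0}$ in $(\Ddif^{+,fn}(V_R)/t^k)_{\tau_0}$ is an $R$-submodule, since $\varphi^f$ is $R$-linear. So $g(M)$ lies in that image if and only if every $g(a_i)$ does. By Lemma~\ref{lem:g(a) in image}, this holds if and only if $\Sp(R)\to\Sp(S)$ factors through every $\Sp(S(k,\alpha,a_i,n,\tau_0))$, that is, through their intersection.

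It remains to pass between the image of the full $\varphi^f=g(\alpha)$ eigenspace and the image of its $\Gamma_K$-invariants, as required in the statement. One direction is trivial: the $\Gamma_K$-invariant part maps into the other. For the converse, suppose $g(a_i)=\iota(x)$ with $x$ in the $\varphi^f$-eigenspace. For every $\gamma\in\Gamma_K$, the map $\iota_n$ is $\Gamma_K$-equivariant and $g(a_i)$ is $\Gamma_K$-fixed, so $\iota(\gamma x - x)=0$. The eigenspace is $\Gamma_K$-stable because $\varphi$ and $\Gamma_K$ commute. Hence Lemma~\ref{lem:tau0-inj} (injectivity, which uses the hypothesis on $k$ applied over $R$, noting $|g(\alpha)^{-1}|_{\mathrm{sp}}\le|\alpha^{-1}|_{\mathrm{sp},U}$) gives $\gamma x=x$. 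So $x$ lies in the $\Gamma_K$-invariants, and the two conditions coincide.

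The main point needing care is this last step, in particular that the $\tau_0$-projection and the $K\otimes_{K_0}$-extension are compatible with the $\Gamma_K$-action and with injectivity over $R$. Both follow because $\Gamma_K$ acts trivially on $K\otimes_{\BQ_p}E$.
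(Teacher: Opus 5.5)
Your proposal is correct and takes the same approach as the paper. The paper, citing Liu's Corollary~4.8, defines $\Sp(S(k,\alpha,n,\tau_0))$ as the intersection of the subspaces $\Sp(S(k,\alpha,a,n,\tau_0))$ from Lemma~\ref{lem:g(a) in image}; intersecting only over a finite generating set, as you do, gives the same Zariski closed subspace, and your injectivity argument via Lemma~\ref{lem:tau0-inj} (a preimage of a $\Gamma_K$-fixed element is $\Gamma_K$-fixed) spells out the passage to $\Gamma_K$-invariants that the paper leaves implicit in that citation.
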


\begin{proof}
This is the same as \cite[Corollary 4.8]{Liu}, i.e., we can take $\Sp(S(k, \alpha, n, \tau_0))$ to be the intersection $\Sp(S(k, \alpha, a, n, \tau_0))$ for all $a\in (\Ddif^{+, fn}(V_S)/t^k)^{\Gamma_K}_{\tau_0}$. 
\end{proof}

\begin{corollary}\label{cor:local input for Xfs}
Keep notations as in the previous lemma. 
Then there exists a Zariski closed $E$-subspace $\Sp(S(k, \alpha, \tau_0))$ of $\Sp(S)$, such that for any map $g\colon  S\to R$ of affinoid $E$-algebras, the $R$-submodule $g\left((\Ddif^{+, fn}(V_S)/t^k)^{\Gamma_K}_{\tau_0}\right)$ of $(\Ddif^{+, fn}(V_R)/t^k)^{\Gamma_K}_{\tau_0}$ is contained in the image of 
\[\left(K\otimes_{K_0}D_{\mathrm{rig}}^\dagger(V_R)\right)^{\varphi^f=g(\alpha),\Gamma_K=1}_{\tau_0} 
\to \left(D_{\mathrm{dif}}^{+,fn}(V_R)/t^k\right)^{\Gamma_K}_{\tau_0}\]
for all sufficiently large $n$ if and only if the map $\Sp(R)\to \Sp(S)$ factors through $\Sp(S(k, \alpha, \tau_0))$. 
\end{corollary}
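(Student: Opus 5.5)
The plan is to mimic the passage from \cite[Corollary~4.8]{Liu} to the statement in Liu where the dependence on $n$ is removed. For each $n\geq n(V_S)$, Corollary~\ref{cor:n submod in image} gives a Zariski closed subspace $\Sp(S(k,\alpha,n,\tau_0))$. I would take
\[
\Sp(S(k,\alpha,\tau_0)) \;=\; \bigcup_{m\geq n(V_S)}\ \bigcap_{n\geq m}\Sp(S(k,\alpha,n,\tau_0)),
\]
and show that this increasing union stabilizes. Once it stabilizes, it is a Zariski closed subspace. Then a map $\Sp(R)\to\Sp(S)$ factors through it if and only if it factors through $\Sp(S(k,\alpha,n,\tau_0))$ for all $n\gg0$, which is exactly the required property.

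The key steps, in order, are as follows.

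\begin{enumerate}
\item Prove monotonicity in $n$: the subspaces $\Sp(S(k,\alpha,n,\tau_0))$ increase with $n$, that is, $\Sp(S(k,\alpha,n,\tau_0))\subseteq \Sp(S(k,\alpha,n+1,\tau_0))$. If this holds, then $\bigcap_{n\geq m}$ is just $\Sp(S(k,\alpha,m,\tau_0))$, and the union is an increasing union of Zariski closed subspaces.
\item To prove monotonicity, use the compatibility of the localization maps. The map $\iota_{n+1}$ factors through $\iota_n$ via the natural inclusion $\Ddif^{+,fn}\hookrightarrow\Ddif^{+,f(n+1)}$. On $\Gamma_K$-invariants this inclusion becomes an isomorphism after base change $K_{fn}\to K_{f(n+1)}$ (Galois descent in $n$). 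Hence $(\Ddif^{+,fn}(V_R)/t^k)^{\Gamma_K}_{\tau_0}\to(\Ddif^{+,f(n+1)}(V_R)/t^k)^{\Gamma_K}_{\tau_0}$ is an isomorphism compatible with the maps from the $\varphi^f=\alpha$ part, uniformly in $R$.
\item Conclude that the conditions ``image contains $g(\,\cdot\,)$'' are independent of $n$ for $n\geq n(V_S)$. In particular all the $S(k,\alpha,n,\tau_0)$ coincide, so one may simply take $S(k,\alpha,\tau_0)=S(k,\alpha,n(V_S),\tau_0)$.
\item If the invariance is only available for $n$ large relative to $R$, namely $n\geq n(V_R)$, fall back on a Noetherianity argument. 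An increasing chain of Zariski closed subspaces of the Noetherian space $\Sp(S)$ need not stabilize in general, so in that case I would instead use the decreasing chain $\bigcap_{n\geq m}$ for fixed $m$. A decreasing chain of closed subspaces stabilizes by Noetherianity of $S$, which yields a closed subspace for each $m$. I would then compare across $m$ using step~2.
\end{enumerate}

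The main obstacle is step~2: verifying that passing from level $fn$ to $f(n+1)$ does not change the $\Gamma_K$-invariants modulo $t^k$ in the $\tau_0$-component, uniformly in the base $R$. I would first try to deduce this from the isomorphism $\Ddif^{+,n+1}=\Ddif^{+,n}\otimes_{K_n[[t]]}K_{n+1}[[t]]$ together with $(K_{n+1})^{\Gamma_K}=K$, as in \cite[\S2]{Liu}. Only if that fails would I resort to the Noetherian stabilization argument of step~4.
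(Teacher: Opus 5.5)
Your overall plan (show the level-$n$ condition does not depend on $n$) would prove the statement, but Step~2, as you propose to prove it, does not go through. The monotonicity you aim for in Step~1, $\Sp(S(k,\alpha,n,\tau_0))\subseteq\Sp(S(k,\alpha,n+1,\tau_0))$, is the hard direction. It requires every element of $(\Ddif^{+,f(n+1)}(V_S)/t^k)^{\Gamma_K}_{\tau_0}$ to come from level $fn$. This does not follow from $\Ddif^{+,f(n+1)}=\Ddif^{+,fn}\otimes_{K_{fn}[[t]]}K_{f(n+1)}[[t]]$ and $K_{f(n+1)}^{\Gamma_K}=K$ by Galois descent. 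The group $\Gamma_{K_{fn}}$ acts non-trivially on $\Ddif^{+,fn}$ (on $t$ and through the Sen operator), so there is no action of $\Gal(K_{f(n+1)}/K_{fn})$ to descend along. In fact, for an arbitrary semilinear $\Gamma_K$-module over $K_{fn}$ the invariants can grow after extending scalars. For example, take a rank-one module twisted by a finite-order character of $\Gamma_K$ that is non-trivial on $\Gamma_{K_{fn}}$ but trivial on $\Gamma_{K_{f(n+1)}}$. What is true is a Tate--Sen decompletion statement: the normalized trace gives a splitting $K_{f(n+1)}=K_{fn}\oplus Y$, and $\gamma-1$ is invertible on $Y\otimes_{K_{fn}}\Ddif^{+,fn}(V_S)/t^k$. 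This is exactly where $n\geq n(V_S)$ enters, and you would have to prove or cite it. Your fallback Step~4 does not avoid this, since it ends by appealing to Step~2 again.

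The missing observation is that the opposite inclusion is elementary and is all you need.
\begin{itemize}
\item The transition maps $\Ddif^{+,fn}(V_R)/t^k\to\Ddif^{+,f(n+1)}(V_R)/t^k$ are injective, being a faithfully flat base change along $K_{fn}\to K_{f(n+1)}$. They commute with $g$, with $\Gamma_K$ and with the $\tau_0$-projection.
\item $\iota_{f(n+1)}\circ\varphi^f$ equals $\iota_{fn}$ followed by the transition map. So on the $\varphi^f=\alpha$ eigenspace, $\iota_{f(n+1)}$ is $\alpha^{-1}$ times the transition of $\iota_{fn}$, not literally the transition of $\iota_{fn}$ as you wrote. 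Since $\alpha$ is a unit, the image at level $f(n+1)$ is the transition of the image at level $fn$.
\item Hence if the containment holds at level $f(n+1)$, injectivity on the $V_R$-side forces it at level $fn$. That is, $\Sp(S(k,\alpha,n+1,\tau_0))\subseteq\Sp(S(k,\alpha,n,\tau_0))$.
\end{itemize}
Consequently $\bigcap_{n\geq m}\Sp(S(k,\alpha,n,\tau_0))$ is independent of $m$, and your set $\bigcup_m\bigcap_{n\geq m}$ collapses to $\bigcap_{n\geq n(V_S)}\Sp(S(k,\alpha,n,\tau_0))$. This is Zariski closed by Noetherianity, and ``for all sufficiently large $n$'' becomes equivalent to ``for all $n\geq n(V_S)$''. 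That intersection is precisely the paper's choice (following \cite[Corollary~4.9]{Liu}), and it needs no decompletion input at all.
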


\begin{proof}
This is the same as \cite[Corollary 4.9]{Liu}, i.e., we can take $\Sp(S(k , \alpha, \tau_0))$ to be the intersection of $\Sp(S(k , \alpha, n, \tau_0))$ for all $n\geq n(V_S)$. 
\end{proof}

\begin{proof}[Proof of Theorem~\ref{thm:X tau0-fs}]
By Lemma~\ref{lem:glue prop of Xfs}, it suffices to treat the case that $X=\Sp(S)$ is affinoid. 
Let
\[X':=\bigcap_{k>\log_{|\pi_K|^{-1}}|\alpha^{-1}|}\Sp(S(k, \alpha, \tau_0)).\]
Now for each $i\geq1$, let $X_i'$ be the Zariski closure of 
\[X'_{P(i)_{\tau_0}}=\bigcap_{0\leq j\leq i-1} X'_{Q(-j)_{\tau_0}}. \]
Then we claim that
\[X_{\tau_0-fs}:=\bigcap_{i\geq1} X_i'\]
is the $\tau_0$-finite slope subspace of $X$. 
Let's follow the argument of \cite[Theorem~4.10]{Liu}. 
Fix an $i_0$, such that $X'_i=X'_{i_0}$ for all $i\geq i_0$. 
Hence $X_{\tau_0-fs}=X_i'$ for all $i\geq i_0$. 
Then for any $i\geq i_0$, we have 
\[
(X_{\tau_0-fs})_{P(i)_{\tau_0}}\supset X_{\tau_0-fs}\cap X'_{P(i)_{\tau_0}}=X'_{i_0}\cap X'_{P(i)_{\tau_0}}=X'_{P(i)_{\tau_0}}. 
\]
Therefore the Zariski closure of $(X_{\tau_0-fs})_{P(i)_{\tau_0}}$ contains the Zariski closure of $X'_{P(i)_{\tau_0}}$, which is $X_i'=X_{\tau_0-fs}$; this yields that $X_{\tau_0-fs}$ satisfies (1) of Definition~\ref{def:X tau0-fs}. 

Now suppose that $g\colon \Sp(R)\to \Sp(S)$ is a morphism of affinoid spaces over $E$ which factors through $X_{Q(j)_{\tau_0}}$ for every $j\leq0$. 
It follows from Lemma~\ref{lem:tau0 Liu's cor2.27} that for each $k\geq1$ and $n\geq n(V_R)$, the natural map 
\[
\left(\Ddif^{+, fn}(V_R)\right)^{\Gamma_K}_{\tau_0}\to \left(\Ddif^{+, fn}(V_R)/t^k\right)^{\Gamma_K}_{\tau_0}
\]
is an isomorphism. 
Hence \eqref{eq:tau0-fs isom} is an isomorphism if and only if the natural map 
\begin{equation}\label{eq:tau0 Liu's 4.7}
\left(K\otimes_{K_0}(D_{\mathrm{rig}}^\dagger(V_R))^{\varphi^f=g^*\alpha,\Gamma_K=1}\right)_{\tau_0} 
\to \left(D_{\mathrm{dif}}^{+,fn}(V_R)/t^k\right)^{\Gamma_K}_{\tau_0}
\end{equation}
is surjective for some (hence any) $k\geq1$. By Lemma~\ref{lem:tau0 Liu's cor2.30}, the natural map 
\[
\left(\Ddif^{+, fn}(V_S)/t^k\right)^{\Gamma_K}_{\tau_0}\otimes_SR\to \left(\Ddif^{+, fn}(V_R)/t^k\right)^{\Gamma_K}_{\tau_0}
\]
is an isomorphism. 
Hence by Corollary~\ref{cor:n submod in image}, the map \eqref{eq:tau0 Liu's 4.7} is surjective if and only if the map $g\colon \Sp(R)\to \Sp(S)$ factors through $\Sp(S(k, \alpha, n, \tau_0))$ for each $k>\log_{|\pi_K|}|\alpha|_{\mathrm{sp}, U}$. 
By Corollary~\ref{cor:local input for Xfs}, we thus conclude that \eqref{eq:tau0-fs isom} is an isomorphism for all sufficiently large $n$ if and only if $g\colon \Sp(R)\to \Sp(S)$ factors through $X_{\tau_0-fs}$. 
This yields that $X_{\tau_0-fs}$ satisfies (2) of Definition~\ref{def:X tau0-fs}. 
\end{proof}

\begin{proposition}\label{prop:isom on Xtau0fs}
Suppose $U=\Sp(S)$ is an affinoid subdomain of $X_{\tau_0-fs}$.
Then for any $k>\log_{|\pi_K|^{-1}}|\alpha^{-1}|_{\mathrm{sp}, U}$, we have $S(k, \alpha, \tau_0)=S$.
In particular, for such $k$, the natural morphism
\[
\left(K\otimes_{K_0}(D_{\mathrm{rig}}^\dagger(V_S))^{\varphi^f=g^*\alpha,\Gamma_K=1}\right)_{\tau_0}\to \left(\Ddif^{+, fn}(V_S)/t^k\right)^{\Gamma_K}_{\tau_0}
\]
is an isomorphism for all $n\geq n(V_S)$. 
\end{proposition}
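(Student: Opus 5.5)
The plan mirrors the corresponding step in Liu's proof of \cite[Theorem~4.10]{Liu}: first show that $X_{\tau_0-fs}$ lies inside $\Sp(S(k,\alpha,\tau_0))$, then use density to get surjectivity, and combine it with injectivity.

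\emph{Step 1: $U\subset\Sp(S(k,\alpha,\tau_0))$.} By construction $X_{\tau_0-fs}=\bigcap_i X_i'\subset X'$. Here $X'$ is the intersection of the Zariski closed subspaces $\Sp(S(k,\alpha,\tau_0))$ of the ambient affinoid. Each $X_i'$ is a Zariski closure of a subset of $X'$, so $X_i'\subset X'$ because $X'$ is Zariski closed. Hence the inclusion $U\hookrightarrow X$ factors through $\Sp(S(k,\alpha,\tau_0))$ for every admissible $k$. Two points need checking. First, the spectral norm on $U$ may differ from the one used on the ambient affinoid. I would handle this by noting that $|\alpha^{-1}|_{\mathrm{sp},U}$ is at most the ambient norm, so the threshold on $U$ is no larger. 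I would then re-run the construction of Corollary~\ref{cor:local input for Xfs} directly on $U$, with $V_S$ the restriction to $U$. The universal property of Corollary~\ref{cor:local input for Xfs}, applied to $g=\mathrm{id}_S$, then shows that $S(k,\alpha,\tau_0)=S$ is equivalent to the statement that $(\Ddif^{+,fn}(V_S)/t^k)^{\Gamma_K}_{\tau_0}$ lies in the image for all large $n$. Second, the gluing Lemma~\ref{lem:glue prop of Xfs} gives compatibility: the $\tau_0$-finite slope subspace of $U$ regarded as a space in its own right is $U$ itself. Indeed, $U\subset X_{\tau_0-fs}$, and $\tau_0$-finite slope subspaces are compatible with restriction to affinoid subdomains, since they are characterized by the universal property of Definition~\ref{def:X tau0-fs}.

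\emph{Step 2: surjectivity on a dense open.} I would use condition (1) together with Theorem~\ref{thm:X tau0-fs} for $U$. On the Zariski open dense subset $U_{P(k)_{\tau_0}}$, any affinoid $\Sp(R)\subset U_{P(k)_{\tau_0}}$ with $\Sp(R)\subset U_{Q(j)_{\tau_0}}$ for all $j\le 0$ has map \eqref{eq:tau0 Liu's 4.7} surjective; this is condition (2). Lemma~\ref{lem:tau0 Liu's cor2.30} shows that the base change of $(\Ddif^{+,fn}(V_S)/t^k)^{\Gamma_K}_{\tau_0}$ to such $R$ is the full target. So the closed subspace $\Sp(S(k,\alpha,\tau_0))$ contains a Zariski dense subset of $U$. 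Since $U$ is reduced, it follows that $\Sp(S(k,\alpha,\tau_0))=U$, i.e.\ $S(k,\alpha,\tau_0)=S$. This gives a second route to Step~1. The delicate point is that only finitely many $Q(j)$ matter for a fixed $k$, namely those entering $P(k)$, which suffices for Lemma~\ref{lem:tau0 Liu's cor2.30}. Infinitely many $j\le0$ appear in condition (2), but one can use instead the closedness characterization from Lemma~\ref{lem:g(a) in image}, which holds for arbitrary $R$.

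\emph{Step 3: the isomorphism.} Applying Corollary~\ref{cor:local input for Xfs} with $g=\mathrm{id}$ shows that the map is surjective for all sufficiently large $n$. Corollary~\ref{cor:n submod in image} applies for each individual $n\ge n(V_S)$, since $S(k,\alpha,\tau_0)=\bigcap_n S(k,\alpha,n,\tau_0)$; so $S(k,\alpha,\tau_0)=S$ forces $S(k,\alpha,n,\tau_0)=S$ for every $n$, and surjectivity holds for every $n\ge n(V_S)$. Injectivity comes from Lemma~\ref{lem:tau0-inj} after restricting to $\Gamma_K$-invariants. I expect the main obstacle to be the careful matching of the thresholds on $k$ between $U$ and the ambient affinoid, together with the reducedness and density argument in Step 2.
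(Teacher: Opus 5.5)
Your Steps 1 and 3 are the argument the paper takes over from Liu's Proposition~4.11. First, $U$ is its own $\tau_0$-finite slope subspace, by restriction and uniqueness. The construction in Theorem~\ref{thm:X tau0-fs} then gives $U=U_{\tau_0-fs}\subset\Sp(S(k,\alpha,\tau_0))$, so $S(k,\alpha,\tau_0)=S$ and hence $S(k,\alpha,n,\tau_0)=S$ for every $n\geq n(V_S)$. Corollary~\ref{cor:n submod in image} then gives surjectivity for each such $n$, and Lemma~\ref{lem:tau0-inj} gives injectivity.

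Step 2 is unnecessary and can be dropped. As written, its density claim for the locus where all $Q(j)_{\tau_0}$ are nonzero (a countable intersection of Zariski-open sets) would need a separate Baire-type argument. Likewise, your worry about matching thresholds is moot, since the construction on $U$ already uses $|\alpha^{-1}|_{\mathrm{sp},U}$.
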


\begin{proof}
This is the same as \cite[Proposition~4.11]{Liu}. 
\end{proof}

\subsection{Modified form of Liu's theorem}

Keep the notation and assumption the same as the previous section. 
In \cite[\S~5]{Liu}, Liu shows that if the family $V_X$ is weakly refined, then $X_{fs}=X$. 
We will modify his argument to establish a criterion for $X_{\tau_0-fs}=X$. 

\begin{theorem}\label{thm:modified-liu}
Suppose that the quadruple $(X,\alpha,V_X, \tau_0)$ satisfies the following conditions. 
\begin{enumerate}[label=(\arabic*), ref=(\arabic*)]
\item\label{cond:modified Liu (2)}
For any positive integer $k$, $X_{P(k)_{\tau_0}}$ is Zariski-open and dense in $X$. 
\item\label{cond:modified Liu (1)} 
For any $x\in X$, the $K_0\otimes_{\BQ_p}k(x)$-module
\begin{equation}
(\Drig(V_x))^{\varphi^f=\alpha(x),\,\Gamma_K=1}\neq 0. \label{eq:almost pointwise trianguline}
\end{equation}
Note that $\varphi$ acts on this module faithfully and permutes its components transitively under the decomposition induces by $K_0\otimes_{\BQ_p}k(x)\simeq \prod_{\sigma\in\Gal(K_0/\BQ_p)}k(x)$, this condition is equivalent to  for any $x\in X$ and any $\sigma\in \Gal(K_0/\BQ_p)$, 
\begin{equation}\label{eq:almost pointwise trianguline, component}
\dim_{k(x)}\left(\Drig(V_x)\right)^{\varphi^f=\alpha(x),\,\Gamma_K=1}_{\sigma}\geq 1, 
\end{equation}\label{eq:almost pointwise triangulin, sigma component}
i.e., $(\Drig(V_x))^{\varphi^f=\alpha(x),\,\Gamma_K=1}$ contains a free $K_0\otimes_{\BQ_p}k(x)$-submodule of rank $1$. 
Alternatively, this is also equivalent to for any $x\in X$ and any $\tau\in H_K$, 
\begin{equation}\label{eq:almost pointwise triangulin, tau component}
\dim_{k(x)}\left(K\otimes_{K_0}(D_{\mathrm{rig}}^\dagger(V_x))^{\varphi^f=g^*\alpha,\Gamma_K=1}\right)_{\tau}\geq 1. 
\end{equation}
\end{enumerate}
Then $X_{\tau_0-fs}=X$. 
\end{theorem}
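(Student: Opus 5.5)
Since $X_{\tau_0-fs}$ is Zariski closed, the plan is to show it contains a Zariski dense set of points. By Lemma~\ref{lem:glue prop of Xfs} we may assume $X=\Sp(S)$ is affinoid and reduced. Condition~\ref{cond:modified Liu (2)} makes $\bigcap_{k} X_{P(k)_{\tau_0}}$ dense (every component is hit, since $X$ reduced and each $P(k)_{\tau_0}$ is a nonzerodivisor on it). So it suffices to find a Zariski open dense $W\subset X_{P(k_0)_{\tau_0}}$ for a suitably large $k_0$, together with an affinoid cover of $W$ by pieces $\Sp(R)$ on which the map of Corollary~\ref{cor:n submod in image} is surjective for that $k_0$ and all large $n$. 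Then Corollary~\ref{cor:local input for Xfs} shows $\Sp(R)\subset \Sp(S(k,\alpha,\tau_0))$ for all relevant $k$, i.e.\ $W\subset X'$. Since $W$ avoids the zeros of the $Q(-j)_{\tau_0}$, it also lies in every $X'_{P(i)_{\tau_0}}$, hence in $X_{\tau_0-fs}$, and taking closures gives the result.

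To build $W$, I will follow Liu \S5. Fix $k>\log_{|\pi_K|^{-1}}|\alpha^{-1}|_{\mathrm{sp}}$. On the open set where $P(k)_{\tau_0}$ is a unit, Lemma~\ref{lem:tau0 Liu's cor2.27} and Lemma~\ref{lem:tau0 Liu's cor2.30} show two things. First, $(\Ddif^{+,fn}/t^k)^{\Gamma_K}_{\tau_0}$ equals $\Dsen^{fn}(V)^{\Gamma_K}_{\tau_0}$ after localization. Second, this module commutes with base change. Because $Q(0)_{\tau_0}$ is a unit there, $\ker\Theta_{\tau_0}$ is a rank-one direct summand, so the target is locally free of rank $1$ over $S$ and has fiber dimension exactly $1$ at every point. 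On the source side, Lemma~\ref{lem:tau0-inj} gives injectivity into this rank-one target, uniformly in $R$. Hypothesis~\ref{cond:modified Liu (1)}, in the form \eqref{eq:almost pointwise triangulin, tau component}, says the fiber $\bigl(K\otimes_{K_0}\Drig(V_x)^{\varphi^f=\alpha(x),\Gamma_K=1}\bigr)_{\tau_0}$ is nonzero. Injectivity at the point $x$ then forces it to be one-dimensional and to map isomorphically onto the fiber of the target. So at every point $x$ the pointwise map is an isomorphism, i.e.\ $x\in X_{\tau_0-fs}$ in the sense of Definition~\ref{def:X tau0-fs}(2) applied to $R=k(x)$.

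It remains to upgrade from points to the reduced space. Here I will use Lemma~\ref{lem:g(a) in image}. Take a local generator $a$ of the rank-one module $(\Ddif^{+,fn}(V_S)/t^k)^{\Gamma_K}_{\tau_0}$. The set of points at which $g(a)$ lies in the image is the Zariski closed subspace $\Sp(S(k,\alpha,a,n,\tau_0))$. By the pointwise statement, this subspace contains every closed point of the open set where $P(k)_{\tau_0}$ is a unit. A Zariski closed subspace containing all points of a dense open subset of the reduced space $X$ must be all of $X$, so $S(k,\alpha,a,n,\tau_0)=S$ for every $n$. By Corollary~\ref{cor:n submod in image} and Corollary~\ref{cor:local input for Xfs}, $S(k,\alpha,\tau_0)=S$ for all such $k$, so $X'=X$ and hence $X_{\tau_0-fs}=X$.

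The main obstacle is the pointwise step. It needs the rank-one (fiber dimension exactly one) description of the de Rham side, together with its compatibility with specialization at points, and both rely on the unit hypothesis on $P(k)_{\tau_0}$. It also needs care in matching the $\varphi^f$-eigenspace at the $\tau_0$-component with the $\sigma$-component in \eqref{eq:almost pointwise trianguline, component}. The bound on $k$ must be chosen uniformly, using the spectral norm of $\alpha^{-1}$ on the whole affinoid $X$.
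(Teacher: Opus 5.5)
Your proposal is essentially the paper's proof: for each admissible $k$ you show that every point of $X_{P(k)_{\tau_0}}$ lies in $\Sp(S(k,\alpha,\tau_0))$ by combining the injectivity of Lemma~\ref{lem:tau0-inj}, the bound $\dim\le 1$ on the target (from $0$ being a simple root of the $\tau_0$-Sen polynomial), and the lower bound from condition (2); then Zariski-closedness of $\Sp(S(k,\alpha,\tau_0))$, the density in condition (1) (used separately for each $k$, both here and for the sets $X'_i$), and reducedness give $S(k,\alpha,\tau_0)=S$ and hence $X_{\tau_0-fs}=X$. Two minor slips do not affect this: the Lemma~\ref{lem:g(a) in image} step must be run for every $a\in(\Ddif^{+,fn}(V_S)/t^k)^{\Gamma_K}_{\tau_0}$ rather than for a single ``local generator'' (this module need not be rank one over all of $S$, but the same pointwise argument works verbatim for each $a$, which is what Corollary~\ref{cor:n submod in image} requires), and your first-paragraph claim that $\bigcap_k X_{P(k)_{\tau_0}}$ is dense is not justified by the non-zero-divisor reasoning (a countable intersection of dense Zariski opens in an affinoid over a finite extension of $\Qp$ can fail to be dense), though your actual argument never uses it.
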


\begin{proof}
By Lemma~\ref{lem:glue prop of Xfs}, it suffices to show that for any affinoid subdomain $U=\Sp(S)\subset X, U_{\tau_0-fs}=U$. 
For any positive integer $k>\log_{|\pi_K|^{-1}}|\alpha^{-1}|_S$, we claim 
\[
U_{P(k)_{\tau_0}}\subset \Sp(S(k, \alpha, \tau_0)).
\]
In fact, let $x\in U_{P(k)_{\tau_0}}$. 
For any $n\geq n(V_S)$, Lemma~\ref{lem:tau0-inj} tells us that the natural map
\begin{equation}\label{eq:Liu's injection}
\iota_{n,K, \tau_0}\colon 
\left(K\otimes_{K_0}(D_{\mathrm{rig}}^\dagger(V_R))^{\varphi^f=g^*\alpha,\Gamma_K=1}\right)_{\tau_0}
\hookrightarrow\left(D_{\mathrm{dif}}^{+,fn}(V_x)/t^k\right)^{\Gamma_K}_{\tau_0}. 
\end{equation}
is injective. 
Following the proof of \cite[Lemma~5.2]{Liu}, we will show that it is an isomorphism. 
Indeed, since $x\in U_{P(k)_{\tau_0}}$, by Lemma~\ref{lem:tau0 Liu's cor2.27}, the natural map
\[
\left(\Ddif^{+, n}(V_x)/t^k\right)^{\Gamma_K}\to \left(\Dsen^n(V_x)\right)^{\Gamma_K}
\]
is an isomorphism. 
Moreover, $0$ is a simple root of $TQ(T)_{\tau_0}(x)$, so $\dim_{k(x)}\left(\Dsen^{n}(V_x)\right)^{\Gamma_K}_{\tau_0}\leq1$. 
Hence 
\begin{equation}
\dim_{k(x)}\left(\Ddif^{n, +}(V_x)\right)^{\Gamma_K}_{\tau_0}\leq 1 \label{eq:upper bound of D_dif}
\end{equation}
and the dimension of right hand side of (\ref{eq:Liu's injection}) over $k(x)$ is at most $1$. 
On the other hand, by the inequality (\ref{eq:almost pointwise triangulin, tau component}) in condition~\ref{cond:modified Liu (1)}, the left hand side of (\ref{eq:Liu's injection}) has $k(x)$-dimension at least $1$. 
Putting everything together, we conclude that (\ref{eq:Liu's injection}) is an isomorphism, thus $x\in \Sp(S(k, \alpha, \tau_0))$ and the claim holds. 

By our condition~\ref{cond:modified Liu (2)}, $U_{P(k)_{\tau_0}}$ is Zariski-dense in $U$. 
Since $\Sp(S(k, \alpha, \tau_0))$ is Zariski closed in $U$, $\Sp(S(k, \alpha, \tau_0))=U$. 
We conclude $U_{\tau_0-fs}=U$ following the construction of the $\tau_0$-finite slope subspace. 
\end{proof}

\begin{remark}\label{rmk:dim of Ddif+}
From the proof, we see that $\dim_{k(x)}\left(\Ddif^{+, fn}(V_x)\right)^{\Gamma_K}_{\tau_0}=1$ for any $n\geq n(V_S), x\in U_{P(k)_{\tau_0}}$. 
\end{remark} 
\section{\texorpdfstring{$G_K$}{G\_K}-version of the Diao--Liu's argument}
\label{sec:Diao-Liu}

\subsection{The setup and goal}

Keep the notation as in the previous section. 
Let $S=E\langle T\rangle, \CD=\Sp(S)$ be the closed unit disc, with center $0$ defined by $T=0$, and $\CD^*=\CD\setminus\{0\}$.
Let $V_{\CD}$ be a family of $G_K$-representations on $\CD, \alpha\in\CO(\CD^*)^\times\cap\CO(\CD)$. 
Still fix $\tau_0\in H_K$. 

\begin{theorem}\label{thm:DL-general}
Assume that the quadruple $(\CD, \alpha, V_{\CD}, \tau_0)$ satisfies the following conditions. 
\begin{enumerate}
\item the $\tau_0$-component of the Sen polynomial on $\CD^*$ has the form $TQ(T)_{\tau_0}$; 
\item the conditions~\ref{cond:modified Liu (2)} and \ref{cond:modified Liu (1)} in Theorem~\ref{thm:modified-liu} are satisfied for the quadruple $(X:=\CD^*, \alpha, V_X:=V_{\CD}|_{\CD^*}, \tau_0)$. 
\end{enumerate}
Then $\alpha(0)\neq0$.
Consequently, $\alpha\in\CO(\CD)^\times$.
\end{theorem}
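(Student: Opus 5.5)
We follow the strategy of Diao--Liu, isolating the $\tau_0$-component throughout. We argue by contradiction and assume $\alpha(0)=0$.

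\emph{Step 1: comparison on the punctured disc.} By Theorem~\ref{thm:modified-liu} applied to $(\CD^*,\alpha,V_{\CD}|_{\CD^*},\tau_0)$ we have $(\CD^*)_{\tau_0-fs}=\CD^*$. Cover $\CD^*$ by the annuli $\Sp(R_m)=\{|p|^{1/m}\le |T|\le 1\}$. For each $m$, Proposition~\ref{prop:isom on Xtau0fs} together with Lemma~\ref{lem:tau0 Liu's cor2.27} (on the dense locus where the $P(k)_{\tau_0}$ are units) and Theorem~\ref{thm:comparison of various D} should give an isomorphism
\[
\bigl(K\otimes_{K_0}\Dcrysplus(V_{R_m})^{\varphi^f=\alpha}\bigr)_{\tau_0}\xrightarrow{\sim}\DdRplus(V_{R_m})_{\tau_0}.
\]
Here $\DdRplus(V_{R_m})_{\tau_0}$ is nonzero and generically of rank $1$ by Remark~\ref{rmk:dim of Ddif+}. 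The sections are compatible under restriction by Proposition~\ref{prop:flat bc of DdRplus}, so they glue.

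\emph{Step 2: extension to $\CD$.} Because $V_{\CD}$ is defined over the whole disc, $\DdRplus(V_{\CD})$ makes sense. It is a finite module over the PID $S=E\langle T\rangle$, and its restriction to $\CD^*$ agrees with the glued module by flat base change. Pick a generator $e$ of $\DdRplus(V_{\CD})_{\tau_0}$, after saturating in $\Ddif^+$ so that $e$ is nonzero at $0$. Over each $R_m$, Step~1 shows that $e$ lies in the $\tau_0$-part of $K\otimes_{K_0}\Dcrysplus(V_{R_m})^{\varphi^f=\alpha}$. These crystalline sections glue to an element of $(K\otimes_{K_0}\Drig(V_{\CD^*}))^{\varphi^f=\alpha}_{\tau_0}$. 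We then show it extends over $0$, so that it lies in $(K\otimes_{K_0}\Dcrysplus(V_{\CD}))^{\varphi^f=\alpha}_{\tau_0}$ and maps onto the line spanned by $e$. For this we use that $\Dcrysplus(V_{\CD})=\Drig(V_{\CD})^{\Gamma_K}$ is computed from bounded sections: the element is integral over each annulus, with norms uniform as $m\to\infty$ because $\alpha\in\CO(\CD)$ and $\iota_n$ is injective. A Riemann extension argument in the $\widehat\otimes S$ variable then extends the section across $0$. In the end this gives the analogue of Diao--Liu's comparison
\[
(K\otimes_{K_0}\Dcrysplus(V_{\CD}))_{\tau_0}\supset \bigl(K\otimes_{K_0}\Dcrysplus(V_{\CD})^{\varphi^f=\alpha}\bigr)_{\tau_0}\twoheadrightarrow \DdRplus(V_{\CD})_{\tau_0}.
\]

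\emph{Step 3: contradiction at $0$.} Specialize the element $\tilde e$ constructed in Step~2 at $T=0$. Its image in $\Ddif^+(V_0)_{\tau_0}$ is the specialization of $e$, which is nonzero. On the other hand $\varphi^f\tilde e_0=\alpha(0)\tilde e_0=0$. Since $\varphi$ is injective on $\Drig(V_0)$ (its linearization is an isomorphism), this forces $\tilde e_0=0$, a contradiction. Therefore $\alpha(0)\ne0$. Since $\alpha$ is already a unit on $\CD^*$, it follows that $\alpha\in\CO(\CD)^\times$.

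The main obstacle is Step~2: showing that the crystalline periods on $\CD^*$ extend across the puncture while remaining compatible with a de Rham period that is nonvanishing at $0$. We would first try to carry over Diao--Liu's estimates verbatim on $\tau_0$-components, using the bound $k>\log_{|\pi_K|^{-1}}|\alpha^{-1}|$ of Lemma~\ref{lem:tau0-inj} uniformly on the annuli. The delicate point is that $|\alpha^{-1}|_{\mathrm{sp}}$ blows up as the annuli shrink toward $0$. This is handled by the fact that $\varphi^f$-eigenvectors are determined by their images in $\Ddif^{+,fn}/t^k$, together with the freeness of $\DdRplus(V_{\CD})_{\tau_0}$ over $S$.
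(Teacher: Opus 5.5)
\textbf{Your Step~2 has a genuine gap, and it is the step that carries the content of the theorem.}

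Step~1 gives, on each annulus $\Sp(R_m)$, only an abstract isomorphism of $R_m$-modules. It gives no control on the size of the crystalline periods. A Riemann-extension argument for $\bB_{\mathrm{crys},K}^+$-valued sections would need bounds in the Banach norm of $\bB_{\mathrm{crys},K}^+$ that are uniform as $m\to\infty$, and nothing you cite provides them.
\begin{itemize}
\item The injectivity of $\iota_n$ and the fact that $\alpha\in\CO(\CD)$ carry no norm information.
\item That $e$ comes from $(S\widehat{\otimes}_{\BQ_p}\bB_{\mathrm{dR}}^+)\otimes_S V_{\CD}$ bounds it only in the $\bB_{\mathrm{dR}}^+$-topology, which is strictly coarser on $\bB_{\mathrm{crys},K}^+$.
\item Your remedy for the blow-up of $|\alpha^{-1}|_{\mathrm{sp}}$ near $0$ (eigenvectors are determined modulo $t^k$; $\DdRplus(V_{\CD})_{\tau_0}$ is free) yields no estimate. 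That blow-up is exactly why such bounds cannot be extracted from the $\varphi^f$-eigenvector side.
\end{itemize}
As written, the extension across $0$ is asserted rather than proved. The phrase ``$\Drig(V_{\CD})^{\Gamma_K}$ is computed from bounded sections'' also has no precise meaning here.

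\textbf{The missing idea: do not shrink the annuli at all.} The element $e$ already lives in $(S\widehat{\otimes}\bB_{\mathrm{dR}}^+)\otimes_S V_{\CD}$ over the whole disc. So the only question is which period ring its coordinates lie in. The paper answers this with the intersection property carried over from Diao--Liu:
\begin{itemize}
\item If $x\in S_n\widehat{\otimes}\bB_{\mathrm{dR}}^+$ has image in $S_{n,n'}\widehat{\otimes}\bB_{\mathrm{dR}}^+$ lying in $S_{n,n'}\widehat{\otimes}\bB_{\mathrm{crys},K}^+$, then $x\in S_n\widehat{\otimes}\bB_{\mathrm{crys},K}^+$.
\item In essence this compares Laurent coefficients on a single annulus. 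The negative-degree coefficients vanish because they vanish in $\bB_{\mathrm{dR}}^+$. The nonnegative ones already have the right decay, inherited from the outer boundary $|T|=1$ of $\Sp(S_{0,1})$.
\end{itemize}
Apply this coordinatewise in an $S$-basis of $V_{\CD}$, using the Step~1 comparison on the single annulus $\Sp(S_{0,1})$. This gives $\DdRplus(V_{\CD})_{\tau_0}=(K\otimes_{K_0}\Dcrysplus(V_{\CD}))_{\tau_0}$ with no estimates at all. For a nonzero $e$ (normalized by a power of $T$ to be nonzero at $0$), the relation $\varphi^f e=\alpha e$ then holds on $\Sp(S_{0,1})$. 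It propagates to all of $\CD$ by Zariski density, and your Step~3 finishes as you wrote it. Incidentally, you do not need $e$ to be a generator, so you need not prove that $\DdRplus(V_{\CD})_{\tau_0}$ is free over $S$.
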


The proof occupies the rest of this section, which generalizes the argument of \cite{DiaoLiu}.

\subsection{De Rham periods vs. crystalliine periods}

Let $U=\Sp(R)\subset\CD^*$ be an affinoid subdomain. 
It is smooth and one-dimensional.
Choose an positive integer $k>\log_{|\pi_K|^{-1}}|\alpha^{-1}|_{\mathrm{sp}, U}$. 
By Theorem~\ref{thm:modified-liu} and Proposition~\ref{prop:isom on Xtau0fs}, for any $n\geq n(V_R)$ the natural morphism
\begin{equation}\label{eq:Liu's comparison Thm}
\left(K\otimes_{K_0}(D_{\mathrm{rig}}^\dagger(V_R))^{\varphi^f=g^*\alpha,\Gamma_K=1}\right)_{\tau_0}\xrightarrow{\sim}
\left(D_{\mathrm{dif}}^{+,fn}(V_R)/t^k\right)^{\Gamma_K}_{\tau_0}.
\end{equation}
is an isomorphism. 
Therefore, the natural morphism
\begin{equation}\label{eq:cor of Liu's comparison Thm}
\left(K\otimes_{K_0}(D_{\mathrm{rig}}^\dagger(V_R))^{\varphi^f=g^*\alpha,\Gamma_K=1}\right)_{\tau_0}\xrightarrow{\sim}
\left(D_{\mathrm{dif}}^{+}(V_R)/t^k\right)^{\Gamma_K}_{\tau_0}
\end{equation}
is also an isomorphism, and the natural inclusion from the right hand side of \eqref{eq:Liu's comparison Thm} to that of \eqref{eq:cor of Liu's comparison Thm} is an equality. 

\begin{proposition}\label{prop:rank-one}
The two sides of \eqref{eq:cor of Liu's comparison Thm} are locally free of rank one over $R$. 
\end{proposition}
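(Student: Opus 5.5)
Since \eqref{eq:cor of Liu's comparison Thm} is an isomorphism, it suffices to treat the right-hand side $M:=\left(\Ddif^{+}(V_R)/t^k\right)^{\Gamma_K}_{\tau_0}=\left(\Ddif^{+,fn}(V_R)/t^k\right)^{\Gamma_K}_{\tau_0}$ for a fixed large $n$. Because $R$ is the ring of a smooth one-dimensional affinoid, hence a finite product of Dedekind domains, a finitely generated $R$-module is locally free exactly when it is torsion free. The plan is therefore to show that $M$ is finitely generated and torsion free, and then to compute its generic rank.

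\emph{Finite generation and torsion freeness.} The module $\Ddif^{+,fn}(V_R)/t^k$ is finite projective over $(K_{fn}[t]/t^k)\otimes_{\BQ_p}R$, hence finite projective over $R$. Its $\Gamma_K$-invariants form the kernel of $\gamma-1$ for a topological generator $\gamma$ (after passing to an open subgroup and taking invariants of the finite quotient, which is exact in characteristic $0$). This is a submodule of a finite module over the noetherian ring $R$, hence finitely generated. It is also a submodule of a projective module, hence torsion free. Taking the $\tau_0$-component is a direct summand, so $M$ is finite and torsion free, and therefore locally free.

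\emph{Rank one.} The rank can be checked at the points of a Zariski-dense set. Condition~\ref{cond:modified Liu (2)} says $U_{P(k)_{\tau_0}}$ is Zariski open and dense in $U$, so I will work there, where $P(k)_{\tau_0}$ is invertible. Shrinking to an affinoid subdomain $\Sp(R')\subset U_{P(k)_{\tau_0}}$, I apply Lemma~\ref{lem:tau0 Liu's cor2.30} twice:
\begin{itemize}
\item to $R\to R'$, which gives $M\otimes_R R'\cong (\Ddif^{+,fn}(V_{R'})/t^k)^{\Gamma_K}_{\tau_0}$;
\item to $R'\to k(x)$ for $x\in\Sp(R')$, which identifies the fibre of the latter module at $x$ with $(\Ddif^{+,fn}(V_x)/t^k)^{\Gamma_K}_{\tau_0}$.
\end{itemize}
By Lemma~\ref{lem:tau0 Liu's cor2.27} (and Remark~\ref{rmk:dim of Ddif+}), this fibre has dimension exactly $1$: the proof of Theorem~\ref{thm:modified-liu} gives the upper bound from the simple root $0$ of the Sen polynomial and the lower bound from condition~\ref{cond:modified Liu (1)}. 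Hence $M$ has rank one on a dense open set. Since $M$ is locally free, its rank is locally constant, and every connected component of $U$ meets the dense set $U_{P(k)_{\tau_0}}$. So $M$ has rank one everywhere, and so does the left-hand side via the isomorphism.

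\emph{Main obstacle.} The delicate step is the finiteness of $\Gamma_K$-invariants over a non-field base: $\Gamma_K$ need not be procyclic (for $p=2$), and the module is only semilinear over $K_{fn}$. I would handle this by passing to an open procyclic subgroup $\Gamma'$, writing the invariants as $\ker(\gamma'-1)$, and then taking invariants of the finite group $\Gamma_K/\Gamma'$ via the averaging idempotent. Both steps preserve finite generation over the noetherian ring $R$.
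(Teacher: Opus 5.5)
Your proposal is correct and follows the paper's route: local freeness comes from finite generation plus torsion-freeness over the one-dimensional regular ring $R$, and rank one comes from computing fibres at points of the dense set $U_{P(k)_{\tau_0}}$ via Lemma~\ref{lem:tau0 Liu's cor2.30} and Remark~\ref{rmk:dim of Ddif+}. The only real difference is that you prove finiteness and torsion-freeness directly, viewing the invariants as an $R$-submodule of the finite projective $R$-module $\Ddif^{+,fn}(V_R)/t^k$, where the paper cites Liu's Propositions~2.21(3) and~2.23; this also shows your ``main obstacle'' is not one, since that argument uses no structure of $\Gamma_K$ (likewise, one application of the lemma to $R\to k(x)$ suffices, and the paper's removal of $E_{\mathrm{exc}}$ is not needed in this setting).
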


\begin{proof}
Since $U=\Sp(R)$ is smooth of dimension one, the $R$-modules $(\Ddif^{+}(V_R)/t^k)^{\Gamma_K}_{\tau_0}$ is locally free because it is finitely generated by \cite[Proposition~2.21(3)]{Liu} and torsion-free by \cite[Proposition~2.23]{Liu}. 
It remains to show that its rank is one.
Indeed, for any $x\in (U\backslash E_{\mathrm{exc}})\cap U_{P(k)_{\tau_0}}$, which is a Zariski-dense subset of $U$, by Lemma~\ref{lem:tau0 Liu's cor2.30}, the natural map
\[
(\Ddif^{+}(V_R)/t^k)^{\Gamma_K}_{\tau_0}\otimes_{R}k(x)\to (\Ddif^{+}(V_x)/t^k)^{\Gamma_K}_{\tau_0}
\]
is an isomorphism. The right hand side is of $k(x)$-dimension one by Remark~\ref{rmk:dim of Ddif+}. 
So we are done. 
\end{proof}

Passing to the inverse limit in $k$ in \eqref{eq:cor of Liu's comparison Thm} gives
\[
\left(K\otimes_{K_0}(D_{\mathrm{rig}}^\dagger(V_R))^{\varphi^f=g^*\alpha,\Gamma_K=1}\right)_{\tau_0}\xrightarrow{\sim}\Ddif^{+}(V_R)^{\Gamma_K}_{\tau_0}.
\]
Using Theorem~\ref{thm:comparison of various D}, we obtain the following corollary. 

\begin{corollary}\label{cor:crys vs dR on punctured disc}
For every affinoid $U=\Sp(R)\subset\CD^*$, there is a natural isomorphism
\begin{equation}\label{eq:generalized Diao-Liu's Cor4.4}
\left(K\otimes_{K_0}D_{\mathrm{crys}}^+(V_R)^{\varphi^f=\alpha}\right)_{\tau_0}
\xrightarrow{\sim}D_{\mathrm{dR}}^+(V_R)_{\tau_0}.
\end{equation}
Both sides are locally free of rank one over $R$.
\end{corollary}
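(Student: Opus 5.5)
The plan is to derive the corollary directly from the isomorphism obtained just above by passing to the limit in $k$, and to translate both sides via Theorem~\ref{thm:comparison of various D}. First I would identify the target. By Theorem~\ref{thm:comparison of various D}, $\DdRplus(V_R)=\Ddif^+(V_R)^{\Gamma_K}$. This is compatible with the $K\otimes_{\Qp}E$-module structures, since $\Ddif^{+,n}(V_R)^{\Gamma_K}$ is a module over $(K_n[[t]])^{\Gamma_K}\otimes E=K\otimes E$. Hence taking $\tau_0$-components gives $\DdRplus(V_R)_{\tau_0}=\Ddif^{+}(V_R)^{\Gamma_K}_{\tau_0}$. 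To justify the inverse limit step, I would use that $\Ddif^{+}(V_R)^{\Gamma_K}=\Ddif^{+,fn}(V_R)^{\Gamma_K}$ for large $n$, together with the fact that taking $\Gamma_K$-invariants and $\tau_0$-components commutes with $\varprojlim_k$. The transition maps in $k$ are compatible with \eqref{eq:cor of Liu's comparison Thm}, and the source does not depend on $k$. So the limit of the right-hand sides is identified with the fixed source, and this gives the displayed isomorphism onto $\Ddif^{+}(V_R)^{\Gamma_K}_{\tau_0}$.

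Next I would identify the source. By Theorem~\ref{thm:comparison of various D}, $\Dcrysplus(V_R)=\Drig(V_R)^{\Gamma_K}$, and this identification is $\varphi$-equivariant. Therefore
\[
\Dcrysplus(V_R)^{\varphi^f=\alpha}=\Drig(V_R)^{\varphi^f=\alpha,\Gamma_K=1}.
\]
Since $K\otimes_{K_0}(-)$ is exact, applying it and projecting to the $\tau_0$-component yields
\[
\left(K\otimes_{K_0}\Dcrysplus(V_R)^{\varphi^f=\alpha}\right)_{\tau_0}=\left(K\otimes_{K_0}\Drig(V_R)^{\varphi^f=\alpha,\Gamma_K=1}\right)_{\tau_0}.
\]
I would also check naturality. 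The composite map should be the one induced by $\bcrys^+\subset\bdR^+$ after $K$-linear extension, which is the map $\iota_n$ under the comparison of Theorem~\ref{thm:comparison of various D}. I expect this compatibility to be the only point requiring care, and I would cite the construction in \cite{Bellovin15}, where the comparisons are built exactly through $\iota_n$.

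Finally, I would settle local freeness of rank one. This follows from Proposition~\ref{prop:rank-one}: both sides of \eqref{eq:cor of Liu's comparison Thm} are locally free of rank one for every $k$, and the source is independent of $k$, so the source is locally free of rank one over $R$. The target is isomorphic to the source, hence is also locally free of rank one.
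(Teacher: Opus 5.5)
Your proposal is correct and follows the paper's route. You pass to the inverse limit over $k$ in \eqref{eq:cor of Liu's comparison Thm} to get an isomorphism onto $\Ddif^{+}(V_R)^{\Gamma_K}_{\tau_0}$, translate both sides via Theorem~\ref{thm:comparison of various D}, and read off local freeness of rank one from Proposition~\ref{prop:rank-one}; your remarks on the stabilization of $\Ddif^{+,fn}(V_R)^{\Gamma_K}$ in $n$ and on the compatibility of $\tau_0$-components only make explicit what the paper leaves implicit.
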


For integers $n\geq0$ (resp., $n'>n\geq0$), let $S_n=E\langle p^{-n}T\rangle$ (resp., $S_{n,n'}=E\langle p^{-n}T, p^{n'}T^{-1}\rangle$).
Let $V_{n}$ (resp., $V_{n, n'}$) be the restriction of $V_{\CD}$ on $\Sp(S_n)$ (resp., $\Sp(S_{n, n'})$)
Set $\bB_{\mathrm{crys},K}^+=K\otimes_{K_0}\bB_{\mathrm{crys}}^+$.
There is a natural continuous injection $\bB_{\mathrm{crys}, K}^+\to \bB_{\mathrm{dR}}^{+}$. 

\begin{lemma}
\begin{enumerate}
\item 
For any $n\geq0$ (resp., $n'>n\geq0$), the continuous map $\bB_{\mathrm{crys}, K}^+\to \bB_{\mathrm{dR}}^{+}$ induces a natural inclusion 
$S_n\widehat{\otimes}_{\BQ_p}\bB_{\mathrm{crys}, K}^+\hookrightarrow S_n\widehat{\otimes}_{\BQ_p}\bB_{\mathrm{dR}}^+$ (resp., $S_{n, n'}\widehat{\otimes}_{\BQ_p}\bB_{\mathrm{crys}, K}^+\hookrightarrow S_{n, n'}\widehat{\otimes}_{\BQ_p}\bB_{\mathrm{dR}}^+$). 
\item\label{lem:intersection}
For any $x\in S_n\widehat{\otimes}_{\BQ_p}\bB_{\mathrm{dR}}^+$, if its image in $S_{n, n'}\widehat{\otimes}_{\BQ_p}\bB_{\mathrm{dR}}^+$ belongs to $S_{n, n'}\widehat{\otimes}_{\BQ_p}\bB_{\mathrm{crys}, K}^+$, 
then $x\in S_n\widehat{\otimes}_{\BQ_p}\bB_{\mathrm{crys}, K}^+$. 
\end{enumerate}
\end{lemma}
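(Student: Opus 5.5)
This lemma is the $G_K$-analogue of the corresponding lemma in \cite{DiaoLiu}, and I would follow their argument, replacing $\bB_{\mathrm{crys}}^+$ by $\bB_{\mathrm{crys},K}^+=K\otimes_{K_0}\bB_{\mathrm{crys}}^+$. Since $K$ is finite free over $K_0$, the latter is again a $\BQ_p$-Banach space. The basic tool is that $S_n$ and $S_{n,n'}$ admit orthonormal Schauder bases over $E$ (hence over $\BQ_p$, after choosing a basis of $E/\BQ_p$). Thus completed tensor products with a Banach or Fr\'echet space $B$ are described concretely. For example, $S_n\widehat{\otimes}_{\BQ_p}B$ is the space of power series $\sum_{m\ge0} b_m T^m$ with $b_m\in E\otimes_{\BQ_p}B$ and $p^{nm}b_m\to0$. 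Similarly, $S_{n,n'}\widehat{\otimes}B$ is the space of Laurent series $\sum_{m\in\BZ}b_mT^m$ with $p^{nm}b_m\to0$ as $m\to+\infty$ and $p^{n'm}b_m\to 0$ as $m\to-\infty$. For the Fr\'echet space $\bdR^+=\varprojlim_k\bdR^+/t^k$, the same description holds levelwise modulo $t^k$ and then passes to the limit.

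For (1), I would first show that $\bB_{\mathrm{crys},K}^+\to\bdR^+$ is injective and continuous. Injectivity is classical: $K\otimes_{K_0}\bB_{\mathrm{crys}}\to\bdR$ is injective by \cite{Fontaine}. Continuity is needed at each level $\bdR^+/t^k$. Then I would apply the coefficientwise description. An element of $S_n\widehat{\otimes}\bB_{\mathrm{crys},K}^+$ is a series $\sum b_mT^m$. Its image in $S_n\widehat{\otimes}\bdR^+$ is the series of images $\sum \iota(b_m)T^m$, which is zero only if every $\iota(b_m)=0$, and hence only if every $b_m=0$. The point requiring care is that taking coefficients commutes with the completed tensor product. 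This holds because the coefficient functionals $S_n\to E$, $\sum a_mT^m\mapsto a_m$, are continuous and give the identification above. The same argument works for $S_{n,n'}$ with Laurent coefficients.

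For (2), take $x=\sum_{m\ge0}x_mT^m\in S_n\widehat{\otimes}\bdR^+$. Its image in $S_{n,n'}\widehat{\otimes}\bdR^+$ is the same series, now viewed as a Laurent series with no negative-index terms. By hypothesis this image equals $\iota(y)$ for some $y=\sum_{m\in\BZ}y_mT^m\in S_{n,n'}\widehat{\otimes}\bB_{\mathrm{crys},K}^+$. Comparing coefficients, which is legitimate by the injectivity in (1), gives $y_m=0$ for $m<0$ and $\iota(y_m)=x_m$ for $m\ge0$. The remaining task is to show that $\sum_{m\ge0}y_mT^m$ converges in $S_n\widehat{\otimes}\bB_{\mathrm{crys},K}^+$. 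This requires $p^{nm}y_m\to0$ in the Banach norm of $\bB_{\mathrm{crys},K}^+$. That follows directly from the convergence condition for $y$ in $S_{n,n'}\widehat{\otimes}\bB_{\mathrm{crys},K}^+$ as $m\to+\infty$, since that condition involves the same radius $p^{-n}$. So the element $y':=\sum_{m\ge0}y_mT^m$ lies in $S_n\widehat{\otimes}\bB_{\mathrm{crys},K}^+$ and maps to $x$, which proves (2).

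I expect the main obstacle to be the foundational claim that elements of these completed tensor products are exactly the convergent (Laurent) series with coefficients in $E\otimes_{\BQ_p}B$. This is delicate for the Fr\'echet space $\bdR^+$, where the completed tensor product is defined as an inverse limit. I would handle it by working modulo $t^k$ for each $k$, where $\bdR^+/t^k$ is a Banach space, and then noting that coefficient extraction is compatible with the transition maps. The crystalline side involves only Banach spaces, so no limit is needed there.
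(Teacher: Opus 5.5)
Your proposal is correct and takes the same route as the paper. The paper's proof is only a citation: \cite[Section~5]{DiaoLiu} goes through with $\bB_{\mathrm{crys}}^+$ replaced by $\bB_{\mathrm{crys},K}^+$. That cited argument is your coefficientwise comparison: power (resp.\ Laurent) series expansions over the orthonormal bases of $S_n$ and $S_{n,n'}$, injectivity of $K\otimes_{K_0}\bB_{\mathrm{crys}}^+\to\bdR^+$, and the fact that the outer radius of $S_{n,n'}$ equals the radius of $S_n$, with your treatment of $\bdR^+$ modulo $t^k$ supplying the Fr\'echet detail the citation leaves implicit.
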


\begin{proof}
The proof of \cite[Section~5]{DiaoLiu} goes through verbatim, with $\bB_{\mathrm{crys}}^+$ replaced by $\bB_{\mathrm{crys}, K}^+$ throughout. 
\footnote{Our field $E$ is denoted by $L$ there. }
\end{proof}

\begin{corollary}
We have 
\[
(K\otimes_{K_0}D_{\mathrm{crys}}^+(V_\CD))_{\tau_0}=\DdRplus(V_{\CD})_{\tau_0}. 
\]
\end{corollary}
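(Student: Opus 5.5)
The plan is to follow the gluing argument of \cite[Section~5]{DiaoLiu}, now applied to the $\tau_0$-components. The inclusion $(K\otimes_{K_0}\Dcrysplus(V_\CD))_{\tau_0}\subset \DdRplus(V_\CD)_{\tau_0}$ is the formal part. It comes from the natural injection $S\widehat{\otimes}_{\BQ_p}\bB_{\mathrm{crys},K}^+\hookrightarrow S\widehat{\otimes}_{\BQ_p}\bdR^+$ of part (1) of the preceding lemma, tensored with $V_\CD$ and then restricted to $G_K$-invariants and to the $\tau_0$-part. We use that $K\otimes_{K_0}\Dcrysplus(V)=((S\widehat{\otimes}\bB^+_{\mathrm{crys},K})\otimes V)^{G_K}$, since $K$ is finite over $K_0$ and is fixed by $G_K$. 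The real content is the reverse inclusion.

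\textbf{Step 1: work over $S_n$ and get the period on the annulus.} Take $x\in \DdRplus(V_\CD)_{\tau_0}$ and restrict it to $\Sp(S_n)$, so that it lies in $((S_n\widehat{\otimes}\bdR^+)\otimes V_n)^{G_K}$. Its further restriction to the annulus $\Sp(S_{n,n'})\subset\CD^*$ lies in $\DdRplus(V_{n,n'})_{\tau_0}$. By Corollary~\ref{cor:crys vs dR on punctured disc}, this equals $(K\otimes_{K_0}\Dcrysplus(V_{n,n'})^{\varphi^f=\alpha})_{\tau_0}$, which sits inside $(S_{n,n'}\widehat{\otimes}\bB^+_{\mathrm{crys},K})\otimes V_{n,n'}$. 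One point needs checking here: the identification uses the natural map, compatible with restriction, and the comparison isomorphism \eqref{eq:generalized Diao-Liu's Cor4.4} is compatible with the inclusion of crystalline into de Rham periods. This holds because both maps are induced by $\iota_n$ and by Theorem~\ref{thm:comparison of various D}.

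\textbf{Step 2: extend across the puncture.} Choose a basis of $V_n$; after shrinking $n$ if needed we may assume $V_n$ is free, since $V_\CD$ is locally free and $S$ is a PID. Writing $x$ in coordinates then gives finitely many elements of $S_n\widehat{\otimes}\bdR^+$ whose images in $S_{n,n'}\widehat{\otimes}\bdR^+$ lie in $S_{n,n'}\widehat{\otimes}\bB^+_{\mathrm{crys},K}$. Part \eqref{lem:intersection} of the preceding lemma puts each coordinate in $S_n\widehat{\otimes}\bB^+_{\mathrm{crys},K}$. The element is still $G_K$-invariant, because the crystalline ring injects into the de Rham one, so $x|_{S_n}\in (K\otimes_{K_0}\Dcrysplus(V_n))_{\tau_0}$.

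\textbf{Step 3: glue.} The sets $\Sp(S_n)$ and $\CD^*$ form an admissible covering of $\CD$. On $\CD^*$ we already know $x$ is crystalline, affinoid by affinoid, by Corollary~\ref{cor:crys vs dR on punctured disc} applied to each affinoid subdomain. Sections of $(S\widehat{\otimes}\bB^+_{\mathrm{crys},K})\otimes V$ form a sheaf for this covering, and invariants commute with this. Gluing the crystalline sections over $\Sp(S_n)$ and over the affinoids in $\CD^*$ therefore gives $x\in(K\otimes_{K_0}\Dcrysplus(V_\CD))_{\tau_0}$.

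I expect the main obstacle to be Step 3. The gluing is immediate for $\DdRplus$ but needs care for crystalline periods: a priori $\Dcrysplus(V_\CD)$ is defined only through $S\widehat{\otimes}\bcrys^+$. The way around this is to use $\Sp(S_0)=\CD$ itself, i.e.\ $n=0$, which avoids gluing altogether. Part \eqref{lem:intersection} with $n=0$ and any $n'>0$ shows directly that $x\in (S\widehat{\otimes}\bB^+_{\mathrm{crys},K})\otimes V_\CD$ once $V_\CD$ is free. So I would first reduce to $V_\CD$ free over $S=E\langle T\rangle$, which holds since $S$ is a PID, and then carry out Steps 1 and 2 with $n=0$ only.
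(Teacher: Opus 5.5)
Your proposal is correct. Once you take $n=0$ (say with $n'=1$) and a basis of $V_\CD$ over the PID $S=E\langle T\rangle$, it is exactly the paper's argument: restrict a $\tau_0$-component de Rham period to the annulus $\Sp(S_{0,1})\subset\CD^*$, use Corollary~\ref{cor:crys vs dR on punctured disc} there to see that its coordinates are crystalline, and apply part (2) of the preceding lemma coordinatewise to extend across the puncture, so the general-$n$ steps and the gluing in Step~3 are unnecessary, as you yourself note.
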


\begin{proof}
We repeat the argument of \cite[Corollary~5.7]{DiaoLiu}.  Pick an $S$-basis $e_1, \cdots, e_d$ of $V_{\CD}$. 
By Corollary~\ref{cor:crys vs dR on punctured disc}, we get $(K\otimes_{K_0}\Dcrysplus(V_{0, 1}))_{\tau_0}=\DdRplus(V_{0, 1})_{\tau_0}$. 
This implies that, for any $a_{\tau_0}=\sum_{i=1}^da_{i, \tau_0}e_i\in\DdRplus(V_{\CD})_{\tau_0}$, where $a_{i, \tau_0}\in (S\widehat{\otimes}_{\BQ_p}\bB_{\mathrm{dR}}^+)_{\tau_0}$, the images of $a_{i, \tau_0}$ in $(S_{0. 1}\widehat{\otimes}_{\BQ_p}\bB_{\mathrm{dR}}^+)_{\tau_0}$ belong to $(S_{0, 1}\widehat{\otimes}_{\BQ_p}\bB_{\mathrm{crys, K}}^+)_{\tau_0}$. 
Applying the previous lemma, we obtain that $a_{i, \tau_0}$ belongs to $S\widehat{\otimes}_{\BQ_p}\bB_{\mathrm{crys}, K}^+$, thus lies in $(S\widehat{\otimes}_{\BQ_p}\bB_{\mathrm{crys}, K}^+)_{\tau_0}$; this yields $\DdRplus(V_{\CD})_{\tau_0}\subset (K\otimes_{K_0}\Dcrysplus(V_{\CD}))_{\tau_0}$. 
Hence $\DdRplus(V_{\CD})_{\tau_0}=(K\otimes_{K_0}\Dcrysplus(V_{\CD}))_{\tau_0}$. 
\end{proof}

\subsection{Proof of Theorem~\ref{thm:DL-general}}

Applying Proposition~\ref{prop:flat bc of DdRplus}, we obtain isomorphism $\DdRplus(V_{\CD})_{\tau_0}\otimes_{S}S_{0, 1}\xrightarrow{\sim}\DdRplus(V_{0, 1})_{\tau_0}$. 
By Corollary~\ref{cor:crys vs dR on punctured disc}, $\DdRplus(V_{0, 1})_{\tau_0}$ is a locally free $S_{0, 1}$-module of rank one. 
In particular, this implies that $\DdRplus(V_{\CD})_{\tau_0}\neq0$. 
Now pick a nonzero element $e_{\tau_0}\in \DdRplus(V_{\CD})_{\tau_0}$. 
By dividing a suitable power of $T$, we may assume that the specialization $e_{\tau_0, 0}$ of $e$ at the puncture $0$ is nonzero. 
By Corollary~\ref{cor:crys vs dR on punctured disc} $e_{\tau_0}\in (K\otimes_{K_0}\Dcrysplus(V_{\CD}))_{\tau_0}$. 
Note that for any $K_0\otimes_{\BQ_p}E$-module, we have a functorial inclusion
\[
(K\otimes_{K_0}M)_{\tau_0}\simeq M_{\sigma_0}\hookrightarrow M, 
\]
where $\sigma_0\in \Gal(K_0/\BQ_p)$ is the restriction of $\tau_0$. 
So we have the following commutative diagram
\[
\begin{tikzcd}
(K\otimes_{K_0}\Dcrysplus(V_{\CD}))_{\tau_0} \arrow[r, hook] \arrow[d]                             & \Dcrysplus(V_{\CD}) \arrow[d]                             \\
{(K\otimes_{K_0}\Dcrysplus(V_{0, 1}))_{\tau_0}} \arrow[r, hook]                                    & {\Dcrysplus(V_{0, 1})}                                    \\
{(K\otimes_{K_0}\Dcrysplus(V_{0, 1})^{\varphi^f=\alpha})_{\tau_0}} \arrow[r, hook] \arrow[u, "\simeq", hook] & {\Dcrysplus(V_{0, 1})^{\varphi^f=\alpha}} \arrow[u, hook]
\end{tikzcd}
\]
Let $e\in \Dcrysplus(V_{\CD})$ be the image of $e_{\tau_0}$ under the above inclusion. 
Then the specialization $e_0$ of $e$ at the puncture $0$ is also nonzero. 
By Corollary~\ref{cor:crys vs dR on punctured disc}, the left bottom inclusion is an isomorphism. 
An easy diagram chasing shows that the image of $e$ in $\Dcrysplus(V_{0, 1})$ belongs to $\Dcrysplus(V_{0, 1})^{\varphi^f=\alpha}$. 
That is, $\varphi^f(e)=\alpha e$ on $\Sp(S_{0, 1})$. 
Since $\Sp(S_{0, 1})$ is Zariski-dense in $\CD$, we must have $\varphi^f(e)=\alpha e$ on the entire disk. 
In particular, $\varphi^f(e_0)=\alpha(0)e_0$. 
Using the fact that $\varphi$ is injective on $\Dcrysplus(V_0)$, we conclude that $\alpha(0)\neq0$. 
Thus $\alpha\in\CO(\CD)^\times$.  
\section{A variant of the Herr complex}\label{sec:phi-f-herr}

\subsection{The setup and goal}

Let $S$ be an $\BQ_p$-affinoid algebra and let $D$ be a finite projective $(\varphi,\Gamma_K)$-module over
$\mathbf B_{\mathrm{rig},K,S}^{\dagger}$. 
Let $\Delta_K\subset\Gamma_K$ be the torsion subgroup, choose $\gamma_K\in \Gamma_K$ whose image in $\Gamma_K/\Delta_K$ is a topological generator.
The goal of this section is to prove the following theorem.
\begin{theorem}\label{thm:herr-main}
Let $\alpha\in S^\times$.  Then
\[
C^\bullet_{\varphi^f-\alpha,\gamma_K}(D)
=
\left[
D^{\Delta_K}
\xrightarrow{(\varphi^f-\alpha,\ \gamma_K-1)}
D^{\Delta_K}\oplus D^{\Delta_K}
\xrightarrow{(\gamma_K-1)\oplus(\alpha-\varphi^f)}
D^{\Delta_K}
\right]
\] 
is a perfect complex of $S$-modules concentrated in degrees in $[0, 2]$. 
In particular, the function $s\mapsto \dim_{k(s)}\left(D\widehat{\otimes}_Sk(s)\right)^{\varphi^f=\alpha(s), \Gamma_K=1}$ is upper semi-continuous on $\Sp(S)$ with respect to the Zariski topology. 
\end{theorem}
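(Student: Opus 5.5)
The plan is to reduce to the perfectness results of Kedlaya--Pottharst--Xiao for the usual Herr complex, applied to an auxiliary $(\varphi,\Gamma)$-module over the Robba ring for the unramified-degree-$f$ Frobenius. The key observation is that $\varphi^f$ is the Frobenius of $D$ regarded as a $\varphi^f$-module. Concretely, I would view $D$ as a $(\varphi^f,\Gamma_K)$-module over $\mathbf B_{\mathrm{rig},K,S}^{\dagger}$ and twist by the rank-one object $S(\alpha^{-1})$, on which $\varphi^f$ acts by $\alpha^{-1}$ and $\Gamma_K$ trivially. Set $D_\alpha:=D\otimes_S S(\alpha^{-1})$, so that $\varphi^f-\alpha$ on $D$ becomes $\alpha(\varphi^f_{D_\alpha}-1)$ on $D_\alpha$. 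Since $\alpha$ is a unit, $C^\bullet_{\varphi^f-\alpha,\gamma_K}(D)$ is then isomorphic to the Herr complex $C^\bullet_{\varphi^f,\gamma_K}(D_\alpha)$, after rescaling the components by $\alpha$.

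Next I would compare $\varphi^f$-cohomology with $\varphi$-cohomology by induction. Put $\mathrm{Ind}(D_\alpha):=\bigoplus_{i=0}^{f-1}D_\alpha e_i$, with $\varphi(x e_i)=xe_{i+1}$ for $i<f-1$ and $\varphi(xe_{f-1})=\varphi^f(x)e_0$, and with the diagonal $\Gamma_K$-action. The $\varphi$-semilinearity has to be checked with care: write $\varphi(x e_i)=\varphi(x)\cdot(\text{twist})$, or more cleanly take $\mathrm{Ind}=\mathbf B\otimes_{\varphi^f\text{-}}$ of the standard induction from $\varphi^f$-modules to $\varphi$-modules. This gives a genuine finite projective $(\varphi,\Gamma_K)$-module over $\mathbf B_{\mathrm{rig},K,S}^{\dagger}$ of rank $fd$. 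The elementary linear algebra identity $\ker(\varphi-1\mid \mathrm{Ind})\cong\ker(\varphi^f-1\mid D_\alpha)$ and $\operatorname{coker}(\varphi-1\mid\mathrm{Ind})\cong\operatorname{coker}(\varphi^f-1\mid D_\alpha)$, compatible with $\gamma_K$ and $\Delta_K$, then yields a quasi-isomorphism of total complexes $C^\bullet_{\varphi,\gamma_K}(\mathrm{Ind}D_\alpha)\simeq C^\bullet_{\varphi^f,\gamma_K}(D_\alpha)$. I would realize this via the explicit map $D_\alpha\to\mathrm{Ind}$ given by $x\mapsto(x,\varphi x,\dots)$, together with its cokernel analogue.

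By KPX (Theorem 4.4.5 there, the perfectness of the Herr complex, with $K$ general, or after restricting to $\Gamma_K$ and $\Delta_K$-invariants), $C^\bullet_{\varphi,\gamma_K}(\mathrm{Ind}D_\alpha)$ is a perfect complex of $S$-modules in degrees $[0,2]$, and it commutes with arbitrary base change $S\to S'$. This transfers to $C^\bullet_{\varphi^f-\alpha,\gamma_K}(D)$. For the upper semicontinuity I would use base change: $C^\bullet\otimes^{\mathbb L}_S k(s)\simeq C^\bullet(D\widehat\otimes_S k(s))$. The $H^0$ of the fibre is exactly $(D\widehat\otimes k(s))^{\varphi^f=\alpha(s),\Gamma_K=1}$, where $\Delta_K$-invariants followed by $\gamma_K$-invariants gives $\Gamma_K$-invariants. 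Representing $C^\bullet$ by a bounded complex $P^\bullet$ of finite projectives in degrees $[0,2]$, the dimension $\dim H^0(P^\bullet\otimes k(s))=\operatorname{rk}P^0-\operatorname{rk}(d^0\otimes k(s))$, and rank of a matrix is lower semicontinuous; this gives the claim.

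I expect the main obstacle to be the induction step: making the $\varphi$-module structure on $\mathrm{Ind}$ correct, since $\varphi$ is not $\mathbf B$-linear, and verifying the quasi-isomorphism including the $\gamma_K$-compatibility and $\Delta_K$-invariants. If that becomes messy, the alternative is to redo the KPX argument directly with $\varphi^f$ in place of $\varphi$. In that approach one uses $\psi^f$ and the $\psi$-complex, together with finiteness of $D^{\psi=0}$-type cohomology over $\Gamma$, and checks that each step of KPX only uses that $\varphi$ is a finite flat endomorphism with left inverse $\psi$; these properties hold equally for $\varphi^f$ with $\psi^f$.
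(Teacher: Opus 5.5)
Your proposal is correct, but your main route differs from the paper's. The fallback you sketch at the end is essentially what the paper does.

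\textbf{The paper's route.} It rescales to $C^\bullet_{\alpha^{-1}\varphi^f,\gamma_K}(D)$ and then reruns the Kedlaya--Pottharst--Xiao/Bellovin machinery with $\varphi^f$ in place of $\varphi$. It uses $\Psi=\psi^f$ and the invertibility of $\gamma_K-1$ on $\ker\Psi$ (from \cite[Theorem~3.1.1]{KPX}) to show the transition maps $C^\bullet(D^{s'})\to C^\bullet(D^{qs'})$ are quasi-isomorphisms. It then passes to a closed annulus via the equivalence of Lemma~\ref{lem:phi-f-descent}. Finiteness comes from complete continuity \cite{KedlayaLiu}, and perfectness from flatness of $D^{\Delta_K}$ \cite[Corollary~2.1.5]{KPX}.

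\textbf{Your route.} You reduce formally to \cite[Theorem~4.4.5]{KPX} for an honest $(\varphi,\Gamma_K)$-module. The induction step is easier than you fear, because $D$ is already a $\varphi$-module and no pullbacks $(\varphi^i)^*$ are needed.
\begin{itemize}
\item Take $W_\alpha=\bigoplus_{i=0}^{f-1}Se_i$ with $\varphi(e_i)=e_{i+1}$ for $i<f-1$, $\varphi(e_{f-1})=\alpha^{-1}e_0$, and trivial $\Gamma_K$-action. Put $\mathrm{Ind}=D\otimes_S W_\alpha$, so that $\varphi(xe_i)=\varphi(x)e_{i+1}$ and $\varphi(xe_{f-1})=\alpha^{-1}\varphi(x)e_0$. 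Your literal formula $\varphi(xe_i)=xe_{i+1}$ is not semilinear and should be replaced by this.
\item The linearized Frobenius of $\mathrm{Ind}$ is an isomorphism exactly because the companion matrix is invertible, i.e.\ because $\alpha\in S^\times$. This also explains the paper's counterexample remark.
\item The maps $x\mapsto\sum_i\varphi^i(x)e_i$ in degree $0$ and $y\mapsto ye_0$ in degree $1$ give a $\Gamma_K$-equivariant chain map $[D\xrightarrow{\alpha^{-1}\varphi^f-1}D]\to[\mathrm{Ind}\xrightarrow{\varphi-1}\mathrm{Ind}]$.
\item This chain map is a quasi-isomorphism. The map $\sum_i y_ie_i\mapsto y_0+\alpha^{-1}\sum_{j\ge1}\varphi^{f-j}(y_j)$ (with $x\mapsto x_0$ in degree $0$) is a chain map the other way. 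It is surjective with acyclic kernel, and it is a left inverse of yours.
\item Taking $\Delta_K$-invariants (exact in characteristic $0$) and then the fibre of $\gamma_K-1$ gives the quasi-isomorphism of Herr complexes.
\end{itemize}

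\textbf{Comparison.} Your approach uses \cite{KPX} as a black box. It yields Corollary~\ref{cor:derived bc of herr cpx} at the same time, since $W_\alpha\otimes_S R=W_{g^*\alpha}$ and the chain maps commute with base change. It also makes the role of $\alpha\in S^\times$ transparent. The paper's route is longer but more self-contained, proving the closed-annulus comparison for $\varphi^f$-modules directly. Your semicontinuity argument is fine: use a projective model in degrees $[0,2]$ and lower semicontinuity of the rank of $d^0\otimes k(s)$. It fills in what the paper leaves as ``in particular''.
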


\begin{remark}
If we only assume $\alpha\in S$, the resulting statement may not hold. 
For example, let $K=\BQ_p, \chi\colon  \Gamma:=\Gamma_{K}\to \BZ_p^\times$ be the cyclotomic character, $p>2$. 
Write $\chi(g)=\omega(\chi(g))\langle\chi(g)\rangle$, where $\omega(\chi(g))\in\mu_{p-1}, \langle\chi(g)\rangle\in (1+p\BZ_p)$. 
Then there exists a unique $z(g)\in \BZ_p$, such that $\langle\chi(g)\rangle=(1+p)^{z(g)}$, and the map $g\colon \Gamma\to \BZ_p$ is a continuous homomorphism. 
Let $S=\BQ_p\langle X, Y\rangle$. Consider the continuous character 
\[
\eta\colon \Gamma\to S^\times, \quad g\mapsto (1+pX)^{z(g)}=\sum_{n\geq0}\binom{z(g)}{n}p^nX^n. 
\]
Let $D$ be the associated $(\varphi, \Gamma)$-module, i.e. $D$ is free of rank one on $\bB_{\rig, K, S}^{\dagger}$ with a basis $e$, where $\varphi(e)=e, g(e)=\eta(g)e$ for any $g\in\Gamma$. 
For any $m\in \BZ_{\geq1}$, let $k_m=(p-1)p^{m-1}, x_m=\frac{(1+p)^{-k_m}-1}{p}, y_m=p^{k_m}$ and $e_m=t^{k_m}e$. 
Then it's easy to check that after specializing to $s_m=(x_m, y_m)\in\Sp(S)$, we have $g(e_m)=e_m$ and $\varphi(e_m)=y_me_m$, hence $\dim_{k(s_m)}(D\widehat{\otimes}_{S}k(s_m))^{\varphi=Y(s_m), \Gamma=1}\geq1$. 
However, at $s_0=(0, 0), Y(s_0)=0$, so $\dim_{k(s_0)}(D\widehat{\otimes}_{S}k(s_0))^{\varphi=Y(s_0)}=0$.
Since $s_m\to s_0$ as $m\to \infty$, $s_0$ lies in the Zariski closure of $\{s_m\}_{m\geq1}$. 
So the function $s\mapsto \dim_{k(s)}\left(D\widehat{\otimes}_Sk(s)\right)^{\varphi^f=Y(s), \Gamma_K=1}$ is not upper semi-continuous. 
\end{remark}

\begin{corollary}\label{cor:derived bc of herr cpx}
Let $g\colon \Sp(R)\to \Sp(S)$ be a morphism of $\BQ_p$-affinoid spaces. 
Then the canonical morphism 
\[
C^\bullet_{\varphi^f-\alpha,\gamma_K}(D)\otimes^{\BL}_{S}R\to C^\bullet_{\varphi^f-g^*(\alpha),\gamma_K}(D\widehat{\otimes}_SR)
\]
is an isomorphism. 
In particular, if $S=E'\to R=E''$ are finite extensions of $\BQ_p$, we have 
\[
D^{\varphi^f=\alpha, \Gamma_K=1}\otimes_{E'}E''\simeq (D\widehat{\otimes}_{E'}E'')^{\varphi^f=\alpha, \Gamma_K=1}.
\]
\end{corollary}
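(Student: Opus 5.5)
We reduce Corollary~\ref{cor:derived bc of herr cpx} to Theorem~\ref{thm:herr-main} together with a comparison at the level of terms. First, check that the complex commutes with completed base change termwise. Since $D$ is finite projective over $\mathbf B_{\mathrm{rig},K,S}^{\dagger}$, it is a direct summand of a finite free module. We have $\mathbf B_{\mathrm{rig},K,S}^{\dagger}\widehat{\otimes}_S R=\mathbf B_{\mathrm{rig},K,R}^{\dagger}$. Taking $\Delta_K$-invariants is the image of the idempotent $|\Delta_K|^{-1}\sum_{\delta\in\Delta_K}\delta$, so it also commutes with $\widehat{\otimes}_SR$. Hence
\[
C^\bullet_{\varphi^f-\alpha,\gamma_K}(D)\widehat{\otimes}_SR\simeq C^\bullet_{\varphi^f-g^*(\alpha),\gamma_K}(D\widehat{\otimes}_SR),
\]
compatibly with the differentials, because $\varphi^f$, $\gamma_K$ and $\alpha$ all base change.

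The key step is to replace $\widehat{\otimes}$ by $\otimes^{\BL}$. For this we follow the strategy of \cite{KPX} (their Theorem~4.4.5 on base change of the Herr complex). Concretely:
\begin{enumerate}
\item Replace $C^\bullet$ by a quasi-isomorphic complex. Using the $\psi$-operator, or by restricting to a closed annulus $\mathbf B_{K,S}^{[s,ps^f]}$-lattice, pass to a complex built from modules of the form $M\widehat{\otimes}_{\BQ_p}S$ with $M$ a $\BQ_p$-Banach space. These are orthonormalizable and hence flat for completed tensor.
\item On such terms, the functor $\widehat{\otimes}_SR$ computes $\otimes^{\BL}_SR$. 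Indeed, $-\widehat{\otimes}_SR$ of a complex of ``potentially ON'' Banach $S$-modules agrees with the derived tensor once the complex is known to be perfect. This is where Theorem~\ref{thm:herr-main} enters: the complex is quasi-isomorphic to a bounded complex $P^\bullet$ of finite projective $S$-modules.
\item Compare $P^\bullet\otimes_SR$ and $C^\bullet\widehat{\otimes}_SR$ via a chain homotopy equivalence of Banach complexes. Finite projective modules are Banach, so the quasi-isomorphism $P^\bullet\to C^\bullet$ can be chosen to be a homotopy equivalence of complexes of Banach $S$-modules, in the manner of \cite{KPX}, Lemma~4.1.5. Such equivalences are preserved by $\widehat{\otimes}_SR$.
\end{enumerate}

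The main obstacle is item 3: showing that the quasi-isomorphism is a strict homotopy equivalence in the topological sense. I would do this exactly as in the proof of Theorem~\ref{thm:herr-main}, which I expect uses a ``completely continuous'' argument. Restriction between annuli is completely continuous, and $\varphi^f-\alpha$ is invertible up to a compact perturbation because $\alpha\in S^\times$. This should give, via Kiehl-type gluing, a perfect model with a homotopy inverse, and I would reuse that construction directly.

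The particular case then follows formally. Take $S=E'$ and $R=E''$ with $E''$ flat over $E'$. Then $\otimes^{\BL}$ is just $\otimes$, so taking $H^0$ gives
\[
D^{\varphi^f=\alpha,\Gamma_K=1}\otimes_{E'}E''\simeq(D\widehat{\otimes}_{E'}E'')^{\varphi^f=\alpha,\Gamma_K=1}.
\]
This uses that $H^0$ of the complex equals $(D^{\Delta_K})^{\varphi^f=\alpha,\gamma_K=1}=D^{\varphi^f=\alpha,\Gamma_K=1}$.
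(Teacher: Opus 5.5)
Your argument is correct and is essentially the paper's route. The paper's proof is only a pointer to \cite[Theorem~4.4.3]{KPX}, and the argument you sketch is exactly that one: reduce to the closed-annulus complex of Proposition~\ref{prop:closed annulus herr} (compatibly over $R$), then compare $\widehat{\otimes}_SR$ with $\otimes^{\BL}_SR$ using the perfectness from Theorem~\ref{thm:herr-main}.

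One minor correction: in the proof of Theorem~\ref{thm:herr-main}, the complete continuity comes from restricting between nested annuli $[qs',r'']\subset(s',r')$, not from $\varphi^f-\alpha$. In any case, once perfectness is known your item~3 needs no such input. By the open mapping theorem, a bounded acyclic complex of direct summands of orthonormalizable Banach $S$-modules is continuously contractible, and that contraction survives $\widehat{\otimes}_SR$.
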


\begin{proof}
This is the same as \cite[Theorem~4.4.3]{KPX}. 
\end{proof}

\subsection{\texorpdfstring{$\varphi^f$}{phi-f}-modules}

For any $r\geq s>0$, iterating the operator $\varphi$, we get
\[
\varphi^f\colon \mathbf B_{K,S}^{[s,r]}\longrightarrow\mathbf B_{K,S}^{[qs,qr]}, 
\]
where $q=p^f$, which induce
\[
\varphi^f\colon \mathbf B_{\mathrm{rig},K,S}^{\dagger,s}\longrightarrow\mathbf B_{\mathrm{rig},K,S}^{\dagger,qs}
\]
and 
\[
\varphi^f\colon \mathbf B_{\mathrm{rig},K,S}^{\dagger}\longrightarrow\mathbf B_{\mathrm{rig},K,S}^{\dagger}. 
\]
They are $S$-linear and commute with the action of $\Gamma_K$. 
For a finite projective module $D^s$ over $B_{\mathrm{rig},K,S}^{\dagger,s}$, we write $D^r:=B_{\mathrm{rig},K,S}^{\dagger,r}\otimes_{B_{\mathrm{rig},K,S}^{\dagger,s}} D^s$ and $D^{[s, r]}:=B_{\mathrm{rig},K,S}^{\dagger,[s, r]}\otimes_{B_{\mathrm{rig},K,S}^{\dagger,s}} D^s$. 

\begin{definition}
\begin{itemize}
\item 
A \emph{$\varphi^f$-module over $\mathbf B_{\mathrm{rig},K,S}^{\dagger,s}$} is a finite projective module $D^s$ together with an isomorphism
$\Phi_{D^s}\colon  (\varphi^f)^*D^s\xrightarrow{\sim}D^{qs}$.
Equivalently, $\Phi_{D^s}$ is a $\varphi^f$-semilinear map $D^s\to D^{qs}$, whose linearization is an isomorphism. 
\item 
Let $0<qs\le r$.
Note that $[s, r]\cap [qs, qr]=[qs, r]$. 
A \emph{$\varphi^f$-module over $\mathbf B_{K,S}^{[s, r]}$} is a finite projective module $D^{[s,r]}$ together with an isomorphism
$\Phi_{D^{[s, r]}}\colon  \mathbf B_{K,S}^{[qs,r]}\otimes_{\mathbf B_{K,S}^{[qs,qr]}}(\varphi^f)^*D^{[s,r]}\xrightarrow{\sim}
D^{[qs, r]}:=B_{\mathrm{rig},K,S}^{\dagger,[qs, r]}\otimes_{B_{\mathrm{rig},K,S}^{\dagger,[s, r]}} D^{[s, r]}$.
Equivalently, $\Phi_{D^{[s, r]}}$ is a $\varphi^f$-semilinear map $D^{[s, r]}\to D^{[qs, r]}$, whose linearlization, after extending the scalar to $B_{\mathrm{rig},K,S}^{\dagger,[qs, r]}$, is an isomorphism. 
\end{itemize}
\end{definition}

Note that if $(D^s, \Phi_{D^s})$ is an $\varphi$-module over $\mathbf B_{\mathrm{rig},K,S}^{\dagger, s}$, then for any $r\geq qs, (D^{[s, r]}, \Phi_{D^{[s, r]}}:=1\otimes \Phi_{D^s})$ is an $\varphi$-module over $\mathbf B_{\mathrm{rig},K,S}^{\dagger, [s, r]}$. 

\begin{lemma}\label{lem:phi-f-descent}
Fix $0<qs\le r$.
Then the functor $\mathbf B_{\mathrm{rig},K,S}^{\dagger,[s, r]}\otimes_{\mathbf B_{\mathrm{rig},K,S}^{\dagger, s}}(-)$ induces an equivalence of categories between $\varphi^f$-modules over
$\mathbf B_{\mathrm{rig},K,S}^{\dagger,s}$ and $\varphi^f$-modules over $\mathbf B_{\mathrm{rig},K,S}^{\dagger, [s, r]}$. 
\end{lemma}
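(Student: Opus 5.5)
The plan is to adapt the standard proof of the analogous statement for $\varphi$-modules (Kedlaya--Liu, Kedlaya--Pottharst--Xiao, Lemma~2.2.7 / Proposition~2.2.7-type results) with $\varphi$ replaced by $\varphi^f$ and $p$ replaced by $q=p^f$. The functor is clearly well defined: if $(D^s,\Phi_{D^s})$ is a $\varphi^f$-module over $\mathbf B_{\mathrm{rig},K,S}^{\dagger,s}$, then base change to $[s,r]$ together with $1\otimes\Phi_{D^s}$ gives a $\varphi^f$-module over $\mathbf B_{K,S}^{[s,r]}$, as noted just before the lemma. For the quasi-inverse, I start from a $\varphi^f$-module $D^{[s,r]}$ and construct, for every $m\ge0$, a finite projective module $D^{[s,q^mr]}$ over $\mathbf B_{K,S}^{[s,q^mr]}$ by induction.

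\emph{Step 1.} Given $D^{[s,q^mr]}$ with its Frobenius structure, set $D^{[qs,q^{m+1}r]}:=\mathbf B_{K,S}^{[qs,q^{m+1}r]}\otimes_{\varphi^f,\mathbf B_{K,S}^{[s,q^mr]}}D^{[s,q^mr]}$. On the overlap $[qs,q^mr]$ (nonempty since $qs\le r$) the isomorphism $\Phi$ identifies this with $D^{[qs,q^mr]}$. Gluing along the overlap via the Kiehl-type gluing for finite projective modules over the covering $\{[s,q^mr],[qs,q^{m+1}r]\}$ of $[s,q^{m+1}r]$ (valid for these relative Robba rings, cf.\ \cite[Lemma~2.1.4]{KPX} / \cite{KedlayaLiu}), one obtains $D^{[s,q^{m+1}r]}$, again carrying a $\varphi^f$-structure by construction. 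Taking the inverse limit $D^s:=\varprojlim_m D^{[s,q^mr]}$ gives a module over $\mathbf B_{\mathrm{rig},K,S}^{\dagger,s}=\varprojlim_r\mathbf B_{K,S}^{[s,r]}$ with a compatible $\Phi$; this is a uniform vector bundle on a half-open annulus over $\Sp(S)$.

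\emph{Step 2 (main obstacle).} Show that $D^s$ is finite projective and that $D^s\otimes\mathbf B_{K,S}^{[s,r]}\cong D^{[s,r]}$. A compatible family of finite projective modules on an increasing exhausting sequence of annuli need not be finitely generated globally unless ranks/generators are uniformly bounded; here the Frobenius gives uniformity: each $D^{[s,q^mr]}$ is obtained from $D^{[s,r]}$ by pulling back along $\varphi^f$, so it is generated by a bounded number of elements, and I will invoke the argument of \cite[Proposition~2.2.7]{KPX} (Kedlaya--Liu's ``vector bundles on the half-open annulus with Frobenius structure are finite projective''), which uses exactly this: choose a finite presentation on $[s,r]$, propagate via $\Phi$, and apply a Mittag-Leffler/approximation argument to lift generators to the limit. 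The $\varphi^f$-version only changes $p$ to $q$, so I expect the argument to go through verbatim.

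\emph{Step 3.} Full faithfulness: a morphism of $\varphi^f$-modules over $[s,r]$ extends uniquely to each $[s,q^mr]$ by the same gluing (Frobenius-compatibility determines it on $[qs,q^{m+1}r]$), hence to the limit; conversely morphisms over $\mathbf B_{\mathrm{rig},K,S}^{\dagger,s}$ are determined by their restriction to $[s,r]$ since $\mathbf B_{\mathrm{rig},K,S}^{\dagger,s}\to\mathbf B_{K,S}^{[s,r]}$ is injective and, by Frobenius, the restriction determines the restrictions to all $[s,q^mr]$. Together with the essential surjectivity from Steps 1--2, this proves the equivalence.
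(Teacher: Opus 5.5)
Your proposal is correct and is essentially the argument the paper defers to (it simply cites \cite[Lemma~3.5]{Bellovin} and \cite[Lemma~2.10]{KedlayaPottharst}): extend by Frobenius pullback and gluing to a $\varphi^f$-bundle on the half-open annulus, then invoke the uniform-finite-generation result to get a finite projective module, with $p$ replaced by $q$ throughout. One point to phrase more carefully: it is the pieces $D^{[q^ms,q^mr]}\cong(\varphi^{fm})^*D^{[s,r]}$, not $D^{[s,q^mr]}$ itself, that are pullbacks of $D^{[s,r]}$, and this uniform bound on the pieces is exactly the input the cited uniformity argument needs.
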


\begin{proof}
This is  the same as  \cite[Lemma~3.5]{Bellovin}.
Also, see the proof of \cite[Lemma 2.10]{KedlayaPottharst}. 
\end{proof}

\begin{corollary}\label{cor:qisom by equiv of cat}
Let $D^s$ be a $\varphi^f$-module over $\mathbf B_{\mathrm{rig},K,S}^{\dagger,s}$, then the natural morphism of complexes
\[
C_{\Psi_{D^s}}^{\bullet}(D^s):=[D^s\xrightarrow{\Phi_{D^s}-1}D^{qs}]\to[D^{[s, r]}\xrightarrow{\Phi_{D^{[s, r]}}-1} D^{[qs, r]}]=:C_{\Psi_{D^{[s, r]}}}^{\bullet}(D^{[s, r]})
\]
is an quasi-isomorphism for any $r\geq qs$. 
\end{corollary}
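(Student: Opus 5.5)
The natural approach is to show that the map of two-term complexes induces an isomorphism on $H^0$ and on $H^1$. I will deduce both from the equivalence of categories in Lemma~\ref{lem:phi-f-descent}, applied to rank-one extensions. Throughout, $\Phi$ denotes the Frobenius structure of $D^s$, and $\Phi_{D^{[s,r]}}=1\otimes\Phi$.

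\textbf{Step 1: $H^0$.} We have $H^0(C^\bullet_{\Psi_{D^s}}(D^s))=\Hom(\mathbf 1,D^s)$, where $\mathbf 1=\mathbf B_{\mathrm{rig},K,S}^{\dagger,s}$ is the trivial $\varphi^f$-module. Indeed, a morphism $\mathbf 1\to D^s$ is determined by the image $v$ of $1$, subject to the condition $\Phi(v)=v$ in $D^{qs}$. The same description holds over $[s,r]$, now with the equation taken in $D^{[qs,r]}$. The map on $H^0$ is exactly the map on $\Hom$-sets induced by the base-change functor. Since that functor is fully faithful by Lemma~\ref{lem:phi-f-descent}, the map on $H^0$ is bijective.

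\textbf{Step 2: $H^1$.} I identify $H^1$ with $\Ext^1(\mathbf 1,D)$ in the exact category of $\varphi^f$-modules whose underlying modules are finite projective. Given $a\in D^{qs}$, let $E_a=D^s\oplus \mathbf B^{\dagger,s}_{\mathrm{rig},K,S}e$ with Frobenius $\Phi_{E_a}(e)=e+a$. Its linearization is block upper triangular with invertible diagonal blocks, so it is an isomorphism and $E_a$ is again a $\varphi^f$-module. Conversely, every extension $0\to D^s\to E\to\mathbf 1\to0$ splits as modules because $\mathbf 1$ is free. Choosing a lift $e$ of $1$, we get $\Phi(e)-e\in D^{qs}$, and changing the lift by $v\in D^s$ alters this element by $\Phi(v)-v$. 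Hence $\mathrm{coker}(\Phi-1)\cong\Ext^1(\mathbf 1,D^s)$, and the same construction works verbatim over $[s,r]$.

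\textbf{Step 3: comparing $\Ext^1$.} Base change carries $E_a$ to $E_{a}$, so the map on $H^1$ is the map on $\Ext^1$ induced by the functor. For injectivity: if the base change of $E_a$ splits, the splitting is a morphism of $\varphi^f$-modules over $[s,r]$. By full faithfulness it descends to a splitting over $s$, so $a$ is a coboundary. For surjectivity: an extension $E'$ over $[s,r]$ is isomorphic to $E''\otimes$ for some $\varphi^f$-module $E''$ over $\mathbf B^{\dagger,s}_{\mathrm{rig},K,S}$, by essential surjectivity. The maps $D^{[s,r]}\to E'$ and $E'\to\mathbf 1$ then descend to maps $D^s\to E''\to\mathbf 1$ by full faithfulness. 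The composite is zero, since it becomes zero after base change and $\Hom$ is preserved.

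\textbf{Step 4: exactness of the descended sequence.} I expect this to be the main obstacle, as Lemma~\ref{lem:phi-f-descent} is a statement about categories, not about exact sequences. Split $E'\cong D^{[s,r]}\oplus\mathbf B^{[s,r]}e'$ as modules, with $\Phi(e')=e'+a'$ and $a'\in D^{[qs,r]}$. I will try to show directly that $a'$ can be modified by a coboundary $\Phi(v)-v$, with $v\in D^{[s,r]}$, to land in the image of $D^{qs}$.

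To do this, use Frobenius to extend $v$ and $e'$ outward. The relation $\Phi(e')=e'+a'$ rewrites $e'$ over the annulus $[qs,qr]$ in terms of data over $[s,r]$. Iterating this is exactly the gluing used in the proof of Lemma~\ref{lem:phi-f-descent}. It shows that the descended $E''$ contains $D^s$ as a saturated submodule with quotient $\mathbf 1$. Exactness can then be checked after the faithfully flat-in-effect base changes to all $[q^ms,q^mr]$, on which the extension is exact by pulling back $E'$ via $\Phi^m$.

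Once exactness is established, $E''$ gives a class in $\Ext^1(\mathbf 1,D^s)$ mapping to $E'$. This proves surjectivity on $H^1$ and completes the argument.
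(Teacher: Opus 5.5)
Your proposal is correct and follows the same route as the paper: identify $H^0$ and $H^1$ of the two-term complexes with $\Hom_{\varphi^f}(\mathbf 1,-)$ and $\Ext^1_{\varphi^f}(\mathbf 1,-)$, then conclude from the equivalence of categories in Lemma~\ref{lem:phi-f-descent}. The paper's proof stops there, and your Step~4 supplies a point it leaves implicit: the descended sequence is still short exact, because via the $\Phi^m$-linearizations its base change to each annulus $[q^ms,q^mr]$ (these cover $[s,\infty)$) is the $\varphi^{fm}$-pullback of the exact sequence over $[s,r]$, and exactness over $\mathbf B^{\dagger,s}_{\mathrm{rig},K,S}$ can be checked on such a covering by the standard quasi-Stein/Mittag-Leffler argument.
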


\begin{proof}
Let $\mathbf{1}^s$ denote the rank-one trivial $\varphi^f$-module over $\mathbf B_{\mathrm{rig},K,S}^{\dagger,s}, \Ext^1_{\varphi^f}(\mathbf{1}, D^s)$ be the groups of extensions of $D^s$ by $\mathbf{1}$ as $\varphi^f$-modules and define $\mathbf{1}^{[s, r]}$, $\Ext^1_{\varphi^f}(\mathbf{1}^{[s, r]}, D^{[s, r]})$ similarly. 
We may interpret the cohomology groups of two complexes as $\Hom_{\varphi^f}(\mathbf{1}^s, D^s), \Ext^1_{\varphi^f}(\mathbf{1}^s, D^s)$ and $\Hom_{\varphi^f}(\mathbf{1}^{[ s,r ]}, D^{[s, r]}), \Ext^1_{\varphi^f}(\mathbf{1}, D^{[s, r]})$ respectively. 
Thus the conclusion follows from the previous lemma. 
\end{proof}

\subsection{Cohomology of the variant of the Herr complex}
For the rest of this section, we fix a $(\varphi, \Gamma_K)$-module $(D, \varphi_D)$ over $B_{\mathrm{rig},K,S}^{\dagger}$. 
It is the base change of a finite projective module $D^s$ over $\bB_{\mathrm{rig},K,S}^{\dagger,s}$ to $B_{\mathrm{rig},K,S}^{\dagger}$ for some $s$, equipped with a morphism $\varphi_{D^s}\colon  D^s\to D^{ps}$ whose linearlization is an isomorphism $\varphi_{D^s}\colon  \varphi^*D^s\xrightarrow{\sim} D^{ps}$. 
Iterating $\varphi_{D^s}$, we get a $\varphi^f$-module structure of $D^s$ over $B_{\mathrm{rig},K,S}^{\dagger,s}$, i.e. 
\[
\Phi_{D^s}\colon  (\varphi^f)^*D^s\xrightarrow{\sim} D^{qs}. 
\]
More precisely, for any $i\geq1$, base change $\varphi_{D^s}$ along the natural morphism $B_{\mathrm{rig},K,S}^{\dagger,ps}\to B_{\mathrm{rig},K,S}^{\dagger,p^{i+1}s}$, we get 
\[
\varphi_{D^{p^is}}:=1\otimes\varphi_{D^s}\colon  \varphi^*D^{p^is}\simeq \bB_{\mathrm{rig},K,S}^{\dagger,p^{i+1}s}\otimes_{\bB_{\mathrm{rig},K,S}^{\dagger,ps}} (\varphi)^*D^s\xrightarrow{\sim} D^{p^{i+1}s}. 
\]
Then $\Phi_{D^s}:=\varphi_{D^{p^{f-1}s}}\circ\cdots\circ(\varphi^{f-2})^*\varphi_{D^{ps}} \circ(\varphi^{f-1})^*\varphi_{D^s}$. 
It's easy to check $1\otimes \Phi_{D^s}=\varphi_D^f$ on $D$. 
Abusing the notation, we still denote various $\varphi_{D^{p^{i}s}}$ and their pullback along $\varphi^j$ by $\varphi_D$. 

Recall that there is a natural morphism 
\[
\psi\colon  B_{\mathrm{rig},K,S}^{\dagger,ps}\to B_{\mathrm{rig},K,S}^{\dagger,s}
\]
which is a left inverse of $\varphi_D$. 
Let
\[\psi_{D^s}\colon  D^{ps}\simeq B_{\mathrm{rig},K,S}^{\dagger,ps}\otimes_{\varphi, B_{\mathrm{rig},K,S}^{\dagger,s}}D^s \to D^s, \quad 
b\otimes d \mapsto \psi(b)d. \]
Then $\psi_{D^s}$ is a left inverse of $\varphi$. 
Similarly, for each $i\geq1$ we can define 
\[\psi_{D^{p^{i+1}s}}\colon  D^{p^{i+1}s}\simeq B_{\mathrm{rig},K,S}^{\dagger,p^{i+1}s}\otimes_{\varphi, B_{\mathrm{rig},K,S}^{\dagger,p^{i}s}}D^{p^{i}s} \to D^{p^{i}s}, \quad
b\otimes d \mapsto \psi(b)d. \]
Then $\psi_{D^{p^i}s}=1\otimes \psi_{D^s}$, and it is a left inverse of $\varphi$. 
Put $\Psi_{D^s}:=\psi_{D^{p^{f-1}s}}\circ\cdots\circ \psi_{D^s}$. 
The upshot is that we have the following picture
\[
\begin{tikzcd}[column sep=large]
D^s
  \arrow[r, bend left=18, "\varphi_D"'{pos=.35}]
  \arrow[rrrr, bend left=25, "\Phi"]
&
D^{ps}
  \arrow[r, bend left=18, "\varphi_D"]
  \arrow[l, bend left=18, "\psi_D"{pos=.35}]
&
\cdots
  \arrow[r, bend left=18, "\varphi_D"]
  \arrow[l, bend left=18, "\psi_D"]
&
D^{p^{f-1}s}
  \arrow[r, bend left=18, "\varphi_D"{pos=.35}]
  \arrow[l, bend left=18, "\psi_D"]
&
D^{qs}
  \arrow[l, bend left=18, "\psi_D"'{pos=.35}]
  \arrow[llll, bend left=25, "\Psi"]
\end{tikzcd}
\]
and $\Psi_{D^s}\circ\Phi_{D^s}=1$, where we abuse the notation again to denote various $\psi_{D^{p^i}s}$ by $\psi_D$. 

\begin{lemma}
For $1\leq j\leq f$, let
\[
K_j(s):=\ker(\psi_D\colon  D^{p^js}\to D^{p^{j-1}s}). 
\]
Then 
\[
\ker\Psi_{D^s}=K_f(s)\oplus \varphi K_{f-1}(s)\oplus\cdots\oplus \varphi^{f-1}K_1(s). 
\]
\end{lemma}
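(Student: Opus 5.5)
The plan is an induction on the number of $\psi$'s. Only two facts are used: each $\psi_D$ is additive, and $\psi_D\circ\varphi_D=1$ at every level $D^{p^{j-1}s}\to D^{p^js}\to D^{p^{j-1}s}$. Both were noted just before the statement.

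For $0\le m\le f$, write $\Psi_m\colon D^{p^ms}\to D^s$ for the composite of the $m$ maps $\psi_D$ going down from level $p^ms$ to level $s$, with $\Psi_0=1$ and $\Psi_f=\Psi_{D^s}$. By induction on $m\ge1$ I will prove
\[
\ker\Psi_m=K_m(s)\oplus\varphi K_{m-1}(s)\oplus\cdots\oplus\varphi^{m-1}K_1(s)\subset D^{p^ms}.
\]
First I check the summands land in the right place: for $k\in K_{m-i}(s)\subset D^{p^{m-i}s}$, the element $\varphi^ik$ lies in $D^{p^ms}$. The case $m=1$ is the definition of $K_1(s)$.

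The first step is the splitting at a single level. For $x\in D^{p^ms}$, write
\[
x=(x-\varphi\psi x)+\varphi\psi x .
\]
The first term lies in $K_m(s)$ because $\psi\varphi=1$. This gives $D^{p^ms}=K_m(s)\oplus\varphi(D^{p^{m-1}s})$. The sum is direct: if $\varphi y\in K_m(s)$, then $y=\psi\varphi y=0$.

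For the inclusion ``$\supseteq$'', I will check each summand. An element of $K_m(s)$ is killed by the first $\psi$ in $\Psi_m$. For $k\in K_{m-i}(s)$ with $i\ge1$, cancelling $\psi\varphi=1$ gives $\Psi_m(\varphi^ik)=\Psi_{m-i}(k)$, and $\Psi_{m-i}(k)=0$ since $\Psi_{m-i}$ begins with $\psi$ on $D^{p^{m-i}s}$.

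For ``$\subseteq$'', take $x\in\ker\Psi_m$ and set $y=\psi x\in D^{p^{m-1}s}$. Then $\Psi_{m-1}(y)=\Psi_m(x)=0$, so by induction
\[
y\in K_{m-1}(s)\oplus\cdots\oplus\varphi^{m-2}K_1(s).
\]
Hence $x=(x-\varphi y)+\varphi y$ lies in $K_m(s)+\varphi K_{m-1}(s)+\cdots+\varphi^{m-1}K_1(s)$.

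Directness comes from applying $\psi$ to a relation $k_m+\varphi k_{m-1}+\cdots+\varphi^{m-1}k_1=0$ with $k_j\in K_j(s)$. Since $\psi k_m=0$ and $\psi\varphi=1$, this yields $k_{m-1}+\varphi k_{m-2}+\cdots+\varphi^{m-2}k_1=0$ in $D^{p^{m-1}s}$. By induction all $k_j$ with $j<m$ vanish, and then $k_m=0$.

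Taking $m=f$ gives the lemma. I do not expect a real obstacle. The only point needing care is that $\varphi_D$ and $\psi_D$ at the various levels are compatible, namely that they are base changes of $\varphi_{D^s}$ and $\psi_{D^s}$. This is what makes $\psi\varphi=1$ hold on every $D^{p^{j-1}s}$ and makes $\Psi_m$ factor as $\Psi_{m-1}\circ\psi$.
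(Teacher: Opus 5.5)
Your proof is correct and takes the same approach as the paper. The paper also splits $x=(x-\varphi\psi x)+\varphi\psi x$ to get $\ker\Psi_{D^s}=K_f(s)\oplus\varphi\ker(\psi_D^{f-1}\colon D^{p^{f-1}s}\to D^s)$ and then just says ``iterating''; your induction on $m$, with both inclusions and directness checked separately, writes that iteration out in full.
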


\begin{proof}
For $x\in D^{qs}$, write $x=(x-\varphi\psi x)+\varphi(\psi x)$. 
The first summand lies in $K_f(s)$, hence 
\[
\ker(\Psi_{D^s})=K_f(s)\oplus \varphi\ker(\psi_D^{f-1}\colon  D^{p^{f-1}s}\to D^s). 
\]
Iterating proves the lemma. 
\end{proof}

Since the action of $\Gamma_K$ commutes with $\varphi_D$, it also commutes with $\psi$, hence it induces an action of $\Gamma_K$ on $K_j(s), \ker\Psi_{D^s}$ and $(\ker\Psi_{D^s})^{\Delta_K}$. 

\begin{lemma}\label{lem:invertible action}
There exists $s_1\geq s$, such that for any $s'\geq s_1$, $\gamma_K-1$ is invertible on $\ker\Psi_{D^{s'}}$ and $(\ker\Psi_{D^{s'}})^{\Delta_K}$. 
\end{lemma}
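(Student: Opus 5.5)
Thanks to the preceding lemma, $\ker\Psi_{D^{s'}}=K_f(s')\oplus\varphi K_{f-1}(s')\oplus\cdots\oplus\varphi^{f-1}K_1(s')$ as $\Gamma_K$-modules, since $\Gamma_K$ commutes with both $\varphi_D$ and $\psi_D$. The plan is therefore to reduce the statement to the classical one for $\psi$, namely the invertibility of $\gamma_K-1$ on $\ker\psi$ (on $D^{\Delta_K}$-invariants) for a $(\varphi,\Gamma_K)$-module in families. This is \cite[Proposition~3.3.2 / Lemma~3.3.?]{KPX} in their setup, the family version of Cherbonnier--Colmez/Herr.

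Concretely, recall the KPX argument. One writes $D^{ps}/\varphi(D^s)$, or equivalently $\ker(\psi\colon D^{ps}\to D^s)$, as $\bigoplus_{i=1}^{p-1}(1+\pi)^i\varphi(D^s)$ in the cyclotomic case. In general, for $K$ with an imperfect-residue-type variable $\pi_K$, one uses the corresponding decomposition from \cite{KPX}. One then shows that $\gamma_K-1$ acts on $\ker\psi$ with inverse given by a convergent series once $s$ is large. The key estimate is that, on $D^{[s',r']}$ with $s'$ large, $\gamma_K-1$ on $\ker\psi$ is a small perturbation of an invertible operator with uniformly bounded inverse. The bound is independent of the base $S$ because $D$ is finite projective and one may embed it as a direct summand of a finite free module and argue coordinatewise.

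I will simply cite \cite[Proposition~3.3.2(?)]{KPX} (valid for $S$ any affinoid algebra and general $K$) to get $s_1$ such that $\gamma_K-1$ is bijective on $K_1(s')=\ker(\psi_D\colon D^{ps'}\to D^{s'})$ for all $s'\ge s_1$. Then the key step is to note that $K_j(s')=\ker(\psi\colon D^{p^{j}s'}\to D^{p^{j-1}s'})=K_1(p^{j-1}s')$ with $p^{j-1}s'\ge s_1$. This gives bijectivity of $\gamma_K-1$ on each $K_j(s')$.

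Next, $\varphi$ is injective and $\Gamma_K$-equivariant, so $\gamma_K-1$ is bijective on $\varphi^{f-j}K_j(s')$. It carries $\varphi^{f-j}K_j(s')$ onto itself because $(\gamma_K-1)\varphi^{f-j}x=\varphi^{f-j}(\gamma_K-1)x$, and $\varphi^{f-j}$ is an isomorphism onto its image. Summing over the direct sum decomposition gives invertibility on $\ker\Psi_{D^{s'}}$.

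For the $\Delta_K$-invariants, $\Delta_K$ is finite and commutes with $\gamma_K$ (as $\Gamma_K$ is abelian). The idempotent $e=|\Delta_K|^{-1}\sum_{\delta\in\Delta_K}\delta$ makes $(\ker\Psi)^{\Delta_K}=e\ker\Psi$ a $\gamma_K$-stable direct summand, and the inverse of $\gamma_K-1$ commutes with $e$. Hence it restricts to $(\ker\Psi_{D^{s'}})^{\Delta_K}$.

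The main obstacle is making sure the cited KPX result covers exactly what is needed: the kernel of $\psi$ on $D^{ps'}$, rather than on all of $D$ or on $D^{[s',r']}$, and a general $K$ rather than $\BQ_p$. If only the statement for the whole Robba ring is available, I would rerun KPX's proof. There one decomposes into the $D^{[s',r']}$ pieces, proves a uniform operator-norm bound $\|(\gamma_K-1)^{-1}\|$ on $\ker\psi\cap D^{[ps',pr']}$ for $s'\ge s_1$ independent of $r'$, and passes to the inverse limit over $r'$.
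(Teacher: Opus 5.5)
Your proposal is correct and follows essentially the paper's argument. The paper cites \cite[Theorem~3.1.1]{KPX} (the precise reference, not a Proposition~3.3.2) to get $s_1$ with $\gamma_K-1$ invertible on every $K_j(s')$ for $s'\geq s_1$, then concludes via the decomposition of $\ker\Psi_{D^{s'}}$ and passage to $\Delta_K$-invariants; your extra details ($K_j(s')=K_1(p^{j-1}s')$, transport along the injective $\Gamma_K$-equivariant $\varphi^{f-j}$, and commutation of $(\gamma_K-1)^{-1}$ with $\Delta_K$) are exactly the steps the paper leaves implicit.
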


\begin{proof}
By \cite[Theorem~3.1.1]{KPX}, there exists $s_1\geq s$, such that for any $s'\geq s_1$, $\gamma_K-1$ is invertible on each $K_j(s')$, hence it is invertible on $\ker(\Psi_{D^{s'}})$ and $(\ker\Psi_{D^{s'}})^{\Delta_K}$. 
\end{proof}

For any $r'\geq qs'\geq qs_1$, $\Phi_{D^{s'}}$ induces a $\varphi^f$-module structure on $D^{[s', r']}$. We denote it by $\Phi_{D^{[s', r']}}$. 
Let $\alpha\in S^{\times}$. 
\begin{proposition}\label{prop:closed annulus herr}
The cohomology of the complex 
\[
C^\bullet_{\alpha^{-1}\varphi^f,\gamma_K}(D)
:=\left[D^{\Delta_K}\xrightarrow{(\alpha^{-1}\varphi^f-1, \gamma_K-1)}D^{\Delta_K}\oplus D^{\Delta_K}\xrightarrow{(\gamma_K-1)\oplus(1-\alpha^{-1}\varphi^f)}D^{\Delta_K}\right]
\] 
can be computed by the following complex
\[
\left[(D^{[s', r']})^{\Delta_K}\xrightarrow{(\alpha^{-1}\Phi_{D^{[s', r']}}-1, \gamma_K-1)}(D^{[qs', r']})^{\Delta_K}\oplus (D^{[s', r']})^{\Delta_K}\xrightarrow{(\gamma_K-1)\oplus(1-\alpha^{-1}\Phi_{D^{[s', r']}})}(D^{[qs', r']})^{\Delta_K}\right]
\]
which is denoted by $C^{\bullet}_{\alpha^{-1}\Phi_{D^{[s', r']}},\gamma_K}(D^{[s', r']})$, for any $r'\geq qs'\geq qs_1$.
\end{proposition}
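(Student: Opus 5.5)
The argument follows the standard Kedlaya--Pottharst--Xiao reduction: first pass from $D$ to $D^{s'}$ using $\Psi$ and the invertibility of $\gamma_K-1$ on $\ker\Psi$ (Lemma~\ref{lem:invertible action}), then from $D^{s'}$ to $D^{[s',r']}$ via Corollary~\ref{cor:qisom by equiv of cat}. Since $\alpha\in S^\times$ is $S$-linear and commutes with $\Gamma_K$, $\alpha^{-1}\Phi_{D^{s'}}$ again defines a $\varphi^f$-module structure on $D^{s'}$ (twist by the rank-one $\varphi^f$-module with $\Phi=\alpha^{-1}$). All results for $\Phi$ therefore apply to $\alpha^{-1}\Phi$, with $\Psi$ replaced by $\alpha\Psi$, whose kernel is still $\ker\Psi_{D^{s'}}$.

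\emph{Step 1: from $D$ to $D^{s'}$.} The complex $C^\bullet_{\alpha^{-1}\varphi^f,\gamma_K}(D)$ is the total complex of the double complex with rows $[D^{\Delta_K}\xrightarrow{\alpha^{-1}\varphi^f-1}D^{\Delta_K}]$ and columns given by $\gamma_K-1$. Since $D=\varinjlim_{s'}D^{s'}$ and cohomology commutes with filtered colimits, it suffices to show that for $s'\ge s_1$ the inclusion of
\[
\left[(D^{s'})^{\Delta_K}\to (D^{qs'})^{\Delta_K}\oplus(D^{s'})^{\Delta_K}\to (D^{qs'})^{\Delta_K}\right]
\]
into the analogous complex for $D^{qs'}$, and hence for every larger radius, is a quasi-isomorphism. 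For this, compare the $\varphi^f$-complexes $[D^{s'}\xrightarrow{\alpha^{-1}\Phi-1}D^{qs'}]$ and $[D^{qs'}\xrightarrow{\alpha^{-1}\Phi-1}D^{q^2s'}]$. The cone of the inclusion is built from $D^{qs'}/D^{s'}$ and $D^{q^2s'}/D^{qs'}$. Mimicking KPX (Prop.~2.3.1 / 2.5.?), use $\alpha\Psi$ to show that $\alpha^{-1}\Phi-1$ induces an isomorphism on these quotients up to terms lying in $\ker\Psi$. The argument runs as follows: $H^0$ agrees because an element with $\alpha^{-1}\Phi x=x$ in $D^{qs'}$ satisfies $x=\alpha\Psi x\in D^{s'}$; for $H^1$, any $y\in D^{q^2s'}$ decomposes as $\Phi\Psi y+(y-\Phi\Psi y)$ with the second term in $\ker\Psi$. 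Consequently the cone is quasi-isomorphic to a complex involving only $(\ker\Psi_{D^{s'}})^{\Delta_K}$, on which the $\gamma_K-1$ direction is acyclic by Lemma~\ref{lem:invertible action}. Taking the total complex with the $\gamma_K$ direction, which is exact on $\Delta_K$-invariants because $\Delta_K$ is finite and we work over $\BQ_p$, then gives the quasi-isomorphism.

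\emph{Step 2: from $D^{s'}$ to $D^{[s',r']}$.} Restriction gives a morphism of double complexes, and Corollary~\ref{cor:qisom by equiv of cat}, applied to the twisted $\varphi^f$-module $(D^{s'},\alpha^{-1}\Phi)$, shows that each row $[D^{s'}\to D^{qs'}]\to[D^{[s',r']}\to D^{[qs',r']}]$ is a quasi-isomorphism. This map is $\Gamma_K$-equivariant, and taking $\Delta_K$-invariants (an exact idempotent projector $|\Delta_K|^{-1}\sum_{\delta}\delta$) preserves quasi-isomorphisms. A morphism of two-column double complexes that is a quasi-isomorphism on each column induces a quasi-isomorphism on total complexes (spectral sequence or cone argument), so the second reduction follows.

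The main obstacle is Step 1, because its contracting-homotopy bookkeeping requires $\Psi$ to be compatible with the twist by $\alpha$ and with $\Delta_K$-invariants. It also must handle the fact that $\ker\Psi$ splits only as the sum over $j$ of the $\varphi^{f-j}K_j$, not as a single $\ker\psi$. I would first handle $\alpha=1$ exactly as in KPX, then observe that every step is $S$-linear, so that multiplying $\Psi$ by the unit $\alpha$ changes nothing.
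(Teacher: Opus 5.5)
Your proposal is correct. Step 2 is the paper's argument: Corollary~\ref{cor:qisom by equiv of cat} applied to the twisted $\varphi^f$-structure $\alpha^{-1}\Phi$, then the fibre of $\gamma_K-1$, then exactness of $\Delta_K$-invariants.

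Step 1 is organised differently. The paper never analyses the inclusion $C^\bullet_{\alpha^{-1}\Phi_{D^{s'}},\gamma_K}(D^{s'})\to C^\bullet_{\alpha^{-1}\Phi_{D^{qs'}},\gamma_K}(D^{qs'})$ directly. Instead it gives an explicit homotopy (via $\mathrm{pr}_1$ and $(0,-1)$) between this inclusion and the chain map $\alpha^{-1}\Phi_{D^{s'}}$. That map has retraction $\alpha\Psi$, so its cokernel can be read off from the splitting $D^{qs'}=\alpha^{-1}\Phi(D^{s'})\oplus\ker\Psi_{D^{s'}}$. The cokernel is the complex built from $(\ker\Psi_{D^{s'}})^{\Delta_K}$ and $(\ker\Psi_{D^{qs'}})^{\Delta_K}$. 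On it $\gamma_K-1$ is invertible termwise, so acyclicity follows at once from Lemma~\ref{lem:invertible action}.

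You take the quotient complex of the inclusion itself. This avoids the homotopy, but $\gamma_K-1$ is not termwise invertible on $D^{qs'}/D^{s'}$ and $D^{q^2s'}/D^{qs'}$. So you must compute the cohomology of the $\varphi^f$-row of the quotient exactly, and here your sketch is imprecise. What survives is not ``only $(\ker\Psi_{D^{s'}})^{\Delta_K}$'' but the quotient $\ker\Psi_{D^{qs'}}/\ker\Psi_{D^{s'}}$, sitting in degree one.

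To close this you need two things.
\begin{enumerate}
\item Injectivity of $\alpha^{-1}\Phi-1$ on $D^{qs'}/D^{s'}$, which is slightly more than your $H^0$ statement. If $x\in D^{qs'}$ and $\alpha^{-1}\Phi x-x\in D^{qs'}$, then $\alpha^{-1}\Phi x\in D^{qs'}$, so $x=\alpha\Psi(\alpha^{-1}\Phi x)\in D^{s'}$.
\item The exact cokernel. Your decomposition $y=\Phi\Psi y+(y-\Phi\Psi y)$ shows every class in $D^{q^2s'}/\bigl(D^{qs'}+(\alpha^{-1}\Phi-1)D^{qs'}\bigr)$ is represented by some $z\in\ker\Psi_{D^{qs'}}$. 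Conversely, suppose such a $z$ equals $a+(\alpha^{-1}\Phi-1)b$ with $a,b\in D^{qs'}$. Applying $\alpha\Psi$ gives $b=\alpha\Psi(b-a)\in D^{s'}$. Hence $z\in D^{qs'}\cap\ker\Psi_{D^{qs'}}=\ker\Psi_{D^{s'}}$.
\end{enumerate}
So the cokernel is $\ker\Psi_{D^{qs'}}/\ker\Psi_{D^{s'}}$, after taking $\Delta_K$-invariants. By the five lemma, $\gamma_K-1$ is bijective on it, because it is bijective on both kernels; Lemma~\ref{lem:invertible action} applies to both since $qs'\ge s'\ge s_1$. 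With this, the total quotient complex is acyclic and your Step 1 goes through. The paper's homotopy trick buys exactly the avoidance of this intersection computation.
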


\begin{proof}
Let's follow the argument of \cite[Proposition~3.8]{Bellovin}. 
Define $C^{\bullet}_{\alpha^{-1}\Phi_{D^{s'}},\gamma_K}(D^{s'})$ similarly to the above complexes.
Note that 
\[
C^\bullet_{\alpha^{-1}\varphi^f,\gamma_K}(D)=\varinjlim_{s'\geq s_1} C^{\bullet}_{\alpha^{-1}\Phi_{D^{s'}},\gamma_K}(D^{s'}). 
\]
We claim that the transition maps are quasi-isomorphisms. 
In fact, there are two chain maps 
\[
1, \alpha^{-1}\Phi_{D^{s'}}\colon  C^{\bullet}_{\alpha^{-1}\Phi_{D^{s'}},\gamma_K}(D^{s'})\to C^{\bullet}_{\alpha^{-1}\Phi_{D^{qs'}},\gamma_K}(D^{qs'}), 
\]
where $1$ is the transition map mentioned above, induced by the natural morphism $D^{s'}\to D^{qs'}$. 
Consider the following diagram
\[
\begin{tikzcd}[column sep=large,row sep=large]
(D^{s'})^{\Delta_K} \arrow[rr, "{(\alpha^{-1}\Phi_{D^{s'}}-1, \gamma_K-1)}"] \arrow[d, "\alpha^{-1}\Phi_{D^{s'}}", shift left] \arrow[d, "1"', shift right] &  & (D^{qs'})^{\Delta_K}\oplus (D^{s'})^{\Delta_K} \arrow[rr, "(\gamma_K-1)\oplus(\alpha^{-1}\Phi_{D^{s'}}-1)"] \arrow[d, "\alpha^{-1}\Phi_{D^{s'}}", shift left] \arrow[lld, "\mathrm{pr}_1" description] \arrow[d, "1"', shift right] &  & (D^{qs'})^{\Delta_K} \arrow[d, "\alpha^{-1}\Phi_{D^{qs'}}", shift left] \arrow[lld, "{(0, -1)}" description] \arrow[d, "1"', shift right] \\
(D^{qs'})^{\Delta_K} \arrow[rr, "{(\alpha^{-1}\Phi_{D^{qs'}}-1, \gamma_K-1)}"']                                                         &  & (D^{q^2s'})^{\Delta_K}\oplus (D^{qs'})^{\Delta_K} \arrow[rr, "(\gamma_K-1)\oplus(\alpha^{-1}\Phi_{D^{qs'}}-1)"']                                                                                                &  & (D^{q^2s'})^{\Delta_K}                                                                                              
\end{tikzcd}
\]
It is easy to see that the two chain maps are homotopic.
Moreover, since $\alpha\Psi_{D^{s'}}\circ \alpha^{-1}\Phi_{D^{s'}}=1$, the cokernel of the chain map $\alpha^{-1}\Phi_{D^{s'}}$ is given by 
\[
\ker(\Psi_{D^{s'}})^{\Delta_K}\xrightarrow{(-1, \gamma_K-1)} \ker(\Psi_{D^{qs'}})^{\Delta_K}\oplus \ker(\Psi_{D^{s'}})^{\Delta_K}\xrightarrow{(\gamma_K-1)\oplus1}\ker(\Psi_{D^{qs'}})^{\Delta_K}. 
\]
By Lemma~\ref{lem:invertible action}, this complex is acyclic, hence the claim holds, and the cohomology of $C^\bullet_{\alpha^{-1}\varphi^f,\gamma_K}(D)$ can be computed by that of $C^\bullet_{\alpha^{-1}\Phi_{D^{s'}},\gamma_K}(D^{s'})$ for any $s'\geq s_1$. 
Apply Corollary~\ref{cor:qisom by equiv of cat} to $D^{s'}$ with the $\varphi^f$-modules structure defined by $\alpha^{-1}\Phi_{D^{s'}}$, we see that the natural morphism 
\[
\Fib\left(\gamma_K-1\colon  C_{\alpha^{-1}\Phi_{D^s}}^{\bullet}(D^{s'})\to C_{\alpha^{-1}\Phi_{D^s}}^{\bullet}(D^{s'})\right)
\to \Fib\left(\gamma_K-1\colon  C_{\alpha^{-1}\Phi_{D^s}}^{\bullet}(D^{[s', r']})\to C_{\alpha^{-1}\Phi_{D^s}}^{\bullet}(D^{[s', r']})\right),
\]
is a quasi-isomorphism for any $r'\geq qs'$. 
Taking the $\Delta_K$-invariant, and since we are in characteristic zero, the resulting natural morphism
\[
C^{\bullet}_{\alpha^{-1}\Phi_{D^{s'}},\gamma_K}(D^{s'})\to C^{\bullet}_{\alpha^{-1}\Phi_{D^{[s', r']}},\gamma_K}(D^{[s', r']})
\]
is still a quasi-isomorphism. So we are done. 
\end{proof}

\begin{proof}[Proof of Theorem~\ref{thm:herr-main}]
Consider the following diagram 
\[
\begin{tikzcd}[column sep=large,row sep=large]
D^{\Delta_K} \arrow[rr, "(\varphi^f-\alpha)\oplus(\gamma_K-1)"] \arrow[d, "1"] &  & D^{\Delta_K}\oplus D^{\Delta_K} \arrow[rr, "(\gamma_K-1)\oplus(\alpha-\varphi^f)"] \arrow[d, "{(\alpha^{-1}, 1)}"] &  & D^{\Delta_K} \arrow[d, "\alpha^{-1}"] \\
D^{\Delta_K} \arrow[rr, "(\alpha^{-1}\varphi^f-1)\oplus(\gamma_K-1)"']         &  & D^{\Delta_K}\oplus D^{\Delta_K} \arrow[rr, "(\gamma_K-1)\oplus(1-\alpha^{-1}\varphi^f)"']                          &  & D^{\Delta_K}.                          
\end{tikzcd}
\]
We see that the complex $C^\bullet_{\varphi^f-\alpha, \gamma_K}(D)$ is isomorphic to $C^{\bullet}_{\alpha^{-1}\varphi^f, \gamma_K}(D)$. 
So it suffice to show the later complex is a perfect complex of $S$-modules concentrated in degrees in $[0, 2]$. 
Indeed, choose $s', r', s'', r''$ with $r'\geq qs'\geq qs_1, r''\geq q^2s'$ and $[qs', r'']\subset(s', r')$. 
Then we have the following commutative square
\[
\begin{tikzcd}
{C^{\bullet}_{\alpha^{-1}\Phi_{D^{s'}},\gamma_K}(D^{s'})} \arrow[r] \arrow[d]   & {C^{\bullet}_{\alpha^{-1}\Phi_{D^{qs'}},\gamma_K}(D^{qs'})} \arrow[d]     \\
{C^{\bullet}_{\alpha^{-1}\Phi_{D^{[s', r']}},\gamma_K}(D^{[s', r']})} \arrow[r] & {C^{\bullet}_{\alpha^{-1}\Phi_{D^{[qs', r'']}},\gamma_K}(D^{[qs', r'']})}
\end{tikzcd}
\]
By the proof of Proposition~\ref{prop:closed annulus herr}, the top horizontal map and two vertical maps are quasi-isomorphisms, so the bottom map is also a quasi-isomorphism. 
Moreover, since $[qs', r'']\subset(s', r')$, each individual map in the bottom is a completely continuous morphisms of Banach spaces over $S$, hence their cohomology groups are finite $S$-modules by \cite[Lemma~1.10]{KedlayaLiu}. 
Thus by Proposition~\ref{prop:closed annulus herr}, the same property hold for the cohomology groups of $C_{\varphi^f-\alpha, \gamma_K}^{\bullet}$. 
Finally, as in the proof of \cite[Theorem~4.4.5(1)]{KPX}, the desired conclusion follows from \cite[Corollary~2.1.5]{KPX}, which gives the flatness of $D^{\Delta_K}$ over $S$. 
\end{proof}
\section{Overconvergent modular forms for \texorpdfstring{$G$}{G} and the cuspidal Hilbert eigenvariety}\label{sec:eigenvariety}

In this section, we follow \cite[\S 4]{AIP} to review $p$-adic families of cuspidal overconvergent modular forms for $G$ and the cuspidal Hilbert eigenvariety. 

\medskip
\noindent\textbf{Notation and conventions.}
Throughout this section to Section~\ref{sec:properness}, we will use the notation and conventions introduced below. 
Additional notation will be introduced as needed and, unless otherwise specified, will remain in force throughout these sections. 

Let $F$ be a totally real number field of degree $g$ over $\BQ$ with ring of integers $\CO_F$. 
Fix an integer $N\geq4$ with $p\nmid N$.
Let $E$ be a finite extension of $\BQ_p$ inside $\overline{\BQ_p}$ which splits $F$, $I:=\Hom(F, \overline{\BQ_p})=\Hom(F, E)$ be the set of all field embeddings.
Let $\mathbb{T}:=\operatorname{Res}_{\mathcal{O}_F / \mathbb{Z}}(\mathbb{G}_m)$. 
Then $\mathbb{Z}[I]$ is the character group of the torus $\mathbb{T}|_E$. 
We write $G:=\Res_{F/\BQ}(GL_2)$ and $G^*:=G\times_{\Res_{F/\BQ}\BG_m}\BG_m$, where $G\to \Res_{F/\BQ}\BG_m$ is the determinant morphism and $\BG_m\to \Res_{F/\BQ}\BG_m$ is the natural one. 

\subsection{Classical and overconvergent modular forms for \texorpdfstring{$G$}{G}}\label{subsec:oc MFs for G}

Let $\CW$ be the weight space for $G$, i.e. the rigid analytic space associated to $\CO_E[[\BT(\BZ_p)\times\BZ_p^\times]]$, with universal character
\[
(\nu^\un, w^\un)\colon H:=\BT(\BZ_p)\times\BZ_p^\times\to \CO_E[[\BT(\BZ_p)\times\BZ_p^\times]]^\times. 
\]
Let $\CU$ be an affinoid rigid space over $E$ with a morphism $\CU\to \CW$. 
Pulling back the universal character, we get a continuous character $(\nu^\CU, w^\CU)$ of $H$ valued in $\CO(\CU)$. 
We will always identify the morphism $\CU\to\CW$ with the associated character $(\nu^\CU, w^\CU)$, called \emph{a family of weights}. 
In \cite{AIP}, Andreatta--Iovita--Pilloni have constructed \emph{the space of overconvergent cusp forms for $G$ of weight $\CU$ and tame level $N$}, denoted by
\[
S^G(\mu_N, (\nu^\CU, w^\CU))^{\dagger}. 
\]
Let $X:=(\BZ^I\oplus \BZ)\subset \CW(E)$ be the set of algebraic weights. 
More precisely, any $\tau\in I$ induces a homomorphism
\[
\tau\colon  \CO_F\otimes_\BZ\BZ_p\cong\prod_{\fq|p}\CO_{F_\fq} \to \CO_E, 
\]
which factors through the natural projection $\prod_{\fq|p}\CO_{F_\fq}\to \CO_{F_{\fp}}$ when $\tau\in I_{\fp}$. 
So for $(\nu, w)=((\nu_\tau)_{\tau\in I}, w)\in \BZ^I\oplus \BZ$, we have the associated character 
\[
H=\BT(\BZ_p)\times\BZ_p^\times\to \CO_E^\times, \quad (a, b)\mapsto \prod_{\tau\in I}\tau(a)^{\nu_\tau}b^w, 
\]
which defines a $E$-point of $\CW$. 
In this case, let $S^G(\Gamma_0(\fP), \mu_N, (\nu, w))$ be the space of classical cusp forms for $G$ of weight $(\nu, w)$, tame level $N$ and Iwahori level $\Gamma_0(\fP)$, where $\fP:=\prod_{\fp|p}\fp$. 
Then the theory of canonical subgroups gives a natural inclusion 
\[
S^{G}\bigl(\Gamma_0(\mathfrak{P}), \mu_N,(\nu,w)\bigr)\hookrightarrow S^G(\mu_N, (\nu, w))^{\dagger}. 
\]
An overconvergent cusp form $f\in S^G(\mu_N, \fc, (\nu, w))^{\dagger}$ is called \emph{classical} if it lies in the image of the above embedding. 
For details, see \cite{Bij} and \cite{AIP}, where Bijakowski studied the classicality of overconvergent modular forms for the group $G$, and Andreatta--Iovita--Pilloni explain how to define the classical modular forms for $G$ in terms of that for $G^*$. 
For a comparison with the adelic definition, see \cite{P}. 

\subsection{Hecke operators}\label{sec:Hecke operators}

Let $\CH^{Np}$ be the commutative $E$-algebra generated by the formal variables $T_{\ell}$ and $S_{\ell}$ for all ideals $\ell\subset\CO_F$ prime to $Np$, $\CU_p$ be the $E$-algebra generated by the operators $U_{\fp_i}$ for $1\leq i\leq f$, $\CH:=\CH^{Np}\otimes_{E}\CU_p$.
From \cite[\S 4]{AIP}, we have the following facts.
\begin{itemize}
\item
There is a natural action of $\CH$ on $S^G(\mu_N, \fc, (\nu^\CU, w^\CU))^{\dagger}$;
\item
There is a distinguished element $U_p\in\CH$.
Moreover, for each prime ideal $\fp|p$ of $\CO_F$, choose $x_\fp\in F^{\times, >0}$ such that $v_{\fp}(x_\fp)=1$ and $v_{\fq}(x_\fp)=0$ for all $\fq\neq\fp$ above $p$, then
\begin{equation}\label{eq:Up and Ufp}
\nu(p\prod_{\fp|p}x_\fp^{-e_{\fp}})U_p=\prod_{\fp|p} U_{\fp}^{e_\fp}, 
\end{equation}
where $e_{\fp}$ is the ramification index at $\fp$. 
\end{itemize}

If $f$ is an eigenvector for elements in $\CH$ simultaneously, we call it an eigenform. 
The following proposition is a special case of \cite[Corollary 4.10]{Hattori}. 

\begin{proposition}\label{prop:integrality of Hecke eigenvalues}
Suppose $\CU=\Sp(\BC_p)$.
Then for any $\fp|p$, the eigenvalues for the action of $U_p$ and $U_\fp$ on the non-zero eigenforms in $S^G(\mu_N, (\nu^\CU, w^\CU))^{\dagger}$ lie in $\CO_{\BC_p}$. 
\end{proposition}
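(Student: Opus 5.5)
The statement is a special case of \cite[Corollary~4.10]{Hattori}, so the cleanest route is to check that our setting matches Hattori's hypotheses and quote it. If we want a self-contained argument instead, we reduce to integrality of $U_\fp$-eigenvalues. The reduction goes through relation \eqref{eq:Up and Ufp}. The scalar $\nu(p\prod_{\fp|p}x_\fp^{-e_\fp})$ is the value of the continuous character $\nu$ on the element $p\prod x_\fp^{-e_\fp}$. By construction of the $x_\fp$, this element is a unit in $\CO_F\otimes\BZ_p$, hence lies in $\BT(\BZ_p)$. Since $\BT(\BZ_p)$ is compact, $\nu$ takes values in $\CO_{\BC_p}^\times$ there. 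So this scalar is a $p$-adic unit. It follows that integrality of all $U_\fp$-eigenvalues implies integrality of the $U_p$-eigenvalue. Thus it suffices to treat each $U_\fp$ separately.

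For a single $U_\fp$, the plan is a norm bound on an integral lattice.
\begin{enumerate}
\item Work on a strict neighbourhood $\mathfrak X(v)$ of the ordinary (or canonical-subgroup) locus of the Hilbert modular variety. Use the integral model of the modular sheaf $\omega^{\nu}$ constructed in \cite{AIP} via the Hodge--Tate map. This gives an $\CO_{\BC_p}$-lattice, namely integral sections, in the Banach space $S^G(\mu_N,(\nu,w))^{\dagger}_v$ of cusp forms overconvergent to radius $v$.
\item Show that the correspondence defining $U_\fp$ (quotienting by subgroups of $A[\fp]$ that are étale relative to the canonical subgroup) maps integral sections to integral sections. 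The normalization of $U_\fp$ in \cite{AIP} is chosen so that no negative power of $p$ is introduced; this step must be checked against the normalization of the trace on the finite flat correspondence.
\item Conclude from this the estimate $|U_\fp f|\leq |f|$ for the supremum norm on $\mathfrak X(v)$.
\item Let $f\neq 0$ be an eigenform with eigenvalue $\lambda$. Then $|\lambda||f| = |U_\fp f|\leq |f|$, which gives $|\lambda|\leq 1$, i.e. $\lambda\in\CO_{\BC_p}$.
\end{enumerate}

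The main obstacle is step 2, the integrality of the Hecke correspondence on the sheaf. For non-algebraic weights $\nu$, the sheaf $\omega^\nu$ is defined through torsors. One has to check that the isomorphism between the pullback of $\omega^\nu$ along the isogeny and $\omega^\nu$ is integral on the chosen neighbourhood, including at ramified primes $\fp$. This is exactly where Hattori's analysis (his Corollary~4.10) is needed. In practice I would cite it after verifying that our $U_\fp$ agrees with his normalization.
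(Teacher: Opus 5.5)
Your proposal is correct and matches the paper, which gives no argument beyond noting that the statement is a special case of \cite[Corollary~4.10]{Hattori}. Your extra reduction of the $U_p$-eigenvalue to the $U_\fp$-eigenvalues via \eqref{eq:Up and Ufp} is sound: $p\prod_{\fp|p}x_\fp^{-e_\fp}$ lies in the compact group $\BT(\BZ_p)$, so its image under $\nu$ is a unit, and eigenforms are simultaneous eigenvectors for $\CH$. The lattice sketch is optional, since, as you note, its crucial integrality step is exactly what Hattori's result supplies.
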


\subsection{The cuspidal Hilbert eigenvariety}\label{subsec:AIP's eigenvariety}

In \cite{AIP}, they use Buzzard's eigenvariety machine in \cite{Buzzard} to construct the cuspidal Hilbert eigenvariety. 
More precisely, we have the following theorem. 

\begin{theorem}\cite[Theorem~5.1]{AIP}
There is a rigid analytic space $\wt\colon  \CE\to \CW$, called \emph{the cuspidal Hilbert eigenvariety}, with a morphism $\Theta\colon  \CH\to \CO_\CE$, satisfying the following propositions. 
\begin{itemize}
\item $\CE$ is equidimensional of dimension $g+1$. 
\item The map $\wt$ is, locally on $\CE$ and $\CW$, finite and surjective. 
\item For any $x=(\nu, w)\in\CW$ and any finite field extension $L$ of $k(x)$, the set of $L$-points $z$ of $\CE$ such that $\wt(z)=x$ is in natural bijection with the systems of eigenvalues of $\CH$ appearing in $S^G(\mu_N, \fc, (\nu, w))^{\dagger}\widehat{\otimes}_{k(x)}L$ such that the eigenvalue of $U_p$ is non-zero. 
\end{itemize}
In particular, by construction, there is a Zariski closed subvariety $\CZ\subset \CW\times\BA^1_{\rig}$, called \emph{the spectral variety}, such that the morphism $(\wt, \Theta(U_p)^{-1})\colon \CE\to \CW\times \BA^1_{\rig}$ factors through a finite morphism $\CE\to \CZ$. 
\end{theorem}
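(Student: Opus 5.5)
This statement is \cite[Theorem~5.1]{AIP}, so the plan is to explain how it comes out of Buzzard's eigenvariety machine \cite{Buzzard} applied to the Banach modules of overconvergent cusp forms, rather than to reprove anything from scratch.

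\textbf{Step 1: the Banach input.} For each affinoid $\CU=\Sp(A)\subset\CW$ and a sufficiently small overconvergence radius $r$, I would take the module $S^G(\mu_N,(\nu^\CU,w^\CU))^{\dagger}_r$ of $r$-overconvergent cusp forms. One needs three facts about it:
\begin{itemize}
\item it is a projective Banach $A$-module satisfying Buzzard's property (Pr);
\item its formation is compatible with base change in the weight;
\item it carries a commuting action of $\CH$ in which $U_p$ acts compactly.
\end{itemize}
Compactness of $U_p$ comes from the fact that $U_p$ improves the radius of overconvergence. The Hecke operators $T_\ell,S_\ell$ with $\ell$ prime to $Np$ and the $U_\fp$ act and commute with $U_p$; this is formula~\eqref{eq:Up and Ufp} together with the constructions in \cite[\S4]{AIP}.

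\textbf{Step 2: the spectral variety and local pieces.} The Fredholm determinant $\det(1-XU_p)\in A\{\{X\}\}$ is compatible with changes of $r$ and of $\CU$, so these determinants glue to a power series over $\CW$. Its zero locus is the spectral variety $\CZ\subset\CW\times\BA^1_{\rig}$. By Buzzard's results, $\CZ$ has an admissible cover by affinoids $Y$ that are finite and flat over affinoids $\CU\subset\CW$. Over each such $Y$, Riesz theory produces a finite projective $\CO(Y)$-module: the slope-bounded part of the space of forms. Then $\CE|_Y$ is $\Sp$ of the image of $\CH\otimes\CO(Y)$ in the endomorphisms of this module. These pieces glue to $\CE$, and by construction $\CE\to\CZ$ is finite with $\Theta(U_p)^{-1}$ equal to the second coordinate.

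\textbf{Step 3: the listed properties.}
\begin{itemize}
\item \emph{Finite and surjective, locally.} $\CE\to\CZ$ is finite, and $\CZ\to\CW$ is locally finite flat and surjective onto its image. Surjectivity of $\wt$ itself needs nonvanishing of forms in every weight; if that is not available, one restricts to the image of each local piece.
\item \emph{Equidimensionality of dimension $g+1$.} Since $\CZ$ is a Fredholm hypersurface in $\CW\times\BA^1$, it is equidimensional of dimension $\dim\CW=g+1$. Each piece of $\CE$ is $\Sp$ of an algebra of endomorphisms of a finite projective $\CO(Y)$-module, hence torsion-free over $\CO(Y)$ after passing to components. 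So every component of $\CE$ maps onto a component of $\CZ$, giving equidimensionality.
\item \emph{Points.} The bijection with eigensystems having nonzero $U_p$-eigenvalue follows from base change of the slope decomposition to $k(x)$ and then to $L$, plus the standard argument that points of $\Sp$ of a Hecke algebra correspond to systems of eigenvalues.
\end{itemize}

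\textbf{Main obstacle.} The hardest input is Step 1: proving (Pr) and base-change compatibility for the AIP sheaves over a family of weights. In \cite{AIP} this is done by passing through $G^*$, working with the weight modules on the strict neighbourhoods of the ordinary locus, and twisting by the action of totally positive units. In a self-contained proof, this is where I would spend the effort.
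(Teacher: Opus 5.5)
Your proposal is correct and follows the same route as the paper: the paper gives no proof of its own, but cites \cite[Theorem~5.1]{AIP} and notes that the result comes from Buzzard's eigenvariety machine applied to the Andreatta--Iovita--Pilloni Banach modules of overconvergent cusp forms, with $\CZ$ the zero locus of the Fredholm series of $U_p$. The one point you leave implicit is that local surjectivity of $\wt$ also needs surjectivity of $\CE\to\CZ$; this follows from your description of points, since a nonzero finite-dimensional generalized $U_p$-eigenspace always carries at least one system of Hecke eigenvalues.
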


As a corollary of Proposition~\ref{prop:integrality of Hecke eigenvalues}, we have: 

\begin{proposition}\label{prop:power boundedness}
For any $x\in\CE$, we have $|\Theta(U_p)(x)|, |\Theta(U_\fp)(x)|\leq 1$ for all $\fp|p$. 
\end{proposition}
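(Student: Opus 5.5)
The plan is to reduce this to the pointwise integrality statement of Proposition~\ref{prop:integrality of Hecke eigenvalues}, using the moduli description of the points of $\CE$ recalled in \cite[Theorem~5.1]{AIP}.

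First, let $x\in\CE$ be a closed point, and set $y=\wt(x)=(\nu,w)\in\CW$. The residue field $k(x)$ is a finite extension of $k(y)$, so by the third property of the eigenvariety, $x$ corresponds to a system of Hecke eigenvalues occurring in $S^G(\mu_N,(\nu,w))^{\dagger}\widehat{\otimes}_{k(y)}k(x)$ with nonzero $U_p$-eigenvalue. In this system, $\Theta(U_p)(x)$ and $\Theta(U_\fp)(x)$ are the eigenvalues of $U_p$ and $U_\fp$. Fix an embedding $k(x)\hookrightarrow\BC_p$ over $E$; since $|\cdot|$ at $x$ is the restriction of the absolute value on $\BC_p$, it suffices to bound the images in $\BC_p$.

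Next, base change along $k(x)\hookrightarrow\BC_p$. The eigenvector in $S^G(\mu_N,(\nu,w))^{\dagger}\widehat{\otimes}_{k(y)}k(x)$ produces a nonzero eigenform in $S^G(\mu_N,(\nu,w))^{\dagger}\widehat{\otimes}_{k(y)}\BC_p$. This is the space of overconvergent cusp forms for the weight $\CU=\Sp(\BC_p)\to\CW$ obtained by composing with $y$. Here I use that the AIP construction of $S^G(\mu_N,\,\cdot\,)^\dagger$ is compatible with base change of the weight. For an affinoid weight the space is a projective Banach module and base change is built into the construction, so this is standard. Its eigenvalues are the same, and Proposition~\ref{prop:integrality of Hecke eigenvalues} gives that they lie in $\CO_{\BC_p}$. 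Hence $|\Theta(U_p)(x)|\le1$ and $|\Theta(U_\fp)(x)|\le1$.

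The main obstacle is the base change step: making sure that the $\BC_p$-point is legitimately covered by Hattori's statement, which is stated for $\CU=\Sp(\BC_p)$, and that specialization of the eigenvariety at $x$ really matches eigenforms at that weight. I would handle it by citing the specialization property in \cite[\S4]{AIP}. As a fallback for $U_\fp$, one can use relation \eqref{eq:Up and Ufp} together with the bound for all $U_\fp$, although Hattori already covers each $U_\fp$ directly.
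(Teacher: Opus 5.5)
Your proposal is correct and follows the paper, which records this as an immediate corollary of Proposition~\ref{prop:integrality of Hecke eigenvalues} (Hattori's integrality), using the description of the points of $\CE$ as finite-slope eigensystems at their weight. One small remark: your fallback through \eqref{eq:Up and Ufp} would not by itself bound each individual $U_\fp$-eigenvalue, but you do not need it, since Hattori's result already covers every $U_\fp$ directly.
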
 

\subsection{Classical points with cohomological weights on the eigenvariety}

\begin{definition}
Let $x\in\CE$ be a closed point.
Suppose $\wt(x)=(\nu, w)$ is an algebraic weight. 
We say that $x$ is a \emph{classical point} if the system of Hecke eigenvalues corresponding to $x$ appears in the subspace $S^G(\Gamma_0(\fp), \mu_N, (\nu, w))\otimes_E k(x)$ of $S^G(\mu_N, (\nu, w))^{\dagger}\otimes_E k(x)$. 
\end{definition}

We decompose $I=\coprod_{\fp|p}I_{\fp}$, where $I_{\fp}=\Hom(F_{\fp}, \overline{\BQ_p})$. 
The following theorem is included \cite[Theorem 5.1]{AIP}, which follows from the main theorem of \cite{Bij}.  

\begin{theorem}\label{thm:classicality}
Let $(\nu,w)\in (\BZ^I\oplus\BZ)\subset \CW(E)$ be an algebraic weight. 
Write $\nu=(\nu_\tau)_{\tau\in I}$ and set $k_{\sigma} = 2\nu_{\sigma}+w$.
Let $f \in S^G(\mu_{N},(\nu,w))^{\dagger}$
be a finite-slope overconvergent eigenform, with $U_\fp$-eigenvalue $s_\fp$. 
Assume that for each prime $\fp|p$ of $F$,
\[
\inf_{\sigma\in I_\fp} k_{\sigma}>e_\fp(v_p(s_\fp)+f_\fp),
\]
where $f_{\fp} = [\mathcal{O}_{F}/\mathfrak{p}\colon  \mathbb{F}_{p}] $.
Then $f$ is a classical cuspidal eigenform for the group $G$. 
\end{theorem}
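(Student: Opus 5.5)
The statement is quoted from \cite[Theorem~5.1]{AIP}, which relies on Bijakowski's classicality theorem \cite{Bij}. The plan is therefore to reduce it to that result rather than reprove it. Bijakowski's theorem concerns $G^*$ or the unitary-type Shimura varieties. The natural route is to transport $f$ to the Hilbert modular variety for $G^*$ and apply his criterion there, one prime $\fp\mid p$ at a time.

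\textbf{Step 1: pass from $G$ to $G^*$.} Following \cite[\S4]{AIP}, overconvergent cusp forms for $G$ of weight $(\nu,w)$ are defined as invariants of a finite group acting on forms for $G^*$, summed over a finite set of polarization classes $\fc$. The same description holds for the classical spaces $S^G(\Gamma_0(\fP),\mu_N,(\nu,w))$. I would therefore write $f$ as a finite tuple $(f_\fc)_\fc$ of forms for $G^*$ of weight $\kappa_\sigma=k_\sigma$. I then check that:
\begin{itemize}
\item the $U_\fp$-operators on the two sides are compatible;
\item the identity \eqref{eq:Up and Ufp} relating $U_p$ to the $U_\fp$ carries over.
\end{itemize}
This step is bookkeeping. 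The one point needing care is the normalization of $U_\fp$, since a twist by $\nu(x_\fp)$ changes slopes by an amount that must be absorbed correctly into the bound $e_\fp(v_p(s_\fp)+f_\fp)$.

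\textbf{Step 2: prime-by-prime analytic continuation.} Bijakowski's method works one prime $\fp$ at a time. Let $s_\fp$ be the $U_\fp$-eigenvalue, and use the degree function on the $\fp$-part of the $p$-divisible group. Suppose the slope condition $\inf_{\sigma\in I_\fp}k_\sigma>e_\fp(v_p(s_\fp)+f_\fp)$ holds. Then the series $s_\fp^{-n}U_\fp^n f$ extends $f$ from a strict neighbourhood of the $\fp$-ordinary (canonical-subgroup) locus to the whole rigid fibre in the $\fp$-direction. The key estimate is a bound on the norm of the operator $U_\fp$ on sections over non-ordinary strata. Its loss is measured by the Hodge height, which produces the factor $e_\fp$, and by the residue degree, which produces the additive $f_\fp$. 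The weight enters through the smallest $k_\sigma$. Applying this for each $\fp\mid p$ in turn gives a section over the full rigid analytic Iwahori-level variety.

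\textbf{Step 3: conclude and descend.} By the Koecher principle, or rigid GAGA on the minimal compactification, the extended section is algebraic, hence classical of level $\Gamma_0(\fP)$. Cuspidality is preserved because the $q$-expansion at each cusp is unchanged. Finally, descend along Step 1 back to $G$ to get the assertion.

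The main obstacle is Step 2: obtaining the sharp norm estimate for $U_\fp$ near the boundary of the ordinary locus when $\fp$ is ramified. I would simply cite \cite{Bij}, whose hypotheses are exactly the displayed inequality, so that the proof amounts to Steps 1 and 3 together with this citation.
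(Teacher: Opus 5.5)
Your proposal agrees with the paper, which gives no independent argument and simply records the statement as part of \cite[Theorem~5.1]{AIP}, itself deduced from Bijakowski's main classicality theorem \cite{Bij}. Your Steps~1--3 are a reasonable outline of what lies behind that citation: Step~2 is only a heuristic sketch of Bijakowski's analytic continuation and gluing, but you rightly defer it to \cite{Bij}, and the normalization of $U_\fp$ that you flag is indeed the one point to check when matching hypotheses.
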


\begin{corollary}\label{cor:classicality}
Keep the notation as in the previous theorem. 
Let $s_p$ be the $U_p$-eigenvalue of $f$, $M:=v_p(s_p)+\max_{\fp}e_\fp f_\fp$. 
If $k_\sigma>M$ for all $\sigma\in I$, then $f$ is a classical cuspidal eigenform for the group $G$
\end{corollary}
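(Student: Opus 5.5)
The plan is to reduce Corollary~\ref{cor:classicality} directly to Theorem~\ref{thm:classicality}. That means checking, for every prime $\fp\mid p$, the inequality
\[
\inf_{\sigma\in I_\fp}k_\sigma>e_\fp\bigl(v_p(s_\fp)+f_\fp\bigr).
\]
Since $k_\sigma>M$ for every $\sigma\in I$, and in particular for all $\sigma\in I_\fp$, it suffices to show that $M\ge e_\fp(v_p(s_\fp)+f_\fp)$ for each $\fp$. Here $M=v_p(s_p)+\max_{\fp}e_\fp f_\fp$.

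The key input is relation~\eqref{eq:Up and Ufp} between $U_p$ and the $U_\fp$. Applied to the eigenform $f$, it gives
\[
\nu\Bigl(p\prod_{\fp\mid p}x_\fp^{-e_\fp}\Bigr)\,s_p=\prod_{\fp\mid p}s_\fp^{e_\fp}.
\]
I will first check that the scalar $c:=\nu(p\prod_\fp x_\fp^{-e_\fp})$ is a $p$-adic unit. The element $p\prod_\fp x_\fp^{-e_\fp}$ has $\fq$-adic valuation $e_\fq-e_\fq=0$ at every $\fq\mid p$, so it lies in $\BT(\BZ_p)=(\CO_F\otimes\BZ_p)^\times$. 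For an algebraic weight, $\nu$ sends it to $\prod_\tau\tau(\cdot)^{\nu_\tau}$, which is a product of units. Taking valuations then gives
\[
v_p(s_p)=\sum_{\fp\mid p}e_\fp\,v_p(s_\fp).
\]

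Next I invoke Proposition~\ref{prop:integrality of Hecke eigenvalues}, after extending scalars to $\BC_p$. It gives $v_p(s_\fq)\ge0$ for all $\fq$, and hence $e_\fp v_p(s_\fp)\le v_p(s_p)$ for each fixed $\fp$. Since also $e_\fp f_\fp\le\max_{\fq}e_\fq f_\fq$, we get
\[
e_\fp\bigl(v_p(s_\fp)+f_\fp\bigr)\le M<\inf_{\sigma\in I_\fp}k_\sigma,
\]
and Theorem~\ref{thm:classicality} applies.

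The only potential obstacle is the unit check on $c$, together with the fact that finite slope forces $s_p\neq0$. Since $s_p\neq0$, every $s_\fp$ is nonzero and has a finite valuation. Both points are routine.
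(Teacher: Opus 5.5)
Your proposal is correct and follows the paper's proof. Both combine \eqref{eq:Up and Ufp} with integrality of the $U_\fq$-eigenvalues to get $e_\fp v_p(s_\fp)\le v_p(s_p)$, then apply Theorem~\ref{thm:classicality}; you cite Proposition~\ref{prop:integrality of Hecke eigenvalues} directly and spell out the unit check on $\nu\bigl(p\prod_\fp x_\fp^{-e_\fp}\bigr)$, whereas the paper cites its consequence Proposition~\ref{prop:power boundedness} and leaves that check implicit.
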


\begin{proof}
By Proposition~\ref{prop:power boundedness} and \eqref{eq:Up and Ufp}, we have $e_\fp v_p(s_\fp)\leq v_p(s_p)$. 
So we conclude by Theorem~\ref{thm:classicality}. 
\end{proof}

We define the set of cohomological weights to be
\[
X_{\coh}:=\left\{((\nu_\tau)_\tau,w)\in X:\nu_\tau\le0,\quad 2\nu_\tau+w\ge2\ \text{for every }\tau\in I\right\} \subset X.
\]
and for $M\in\BR$, let
\[
X_{\coh}(M)=\left\{((\nu_\sigma)_\sigma,w)\in X_{\coh}:
2\nu_\sigma+w>M\quad\text{for every }\sigma\in I
\right\}\subset X_{\coh}.
\]

\begin{theorem}\label{thm:density of coh wts}
For every $M\in\BR$ and every $x\in X_{\coh}$, there is a fundamental system of affinoid neighbourhoods $V_n$ of $\chi_x$ in $\CW$, such that $X_{\coh}(M)\cap V_n$ is Zariski dense in $V_n$ for every $n$.
\end{theorem}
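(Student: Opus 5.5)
We plan to prove Theorem~\ref{thm:density of coh wts} by reducing to a statement about $p$-adic analytic functions on a polydisc, pulled back along the parametrization of weights by their integer exponents, and then using Strassmann's theorem one variable at a time.

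\textbf{Step 1: neighbourhoods.} Fix $x=(\nu,w)\in X_{\coh}$ and write $\chi_x$ for its character of $H=\BT(\BZ_p)\times\BZ_p^\times$. Choose $m\ge 2$ so that $H_m:=(1+p^m\CO_F\otimes\BZ_p)\times(1+p^m\BZ_p)$ is a free $\BZ_p$-module of rank $g+1$, with basis $h_0,\dots,h_g$. Put $e:=[H:H_m]$. The component of $\CW$ containing $\chi_x$ is determined by the restriction of the character to a finite set of coset representatives. On that component, the coordinates $y_i=\chi(h_i)/\chi_x(h_i)-1$ identify it with an open polydisc. We take
\[
V_n=\{\chi : \chi|_{\mathrm{tors}}=\chi_x|_{\mathrm{tors}},\ |y_i(\chi)|\le p^{-n}\ \text{for all } i\}.
\]
Together with the finite-index data these are affinoid polydiscs and form a fundamental system of neighbourhoods of $\chi_x$.

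\textbf{Step 2: which algebraic weights lie in $V_n$.} For $\lambda=(\delta,\varepsilon)\in\BZ^I\oplus\BZ$, the weight $x+\lambda$ has character $\chi_x\cdot\chi_\lambda$. If every entry of $\lambda$ is divisible by $e$, then $\chi_\lambda$ is trivial on the torsion and on the coset data. Restricted to $H_m$, the value $\chi_\lambda(h_i)$ equals $\exp\bigl(\ell_i(\lambda)\bigr)$, where each $\ell_i$ is a $\BZ_p$-linear form in $\lambda$: it is the sum $\sum_\tau\delta_\tau\log\tau(a_i)+\varepsilon\log b_i$ when $h_i=(a_i,b_i)$. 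Consequently, if $\lambda\in p^{N}e\,\BZ^{g+1}$ with $N\ge N(n)$, then $x+\lambda\in V_n$. Moreover, the map $\lambda\mapsto(y_i)$ extends to a rigid-analytic map $\Lambda:=p^{N}e\,\BZ_p^{g+1}\to V_n$ given by convergent power series.

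Hence any $\phi\in\CO(V_n)$ pulls back to a power series $\Phi(\lambda)$ converging on $\Lambda$. If $\phi$ vanishes on $X_{\coh}(M)\cap V_n$, then $\Phi$ vanishes on the set
\[
C:=\{(\delta,\varepsilon)\in\Lambda\cap\BZ^{g+1}:\ \delta_\tau\le 0,\ \varepsilon+2\delta_\tau\ge R\ \ \forall\tau\}.
\]
Here $R$ is chosen so that $2(\nu_\tau+\delta_\tau)+w+\varepsilon>\max(M,2)$, using $\nu_\tau\le 0$. Every such point is a cohomological weight in $X_{\coh}(M)$.

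\textbf{Step 3: vanishing.} Fix $\delta\in(p^Ne\,\BZ_{\le 0})^I$. Then $\varepsilon\mapsto\Phi(\delta,\varepsilon)$ is a one-variable power series on $p^Ne\,\BZ_p$ vanishing at the infinitely many $\varepsilon\ge R-2\min_\tau\delta_\tau$. By Strassmann's theorem it vanishes identically. So for each fixed $\varepsilon\in p^Ne\,\BZ_p$, the function $\Phi(\cdot,\varepsilon)$ vanishes on the product set $(p^Ne\,\BZ_{\le 0})^I$. Applying Strassmann inductively in each $\delta_\tau$ gives $\Phi\equiv 0$ on $\Lambda$.

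\textbf{Step 4: conclusion and main obstacle.} It remains to pass from $\Phi\equiv0$ to $\phi\equiv0$. For this we need the image of $\Lambda$ to be non-thin in $V_n$, which we get from the $(g+1)\times(g+1)$ matrix $(\log\tau(a_i),\log b_i)$. This matrix is invertible over $E$, since it is the logarithm map $H_m\otimes\BQ_p\to\CO_F\otimes\BQ_p\times\BQ_p$ followed by the embeddings splitting $F\otimes E$. Hence $\lambda\mapsto(\ell_i(\lambda))_i$ is a linear isomorphism over $E$, and $\exp$ is an isomorphism onto a small polydisc. So $\Lambda\otimes E$ maps isomorphically onto a polydisc $V'\subset V_n$ around $\chi_x$, and $\phi|_{V'}=0$. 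Since $V_n$ is a connected smooth polydisc, $\phi=0$.

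The main obstacle is carrying out this comparison of coordinates carefully: making the choice of $N$ versus $n$ explicit, and replacing $V_n$ by the image polydisc if necessary so that the $V_n$ remain a fundamental system. The Strassmann induction itself is routine.
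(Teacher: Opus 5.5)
Your argument is correct in substance, and it follows a genuinely different route from the paper's.

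\textbf{Comparison with the paper.} The paper first passes to the adapted basis $\lambda_0=\mathbf e_0$, $\lambda_\sigma=h\mathbf e_0-\mathbf e_\sigma$ of $X$, chosen so that a whole orthant $\BZ_{\ge N}^d$ of exponents lands in $X_{\coh}(M)$. It then interpolates $m\mapsto\eta_{i,n}^m$ by binomial series along the one-parameter subgroups $L_i$, and multiplies these to get $\Psi_n\colon\CD^d\to V_n$, which it only shows to be \'etale at the origin. It concludes from the Zariski density of $\BZ_{\ge N}^d$ in $\CD^d$ together with the identity theorem on the polydisc $V_n$.

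You instead keep the natural coordinates $(\delta,\varepsilon)$. You parametrize the whole translated lattice at once by $\lambda\mapsto\chi_x\exp(\ell(\lambda))$, which is an isomorphism onto a small polydisc $V'$ rather than merely \'etale at one point. You handle the non-product shape of the cohomological cone by a fibrewise Strassmann argument: first in $\varepsilon$ for each fixed $\delta$, then in $\delta$. The one substantive input common to both proofs is the invertibility of the logarithm matrix, which is exactly Proposition~\ref{prop:A0invertible}.

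Your route makes Lemma~\ref{lem:curve-a}, Proposition~\ref{prop:etale-product} and the binomial interpolation unnecessary. If you take the images $V'$ themselves (as $N$ grows) as the neighbourhood system, you do not even need the identity theorem. The paper's set-up has one real advantage: its coordinates come from a $\BZ_p$-basis of $H/\Delta$, so its $V_n$ are honest polydiscs from the start. That is precisely where your Step 1 needs repair.

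\textbf{The point to repair in Step 1.} Your $h_0,\dots,h_g$ form a basis of $H_m$ with $m\ge2$, and the image of $H_m$ in $H/\Delta$ (with $\Delta=H_{\mathrm{tors}}$) generally has index $>1$. For example, for $F=\BQ$ and $p$ odd the index is $p^2$. So $\chi\mapsto(y_i(\chi))$ is a finite map of that degree from the component of $\chi_x$ onto an open polydisc, not an isomorphism.

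As written, your $V_n$ is therefore (geometrically) a disjoint union of polydiscs, centred at the twists $\chi_x\psi$ by the finite-order characters $\psi$ of $H/\Delta$ that are trivial on $H_m$. This has three consequences.
\begin{enumerate}
\item These extra pieces do not shrink to $\chi_x$, so the $V_n$ are not a fundamental system of neighbourhoods.
\item The final appeal to connectedness of $V_n$ is unjustified.
\item Density genuinely fails on the extra pieces. For $F=\BQ$, algebraic characters trivial on $\Delta$ take values in $1+p\BZ_p$ on $H/\Delta$, so they stay far from the nontrivial $p$-th roots of unity.
\end{enumerate}
For the same reason, the $y_i$ alone do not define a map into $\CW$.

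The fix is straightforward. Define the interpolating character on all of $H$ by
$(a,b)\mapsto\exp\bigl(\sum_\tau\delta_\tau\log\tau(a)+\varepsilon\log b\bigr)$.
This agrees with $\chi_\lambda$ for $\lambda\in p^Ne\BZ^{g+1}$, because $h^e\in H_m$ for every $h\in H$, and it is automatically trivial on $\Delta$. Then use coordinates attached to a $\BZ_p$-basis of $H/\Delta$, as the paper does with $\gamma_{\mathfrak p,i},\gamma_0$. Alternatively, restrict to the connected component of $\chi_x$, or simply use the $V'$. With this change, the rest of your argument goes through unchanged.
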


\begin{proof}
See Appendix~\ref{app:coh wts}. 
\end{proof}

\begin{proposition}\label{prop:density of classical points with coh wts}
The subset $\CC$ of classical points with cohomological weight is very Zariski-dense in $\CE$. 
\end{proposition}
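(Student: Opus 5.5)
We will use the standard definition: $\CC$ is very Zariski-dense in $\CE$ if for every $x\in\CE$ and every connected affinoid neighbourhood $V$ of $x$, the set $\CC\cap V$ is Zariski-dense in $V$. The plan is the usual eigenvariety argument of Chenevier, built on Buzzard's construction and combining three inputs: Theorem~\ref{thm:density of coh wts}, Corollary~\ref{cor:classicality}, and the finiteness of $\CE\to\CZ$.

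First, we reduce to a neighbourhood basis adapted to the construction. Buzzard's machine provides an admissible cover of $\CZ$ by affinoids $Y$ that are finite and flat over affinoids $W\subset\CW$, together with the corresponding pieces $\CE_Y\to Y$, which are finite. Moreover, every irreducible component of $\CE_Y$ surjects onto a component of $W$, since $\CE$ is equidimensional of dimension $g+1=\dim\CW$ and the map is finite. Because $\CE_Y\to W$ is finite, it is enough to argue for sufficiently small neighbourhoods. Concretely, given $x\in\CE$ and an affinoid neighbourhood $V$, we choose $W'\subset W$ to be a member of the fundamental system from Theorem~\ref{thm:density of coh wts} around a cohomological weight, and we take the connected component of the preimage of $W'$ in $\CE_Y$ that contains $x$ and lies in $V$. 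Two points need care here. The centre of $W'$ must be a point of $X_{\coh}$, whereas $\wt(x)$ may be arbitrary. To handle this, we note that $X_{\coh}(M)$ is Zariski dense near every point, in the sense that the neighbourhoods in the appendix should in fact shrink around any weight. If they do not, we use that every connected component of $\CW$ contains points of $X_{\coh}$ and propagate density along irreducible components. We expect this step to be the main obstacle, and what we will try first is to verify that the appendix's proof gives density in arbitrarily small neighbourhoods of arbitrary weights (for instance, via the density of algebraic characters in each residue disc).

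Second, we bound slopes. On the affinoid $V$, the function $\Theta(U_p)^{-1}$ is bounded, since it is a coordinate on the affinoid $Y\subset\CW\times\BA^1_{\rig}$. Hence $v_p(s_p)\le C$ on $V$ for some constant $C$. We set $M:=C+\max_\fp e_\fp f_\fp$. Corollary~\ref{cor:classicality} then shows that every point $z\in V$ with $\wt(z)\in X_{\coh}(M)$ is classical, and therefore $\wt^{-1}(X_{\coh}(M))\cap V\subset\CC$.

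Third, we lift density from $W'$ to $V$. Let $Z$ be the Zariski closure of $\wt^{-1}(X_{\coh}(M))\cap V$ in $V$, and suppose $Z\neq V$. Then some irreducible component $V_i$ of $V$ is not contained in $Z$. Since $V\to W'$ is finite and each irreducible component maps onto a component of $W'$, the image $\wt(Z\cap V_i)$ is a proper Zariski-closed subset of $\wt(V_i)$. By Theorem~\ref{thm:density of coh wts}, this closed subset must nevertheless contain the Zariski-dense set $X_{\coh}(M)\cap\wt(V_i)$, since every weight in $\wt(V_i)$ has a preimage in $V_i$. This is a contradiction, so $\CC\cap V$ is Zariski-dense in $V$. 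Finally, very Zariski-density for an arbitrary neighbourhood follows because the small neighbourhoods constructed above form a basis, and density in a basis of each component implies density in any affinoid neighbourhood, as each irreducible component of $V$ meets such a small neighbourhood in a nonempty open set.
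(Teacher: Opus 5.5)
Your Steps~2 and~3 are the standard argument that the paper invokes by citing Bellaïche's Proposition~3.8.6. You bound the slope on a piece finite flat over a small weight affinoid, apply Corollary~\ref{cor:classicality} with $M=C+\max_\fp e_\fp f_\fp$, and pull the Zariski density of $X_{\coh}(M)$ back through a finite map whose irreducible components surject.

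\textbf{The target in Step~1 is false.} Density of $\CC$ in \emph{every} affinoid neighbourhood of \emph{every} point of $\CE$ does not hold. Theorem~\ref{thm:density of coh wts} also cannot be extended to arbitrary weights, so the density in residue discs you hope for does not exist. Every algebraic weight in $V_n$ has $\widetilde T_1\in\varpi\CO_E$, hence $|\widetilde T_1|\in|\varpi|^{\BZ}\cup\{0\}$. Consider the Laurent domain $A=\{\chi\in V_n:|\widetilde T_1(\chi)|^2=|\varpi|^{2n+1}\}$. It is a nonempty affinoid containing no algebraic weight. Its preimage in a piece of $\CE$ finite flat over $V_n$ is then a nonempty affinoid subdomain of $\CE$ with no point of $\CC$. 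So your final paragraph asserts something false.

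\textbf{What should be proved instead.} The correct notion, and what is used later (Chenevier's construction of $T$, McDonald's lemma, the Sen-polynomial interpolation), has two parts. First, $\CC$ is Zariski dense in $\CE$. Second, $\CC$ accumulates at every point of $\CE$ lying over $X_{\coh}$, in particular at every point of $\CC$. Your Steps~2--3 prove the accumulation part. For this, take $W'$ from the fundamental system centred at $\wt(x)\in X_{\coh}$, and shrink it until the connected component through $x$ lies in the given neighbourhood.

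\textbf{The missing input for global density.} You need that every irreducible component $C$ of $\CE$ contains a point of cohomological weight; ``propagate density along irreducible components'' does not supply this. Knowing that each connected component of $\CW$ contains cohomological weights is not enough. A priori $\wt(C)$ is only an admissible open, and by the example above such an open can avoid all algebraic weights. The missing fact is that $\wt(C)$ is \emph{Zariski} open in a connected component of $\CW$. It follows in three steps:
\begin{enumerate}
\item $\CE\to\CZ$ is finite and both spaces are equidimensional of dimension $g+1$, so $C$ maps onto an irreducible component of the Fredholm hypersurface $\CZ$.
\item By Conrad's theorem on irreducible components of rigid spaces, such a component is the zero locus of a Fredholm series $G=1+\sum_{n\ge1}c_nT^n$.
\item The image of that zero locus in $\CW$ is the complement of $V(c_1,c_2,\ldots)$, which is Zariski open.
\end{enumerate}

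\textbf{Finishing the argument.} Each component of $\CW$ is an irreducible open polydisc containing a cohomological weight. By Theorem~\ref{thm:density of coh wts}, $X_{\coh}$ is therefore Zariski dense in it, and so $\wt(C)$ meets $X_{\coh}$. Run Steps~2--3 at a point $z\in C$ over such a weight. This gives an affinoid neighbourhood of $z$, one of whose irreducible components lies in $C$ and contains a Zariski-dense subset of $\CC$. Hence $\CC\cap C$ is Zariski dense in $C$.
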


\begin{proof}
This is the same as \cite[Proposition~3.8.6]{Bellaiche}, which follows from Corollary~\ref{cor:classicality} and Theorem~\ref{thm:density of coh wts}.
\end{proof} 
\section{Galois representations associated to overconvergent eigenforms}\label{sec:Gal rep}

\subsection{Families of pseudo representations on the cuspidal Hilbert eigenvarieties}

Recall that in Section~\ref{sec:Hecke operators}, we have chosen $x_\fp$ for each place $\fp$ of $F$ above $p$. 
Let $U_\fp^{\mathrm{class}}$ be the classical Hecke operator at $\fp$ acting on $S^G(\Gamma_0(\fP), \mu_N, (\nu, w))$, then by \cite[Remark 4.7]{AIP}, we have $U_{\fp}^{\mathrm{class}}=\nu(x_\fp)^{-1}U_{\fp}$.
\footnote{There is a typo in \cite[Remark 4.7]{AIP}, where they have written $U_{\fp}^{\mathrm{class}}=\nu(x_\fp)U_{\fp}$. 
To verify the correct formula, one may compare $U_{\fp}$ with the operator in \cite[\S 3.18]{TianXiao}, where Tian--Xiao give a definition of $U_{\fp}^{\mathrm{class}}$ using Hecke correspondence. 
Alternatively, one can use Hattori's formula \cite[Lemma~4.5(2)]{Hattori} on $U_\fp$ in terms of $q$-expansions. 
}

\begin{theorem}\label{thm:Gal rep ass to classical cusp forms}
Let $(\nu, w)$ be a cohomological weight, $f\in S^G(\Gamma_0(\fP), \mu_N, (\nu, w))$ be a classical cuspidal eigenform of weight $(\nu, w)$.
Then there exists a continuous odd irreducible representation $\rho_f\colon G_F\to \GL_2(\overline{\BQ_p})$, such that the following properties are satisfied: 
\begin{enumerate}
\item 
For every finite place $\ell\nmid pN$, if $\lambda_{\ell}$ denotes the eigenvalue of the Hecke operator $T_\ell$ on $f$, then $\rho_f$ is unramified at $\ell$ and the characteristic polynomial of $\rho_f(\Frob_{\ell})$ is $T^2-\lambda_\ell T+s_\ell(f)N_{F/\BQ}(\ell)$, where $\Frob_\ell$ is a geometric Frobenius, $s_\ell(f)$ is the eigenvalue of $S_\ell$ on $f$. 
\item 
For a place $\fp|p$ of $F$, if $\lambda_\fp$ is the eigenvalue of the $U_{\fp}$-operator, then the local Galois representation $\rho_f|_{\Gal_{F_\fp}}$ is potentially semistable with Hodge-Tate weights $\nu_\tau, 1-\nu_\tau-w$ in the $\tau$-direction for each $\tau\in I_\fp$, and $\Dcrysplus(\rho_f|_{G_{F_{\fp}}})^{\varphi^{f_\fp}=\nu(x_\fp)^{-1}\lambda_{\fp}}\neq0$. 
\footnote{We normalize so that the cyclotomic character has Hodge-Tate weight $+1$. }
\end{enumerate}
\end{theorem}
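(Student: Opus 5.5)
This theorem is essentially a compilation of known results. I would not reprove it from scratch; I would assemble it from the literature and check carefully that the normalizations match those of \cite{AIP} used here.

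\emph{Step 1: existence and the unramified properties.} The eigenform $f$ has cohomological weight $(\nu,w)$, so it corresponds to a cuspidal automorphic representation $\pi_f$ of $\GL_2(\BA_F)$ that is cohomological, i.e.\ regular algebraic. The comparison with the adelic definition in \cite{P} makes this correspondence precise. The constructions of Carayol, Taylor and Blasius--Rogawski then give a continuous irreducible $\rho_f\colon G_F\to\GL_2(\overline{\BQ_p})$. In the parallel-weight-two case with $[F:\BQ]$ even one needs Taylor's congruence argument; in general the Shimura-curve construction suffices. Oddness follows from the fact that complex conjugations have determinant $-1$ (Taylor). The Eichler--Shimura-type congruence relation at $\ell\nmid Np$ gives property (1). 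The only work is matching the normalizations: the central character eigenvalue $s_\ell(f)$, the factor $N_{F/\BQ}(\ell)$, and geometric Frobenius. I would check these against the $q$-expansion formulas of \cite{AIP} and \cite{Hattori}. If necessary I would twist by a power of the cyclotomic character and replace $\rho_f$ by its dual, so that the stated polynomial comes out exactly.

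\emph{Step 2: local behaviour at $\fp\mid p$.} Local--global compatibility at $p$ (Saito, Skinner, Blasius--Rogawski for the de Rham and crystalline part) shows that $\rho_f|_{G_{F_\fp}}$ is potentially semistable. More precisely, it is semistable over $F_\fp$ because the level at $\fp$ is Iwahori. Its Weil--Deligne representation corresponds to $\pi_{f,\fp}$ under the suitably normalized local Langlands correspondence. The Hodge--Tate weights in direction $\tau\in I_\fp$ are read off from the weight. The $\GL_2$ weight $(\nu,w)$ gives highest-weight data $(\nu_\tau,\nu_\tau+w)$ in each direction, and with the cyclotomic character of weight $+1$ these become $\nu_\tau$ and $1-\nu_\tau-w$. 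The convention is again fixed by compatibility with the determinant $\det\rho_f=\epsilon^{\,?}\cdot(\text{finite order})$, which I would verify in the parallel weight $2$ case.

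\emph{Step 3: the crystalline eigenvector, which I expect to be the main obstacle.} Since $\pi_{f,\fp}$ has an Iwahori-fixed vector, it is either an unramified principal series or an unramified twist of Steinberg. The classical operator $U_\fp^{\mathrm{class}}$ acts on the Iwahori invariants with eigenvalue $\lambda_\fp^{\mathrm{class}}=\nu(x_\fp)^{-1}\lambda_\fp$, using $U_\fp^{\mathrm{class}}=\nu(x_\fp)^{-1}U_\fp$. Under the correspondence (Jacquet module computation), this eigenvalue equals, up to the normalization factor of the Hecke operator, one of the Frobenius eigenvalues on $D_{\mathrm{pst}}$. That is, $\varphi^{f_\fp}$ acting on $D_{\mathrm{st}}(\rho_f|_{G_{F_\fp}})$ (which is $\varphi^{f_\fp}$-linear over $F_{\fp,0}\otimes\overline{\BQ_p}$) has eigenvalue $\lambda_\fp^{\mathrm{class}}$ on the $N=0$ part. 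In the Steinberg case it is the eigenvalue on $\ker N$. Thus $D_{\mathrm{crys}}(\rho_f|_{G_{F_\fp}})^{\varphi^{f_\fp}=\nu(x_\fp)^{-1}\lambda_\fp}\neq0$.

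To pass to $\Dcrysplus$, I would use that all Hodge--Tate weights satisfy $\nu_\tau\le 0$ and $1-\nu_\tau-w\le 0$, which hold because $\nu_\tau\le0$ and $2\nu_\tau+w\ge2$. With this sign convention the Hodge filtration on $D_{\mathrm{dR}}$ is concentrated in nonnegative degrees. Hence $D_{\mathrm{crys}}=(\bcrys\otimes V)^{G}$ lands in $(\bcrys^+\otimes V)^G$, and the eigenvector lies in $\Dcrysplus$. I expect this last point, together with the exact normalizing factor relating $\lambda_\fp$ to the Frobenius eigenvalue (a power of $\nu(x_\fp)$ and of $p$), to be where all the care is required. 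I would pin the factor down by comparing with the ordinary case, where $\rho_f|_{G_{F_\fp}}$ has an unramified quotient or sub-line on which Frobenius acts by the unit root.
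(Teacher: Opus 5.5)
Your proposal follows the paper's route: the paper proves this theorem purely by citation (Carayol, Taylor and Blasius--Rogawski for the construction and property (1); Saito, with T.~Liu and Skinner covering the remaining case, for the local statements at $\fp\mid p$), and your Steps~2--3 only make explicit the normalizations the paper leaves to those references. One correction to your literature summary: Carayol's Shimura-curve construction is unavailable precisely when $[F:\BQ]$ is even and $\pi_f$ is principal series at every finite place, in every weight, not only in parallel weight two. The one inference to tighten is the passage to $\Dcrysplus$: nonpositive Hodge--Tate weights only give $D_{\mathrm{crys}}\subset \DdRplus$, and since $\bcrys\cap\bdR^+$ is strictly larger than $\bcrys^+$, you must also use that $\varphi$ is bijective on $D_{\mathrm{crys}}$, so that every $\varphi^n(x)$ with $n\in\BZ$ lies in $\mathrm{Fil}^0$ and $x$ is therefore regular at all zeros of $t$ (this is the standard fact that $D_{\mathrm{crys}}=\Dcrysplus$ whenever all Hodge--Tate weights are $\le 0$).
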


\begin{proof}
The construction of $\rho_f$ is due to Carayol~\cite{Carayol}, Taylor~\cite{Taylor89}, and Blasius-Rogawski~\cite{Blasius-Rogawski}. 
The verification of the properties for $\rho_f$ was done by \cite{Carayol} for places outside $p$ and by Saito~\cite{Saito} (plus a special case handled independently by T.Liu~\cite{TLiu} and Skinner~\cite{Skinner}) at places above $p$. 
\end{proof}

\begin{theorem}\cite[Theorem~5.1]{AIP}\label{thm:pschar on E}
\begin{enumerate}
\item 
There exists a continuous two dimensional pseudo-character
\[
T \colon G_{F} \longrightarrow \CO(\CE)^0:=\{f\in\CO(\CE)\mid |f(x)|\leq1, \forall x\in \CE\}, 
\]
satisfying $T(\mathrm{Frob}_{\ell}) = \Theta(T_{\ell})$ for every prime ideal $\ell$ of $\mathcal{O}_{F}$ with $\ell \nmid Np$. 
Moreover, $T$ is odd, i.e., let $c\in G_F$ be a complex conjugation, then $T(c)=0$. 
\item
For every $x\in \mathcal{E}$, there is a semisimple Galois representation
\[
\rho_{x}^{\ss} \colon G_{F,Np} \longrightarrow \mathrm{GL}_{2}\bigl(k(x)\bigr)
\]
characterized by $\operatorname{Tr}(\rho_{x}^{\ss})=T|_{x}$. 
\end{enumerate}
\end{theorem}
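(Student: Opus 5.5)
The plan is to interpolate the Galois representations attached to classical points (Theorem~\ref{thm:Gal rep ass to classical cusp forms}) over $\CE$, using the very Zariski density of $\CC$ (Proposition~\ref{prop:density of classical points with coh wts}), following the Chenevier/Bellaïche--Chenevier construction of pseudo-characters on eigenvarieties.

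\textbf{Step 1: coefficients.} By Buzzard's machine, $\CE$ has an admissible cover by affinoids $\Sp(A)$, each finite over an affinoid of $\CW$. Each $A$ is the image of $\CH\otimes\CO(\text{weight affinoid})$ acting on a finite slope piece of $S^G(\mu_N,(\nu^\CU,w^\CU))^\dagger$. The $\Theta(T_\ell)$ are power-bounded by the analogue of Proposition~\ref{prop:power boundedness}, using integrality of Hecke eigenvalues as in \cite{Hattori}. Since $\CE$ is reduced, we can work in $\CO(\CE)^0$.

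\textbf{Step 2: the pseudo-character on $\CC$.} For each $z\in\CC$, $T_z:=\tr\rho_{f_z}$ is a $2$-dimensional pseudo-character with $T_z(\Frob_\ell)=\Theta(T_\ell)(z)$ by Theorem~\ref{thm:Gal rep ass to classical cusp forms}(1). Consider the product map $\CO(\CE)^0\to\prod_{z\in\CC}\CO_{k(z)}$. Because $\CC$ is Zariski dense and $\CE$ is reduced, this map is injective. On affinoids it is moreover a closed embedding: very Zariski density gives density inside every affinoid of the cover. The product $\prod_z T_z$ is a continuous pseudo-character of $G_{F}$, unramified outside $Np$. On the dense set of Frobenii $\Frob_\ell$ it takes values in the closed subring generated by the $\Theta(T_\ell)$. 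By Chebotarev and continuity it therefore factors through $\CO(\CE)^0$, giving $T$.

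Three consequences follow. The pseudo-character identities and the dimension-$2$ property hold because they are closed conditions checked after the injection. For oddness, each $\rho_{f_z}$ is odd, so $T_z(c)=0$, and injectivity gives $T(c)=0$. The main technical point is continuity and closedness of the image on each affinoid. This is handled, as in \cite[Proposition~7.1.1]{Chenevier}-type arguments, by first working over the finite-level Hecke algebras with their $p$-adic topology, and then gluing over the cover using uniqueness, which itself comes from injectivity.

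\textbf{Step 3: pointwise representations.} For $x\in\CE$, the specialization $T|_x$ is a $2$-dimensional pseudo-character over the field $k(x)$ of characteristic $0$. By Taylor's theorem (Rouquier), it is the trace of a unique semisimple $\rho_x^{\ss}\colon G_{F,Np}\to\GL_2(\overline{k(x)})$. Descent to $k(x)$ itself is the remaining obstacle. The semisimple representation is defined over $k(x)$ when either it is absolutely irreducible, via Rouquier/Nyssen together with the vanishing of the Brauer obstruction provided by $T(c)=0$ and an eigenbasis for $c$, or it is a sum of characters, which are then $k(x)$-valued. I expect this descent and the continuity in Step 2 to be the only nonformal points.
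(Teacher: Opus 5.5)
Your overall route is the paper's. Part (1) feeds reducedness of $\CE$ and Zariski density of $\CC$ into Chenevier's interpolation result \cite[Proposition~7.1.1]{ChenevierGLn}. Part (2) uses oddness to descend $\rho_x^{\ss}$ to $k(x)$; the paper cites \cite[Proposition~1.1]{Hida}, and your Brauer-class argument using an eigenbasis of $c$ is the same mechanism.

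\textbf{Step 2: the closedness claim is false.} For a reduced affinoid $\Sp(A)\subset\CE$, the map $A^0\to\prod_{z}\CO_{k(z)}$ is injective when the points are Zariski dense. It is not, however, a closed embedding for the product topology, and no density hypothesis makes it one. On the closed unit disc, take all $\overline{\BQ_p}$-points, which form a very Zariski dense set. The functions $T^{n!}$ converge pointwise to the indicator function of $\{|z|=1\}$, which is not analytic.

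So the argument ``on Frobenii the values lie in a closed subring, hence by Chebotarev everything lands in $\CO(\CE)^0$'' fails affinoid by affinoid. Passing to $p$-adic topologies on the affinoid Hecke algebras does not rescue it, because $A^0$ is not compact. The missing input is compactness: $\CE$ is nested (cf.\ \cite[Lemma~7.2.11]{BC}), and for a reduced nested space $\CO(\CE)^0$ is compact for the topology of uniform convergence on affinoids.

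If you work globally, $\CO(\CE)^0\to\prod_{z\in\CC}\CO_{k(z)}$ is a continuous injection from a compact space into a Hausdorff one. It is therefore a homeomorphism onto a closed subset, and your Chebotarev/limit argument goes through. Density is then needed only globally, for injectivity. This compactness is precisely what Chenevier's proposition supplies.

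\textbf{Reducedness.} You use reducedness of $\CE$ without justification, yet it is exactly what makes the evaluation map injective. It does not come for free from Buzzard's machine; the paper obtains it from the argument of \cite[Theorem~6.10]{Birkbeck}.

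\textbf{Step 3, reducible case.} The claim that the two characters are $k(x)$-valued also uses oddness, not just reducibility. The constituent on which $c$ acts by $+1$ is $g\mapsto\frac{T(g)+T(cg)}{2}$, which lies in $k(x)$. Without $T(c)=0$, the two characters could be conjugate over a quadratic extension of $k(x)$.
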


\begin{proof}
By the proof of \cite[Theorem~6.10]{Birkbeck}, the eigenvariety $\CE$ is reduced. 
\footnote{In \cite[Theorem~6.10]{Birkbeck}, Birkbeck assumed that $p$ is unramified in $F$. 
This condition is only used to apply the control theorem \cite[Theorem~4.22]{Birkbeck}, which can be removed when we show the reducedness of $\CE$. }
So the first points follows from Theorem~\ref{thm:Gal rep ass to classical cusp forms}, Proposition~\ref{prop:density of classical points with coh wts} and \cite[Proposition~7.1.1]{ChenevierGLn}. 
The second follows from \cite[Proposition~1.1]{Hida}, using the oddness $T(c)(x)=0$. Also, see \cite[p.560]{Wiles}. 
\end{proof}

\subsection{Pointwise trianguline property}
 
Recall that there is a universal character $(\nu^\un, w^\un)$ of $\BT(\BZ_p)\times\BZ_p^\times$ on $\CW$. 
For each $\fp|p$, let
\[
\nu_{\fp}^{\un}\colon \CO_{F_{\fp}}^\times\to \CO_E[[\CO_{F_\fp}^{\times}]]^\times\subset \CO_E[[\BT(\BZ_p)]]^\times
\]
be the universal character of $\CO_{F_{\fp}}^\times$. 
Then $\nu^{\un}=\prod_{\fp|p}\nu_\fp^{\un}$. 
Any element in the image of $\nu_\fp^\un$ defines a power bounded function on $\CW$. 

Fix a prime ideal $\fp|p$ of $\CO_F$.
We take $x_\fp$ as a uniformizer of $F_\fp$. 
By local class field theory, which is normalized so that a geometric Frobenius corresponds to $x_\fp$, we can view $\nu_\fp^\un$ as a character of $W_{F_\fp}$ valued in $\CO(\CW)^\times$, which extends to $G_{F_\fp}$ by continuity. 
Consider it as a $p$-adic family of one-dimensional representations of $G_{F_\fp}$ on $\CW$. 
For each algebraic weight $x=(\nu, w)\in \CW(E)$, write $\nu=(\nu_\tau)_{\tau\in I}$ and decompose $\nu=\prod_{\fp|p}\nu_\fp$, then $\nu_\fp^\un|_x$ has Hodge-Tate weights $-\nu_\tau$ in the $\tau$-direction for each $\tau\in I_\fp$, and $\varphi^{f_\fp}$ acts on $\Dcrys(\nu_\fp^\un|_x)$ by $\nu_\fp(x_\fp)$. 

For any $\fq\neq\fp$, since $x_\fp$ is a $\fq$-adic unit, we can evaluate $\nu_\fq^\un$ on it.
Pulling back to $\CE$, we get $\nu_{\fq}^\un(x_\fp)\in\CO(\CE)^\times$, which is power bounded. 
Note that $\nu_\fq^\un(x_\fp)^{-1}$ is also power bounded. 
Let $\alpha_\fp=\Theta(U_\fp)\prod_{\fq\neq\fp}\nu_\fq^\un(x_\fp)^{-1}\in \CO(\CE)^\times$. 
The pullback of $\nu_\fp^\un$ to $\CE$ is still denoted by $\nu_\fp^\un$. 
For any $x\in\CE$, let $V_{\fp, x}:=\rho_x^{\ss}|_{G_{F_\fp}}\otimes\nu_\fp^\un|_x$, which is a representation of $G_{F_\fp}$ over $k(x)$. 

\begin{theorem}\label{thm:pointwise trianguline}
For any $x\in\CE$, we have $\Drig(V_{\fp, x})^{\varphi^{f_\fp}=\alpha_\fp(x), \Gamma_{F_\fp}=1}\neq0$. 
\end{theorem}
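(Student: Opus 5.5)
We prove the statement first at classical points with cohomological weight and then interpolate, using the upper semi-continuity of Theorem~\ref{thm:herr-main} together with the very Zariski density of Proposition~\ref{prop:density of classical points with coh wts}. Throughout, write $K=F_\fp$ and $f=f_\fp$.

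\emph{Classical points.} Let $x\in\CC$ be classical of cohomological weight $(\nu,w)$. Then $\rho_x^{\ss}=\rho_f$ is irreducible. By Theorem~\ref{thm:Gal rep ass to classical cusp forms}(2), $\Dcrysplus(\rho_f|_{G_K})^{\varphi^{f}=\nu(x_\fp)^{-1}\lambda_\fp}\neq 0$. The twist $\nu_\fp^\un|_x$ is crystalline, with $\varphi^{f}$ acting on $\Dcrys$ by $\nu_\fp(x_\fp)$ and with Hodge--Tate weights $-\nu_\tau\ge 0$ since $\nu_\tau\le 0$. Tensoring a crystalline period of $\rho_f$ with the generator of $\Dcrysplus$ of the twist therefore gives a nonzero element of
\[
\Dcrysplus(V_{\fp,x})^{\varphi^{f}=\nu(x_\fp)^{-1}\nu_\fp(x_\fp)\lambda_\fp}.
\]
The eigenvalue equals $\lambda_\fp\prod_{\fq\ne\fp}\nu_\fq(x_\fp)^{-1}=\alpha_\fp(x)$. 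Two checks are needed here. First, the Hodge--Tate weight bookkeeping must confirm that the twisted weights $\nu_\tau-\nu_\tau=0$ and $1-2\nu_\tau-w$ do not obstruct the period landing in the $+$ part; this holds because the period already comes from $\Dcrysplus$ and the twist has nonnegative weights. Second, the normalization of $x_\fp$ as a uniformizer must be compatible with the chosen element of $F^{\times,>0}$. By Theorem~\ref{thm:comparison of various D}, $\Dcrysplus=\Drig^{\Gamma_K}$, so the statement holds at every $x\in\CC$.

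\emph{Interpolation.} Since $\rho_x^{\ss}$ exists only pointwise, we cannot apply Theorem~\ref{thm:herr-main} directly on $\CE$. Following \cite[Proposition~6.6]{McDonald}, we pass to a family of genuine representations. Fix $x\in\CE$ and an affinoid neighbourhood $\Sp(S)$ of $x$. By very Zariski density, classical points accumulate at $x$. Choose a smooth affinoid curve $C\to\CE$, or a normalization of a suitable curve, passing through $x$ and through a Zariski dense set of points of $\CC$. On the normalization $\widetilde C$, which is smooth of dimension one, the odd pseudo-character restricted to $\widetilde C$ is realized by an actual family $V$ of $G_F$-representations; this is the Appendix~\ref{app:global realization} result (Coleman--Mazur), possibly after shrinking or passing to a blow-up/lattice. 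Restrict $V$ to $G_K$, twist by $\nu_\fp^\un$, and form $\Drig$. Apply Theorem~\ref{thm:herr-main} to this family with $\alpha_\fp\in\CO^\times$; note that $\alpha_\fp$ is a unit on $\CE$, which is exactly why the hypothesis $\alpha\in S^\times$ matters. The locus where $\dim(\cdot)^{\varphi^f=\alpha_\fp,\Gamma_K=1}\ge 1$ is then Zariski closed and contains the dense set of classical points, hence it is all of $\widetilde C$. At the point $\tilde x$ over $x$, the specialization $V_{\tilde x}$ has semisimplification $V_{\fp,x}$, after extending scalars using Corollary~\ref{cor:derived bc of herr cpx}.

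\emph{Main obstacle.} The remaining step is to pass from $V_{\tilde x}$ to its semisimplification. This is needed only when $\rho_x^{\ss}$ is reducible, or when the specialization is a non-split extension. If $0\to A\to V_{\tilde x}\to B\to 0$, a nonzero invariant vector in $\Drig(V_{\tilde x})^{\varphi^f=\alpha,\Gamma=1}$ either lies in $\Drig(A)$ or maps to a nonzero element of $\Drig(B)$. Left exactness of invariants gives an injection
\[
\Drig(V)^{\varphi^f=\alpha,\Gamma=1}\hookrightarrow \Drig(A)^{\varphi^f=\alpha,\Gamma=1}\oplus\Drig(B)^{\varphi^f=\alpha,\Gamma=1}
\]
in the dimension sense, so one of the two summands is nonzero. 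Hence $\Drig(A\oplus B)^{\varphi^f=\alpha,\Gamma=1}\ne 0$, and the same holds for $V_{\fp,x}$, since the semisimplification of the restriction to $G_K$ is $(A\oplus B)^{\ss}$. Finer pieces are handled by iterating this argument. The genuinely delicate point is constructing the curve through $x$ with dense classical points and realizing the pseudo-character on it. An alternative is to avoid curves altogether by working on $\CE$ directly, using the determinant/pseudo-character version of upper semi-continuity as in McDonald. I would try the curve approach first.
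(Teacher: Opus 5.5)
The eigenvalue bookkeeping at classical points and your left-exactness argument for passing to the semisimplification are fine; the latter is exactly the step the paper calls easy. The gap is in the interpolation step, which rests on a curve through $x$ on which points of $\CC$ are Zariski dense. No such curve is constructed.

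Your justification, ``by very Zariski density, classical points accumulate at $x$'', is not available. Corollary~\ref{cor:classicality} and Theorem~\ref{thm:density of coh wts} only give accumulation of $\CC$ at points of $\CC$, and at a general $x$ accumulation is false. Algebraic weights are $E$-rational, so if $\wt(x)$ is not, a small enough affinoid neighbourhood of $\wt(x)$ contains no algebraic weight, and the corresponding neighbourhood of $x$ contains no classical point. The curve would therefore have to be global. In the $(g+1)$-dimensional space $\CE$, nothing guarantees an irreducible curve through an arbitrary $x$ that meets $\CC$ in an infinite subset of a quasi-compact piece. Without such a curve, the Coleman--Mazur realization of Appendix~\ref{app:global realization}, which needs a smooth curve, has nothing to act on.

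The paper avoids curves altogether, following \cite[Proposition~6.6]{McDonald}.
\begin{itemize}
\item Take a nested affinoid covering $\{U_m\}$ of $\CE$ and set $V_m=\overline{\CC\cap U_m}$. Then $x\in U_{m'}\subset V_m$ for some $m$ by \cite[Lemma~6.4]{McDonald}.
\item By \cite[Lemma~7.8.11]{BC} there is a proper, surjective, generically finite $f\colon Y\to V_m$ carrying an honest rank-two family $V_Y$ with trace $f^*T$, semisimple on a Zariski-dense open subset.
\item Replacing $Y$ by the Zariski closure of $f^{-1}(\CC\cap V_m)$ keeps $f$ surjective, because $f$ is proper.
\item Theorem~\ref{thm:herr-main} makes no use of the dimension of the base. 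Applied on the finitely many affinoids covering $Y$, it gives non-vanishing at every $y\in Y$.
\item Your semisimplification argument and the surjectivity of $f$ then give the statement at $x$.
\end{itemize}
Your other alternative, a pseudo-character version of the semicontinuity, is not available here: Theorem~\ref{thm:herr-main} needs a genuine $(\varphi,\Gamma_K)$-module.

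A smaller correction at classical points. With the paper's normalization, the twist $\nu_\fp^\un|_x$ has Hodge--Tate weights $-\nu_\tau\ge 0$. Its $\Dcrysplus$ therefore vanishes as soon as some $\nu_\tau<0$, so your tensoring step does not make sense as stated. What works is the following. Tensor with a generator of $\Dcrys$ of the twist; this gives a vector of $\Dcrys(V_{\fp,x})$ with $\varphi^{f_\fp}$-eigenvalue $\alpha_\fp(x)$. Then use that $V_{\fp,x}$ itself has weights $0$ and $1-2\nu_\tau-w\le -1$ to see that this vector lies in $\Drig(V_{\fp,x})^{\Gamma_{F_\fp}}$. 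For example, saturate the rank-one submodule it spans in $\Drig(V_{\fp,x})[1/t]$ and compare Sen weights.
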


\begin{proof}
If $x\in\CC$, i.e. $x$ is a classical point with cohomological weight, then the conclusion follows from Theorem~\ref{thm:Gal rep ass to classical cusp forms} and Theorem~\ref{thm:comparison of various D}. 
For general $x\in \CE$, we use the ideal of the proof of \cite[Proposition~6.6]{McDonald}. 
By \cite[Lemma~7.2.11(1)]{BC}, the eigenvariety $\CE$ is nested, so it admits an admissible covering by some affinoid subdomains $\{U_i\}_{i\geq 0}$, such that $U_i\subset U_{i+1}$. 
Let $V_m:=\overline{\CC\cap U_m}\subset U_m$ be the Zariski closure of $\CC\cap U_m$. 

Take any $x\in \CE$, then $x\in U_{m'}$ for some $m'\geq0$. 
By \cite[Lemma~6.4]{McDonald}, there exists some $m\geq m'$, such that $U_{m'}\subset V_m$, which is still affinoid. 
Consider the restriction $T_{m}$ of $T$ to $V_m$. 
Then by \cite[Lemma~7.8.11]{BC}, there exists a reduced, separated, quasi-compact rigid space $Y$ equipped with a proper, generically finite, surjective map $f\colon Y\to V_m$ such that there exists a family of $p$-adic representations $V_Y$ of $G_F$ of dimension two on $Y$, whose associated pseudo-character is $f^*T_{m}$ and there exists a Zariski-dense open subset $W$ of $Y$, $V_y$ is semisimple for $y\in W$.
Consider the Zariski-closure $Y'=\overline{f^{-1}(\CC\cap U_m)}\subset Y$.
Since $f$ is proper, by \cite[\S~9.6.3, Proposition~3]{BGR}, it maps Zariski-closed subsets to Zariski-closed subsets, so $f|_{Y'}\colon Y'\to U_m$ is still surjective. 
Replacing $Y$ by $Y'$, we may assume that $f^{-1}(\CC\cap V_m)$ is Zariski-dense in $Y$. 
Let $\nu_\fp|_Y$ be the pullback of $\nu_\fp^\un$ to $Y$.

For any $y\in f^{-1}(\CC\cap V_m)\cap W$, which is still Zariski-dense in $Y$, since we are working over characteristic zero, $\rho_{f(y)}^{\ss}\otimes_{k(f(y))}k(y)$ is still semisimple, so $V_y\simeq \rho_{f(y)}^{\ss}\otimes_{k(f(y))}k(y)$, hence $\Drig(V_y|_{G_{F_{\fp}}}\otimes \nu_\fp^\un|_{y})^{\varphi^{f_\fp}=f^*\alpha_\fp(y), \Gamma_{F_\fp}=1}\neq0$ by the similar result for $f(y)\in\CC$ and Corollary~\ref{cor:derived bc of herr cpx}. 
Therefore, applying Theorem~\ref{thm:herr-main} to the family of $(\varphi, \Gamma_{F_\fp})$-module associated to $V_Y|_{G_{F_\fp}}\otimes \nu_\fp|_Y$, we get the same non-vanishing result for all $y\in Y$, and it's easy to see that it also holds with $V_y$ replaced by $V_y^{\ss}$. 
For any $y\in Y$, $V_y^{\ss}\simeq \rho_{f(y)}^{\ss}\otimes_{k(f(y))}k(y)$, we conclude by Corollary~\ref{cor:derived bc of herr cpx} and the surjectivity of $f$. 
\end{proof}

\subsection{Sen polynomial}

Fix a prime ideal $\fp|p$ of $\CO_F$. 
Recall that for any rigid analytic space $X$ over $E$ and any continuous character $\delta\colon  F_\fp^\times\to \CO(X)^\times$, we have the notion of the weight of $\delta$. 
More precisely, if $X=\Sp(S)$ is affinoid, then there is a family of elements $(\wt_\tau(\delta))_{\tau\in I_\fp}$ in $\CO(X)$, such that 
\[
\lim_{\substack{a\to0 \\ a\in\CO_K}}\frac{|\delta(1+a)-1-\sum_{\tau\in I_\fp}\wt_{\tau}(\delta)\tau(a)|}{|a|}=0,
\]
its image under $\prod_{\tau\in I_\fp}\CO(X)\cong F_\fp\otimes_{\BQ_p}\CO(X)$ is denoted by $\wt(\delta)$, see \cite[Definition~6.1.6]{KPX}. 
Setting $\nu_\fp^\un(x_\fp)=1$, we view $\nu_\fp^\un$ as a character $F_{\fp}^\times\to \CO(\CW)^\times$ and get $(\wt_\tau(\nu_\fp^\un))_{\tau\in I_\fp}$. 
For example, if $x=(\nu, w)\in \CW$ is an algebraic weight with $\nu=(\nu_\tau)_{\tau\in I_\fp}$, then $\wt_\tau(\nu_\fp^\un)$ has value $\nu_\tau$ at $x$. 
Similarly, we have $\wt(w^\un)\in\CO(\CW)$. 
We will their pullback to $\CE$ still by the same notation. 

\begin{proposition}\label{prop:Sen poly}
For any $x\in\CE, \tau\in I_\fp$, the $\tau$-component of the Sen polynomial $P(T)(x)\in F_{\fp}\otimes_{\BQ_p}k(x)$ of $V_{\fp, x}|_{G_{\fp}}$ is given by $TQ_{\fp}(T)_{\tau}(x)$, where
\[
Q_{\fp}(T)_{\tau}=T-K_{\tau}, \qquad
K_{\tau}=1-2\wt_{\tau}(\nu_\fp^\un)-\wt(w^\un)\in\CO(\CE). 
\]
\end{proposition}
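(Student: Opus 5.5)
The plan is to prove the identity first at the classical points of cohomological weight, and then spread it to all of $\CE$ by interpolation.

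\textbf{Classical points.} Let $x\in\CC$ have weight $(\nu,w)$. By Theorem~\ref{thm:Gal rep ass to classical cusp forms}, $\rho_x^{\ss}|_{G_{F_\fp}}$ is potentially semistable, hence Hodge--Tate. In the $\tau$-direction its Hodge--Tate weights are $\nu_\tau$ and $1-\nu_\tau-w$. By the discussion preceding Theorem~\ref{thm:pointwise trianguline}, the character $\nu_\fp^\un|_x$ has Hodge--Tate weight $-\nu_\tau$ in the $\tau$-direction. So $V_{\fp,x}$ has $\tau$-Hodge--Tate weights
\[
0 \quad\text{and}\quad 1-2\nu_\tau-w .
\]
With the normalization that the cyclotomic character has weight $+1$, the Sen operator $\Theta_\tau$ is semisimple with these eigenvalues, up to the paper's sign convention. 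The $\tau$-component of the Sen polynomial is therefore $T\bigl(T-(1-2\nu_\tau-w)\bigr)$. At $x$ we have $\wt_\tau(\nu_\fp^\un)(x)=\nu_\tau$ and $\wt(w^\un)(x)=w$, so this equals $TQ_\fp(T)_\tau(x)$.

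One point needs care here. Twisting by $\nu_\fp^\un|_x$ shifts the Sen operator by the Sen weight of that character. Since the character is trivial on $x_\fp$, this Sen weight is $\wt_\tau(\nu_\fp^\un)(x)$. I will check that the sign conventions of \cite[Definition~6.1.6]{KPX} match the Hodge--Tate convention; this is why the twist produces exactly $-\nu_\tau$.

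\textbf{Interpolation.} I work locally, using the affinoids and the proper surjective map $f\colon Y\to V_m$ from the proof of Theorem~\ref{thm:pointwise trianguline}. On $Y$ there is an actual family $V_Y|_{G_{F_\fp}}\otimes\nu_\fp|_Y$, and its Sen polynomial $P_Y(T)\in (F_\fp\otimes_{\BQ_p}\CO(Y))[T]$ specializes at each $y$ to the Sen polynomial of $V_y$. The Sen polynomial depends only on the semisimplification, because it is additive in short exact sequences. For every $y$, $V_y^{\ss}\simeq\rho^{\ss}_{f(y)}\otimes k(y)$, so the specialization $P_Y(y)$ equals the Sen polynomial of $V_{\fp,f(y)}\otimes k(y)$.

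Next consider the difference $P_Y(T)_\tau - T\bigl(T-f^*K_\tau\bigr)$. Its coefficients are functions on $Y$. They vanish on $f^{-1}(\CC\cap V_m)$ by the classical computation, and that set is Zariski dense in the reduced space $Y$. Hence the difference is identically zero. Evaluating at any $y$ over a given $x\in\CE$ gives the formula at $x$, using surjectivity of $f$ and the fact that the Sen polynomial is compatible with field extension. Since the $U_m$ cover $\CE$, the formula holds everywhere.

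\textbf{Main obstacle.} The hardest step is the classical computation, specifically keeping the sign and normalization conventions consistent. Three conventions have to line up: the Sen operator versus Hodge--Tate weights, the Hodge--Tate weights $-\nu_\tau$ of $\nu_\fp^\un|_x$, and the class field theory normalization under which $x_\fp$ corresponds to geometric Frobenius. I would verify these first on the cyclotomic character and on the character $\nu_\fp^\un|_x$, whose $\tau$-weight, as stated in the paper, is $-\nu_\tau$.

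A lesser technical point is that Sen polynomials of families on $Y$ specialize correctly. This follows from the construction of $\Dsen$ in Section~\ref{sec:Sen operator}, which commutes with base change to points.
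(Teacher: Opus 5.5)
Your proposal is correct and follows the paper's proof. Both verify the formula at classical points of cohomological weight via Theorem~\ref{thm:Gal rep ass to classical cusp forms} and then spread it by density; the paper just cites \cite[Lemma~7.5.12]{BC} for the interpolation, while you unpack it via the cover $f\colon Y\to V_m$ from the proof of Theorem~\ref{thm:pointwise trianguline}. Two harmless slips: the sets that must cover $\CE$ are the $V_m$ (each contains some $U_{m'}$ by \cite[Lemma~6.4]{McDonald}), not the $U_m$; and in the paper's conventions the Sen weight of the twist $\nu_\fp^\un|_x$ is $-\wt_\tau(\nu_\fp^\un)(x)=-\nu_\tau$, not $+\wt_\tau(\nu_\fp^\un)(x)$ as your aside states, though your main computation never uses that aside.
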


\begin{proof}
When $x\in\CC$, the statement follows from Theorem~\ref{thm:Gal rep ass to classical cusp forms}. 
Since $\CC\subset\CE$ is very Zariski-dense, we conclude by \cite[Lemma~7.5.12]{BC}. 
\footnote{In \cite[Lemma~7.5.12]{BC}, they only consider the representations of $G_{\BQ_p}$. 
But it is easy to see the same argument works here.}
\end{proof} 
\section{Proof of the main theorem}\label{sec:properness}

\begin{theorem}\label{thm:main}
Let $\CD$ be the closed unit disc over a finite extension $E'/E$, and let
$\CD^*=\CD\setminus\{0\}$ be the punctured disc. 
Suppose that $h\colon \CD^*\rightarrow \CE$ is a morphism of rigid analytic spaces such that $\wt\circ h$ extends to $\CD$.
Assume that, for every prime $\fp\mid p$ of $\CO_F$, there exists an embedding  $\tau_{0,\fp}\colon F_{\fp}\hookrightarrow\bQp$ such that, for every $j\in\BZ_{\leq 0}$, the pullback 
$h^*\bigl(Q_{\fp}(j)_{\tau_{0,\fp}}\bigr)$ is not identically zero on $\CD^*$. 
Then $h$ extends to a morphism $\widetilde h\colon \CD\longrightarrow\CE$ compatible with the given extension of $\wt\circ h$.
\end{theorem}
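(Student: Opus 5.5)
We reduce, as in Buzzard--Calegari and Diao--Liu, to showing that the $U_{\fp}$-eigenvalues extend across the puncture to functions that are units at $0$.

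\textbf{Setting up the extension problem.} The composite $(\wt,\Theta(U_p)^{-1})\circ h$ lands in the spectral variety $\CZ$, and $\CE\to\CZ$ is finite. So it suffices to show that $h^*\Theta(U_p)$ extends to a function on $\CD$ that is invertible at $0$. Once this holds, $(\wt\circ h,\, h^*\Theta(U_p)^{-1})$ extends to $\CD\to\CZ$. A finite morphism satisfies the valuative criterion in the rigid setting, so the lift to $\CE$ extends as well; here we may normalize $\CD$ and use that $\CD$ is smooth, arguing as in Buzzard--Calegari. By Proposition~\ref{prop:power boundedness}, all $h^*\Theta(U_{\fp})$ and $h^*\Theta(U_p)$ are bounded by $1$ on $\CD^*$, so by Riemann extension they extend to $\CD$. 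By \eqref{eq:Up and Ufp}, $\Theta(U_p)$ equals $\prod_{\fp}\Theta(U_{\fp})^{e_\fp}$ times $\nu(p\prod x_\fp^{-e_\fp})^{-1}$, and the latter extends to a unit on $\CD$ because $\wt\circ h$ extends. It therefore suffices to show, for each $\fp\mid p$, that $h^*\alpha_\fp(0)\neq0$. Here $\alpha_\fp$ differs from $\Theta(U_\fp)$ by $\prod_{\fq\neq\fp}\nu_\fq^\un(x_\fp)^{-1}$, which is pulled back from $\CW$ and hence is a unit on $\CD$.

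\textbf{Producing the Galois family.} Pull back the odd pseudo-character $T$ to $\CD^*$. It is bounded by $1$, so it extends to an odd pseudo-character on $\CD$. By Appendix~\ref{app:global realization} (Coleman--Mazur), it is realized by a family $V_\CD$ of rank-$2$ representations of $G_F$ on $\CD$. Set
\[
V_{\fp,\CD}:=V_\CD|_{G_{F_\fp}}\otimes (\wt\circ h)^*\nu_\fp^\un,
\]
which is defined on all of $\CD$ since the weight extends. We now check the hypotheses of Theorem~\ref{thm:DL-general} for $(\CD,h^*\alpha_\fp,V_{\fp,\CD},\tau_{0,\fp})$ with $K=F_\fp$.
\begin{enumerate}
\item On $\CD^*$, the Sen polynomial is $TQ_\fp(T)$ by Proposition~\ref{prop:Sen poly}. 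The Sen polynomial depends only on the semisimplification, and $V_x^{\ss}\cong\rho^{\ss}_{h(x)}$ pointwise.
\item The hypothesis of the theorem says $h^*Q_\fp(j)_{\tau_{0,\fp}}\not\equiv0$ for every $j\le0$. On the one-dimensional connected reduced space $\CD^*$, a nonzero function has discrete zero set, so each $\CD^*_{P(k)_{\tau_0}}$ is Zariski-open and dense. This is condition~\ref{cond:modified Liu (2)}.
\item Condition~\ref{cond:modified Liu (1)} follows from Theorem~\ref{thm:pointwise trianguline} applied at $h(x)$, combined with Corollary~\ref{cor:derived bc of herr cpx} for the field extension $k(h(x))\subset k(x)$.
\end{enumerate}
Theorem~\ref{thm:DL-general} then gives $h^*\alpha_\fp(0)\neq0$.

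\textbf{Main obstacle.} The delicate step is the passage from $\rho^{\ss}_{h(x)}$ to $V_x$ in condition~\ref{cond:modified Liu (1)}. Nonvanishing of $\Drig(\cdot)^{\varphi^f=\alpha,\Gamma=1}$ is not obviously insensitive to semisimplification: $V_x$ may be a nonsplit extension at the finitely many points where it is reducible. I would handle this with semicontinuity, as in the proof of Theorem~\ref{thm:pointwise trianguline}. Away from a discrete set of points, $V_x$ is irreducible, so $V_x$ agrees with $\rho^{\ss}_{h(x)}$ there and the nonvanishing holds on that dense set. Theorem~\ref{thm:herr-main} then shows the nonvanishing locus is Zariski closed, since the dimension function is upper semicontinuous; being dense, it is all of $\CD^*$. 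If $V_\CD$ is reducible on all of $\CD^*$, I would instead choose the realization whose sub-line carries the relevant eigenvector, or replace $V_\CD$ by a suitable lattice modification. The valuative-criterion step for the finite map $\CE\to\CZ$ is routine by comparison.
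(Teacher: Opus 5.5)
Your proposal is correct and follows the paper's proof essentially step for step. It reduces to $h^*\alpha_\fp(0)\neq0$ through the spectral variety $\CZ$ and the finite map $\CE\to\CZ$, takes the Coleman--Mazur realization $V_\CD$ of the extended odd pseudo-character, and applies Theorem~\ref{thm:DL-general}, obtaining condition~\ref{cond:modified Liu (1)} from Theorem~\ref{thm:pointwise trianguline} on the semisimple locus and then on all of $\CD^*$ via the semicontinuity in Theorem~\ref{thm:herr-main}.

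Your fallback for the everywhere-reducible case is unnecessary. Theorem~\ref{thm:global realization and generic ss} already gives $V_x$ semisimple for every $x\notin E_{\mathrm{exc}}$, and $E_{\mathrm{exc}}$ is empty on a component where $\CI_r$ vanishes. There $\alpha$ and $\delta$ are characters by \eqref{eq:CM1a}--\eqref{eq:CM1b}, and $V_\CD$ is their direct sum.
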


Let $h$ be the morphism in the above theorem. 
Then the pullback $h^*T$ of $T$ to $\CD^*$ takes value in $\CO(\CD^*)^0=\{f\in\CO(\CD^*)\mid |f(x)|\leq1, \forall x\in\CD^*\}$, hence in $\CO(\CD)$ by \cite[Lemma~2.1]{DiaoLiu}, i.e. we get a two dimensional continuous pseudo-character $T_{\CD}$ of $G_F$ on $\CD$ extending $h^*T$.
Note that $T_{\CD}(c)=0$. 

\begin{lemma}
There exists a $p$-adic family of representations $V_{\CD}$ of $G_F$ and a locally finite subset $E_{\mathrm{exc}}\subset \CD$, such that for any $x\in \CD\backslash E_{\mathrm{exc}}, V_{x}$ is semisimple, and for any $x\in\CD^*, \Tr(V_x)=\Tr(\rho_{h(x)}^\ss)$. 
In particular, when $x\in \CD^*\backslash E_{\mathrm{exc}}$, $V_x\simeq \rho_{h(x)}^\ss\otimes_{k(h(x))}k(x)$. 
\end{lemma}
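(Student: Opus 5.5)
We will realize the odd pseudo-character $T_{\CD}$ on $\CD$ as an actual family using the result promised in Appendix~\ref{app:global realization}. There, odd pseudo-characters are related to the pseudo-representations of Coleman--Mazur, and the Coleman--Mazur construction applies on the smooth curve $\CD$.

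The construction runs as follows. Since $T_{\CD}(c)=0$ and $c^2=1$, the element $c$ gives a pair of orthogonal idempotents $e_\pm=(1\pm c)/2$ in the Cayley--Hamilton quotient of $\CO(\CD)[G_F]$. This produces a generalized matrix algebra structure, i.e. a Coleman--Mazur pseudo-representation $(a,b,d,x)$ with $x(\sigma,\tau)=b(\sigma)c(\tau)$-type data valued in $\CO(\CD)$.

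Because $\CO(\CD)=E'\langle T\rangle$ is a PID, hence smooth of dimension one, the ideal generated by the values $b(\sigma)$ is principal, say generated by $\beta$. If $\beta\neq0$, we renormalize $b\mapsto b/\beta$ and $c\mapsto c\beta$ to obtain an honest representation $G_F\to\GL_2(\CO(\CD))$ on a free module $V_{\CD}$ with trace $T_{\CD}$. Continuity follows from the continuity of $T_{\CD}$. If all $b(\sigma)=0$, the pseudo-character is a sum of two characters, and we take the direct sum. The trace identity $\Tr(V_x)=T_{\CD}(x)=\Tr(\rho^{\ss}_{h(x)})$ on $\CD^*$ then follows from $T_{\CD}|_{\CD^*}=h^*T$ and Theorem~\ref{thm:pschar on E}(2).

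For the exceptional set, let $E_{\mathrm{exc}}$ be the locus where the specialization $V_x$ is reducible but not semisimple. Since $\beta$ was divided out, the $b$-entries do not vanish simultaneously at any point. Reducibility at $x$ is controlled by the vanishing of the products $b(\sigma)c(\tau)$, i.e. of the ideal generated by the $c$-values times $\beta$, at $x$. If $T_{\CD}$ is generically irreducible, this ideal is nonzero, so its zero locus in the one-dimensional space $\CD$ is discrete and thus locally finite. If $T_{\CD}$ is generically reducible, we choose the direct sum of characters, which is semisimple everywhere, so $E_{\mathrm{exc}}=\emptyset$.

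Finally, for $x\in\CD^*\setminus E_{\mathrm{exc}}$, $V_x$ is semisimple with the same trace as $\rho^{\ss}_{h(x)}\otimes k(x)$. In characteristic zero, semisimple representations are determined by their traces (Brauer--Nesbitt / Chenevier), which gives the isomorphism. The main obstacle I expect is making the generalized-matrix-algebra construction precise for a continuous pseudo-character over the non-field ring $\CO(\CD)$: one must check that the Coleman--Mazur data exist with continuous entries, and that the reducibility ideal is as claimed. This is exactly what the appendix is meant to supply.
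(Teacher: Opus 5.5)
Your proposal is correct and follows the paper's route: the paper simply applies Theorem~\ref{thm:global realization and generic ss}, passing from the odd pseudo-character $T_{\CD}$ to the Coleman--Mazur pseudo-representation $(\xi,\alpha,\delta)$ of Lemma~\ref{lem:CM-pseudorep} and realizing it on the smooth curve $\CD$ via \cite[Theorem~5.1.2]{ColemanMazur}, with $E_{\mathrm{exc}}$ the zero locus of the reducibility ideal $\CI_r$ (a finite set on the disc when $\xi\not\equiv0$). Your renormalization by a generator $\beta$ of a principal ideal in $E'\langle T\rangle$ is an explicit version of that theorem for the closed disc; to make it precise, take $b(g)=\xi(g,h_0)$ and $c(h)=\xi(g_0,h)/\xi(g_0,h_0)$ for a pair with $\xi(g_0,h_0)\neq0$, so that both $b(g)/\beta$ and $\beta c(h)$ lie in $\CO(\CD)$.
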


\begin{proof}
Apply Theorem~\ref{thm:global realization and generic ss} in Appendix~\ref{app:global realization} to $T_{\CD}$. 
\end{proof}

\begin{proof}[Proof of Theorem~\ref{thm:main}]
To simplify the notation, we will denote $\Theta(U_p), \Theta(U_\fp)$ by $U_p$ and $U_\fp$ respectively. 
We are going to apply Theorem~\ref{thm:DL-general}. 
Firstly, by Proposition~\ref{prop:power boundedness}, $h^*U_\fp$ and $h^*U_p$ are bounded functions on $\CD^*$, thus can be extended to $\CD$ by \cite[Lemma~2.1]{DiaoLiu}, and the same hold for $h^*\alpha_\fp$. 
We will denote the extended functions by the same notation. 
Secondly, by \eqref{eq:Up and Ufp}, since $U_p$ is an invertible function on $\CE$, so is $U_\fp$ and their pullback to $\CD^*$. 
Since $\nu_\fq^\un(\fp)\in\CO(\CW)^\times$, their pullback to $\CD^*$ are also invertible. 
Hence $h^*\alpha_\fp$ is also invertible on $\CD^*$. 
Thirdly, the pullback $h^*\nu_\fp^\un$ of $\nu_\fp^\un$ respectively to $\CD^*$ takes value in $\CO(\CD^*)^0$, hence in $\CO(\CD)$ by \cite[Lemma~2.1]{DiaoLiu} again, i.e. we get a one dimensional $p$-adic family of representations $\nu_{\fp, \CD}$ of $G_{F_\fp}$ on $\CD$ extending $h^*\nu_\fp^\un$. 

For any $\fp|p$, consider the quadruple $(\CD, h^*\alpha_\fp, V_{\CD}|_{G_{F_\fp}}\otimes \nu_{\fp, \CD}, \tau_{0, \fp})$. 
The condition~\ref{cond:modified Liu (1)} in Theorem~\ref{thm:modified-liu} holds for $x\in \CD^*\backslash E_{\mathrm{exc}}$ by Theorem~\ref{thm:pointwise trianguline} and Corollary~\ref{cor:derived bc of herr cpx}, hence it holds for all $x\in \CD^*$ by Theorem~\ref{thm:herr-main}. 
By Proposition~\ref{prop:Sen poly} and the Zariski density of $\CD^*\backslash E_{\mathrm{exc}}$ in $\CD^*$, the Sen polynomial of $V_{\CD}|_{G_{F_\fp}}\otimes \nu_{\fp, \CD}$ is given by the pullback of $(TQ_{\fp}(T)_\tau)_{\tau\in I_\fp}$. 
So the condition~\ref{cond:modified Liu (2)} in Theorem~\ref{thm:modified-liu} follows from our assumption. 
Therefore, we can apply Theorem~\ref{thm:DL-general} to $(\CD, h^*\alpha_\fp, V_{\CD}|_{G_{F_\fp}}\otimes \nu_{\fp, \CD}, \tau_{0, \fp})$ and get $(h^*\alpha_\fp)(0)\neq0$ for all $\fp$. 

Note that $h^*(\nu_\fq^\un(x_\fp)^{-1})$ can also be extended to $\CD$, hence $h^*(\nu_\fq^\un(x_\fp))(0)\neq0$, which implies $h^*(U_\fp)(0)\neq0$. 
Similarly, $h^*\left(\nu^\un(p\prod_{\fp|p}x_\fp^{-e_\fp})\right)\neq0$, thus $h^*(U_p)(0)\neq0$ by \eqref{eq:Up and Ufp}, which give us morphisms 
$h^*U_p^{-1}\colon  \CD\to \BA^1_{\rig}$ and $(h', h^*U_\fp^{-1})\colon  \CD\to \CW\times \BA^1_\rig$, where $h'$ is the extension of $\wt\circ h$ to $\CD$. 
After being restricted to $\CD^*$, this is the composite of $h$ with 
\[
(\wt, U_{p}^{-1})\colon  \CE\to \CW\times\BA^1_\rig, 
\]
which factors through the spectral variety $\CZ\hookrightarrow \CW\times \BA^1_{\rig}$. 
Since it is a Zariski closed subvariety, the morphism $(h', h^*U_\fp^{-1})$ also factors through $\CZ$. 
So we obtain the following commutative diagram with solid arrows. 
\[
\begin{tikzcd}[column sep=large,row sep=large]
\CD^* \arrow[dd] \arrow[r, "h"]                              & \CE \arrow[d, "{(\wt,U_p^{-1})}" description] \arrow[dd, bend left=45, "\wt"{pos=0.8, swap}] \arrow[rd] &                                         \\
                                                                                                & \CZ \arrow[r, hook]                                                                      & \CW\times\BA^1_\rig \arrow[ld, "\pr_1"] \\
\CD \arrow[r, "h'"] \arrow[ru, "{(h',h^*U_p^{-1})}" description] \arrow[ruu, dashed, bend left] & \CW                                                                                      &                                        
\end{tikzcd}
\]
Finally, since the morphism $(\wt, U_p^{-1})\colon \CE \to\CZ$ is finite, by \cite[Lemma~6.5]{Bellovin}, there is always a dotted arrow that fits the diagram, which gives the desired extension of $h$ compatible with $\wt\circ h$. 
\end{proof}

\begin{remark}
\begin{enumerate}
\item
We may also regard $K_\tau\in\CO(\CW)$, whose pullback to $\CE$ gives the function in Proposition~\ref{prop:Sen poly}. 
Let $x_0:=\wt\circ h(0)$. 
If for every $\fp\mid p$, there exists $\tau_{0, \fp}\in I_\fp$, such that $K_{\tau_{0, \fp}}(x_0)\notin\BZ_{\leq0}$, then the non-vanishing assumption in the theorem is automatically satisfied.
Thus the theorem gives a properness result away from the non-positive integral Sen weights.
\item 
The non-vanishing assumption is also necessary for $\CD^*$ to be the $\tau_{\fp, 0}$-finite slope subspace of $(h^*\alpha_\fp, V_{\CD}|_{G_{F_\fp}}\otimes \nu_{\fp, \CD}, \tau_{0, \fp})$ for some $\tau_{0, \fp}$. 
Indeed, this follows from the first condition in the definition of the $\tau_{0}$-finite slope subspace.
\end{enumerate}
\end{remark}


\appendix
\section{Density of cohomological weights in the weight space}\label{app:coh wts}

\subsection{The setup and goal}

Recall that $E$ is a finite extension of $\BQ_p$ inside $\overline{\BQ_p}$ which splits the totally real field $F$, and the weight space $\CW$ is the rigid analytic $E$-space parameterizing continuous characters of
\[
H:=\BT(\BZ_p)\times\BZ_p^\times=\prod_{\fp|p}\CO_{F_\fp}^\times\times \BZ_p^\times. 
\]
Let $X:=\BZ^I\oplus\BZ\subset \CW(E)$ be the set of algebraic weights. 
For simplicity, if $\lambda=((\nu_\tau)_{\tau\in I},w)\in X$, we write $\chi_\lambda$ for the corresponding character of $H$. 
We have defined the subset of cohomological weights as
\[
X_{\coh}:=\left\{((\nu_\tau)_\tau,w)\in X:\nu_\tau\le0,\quad 2\nu_\tau+w\ge2\ \text{for every }\tau\in I\right\}\subset X, 
\]
and for $M\in\BR$, we have
\[
X_{\coh}(M):=\left\{((\nu_\sigma)_\sigma,w)\in X_{\coh}:
2\nu_\sigma+w>M\quad\text{for every }\sigma\in I
\right\}\subset X_{\coh}.
\]
The goal of this appendix is to prove the following theorem. 
\begin{theorem}\label{thm:density of coh wts, app}
For every $M\in\BR$ and every $x\in X_{\coh}$, there is a fundamental system of affinoid neighbourhoods $V_n$ of $\chi_x$ in $\CW$, such that $X_{\coh}(M)\cap V_n$ is Zariski dense in $V_n$ for every $n$.
\end{theorem}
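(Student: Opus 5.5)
We reduce to the standard density of arithmetic characters in a product of open discs. Write $H=H_{\mathrm{tors}}\times H_1$ with $H_1$ a torsion-free pro-$p$ group, so $H_1\simeq\BZ_p^{g+1}$ after choosing topological generators $u_1,\dots,u_{g+1}$. The component of $\CW$ containing $\chi_x$ is then an open polydisc $\{(z_1,\dots,z_{g+1}):|z_i|<1\}$ with coordinates $z_i=\chi(u_i)-\chi_x(u_i)$. We take $V_n$ to be the closed polydisc of radius $p^{-n}$ centred at $\chi_x$; these form a fundamental system of affinoid neighbourhoods. Since $V_n$ is irreducible and smooth, a subset is Zariski dense as soon as every analytic function on $V_n$ vanishing on it is zero.

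To produce points of $X_{\coh}(M)$ inside $V_n$ with a nice shape, we use weights of the form $\lambda=x+p^m\mu$ where $m$ is large and $\mu\in\BZ^I\oplus\BZ$. Since $\chi_{x+p^m\mu}=\chi_x\cdot\chi_\mu^{p^m}$ and $\chi_\mu^{p^m}(u)\to 1$ as $m\to\infty$, uniformly for $\mu$ in $\BZ^{I}\oplus\BZ$ (indeed $u^{p^m}$ is $p^{m+1}$-close to $1$ in $\CO_E$), we get $\lambda\in V_n$ for $m\ge m(n)$. The cohomological conditions $\nu_\tau\le0$ and $2\nu_\tau+w>M$ hold as soon as $\mu=((\mu_\tau),\mu_w)$ satisfies $\mu_\tau\le0$ and $\mu_w\ge -2\mu_\tau+C$ for a constant $C$ depending on $M$ and $x$. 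The admissible $\mu$ therefore form a translate of a full-rank cone $\Sigma\subset\BZ^{I}\oplus\BZ$ intersected with the lattice.

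It remains to show that $\{\chi_{x+p^m\mu}:\mu\in\Sigma\}$ is Zariski dense in $V_n$. The map $\mu\mapsto\chi_{x+p^m\mu}$ extends to a rigid analytic map $\BZ_p^{I}\times\BZ_p\to V_n$ via $s\mapsto \chi_x\cdot\exp(p^m\sum s_j\log\chi_{e_j})$, which is étale, since its Jacobian is $p^m$ times the invertible matrix $(\log\tau(u_i))$ (non-degenerate because $\log:H_1\otimes E\to (F\otimes E)\times E$ is an isomorphism). It is therefore an isomorphism onto a small polydisc $V'\subset V_n$. Suppose $f\in\CO(V_n)$ vanishes on the image of $\Sigma$. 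Pulling back, $f$ gives a power series in $s$ converging on a closed polydisc that vanishes on $\Sigma\subset\BZ^{g+1}$. Because $\Sigma$ contains, for instance, products of infinite arithmetic progressions $\prod_j\{a_j+k b_j e_j:k\ge0\}$ in suitable directions, iterating the one-variable Strassmann/Weierstrass principle (a nonzero convergent series on a disc has finitely many zeros) shows the pullback vanishes identically. Hence $f|_{V'}=0$, and by irreducibility of $V_n$ we get $f=0$.

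The main obstacle is the density step. Convergence of the power series on a large enough disc requires care: points of $\Sigma$ are unbounded in $\BZ^{g+1}$ but $p$-adically bounded, and we must show the pulled-back function converges on the closed unit polydisc in $s$. This is where the factor $p^m$ is chosen large relative to the radius of convergence of $\exp$. We also need to check that $\Sigma$, although a cone rather than a box, still contains a product of infinite sets along a basis, so that the induction on variables works. We will verify both points carefully, using the fact that the coordinate change $(\mu_\tau,\mu_w)\mapsto(\mu_\tau,\mu_w+2\sum\ldots)$ is unimodular.
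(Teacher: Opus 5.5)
Your strategy is the paper's. You interpolate a cone of algebraic weights by an analytic map from the closed unit polydisc that is \'etale at the origin, use that a product of infinite sets of integers is Zariski dense there, and conclude from irreducibility of $V_n$. The paper differs only in tools: it uses the binomial series $\mathcal B_u(Z)$ instead of $\exp\circ\log$. It also builds your unimodular change of variables in from the start through the basis $\lambda_0=\mathbf e_0$, $\lambda_\sigma=h\mathbf e_0-\mathbf e_\sigma$ with $h>2$, so that the relevant integer points are exactly $\BZ_{\ge N}^d$.

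There is, however, a step that fails as written: the claim that $\chi_{x+p^m\mu}\in V_n$ for all $\mu$ once $m\ge m(n)$. You only test the values on the generators $u_i$ of the free part $H_1$. But $V_n$ lies in the component $\chi_x\CW^0$, which also requires $\chi_\mu^{p^m}$ to be trivial on $H_{\tors}$. Now $H_{\tors}$ contains the prime-to-$p$ roots of unity of $\CO_{F_\fp}^\times$ and of $\BZ_p^\times$, on which $\zeta\mapsto\zeta^{p^m}$ is a bijection. So $\chi_\mu^{p^m}|_{H_{\tors}}$ does not tend to $1$; it is trivial only under congruence conditions on $\mu$.

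For example, for $F=\BQ$ and $p=5$ one has $H_{\tors}\cong(\BZ/4\BZ)^2$ and $\zeta^{5^m}=\zeta$. Hence $\chi_{x+5^m\mu}$ and $\chi_x$ lie in different components of $\CW$ for every $m$, unless $4$ divides both entries of $\mu$. Correspondingly, your map $s\mapsto\chi_x\exp\bigl(p^m\sum_js_j\log\chi_{e_j}\bigr)$ agrees with $\chi_x$ on $H_{\tors}$. At $s=\mu$ it therefore takes the value $\chi_{x+p^m\mu}$ twisted by a finite-order character of $H_{\tors}$, not the algebraic weight $\chi_{x+p^m\mu}$. So for most $\mu\in\Sigma$ the points whose density you prove are not in $X_{\coh}(M)$ at all.

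The repair is what the paper does. Fix $q_0>0$ divisible by the exponent of $H_{\tors}$ and use only the weights $x+q_0p^m\mu$; this is the paper's $q_n=q_0p^{s_n}$, with $\theta_i=\chi_{q_0\mathbf e_i}\in\CW^0$. Then $\chi_{q_0p^m\mu}$ is trivial on $H_{\tors}$. The integer values of $s\mapsto\chi_x\exp\bigl(q_0p^m\sum_js_j\log\chi_{\lambda_j}\bigr)$ really are $\chi_{x+q_0p^m\sum_js_j\lambda_j}$. The rest of your argument then goes through:
\begin{itemize}
\item convergence on the closed unit polydisc for $m$ large;
\item the Jacobian is $q_0p^m$ times an invertible matrix;
\item iterated Strassmann on $\BZ_{\ge N}^d$;
\item irreducibility of $V_n$.
\end{itemize}

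A minor point: when $E/\BQ_p$ is ramified, $u^{p^m}$ need not be $p^{m+1}$-close to $1$. What you need, and what holds, is $(1+\mathfrak m_E)^{p^m}\subset 1+\varpi^{k(m)}\CO_E$ with $k(m)\to\infty$.
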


\subsection{Components and coordinates}

Let $\Delta=H_{\tors}$ be the subgroup of $H$ consisting of torsion elements.
Then $H/\Delta$ is a finite free $\BZ_p$-module. 
To write down a coordinate of $\CW$, we need to choose a basis for it. 

For every prime $\fp\mid p$, put $d_\fp=[F_\fp:\BQ_p]=e_\fp f_\fp$. 
Let $\Delta_\fp=(\CO_{F_\fp}^{\times})_{\tors}$.
The quotient $\CO_{F_\fp}^{\times}/\Delta_\fp$ is a free $\BZ_p$-module of rank $d_\fp$. 
Since the natural map
\[
1+\fp\CO_{F_\fp}
\longrightarrow
\CO_{F_\fp}^{\times}/\Delta_\fp
\]
is surjective. We may therefore choose
\[
\gamma_{\fp,1},\ldots,\gamma_{\fp,d_\fp}
\in1+\fp\CO_{F_\fp}
\]
whose classes form a $\BZ_p$-basis of the quotient.
Through the natural inclusion $\CO_{F_\fp}^\times\to H$, we regard $\gamma_{\fp,i}\in H$ which equals
$\gamma_{\fp,i}$ in the $\fp$-factor and $1$ in every other factor.
Let $\Delta_0=(\BZ_p^\times)_{\tors}$.
Choose $\gamma_0\in 1+2p\BZ_p$ whose class is a $\BZ_p$-basis of $\BZ_p^\times/\Delta_0$, and we also regard $\gamma_0\in H$. 
Let $\Gamma$ be the topological subgroup of $H$ generated by 
\[
\gamma_0,
\qquad
\gamma_{\fp,i}\quad
(\fp\mid p,\ 1\le i\le d_\fp). 
\]
Then $\Gamma\simeq H/\Delta\simeq \BZ_p^{g+1}$, and these elements form a basis of $\Gamma$ as a $\BZ_p$-module. 
Fix an ordering and write them as
\[
\gamma_1,\ldots,\gamma_d, 
\]
where $d=g+1$. The connected components $\CW_{\epsilon}$ of $\CW$ are indexed by the characters
\[
\varepsilon\colon \Delta\longrightarrow E^\times.
\]
We denote the neutral component consisting of characters trivial on $\Delta$ by $\CW^0$. 
Any point $\chi\in\CW^0$ can be uniquely determined by $\chi(\gamma_j)-1$ for $1\leq j\leq d$, so we define coordinates $T_1,\ldots,T_d$ on $\CW^0$ by
\[
T_j(\chi):=\chi(\gamma_j)-1,
\]
for $\chi\in \CW^0$. 
Then $\CW^0$ is isomorphic to the $d$-dimensional open unit polydisc $\{|T_j|<1\}$. 
Since $H$ is a commutative group, there is naturally a structure of commutative rigid analytic group on $\CW$ and $\CW^0$. 
We denote 
\[
\mu\colon  \CW\times \CW\to \CW, 
\]
to be the multiplication map. 
For $\chi, \chi'\in\CW^0$, we have $\mu(\chi, \chi')\in \CW^0$ and 
\begin{equation}\label{eq:coord of multi}
T_j(\mu(\chi, \chi'))+1=(T_j(\chi)+1)(T_j(\chi')+1). 
\end{equation}

\subsection{An invertible matrix}

Let $J=\{0\}\sqcup I$.
Consider the standard basis of $X$: 
\[
\mathbf e_0=((0)_\tau,1),
\qquad
\mathbf e_\sigma=
\bigl((\delta_{\sigma,\tau})_{\tau\in I},0\bigr)
\quad(\sigma\in I).
\]
Choose an integer $q_0>0$ divisible by the exponent of $\Delta$, and
put
\[
\theta_i=\chi_{q_0\mathbf e_i},
\qquad i\in J.
\]
Then every $\theta_i$ belongs to $\CW^0$ and $\theta_i(\gamma_j)\in 1+\varpi\CO_E$, where $\varpi\in E$ is a uniformizer.  
Define
\[
a^{(0)}_{ij}=\log\bigl(\theta_i(\gamma_j)\bigr)\in E,
\qquad A^{(0)}=(a^{(0)}_{ij})_{i\in J,\ 1\le j\le d}\in M_{d}(E), 
\]
where $\log$ is the usual $p$-adic logarithm for $E$. 

\begin{proposition}\label{prop:A0invertible}
The matrix $A^{(0)}$ is invertible.
\end{proposition}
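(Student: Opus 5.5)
The plan is to identify $A^{(0)}$ with the matrix of a natural $E$-bilinear pairing between two $d$-dimensional spaces, and then show that this pairing is nondegenerate.

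\textbf{Step 1: rewrite the entries.} For $i\in J$, the character $\theta_i=\chi_{q_0\mathbf e_i}$ is explicit:
\[
\theta_0(a,b)=b^{q_0},\qquad \theta_\sigma(a,b)=\sigma(a)^{q_0}\quad(\sigma\in I).
\]
Since every $\gamma_j$ lies in $1+\fp\CO_{F_\fp}$ (respectively $1+2p\BZ_p$), the image $\sigma(\gamma_j)$ lies in $1+\mathfrak m_E$. The logarithm converges there and satisfies $\log(u^{q_0})=q_0\log u$. Hence
\[
a^{(0)}_{0j}=q_0\log(b_j),\qquad a^{(0)}_{\sigma j}=q_0\,\sigma(\log a_j),
\]
where $\gamma_j=(a_j,b_j)$ and $\log$ on $\prod_\fp\CO_{F_\fp}^\times$ is taken componentwise. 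We may drop the factor $q_0$, since it does not affect invertibility.

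\textbf{Step 2: split into blocks.} Each $\gamma_j$ is supported in a single factor: either the $\BZ_p^\times$ factor (namely $\gamma_0$) or a single $\fp$-factor. For $\sigma\in I_\fq$, the entry $\sigma(\log a_j)$ is zero unless $\gamma_j$ lives in the $\fq$-factor. Likewise $\log b_j=0$ unless $\gamma_j=\gamma_0$. So, after reordering rows and columns, $A^{(0)}$ is block diagonal:
\begin{itemize}
\item a $1\times1$ block $\log\gamma_0$, which is nonzero because $\gamma_0\neq1$ lies in $1+2p\BZ_p$, where $\log$ is injective;
\item for each $\fp$, the $d_\fp\times d_\fp$ block $B_\fp=\bigl(\sigma(\log\gamma_{\fp,i})\bigr)_{\sigma\in I_\fp,\,1\le i\le d_\fp}$.
\end{itemize}

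\textbf{Step 3: show each $B_\fp$ is invertible.} This is the main point. The map $\log\colon 1+\fp\CO_{F_\fp}\to F_\fp$ has finite kernel, namely the roots of unity, and has open image. Consequently it induces an injection $\CO_{F_\fp}^\times/\Delta_\fp\to F_\fp$ with image a lattice.

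The images $\log\gamma_{\fp,i}$ span that lattice over $\BZ_p$. Therefore they are $\BQ_p$-linearly independent, and they form a $\BQ_p$-basis of $F_\fp$, because $\dim_{\BQ_p}F_\fp=d_\fp$.

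For a $\BQ_p$-basis $(x_i)$ of $F_\fp$, the matrix $(\sigma(x_i))_{\sigma,i}$ over the splitting field $E$ is invertible. This follows because its square has determinant equal to the discriminant of $F_\fp/\BQ_p$ with respect to that basis, which is nonzero since the extension is separable. Equivalently, $F_\fp\otimes_{\BQ_p}E\to E^{I_\fp}$ is an isomorphism.

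\textbf{Conclusion and main obstacle.} Combining the blocks, $\det A^{(0)}$ is a nonzero multiple of $\log\gamma_0\cdot\prod_\fp\det B_\fp$, so $A^{(0)}$ is invertible. The only delicate point is the injectivity of $\log$ modulo torsion, which ensures the $\log\gamma_{\fp,i}$ are independent. This is standard, because the kernel of $\log$ on $1+\fp\CO_{F_\fp}$ consists exactly of the $p$-power roots of unity, which lie in $\Delta_\fp$.
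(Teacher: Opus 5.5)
Your proof is correct and follows the paper's argument. Both use the same block-diagonal decomposition, the nonvanishing $\log\gamma_0\neq0$, and invertibility of each $\fp$-block: one shows the $\log\gamma_{\fp,i}$ form a $\BQ_p$-basis of $F_\fp$ and then uses the invertibility of $(\sigma(x_i))_{\sigma,i}$ for a basis $(x_i)$. The only cosmetic difference is in how the basis property is obtained: the paper passes to $N$-th powers inside $1+\fp^{m_\fp}\CO_{F_\fp}$, where $\log$ is an isomorphism onto $\fp^{m_\fp}\CO_{F_\fp}$, whereas you use that $\log$ on $1+\fp\CO_{F_\fp}$ kills exactly $\mu_{p^\infty}(F_\fp)\subset\Delta_\fp$ and has open image.
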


\begin{proof}
For each $\fp|p$, consider the matrix
\[
M_\fp=
\left(
\log\bigl(\sigma(\gamma_{\fp,i})\bigr)
\right)_{\sigma\in I_\fp,\ 1\le i\le d_\fp}\in M_{d_\fp}(E).
\]
Note that for $\tau\in I_\fp$, the induced morphism 
\[
\CO_F\otimes_{\BZ}\BZ_p\simeq \prod_{\fp|p}\CO_{F_{\fp}}\to \CO_E
\]
factors through the natural projection $\prod_{\fp|p}\CO_{F_{\fp}}\to \CO_{F_\fp}$. 
Therefore, up to permutations of rows and columns, the matrix $A^{(0)}$ has the block form
\[
A^{(0)}=q_0
\begin{pmatrix}
M_{\fp_1}&&&0\\
&M_{\fp_2}&&0\\
&&\ddots&\vdots\\
0&0&\cdots&\log(\gamma_0)
\end{pmatrix}.
\]
Since $\gamma_0\in 1+2p\BZ_p, \log(\gamma_0)\neq0$, 
and we need to show that $M_{\fp}$ are invertible. 
Indeed, for any $\fp\mid p$, choose an integer $m_\fp>\frac{e_\fp}{p-1}$. 
Then the $p$-adic logarithm $\log\colon  1+\fp\CO_{F\fp}\to F_\fp$ restricts to an isomorphism
\[
\log\colon 1+\fp^{m_\fp}\CO_{F_\fp}\xrightarrow{\sim}\fp^{m_\fp}\CO_{F_\fp}
\]
with inverse given by the $p$-adic exponential. 
Choose $N\in\BN$, such that for any $1\leq i\leq d_\fp, \gamma_{\fp, i}^N\in 1+\fp^{m_{\fp}}\CO_{F_{\fp}}$, then they span a $\BZ_p$-lattice of rank $d_\fp$. 
Thus $\{\log(\gamma_{\fp, i}^N)\}_{1\leq i\leq d_\fp}$ also span a $\BZ_p$-lattice of rank $d_\fp$ in $p^{m_{\fp}}\CO_{F_{\fp}}$, which implies that $\{\log(\gamma_{\fp, i})\}_{1\leq i\leq d_\fp}$ forms a basis of $F_{\fp}$ over $\BQ_p$. 
Finally, for any $\sigma\in I_\fp, \log\bigl(\sigma(\gamma_{\fp,i})\bigr)=\sigma\bigl(\log(\gamma_{\fp,i})\bigr)$, so we conclude by standard algebraic number theory. 
\end{proof}

\subsection{One-dimensional subgroups}

For
\[
a=(a_1,\ldots,a_d)\in E^d, \qquad a\ne0,
\]
choose $j_0$ with $a_{j_0}\ne0$. Put
\[
F_j=a_{j_0}\log(1+T_j)-a_j\log(1+T_{j_0})\in\CO(\CW^0). 
\]
and
\[
Z(a)=V\left(F_j:\ j\ne j_0\right)\subset \CW^0. 
\]

\begin{lemma}\label{lem:curve-a}
The space $Z(a)$ is a smooth one-dimensional closed rigid analytic closed subgroup of $\CW^0$.
Its connected component $L(a)$ containing the identity is a connected smooth rigid analytic curve. 
\end{lemma}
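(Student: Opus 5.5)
The idea is to recognize $Z(a)$ as the preimage, under the logarithm map of the group $\CW^0$, of a line, and to work locally near the identity where the logarithm is an isomorphism. Concretely, write $\ell=(\ell_1,\ldots,\ell_d)\colon \CW^0\to \BA^d_{\rig}$ with $\ell_j=\log(1+T_j)$. Each $\ell_j$ is a homomorphism for the group law by \eqref{eq:coord of multi}, since $\log$ converts products into sums. On the locus $a_{j_0}\ell_j=a_j\ell_{j_0}$ for all $j\ne j_0$, the vector $\ell(\chi)$ is proportional to $a$. So $Z(a)=\ell^{-1}(E\cdot a)$, the preimage of a line under a group homomorphism.

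\emph{Step 1: $Z(a)$ is a closed subgroup.} It is the common zero locus of global functions $F_j\in\CO(\CW^0)$, hence Zariski closed. The $F_j$ are additive under $\mu$ and inversion, so $Z(a)$ contains the identity ($T=0$) and is stable under $\mu$ and inversion. This makes it a closed rigid analytic subgroup.

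\emph{Step 2: smoothness and dimension one.} I will compute the Jacobian of $(F_j)_{j\neq j_0}$. We have
\[
\partial F_j/\partial T_k=a_{j_0}\delta_{jk}(1+T_j)^{-1}-a_j\delta_{j_0k}(1+T_{j_0})^{-1}.
\]
The minor on the columns $k\ne j_0$ is diagonal with entries $a_{j_0}(1+T_j)^{-1}$, which never vanish on $\CW^0$ because $|T_j|<1$ and $a_{j_0}\neq0$. So the $d-1$ equations have everywhere independent differentials. By the Jacobian criterion, $Z(a)$ is smooth of pure dimension $d-(d-1)=1$, provided it is nonempty at each point considered; it contains the identity. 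Equivalently, by the implicit function theorem, locally $Z(a)$ is a graph over the $T_{j_0}$-coordinate.

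\emph{Step 3: the identity component.} Near the identity I will make this explicit. On a small polydisc $|T_j|\le r$ with $r$ small, $\log(1+T_j)$ is an isomorphism onto a disc, with inverse $\exp$. There $Z(a)$ is the image of the disc $s\mapsto(\exp(a_js)-1)_j$ for $|s|$ small. This exhibits a connected piece of $Z(a)$ through the identity. Since $Z(a)$ is a smooth curve, its connected components (in the rigid-analytic sense, via admissible covering) are well defined, and $L(a)$ is the one containing the identity. It is then automatically a connected smooth curve, being an open and closed subspace of a smooth one-dimensional space. It is also a subgroup: $\mu(L(a)\times L(a))$ is connected and contains the identity, so it lies in $L(a)$.

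\emph{Main obstacle.} The delicate point is the rigid-analytic notion of connected component of $Z(a)$, which is not quasi-compact: $\CW^0$ is an open polydisc, and $Z(a)$ has infinitely many components coming from torsion where $\log$ vanishes. I would handle this by exhausting $\CW^0$ with closed polydiscs $|T_j|\le p^{-1/n}$. On each, $Z(a)$ is an affinoid smooth curve with finitely many connected components. I would define $L(a)$ as the increasing union of the components containing the identity, and check that these are compatible and form an admissible open and closed subspace.
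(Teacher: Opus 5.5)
Your proposal is correct and follows the paper's argument. The subgroup structure comes from the additivity of the $F_j$ under the group law, since each $\log(1+T_j)$ is a homomorphism to $\mathbb{G}_a$. Smoothness of dimension one comes from the nowhere-vanishing diagonal Jacobian minor $a_{j_0}^{d-1}\prod_{j\ne j_0}(1+T_j)^{-1}$. Your Step~3 and the exhaustion of $\CW^0$ by closed polydiscs just make explicit what the paper dismisses as \emph{clear} for the identity component $L(a)$. The aside that $L(a)$ is itself a subgroup is not part of the statement.
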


\begin{proof}
By (\ref{eq:coord of multi}) and the standard property of $\log$, each function $\log(1+T_j)$ defines a group homomorphism from $\CW^0$ to $\BG_a$, so does $F_j$.
Hence the group structure on $Z(a)$ follows. 
For $j\ne j_0$, 
\[
\frac{\partial F_j}{\partial T_j}
=
\frac{a_{j_0}}{1+T_j},
\qquad
\frac{\partial F_j}{\partial T_k}=0
\quad(k\ne j,j_0).
\]
The Jacobian minor in the variables $T_j$, $j\ne j_0$, has
determinant
\[
a_{j_0}^{d-1}\prod_{j\ne j_0}(1+T_j)^{-1},
\]
which is nowhere zero.
Thus the equations cut out a smooth one-dimensional rigid space.
The last statement is clear. 
\end{proof}

For $\theta\in\CW^0(E)$, put
\[
a(\theta)=\bigl(\log(\theta(\gamma_1)),\ldots,\log(\theta(\gamma_d))\bigr).
\]
If $a(\theta)\ne0$, then $\theta\in Z(a(\theta))$. 
By the continuity of $\theta$, we have
\[
\theta^{p^s}\longrightarrow1
\qquad(s\to\infty). 
\]
So for all sufficiently large $s$, $\theta^{p^s}\in L(a(\theta))(E)$. 
Moreover, 
\[
a(\theta^{p^s})=p^sa(\theta), \qquad
Z(p^sa)=Z(a), \qquad L(p^sa)=L(a). 
\]

\begin{proposition}\label{prop:etale-product}
Let $(\lambda_i)_{i\in J}$ be a $\BZ$-basis of
$X=\BZ^I\oplus\BZ$. Write
\[
\lambda_i=\sum_{k\in J}c_{ik}\mathbf e_k,
\qquad
C=(c_{ik})\in\GL_d(\BZ).
\]
Put
\[
\theta_i^\lambda=\chi_{q_0\lambda_i}, \qquad
b_i:=(b_{ij})_{1\leq j\leq d}:=a(\theta_i^\lambda), \qquad
B=(b_{ij})_{i\in J,\ 1\le j\le d}.
\]
Then $B=CA^{(0)}$ and $B$ is invertible. 
Let $L_i=L(b_i)$, then the multiplication map
\[
\mu_C\colon \prod_{i\in J}L_i,\longrightarrow\CW^0, \qquad
(h_i)_i\longmapsto\prod_i h_i,
\]
is \'etale at the identity.
Consequently, there are affinoid connected neighbourhoods $V_i\subset L_i$ of the identity, such that
\[
\mu_C\colon 
V:=\prod_iV_i\longrightarrow\CW^0
\]
is \'etale and $U_C^0:=\mu_C(V)$ is an admissible open neighbourhood of the identity.
\end{proposition}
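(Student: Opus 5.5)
First, $B=CA^{(0)}$. By definition $\chi_\lambda$ is additive in $\lambda$: $\chi_{\lambda+\lambda'}=\chi_\lambda\chi_{\lambda'}$. Hence
\[
\theta_i^\lambda=\chi_{q_0\lambda_i}=\prod_{k\in J}\chi_{q_0\mathbf e_k}^{c_{ik}}=\prod_{k\in J}\theta_k^{c_{ik}} .
\]
The logarithm is a homomorphism on $1+\varpi\CO_E$, and all the values $\theta_k(\gamma_j)$ lie there. Therefore
\[
b_{ij}=\log\theta_i^\lambda(\gamma_j)=\sum_k c_{ik}\log\theta_k(\gamma_j)=\sum_k c_{ik}a^{(0)}_{kj},
\]
that is, $B=CA^{(0)}$. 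Since $C\in\GL_d(\BZ)$ and $A^{(0)}$ is invertible by Proposition~\ref{prop:A0invertible}, $B$ is invertible. In particular each row satisfies $b_i\neq0$, so $L_i=L(b_i)$ is defined, and Lemma~\ref{lem:curve-a} says it is a smooth connected curve through the identity.

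Next, the étaleness at the identity. It suffices to show that the differential of $\mu_C$ at the identity is an isomorphism. Domain and target are both smooth of dimension $d$, so a bijective differential gives étaleness at that point, for instance via the Jacobian criterion for rigid spaces. I would pass to logarithmic coordinates $\ell_j=\log(1+T_j)$. These form an étale coordinate system near the identity of $\CW^0$, since $d\ell_j=dT_j/(1+T_j)$. In these coordinates the multiplication on $\CW^0$ becomes addition, by \eqref{eq:coord of multi}.

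On $Z(b_i)$ the equations $F_j=0$ say precisely that the vector $(\ell_1,\dots,\ell_d)$ is proportional to $b_i$. So the tangent line of $L_i$ at the identity is spanned by the vector whose $\ell$-coordinates are $b_i$; nonvanishing follows since the $F_j$ cut out a smooth curve and $b_{i,j_0}\neq0$. The differential of $\mu_C$ at the identity is the sum map on tangent spaces, so in $\ell$-coordinates its matrix has the rows $b_i$, i.e.\ it is $B$. This matrix is invertible, so $\mu_C$ is étale at the identity.

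Finally, the neighbourhoods. The étale locus of a morphism of smooth rigid spaces is Zariski open; it is the nonvanishing locus of the Jacobian determinant. So it contains a product of small affinoid discs $V_i\subset L_i$ around the identity, which are connected. To get these discs, I would use that near the identity $L_i$ is parametrized by $t\mapsto(\exp(tb_{ij})-1)_j$ for $|t|$ small. Restricting, $\mu_C\colon V\to\CW^0$ is étale.

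It remains to see that the image is an admissible open neighbourhood. The cleanest route is again logarithmic coordinates: on a small polydisc, $\mu_C$ reads $(t_i)\mapsto(\ell_j)=\bigl(\sum_i t_ib_{ij}\bigr)_j$, a linear isomorphism given by $B$. So if the $V_i$ are discs $|t_i|\le\varepsilon$, the image is the affinoid polydisc $\{\ell\in\varepsilon B\text{-image}\}$. For $\varepsilon$ small, $\log$ is an isomorphism there, and the image is an affinoid subdomain containing the identity. The main subtlety is making sure $\exp$ and $\log$ converge on the chosen radii, which is settled by shrinking $\varepsilon$.
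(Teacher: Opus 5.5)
Your proof is correct and follows the same route as the paper. You get $B=CA^{(0)}$ from the multiplicativity of $\lambda\mapsto\chi_\lambda$ and of $\log$; the tangent line of each $L_i$ at the identity is spanned by $b_i$; and the differential of $\mu_C$ is the sum map, whose matrix is the invertible $B$. The one addition is that you justify the final clause, which the paper asserts without argument: in logarithmic coordinates $\mu_C$ becomes the linear map given by $B$, so on small polydiscs $\mu_C|_V$ is even an isomorphism onto an affinoid subdomain containing the identity.
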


\begin{proof}
The equality $B=CA^{(0)}$ follows from the way we put $X$ into $\CW(E)$ and additivity of the logarithm.
Thus $B$ is invertible by Proposition~\ref{prop:A0invertible}.
Fix $i$, and choose $j_i$ such that $b_{i,j_i}\ne0$. 
To compute the tangent space of $L_i$ at the identity, we need to solve the following equations
\[
b_{i,j_i}v_j-b_{ij}v_{j_i}=0
\qquad(j\ne j_i)
\]
about $v_i$, which give us
\[
T_1L_i=E\xi_i, \qquad
\xi_i=\sum_{j=1}^db_{ij}\frac{\partial}{\partial T_j}.
\]
The tangent map of multiplication is the summation map
\[
(T\mu_C)_1\colon  \bigoplus_iT_1L_i\longrightarrow T_1\CW^0,
\qquad (v_i)_i\longmapsto\sum_i v_i.
\]
The coordinate vectors of the $\xi_i$ are the rows of $B$, so they form a basis.
Thus $(T\mu_C)_{1}$ is an isomorphism, and $\mu_C$ is \'etale at the identity.
\end{proof}

\subsection{Density of cohomological weights}

Now we are going to prove Theorem~\ref{thm:density of coh wts, app}. 
Fix $x=((\nu_\sigma^\circ)_\sigma,w^\circ)\in X_{\coh}$. 
The component containing $\chi_x$ is given by the translation of $\CW^0$, i.e. $\CW_x=\chi_x\CW^0$.
We define the coordinates $\widetilde{T_j}$ on $\CW_x$ centered at $x$ by
\[
1+\widetilde T_j(\chi)=\frac{\chi(\gamma_j)}{\chi_x(\gamma_j)}.
\]
Let $\varpi$ be a uniformizer of $E$. 
For $n\ge1$, put
\[
V_n=\left\{\chi\in\CW_x:|\widetilde T_j(\chi)|\le|\varpi|^n\ \text{for every }j\right\}.
\]
Then $V_n\simeq
\Sp E\langle S_1,\ldots,S_d\rangle$, where $S_j=\frac{\widetilde T_j}{\varpi^n}$,
and $V_n$ form a fundamental system of affinoid neighbourhoods of $\chi_x$.

\begin{lemma}\label{lem:binomial-interpolation}
There exists an integer $c_E\ge0$, depending only on $E$, such that for every $n\ge1$ and every $u\in1+\varpi^{n+c_E}\CO_E$, the binomial series
\[
\mathcal B_u(Z):=\sum_{r\ge0}\binom Zr(u-1)^r
\]
converges in $E\langle Z\rangle$, and satisfies
\[
\mathcal B_u(Z)-1\in\varpi^n\CO_E\langle Z\rangle, 
\]
and
\begin{equation}\label{eq:Bu(m)}
\mathcal B_u(m)=u^m \qquad(m\in\BZ_{\ge0}) 
\end{equation}
Moreover,
\begin{equation}
\log\mathcal B_u(Z)=Z\log u
\quad\text{in }E\langle Z\rangle. 
\label{eq:binomial-logarithm}
\end{equation}
\end{lemma}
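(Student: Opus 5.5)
The plan is to control the binomial coefficients $\binom Zr$ as functions on the closed unit disc, and then to get the three identities from formal power series identities plus density of $\BZ_{\ge0}$.

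\textbf{Step 1: convergence and the bound $\mathcal B_u(Z)-1\in\varpi^n\CO_E\langle Z\rangle$.}
Write $\binom Zr=\frac{Z(Z-1)\cdots(Z-r+1)}{r!}$. This polynomial has coefficients in $\frac1{r!}\BZ$, so its Gauss norm is at most $|1/r!|=p^{v_p(r!)}\le p^{r/(p-1)}$. Put $e=e(E/\BQ_p)$, so that $|\varpi|=p^{-1/e}$. Then for $u-1\in\varpi^{n+c_E}\CO_E$ the $r$-th term has Gauss norm at most
\[
p^{r/(p-1)}\,p^{-r(n+c_E)/e}=p^{-r\left(\frac{n+c_E}{e}-\frac1{p-1}\right)}.
\]
Choose $c_E$ with $\frac{c_E}{e}>\frac1{p-1}$; explicitly $c_E=\lfloor e/(p-1)\rfloor+1$ works. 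With this choice the exponent is strictly larger than $rn/e$, so the term has norm at most $|\varpi|^{rn}$ times a factor $p^{-r\epsilon}$ with $\epsilon>0$.

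Two consequences follow. The terms tend to $0$, so the series converges in $E\langle Z\rangle$. For $r\ge1$ each term lies in $\varpi^{n}\CO_E\langle Z\rangle$, which gives $\mathcal B_u(Z)-1\in\varpi^n\CO_E\langle Z\rangle$.

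\textbf{Step 2: the identity $\mathcal B_u(m)=u^m$.}
For $m\in\BZ_{\ge0}$, evaluation $E\langle Z\rangle\to E$ at $Z=m$ is continuous. It sends $\binom Zr$ to $\binom mr$, which vanishes for $r>m$. The series therefore becomes the finite binomial expansion of $(1+(u-1))^m=u^m$.

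\textbf{Step 3: the identity $\log\mathcal B_u(Z)=Z\log u$.}
By Step 1 we have $\mathcal B_u(Z)-1\in\varpi^n\CO_E\langle Z\rangle$, so its sup norm on the unit disc is $<1$. Hence $\log\mathcal B_u(Z)=\sum_{k\ge1}(-1)^{k-1}(\mathcal B_u-1)^k/k$ converges in $E\langle Z\rangle$; the terms have norm at most $|\varpi|^{nk}/|k|\to0$.

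Now compare both sides on $\BZ_{\ge0}\subset\CO_E$. For $m\ge0$,
\[
\log\mathcal B_u(m)=\log(u^m)=m\log u,
\]
using Step 2 and the additivity of $\log$ on $1+\varpi\CO_E$. The difference $\log\mathcal B_u(Z)-Z\log u$ is therefore an element of $E\langle Z\rangle$ vanishing on the infinite set $\BZ_{\ge0}$. By Strassmann's theorem (Weierstrass preparation), a nonzero element of $E\langle Z\rangle$ has only finitely many zeros in the closed unit disc. So the difference is $0$.

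\textbf{Main obstacle.}
The one genuinely delicate step is the estimate in Step 1, that is, choosing $c_E$ so that the factorial denominators $1/r!$ are absorbed. The rest is formal once that estimate and the convergence of $\log$ are in place.
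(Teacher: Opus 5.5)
Your proposal is correct and follows essentially the same route as the paper. Both arguments use the Legendre bound $v_p(r!)\le r/(p-1)$, absorbed by choosing $c_E$ with $c_E v_p(\varpi)\ge 1/(p-1)$; both use termination of the series at nonnegative integers; and both identify $\log\mathcal B_u(Z)$ with $Z\log u$ because the two sides agree on $\BZ_{\ge0}$. Beyond that, you make the density step explicit via Strassmann's theorem and spell out the convergence of the logarithm series, both of which the paper leaves implicit.
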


\begin{proof}
Put $\alpha=v_p(\varpi)>0$. 
By Legendre's estimate, $v_p(r!)\le\frac{r}{p-1}$.
Choose $c_E$ with
\[
c_E\alpha\ge\frac1{p-1}.
\]
If $u-1\in\varpi^{n+c_E}\CO_E$, then the $r$-th nonconstant term
has valuation at least
\[
r(n+c_E)\alpha-v_p(r!)\ge rn\alpha.
\]
Thus the series converges in the Tate algebra and every non-constant term is divisible by $\varpi^n$.
So the logarithm on the left side of \eqref{eq:binomial-logarithm} is defined in the Tate algebra.
For a nonnegative integer $m$, the series terminates and the usual binomial theorem gives $\mathcal B_u(m)=u^m$. 
Both sides of \eqref{eq:binomial-logarithm} agree at the Zariski dense subset $\BZ_{\geq0}\subset \Sp(E\langle Z\rangle)$, so we are done.
\end{proof}

Fix an integer $h>2$. Let
\begin{equation}
\lambda_0=\mathbf e_0, \qquad
\lambda_\sigma=h\mathbf e_0-\mathbf e_\sigma \quad(\sigma\in I).
\label{eq:positive-directions}
\end{equation}
Then they form a basis of $X$.
Let $C_+\in\GL_d(\BZ)$ be the corresponding change-of-basis matrix. Put
\[
\theta_i^+=\chi_{q_0\lambda_i},
\qquad
b_i^{(0)}=a(\theta_i^+),
\]
and let $B^{(0)}$ be the matrix with rows $b_i^{(0)}$. 
By Proposition~\ref{prop:etale-product},
\begin{equation}
B^{(0)}=C_+A^{(0)}\in\GL_d(E).
\label{eq:positive-basis-matrix}
\end{equation}
Put $L_i=L(b_i^{(0)})$ for $i\in J$, and let
\[
\mu_+\colon \prod_{i\in J}L_i\longrightarrow\CW^0
\]
be the multiplication morphism. 
By Proposition~\ref{prop:etale-product}, $\mu_+$ is \'etale at the identity.
Now we parameterize the algebraic weights in $L_i$. 

\begin{corollary}\label{cor:parameterize weights}
For any $n\geq1$, there exists $s_n$, such that if we put
\[
q_n=q_0p^{s_n}, \qquad \eta_{i,n}=\chi_{q_n\lambda_i}, \quad u_{ij,n}=\eta_{i,n}(\gamma_j)=\chi_{\lambda_i}(\gamma_j)^{q_n}, 
\]
then for each $i\in J$, there exists a morphism 
\[
\varphi_{i,n}\colon \CD:=\Sp E\langle Z\rangle\longrightarrow L_i
\]
such that $\varphi_{i, n}(m)=\eta_{i, n}^m$ for every $m\in\BZ_{\geq0}$. 
\end{corollary}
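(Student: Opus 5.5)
Fix $n\ge1$ and $i\in J$. The plan is to write $\varphi_{i,n}$ in the coordinates $T_1,\dots,T_d$ of $\CW^0$ using the binomial series of Lemma~\ref{lem:binomial-interpolation}, with $u=u_{ij,n}$. First choose $s_n$. Since $\theta_i^+=\chi_{q_0\lambda_i}$ lies in $\CW^0$ and $\theta_i^+(\gamma_j)\in 1+\varpi\CO_E$, we have
\[
u_{ij,n}=\theta_i^+(\gamma_j)^{p^{s_n}}\longrightarrow 1\qquad (s_n\to\infty).
\]
Hence we may take $s_n$ so large that $u_{ij,n}\in 1+\varpi^{n+c_E}\CO_E$ for all $i\in J$ and $1\le j\le d$. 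Here $c_E$ is the constant of Lemma~\ref{lem:binomial-interpolation}. For simplicity I require $n+c_E\ge1$, which is automatic. Then each $\mathcal B_{u_{ij,n}}(Z)\in E\langle Z\rangle$ converges and satisfies $\mathcal B_{u_{ij,n}}(Z)-1\in\varpi^n\CO_E\langle Z\rangle$.

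Next define $\varphi_{i,n}\colon\CD\to\CW^0$ by $T_j\mapsto \mathcal B_{u_{ij,n}}(Z)-1$. This is a morphism into the open unit polydisc, because each such function has sup norm at most $|\varpi|^n<1$. By \eqref{eq:Bu(m)}, for $m\in\BZ_{\ge0}$ we get $T_j(\varphi_{i,n}(m))=u_{ij,n}^m-1=\eta_{i,n}^m(\gamma_j)-1$. So $\varphi_{i,n}(m)=\eta_{i,n}^m$, since points of $\CW^0$ are determined by their coordinates.

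It remains to see that $\varphi_{i,n}$ factors through $L_i=L(b_i^{(0)})$. First show that it factors through $Z(b_i^{(0)})$. By \eqref{eq:binomial-logarithm},
\[
\varphi_{i,n}^*\log(1+T_j)=Z\log u_{ij,n}=Zq_0^{-1}q_n\, b^{(0)}_{ij}=Zp^{s_n}b^{(0)}_{ij}.
\]
Here $\log u_{ij,n}=p^{s_n}\log\theta_i^+(\gamma_j)=p^{s_n}b^{(0)}_{ij}$. Therefore
\[
\varphi_{i,n}^*F_j=Zp^{s_n}\bigl(b^{(0)}_{ij_0}b^{(0)}_{ij}-b^{(0)}_{ij}b^{(0)}_{ij_0}\bigr)=0.
\]
Since $\CD$ is reduced, $\varphi_{i,n}$ factors through the closed subspace $Z(b_i^{(0)})$. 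Finally, $\CD$ is connected and $\varphi_{i,n}(0)$ is the identity, so the image lies in the connected component $L_i$ of the identity.

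The only delicate step is the factorization through $Z(b_i^{(0)})$ and then $L_i$. This needs the identity \eqref{eq:binomial-logarithm} together with the fact that $\log(1+T_j)$ pulls back correctly along a morphism defined via Tate-algebra functions. Both follow because $|T_j\circ\varphi_{i,n}|<1$, so the logarithm series converges in $E\langle Z\rangle$. Compatibility with taking connected components uses that the image of a connected affinoid is connected.
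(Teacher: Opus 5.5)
Your proposal is correct and follows the paper's proof essentially step for step. It uses the same choice of $s_n$ forcing $u_{ij,n}\in 1+\varpi^{n+c_E}\CO_E$, the same definition $\varphi_{i,n}^*(1+T_j)=\mathcal B_{u_{ij,n}}(Z)$, the vanishing of the pulled-back equations $F_j$ via \eqref{eq:binomial-logarithm}, connectedness of $\CD$ together with $\varphi_{i,n}(0)=1$ to land in $L_i$, and \eqref{eq:Bu(m)} to get $\varphi_{i,n}(m)=\eta_{i,n}^m$. Your extra remarks fill in details the paper leaves implicit: the sup-norm bound shows the map lands in $\CW^0$, and the logarithm series converges on the image. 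The appeal to reducedness of $\CD$ is harmless but unnecessary, since $\varphi_{i,n}^*F_j=0$ holds identically in $E\langle Z\rangle$.
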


\begin{proof}
Choose $s_n$ sufficiently large such that $u_{ij, n}\in 1+\varpi^{n+c_E}\CO_E$ for any $i, j$. 
We define 
\[
\varphi_{i, n}\colon  \CD\to \CW^0
\]
by $\varphi_{i,n}^*(1+T_j)=\mathcal B_{u_{ij,n}}(Z)$. 
Put $b_{ij,n}=\log u_{ij,n}=p^{s_n}b_{ij}^{(0)}$.
Then \eqref{eq:binomial-logarithm} gives
\begin{equation}
\varphi_{i,n}^*\log(1+T_j)
=b_{ij,n}Z
=p^{s_n}b_{ij}^{(0)}Z.
\label{eq:curve-linearization}
\end{equation}
Hence all defining equations of $Z(b_i^{(0)})$ pull back to zero. 
Note that $\CD$ is connected and $\varphi_{i,n}(0)\in \CW$ is the identity, so $\varphi_{i, n}$ factors through $L_i$. 
Finally, for every $m\in\BZ_{\ge0}$,
\[
1+T_j(\varphi_{i, n}(m))=\CB_{u_{ij, n}}(m)=u_{ij, n}^m=\eta_{i, n}^m(\gamma_j), 
\]
thus $\varphi_{i, n}(m)=\eta_{i, n}^m$. 
\end{proof}

\begin{proof}[Proof of Theorem~\ref{thm:density of coh wts, app}]
Fix $n\geq1$, we will show that $X_{\coh}(M)\cap V_n$ is Zariski dense in $V_n$. 
Let $s_n, \varphi_{i, n}$ be as in Corollary~\ref{cor:parameterize weights}, 
\[
\Phi_n=\prod_{i\in J}\varphi_{i,n}\colon 
\CD^d\longrightarrow\prod_{i\in J}L_i,
\]
and
\begin{equation}
\Psi_n=\tau_{\chi_x}\circ\mu_+\circ\Phi_n\colon  \CD^d\longrightarrow\CW_x,
\label{eq:product-interpolation}
\end{equation}
where $\tau_{\chi_x}(h)=\chi_xh$ is the translation map. 
In the coordinates $\tilde{T_j}$,
\begin{equation}\label{eq:Psi_n}
\Psi_n^*(1+\widetilde T_j)=\prod_{i\in J}\mathcal B_{u_{ij,n}}(Z_i).
\end{equation}
Every factor on the right hand side lies in $1+\CO_E\langle Z_i\rangle$, so $\Psi_n(\CD^d)\subset V_n$.
Taking the differential of \eqref{eq:Psi_n} at $Z=0$, we see that $(d\Psi_n)_0$ is represented by the matrix
\[
B_n=(\log u_{ij,n})=p^{s_n}B^{(0)}.
\]
which is invertible by \eqref{eq:positive-basis-matrix}.
Thus $\Psi_n$ is \'etale at the origin. 
Therefore, there exists an admissible open neighbourhood $U_n\subset\CD^d$ of the origin such that $\Psi_n(U_n)\subset V_n$ is a nonempty admissible open subset.
For $m=((m_\sigma)_{\sigma\in I}, m_0)\in \BZ_{\geq0}^d$, Corollary~\ref{cor:parameterize weights} gives
\[
\Psi_n(m)=\chi_x\prod_{i\in J}\eta_{i,n}^{m_i}.
\]
In additive algebraic weight notation,
\begin{equation}
\Psi_n(m)=x+q_n\left(m_0\lambda_0+\sum_{\sigma\in I}m_\sigma\lambda_\sigma\right)\in X.
\end{equation}
Using \eqref{eq:positive-directions}, one can compute that
\begin{equation*}
\begin{aligned}
\nu_\sigma(\Psi_n(m))&=\nu_\sigma^\circ-q_nm_\sigma, \\
2\nu_\sigma(\Psi_n(m))+w(\Psi_n(m))
&={}2\nu_\sigma^\circ+w^\circ+q_n\left(m_0+(h-2)m_\sigma
+h\sum_{\tau\ne\sigma}m_\tau
\right).
\end{aligned}
\label{eq:weight-growth}
\end{equation*}
So there exists $N=N(n,M)$, such that for $D_N:=\BZ_{\geq N}^d\subset\CD^d$, we have
\begin{equation}
\Psi_n(D_N)\subset X_{\coh}(M)\cap V_n.
\label{eq:dense-algebraic-family}
\end{equation}
Let $f\in\CO(V_n)$ vanish on $X_{\coh}(M)\cap V_n$.
By \eqref{eq:dense-algebraic-family}, $\Psi_n^*f$ vanishes on $D_N$, which is Zariski dense in $\CD^d$.
Thus $\Psi_n^*f=0$. 
In particular, $f$ vanishes on the nonempty admissible open subset $\Psi_n(U_n)\subset V_n$. 
Since $V_n$ is an irreducible affinoid polydisc, this implies gives $f=0$ on $V_n$. 
Hence $X_{\coh}(M)\cap V_n$ is Zariski dense in $V_n$, and the theorem follows.
\end{proof} 
\section{Pseudo-characters and families of Galois representations}\label{app:global realization}

In this appendix, we record a result on global realization of family of two dimensional odd pseudo-characters to family of representations over smooth curve.

\subsection{Two dimensional odd pseudo-character}

\begin{definition}
Let $A$ be a commutative $\BQ$-algebra and let $G$ be a group. 
A \emph{two-dimensional $A$-valued pseudo-character} is a function
\[
T\colon G\longrightarrow A
\]
such that $T(1)=2, T(gh)=T(hg)$ and, for all $g_1,g_2,g_3\in G$,
\begin{align*}
0={}&T(g_1)T(g_2)T(g_3)
   -T(g_1g_2)T(g_3)-T(g_1g_3)T(g_2)-T(g_2g_3)T(g_1)\\
 &+T(g_1g_2g_3)+T(g_1g_3g_2).
\end{align*}
Moreover, if $G$ is a topological group, $A$ is a topological ring, and $T$ is continuous, then we say $T$ is a \emph{continuous two-dimensional A-valued pseudo-character}. 
\end{definition}

This is the $d=2$ case of Taylor's trace identities \cite[\S 1.1]{Taylor91}. 
Over an algebraically closed field of characteristic zero, Taylor's theorem \cite[Theorem~1]{Taylor91} says that such a pseudo-character is the trace of a unique semisimple two-dimensional representation, up to isomorphism. 

Fix a rigid analytic space $X$ over a finite extension $E$ of $\BQ_p$ and a topological group $G$. 

\begin{definition}
A \emph{continuous two-dimensional pseudo-character on $X$} is a map
\[
T\colon G\longrightarrow \CO(X)
\]
whose restriction $T_U\colon G\longrightarrow \CO(U)$ is a continuous two-dimensional pseudo-character for every affinoid admissible open $U\subset X$.
\end{definition}

For any closed point $x\in X$, write $T_x$ for the specialization to $k(x)$.  
After extending scalars to an algebraic closure
$\overline{k(x)}$, Taylor's theorem associates to $T_x$ a unique
semisimple representation
\[
\rho^{\ss}_{T,x}\colon G\longrightarrow \GL_2(\overline{k(x)})
\]
with trace $T_x$.

\begin{definition}
Let $T$ be a two-dimensional pseudo-character on $X$. 
An \emph{odd involution of $T$} is an element $c\in G, c^2=1$, such that $T(c)=0$.
If there exists an odd involution of $T$, then we say $T$ is \emph{odd}. 
\end{definition}

At every closed point $x\in X$, the semisimple representation associated to $T_x$ satisfies
\[
\tr(\rho^{\ss}_{T,x}(c))=0,\qquad \rho^{\ss}_{T,x}(c)^2=1.
\]
Since the residue field has characteristic zero, $c$ is diagonalizable with eigenvalues $+1$ and $-1$.

\subsection{Coleman--Mazur's pseudo-representation}

We now pass from a odd pseudo-character $T$ to Coleman--Mazur's pseudo-representation. 
In this subsection, we assume that $X$ is reduced, and let $c$ be an odd involution of $T$.
Define
\begin{equation}\label{eq:alpha-delta}
  \alpha(g)=\frac{T(g)+T(cg)}{2},
  \qquad
  \delta(g)=\frac{T(g)-T(cg)}{2},
\end{equation}
and
\begin{equation}\label{eq:xi}
  \xi(g,h)=\alpha(gh)-\alpha(g)\alpha(h).
\end{equation}

\begin{lemma}\label{lem:CM-pseudorep}
The triple
\[
  r=(\xi,\alpha,\delta)
\]
is a two-dimensional pseudo-representation in the sense of Coleman--Mazur, i.e.
\begin{align}
 \alpha(gh)&=\alpha(g)\alpha(h)+\xi(g,h),\label{eq:CM1a}\\
 \delta(gh)&=\delta(g)\delta(h)+\xi(h,g),\label{eq:CM1b}\\
 \xi(gh,k)&=\alpha(g)\xi(h,k)+\delta(h)\xi(g,k),\label{eq:CM2a}\\
 \xi(g,hk)&=\alpha(k)\xi(g,h)+\delta(h)\xi(g,k),\label{eq:CM2b}\\
 \xi(g,h)\xi(k,\ell)&=\xi(g,\ell)\xi(k,h),\label{eq:CM3}
\end{align}
and
\[
 \alpha(1)=\delta(1)=1,\qquad
 \alpha(c)=1,\qquad \delta(c)=-1,
\]
while $\xi(g,h)=0$ if $g$ or $h$ belongs to $\{1,c\}$.
Moreover
\[
  \alpha(g)+\delta(g)=T(g).
\]
\end{lemma}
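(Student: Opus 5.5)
The plan is to reduce every identity to the two-dimensional trace identity for $T$, first at points and then globally. Since $X$ is reduced, an element of $\CO(U)$ on an affinoid $U$ is determined by its values at closed points. Each identity in the lemma is an equality of functions built polynomially from values of $T$ (via \eqref{eq:alpha-delta} and \eqref{eq:xi}), so it suffices to check it after specializing to every closed point $x$, and then after extending scalars to $\overline{k(x)}$. There Taylor's theorem gives a semisimple $\rho=\rho^{\ss}_{T,x}$ with $\tr\rho=T_x$, $\rho(c)^2=1$ and $\tr\rho(c)=0$. Hence in a suitable basis $\rho(c)=\mathrm{diag}(1,-1)$.

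Write $\rho(g)=\begin{pmatrix}a(g)&b(g)\\ c'(g)&d(g)\end{pmatrix}$. Then $T(g)=a(g)+d(g)$ and $T(cg)=a(g)-d(g)$, which gives $\alpha(g)=a(g)$ and $\delta(g)=d(g)$. In particular $\alpha+\delta=T$ holds immediately, even without passing to points. Multiplying matrices, $a(gh)=a(g)a(h)+b(g)c'(h)$, so $\xi(g,h)=b(g)c'(h)$. Every identity now becomes a matrix computation:
\begin{itemize}
\item \eqref{eq:CM1a} is the definition of $\xi$.
\item \eqref{eq:CM1b} is $d(gh)=c'(g)b(h)+d(g)d(h)$, i.e.\ $\xi(h,g)=b(h)c'(g)$.
\item \eqref{eq:CM2a} follows from $b(gh)=a(g)b(h)+b(g)d(h)$.
\item \eqref{eq:CM2b} follows from $c'(hk)=c'(h)a(k)+d(h)c'(k)$.
\item \eqref{eq:CM3} is $b(g)c'(h)b(k)c'(\ell)=b(g)c'(\ell)b(k)c'(h)$.
\end{itemize}
The special values $\alpha(1)=\delta(1)=1$, $\alpha(c)=1$, $\delta(c)=-1$ follow from $T(1)=2$, $T(c)=0$ and $T(c^2)=T(1)=2$. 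Also $b(1)=b(c)=c'(1)=c'(c)=0$, so $\xi$ vanishes when either argument lies in $\{1,c\}$. Some of these values can also be checked directly from the pseudo-character axioms.

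The only delicate point is the reduction to closed points. Fix an affinoid $U$. An element $f\in\CO(U)$ with $f(x)=0$ for all closed $x$ lies in the nilradical, which vanishes since $X$ is reduced. Taylor's theorem applies at each point because $k(x)$ has characteristic zero. The diagonalization of $\rho(c)$ uses that $\rho(c)$ is an involution with trace zero, hence has eigenvalues $1$ and $-1$. An alternative that avoids points would expand each identity directly through the $d=2$ trace identity. I expect that to be messy, so I would use the pointwise argument, which is clean given reducedness.
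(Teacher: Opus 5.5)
Your proposal is correct and follows the paper's proof essentially verbatim. Both arguments specialize to closed points and extend scalars to $\overline{k(x)}$, put the semisimple representation in a basis with $c\mapsto\mathrm{diag}(1,-1)$, read off $\alpha=a$, $\delta=d$, $\xi(g,h)=b(g)c'(h)$, check each identity by matrix multiplication, and conclude globally because $X$ is reduced.
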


\begin{proof}
The first displayed equality in the conclusion is the definition of
$\xi$, and the last equality is immediate from
\eqref{eq:alpha-delta}.  We prove the remaining identities pointwisely.

Let $x$ be a closed point of $X$, extend the scalars to
$\overline{k(x)}$, and let
\[
\rho_{x}^{\ss}\colon G\longrightarrow \GL_2(\overline{k(x)})
\]
be the semisimple representation with trace $T_x$.  
Choose a basis in which
\[
\rho_{x}^{\ss}(c)=\begin{pmatrix}1&0\\0&-1\end{pmatrix}.
\]
Write
\[
\rho_{x}^{\ss}(g)=
\begin{pmatrix}
a(g)&b(g)\\ c(g)&d(g)
\end{pmatrix}.
\]
Then
\[
T_x(g)=a(g)+d(g),\qquad T_x(cg)=a(g)-d(g),
\]
so $\alpha_x(g)=a(g), \delta_x(g)=d(g)$.
Taking the upper-left entry in
$\rho_{x}^{\ss}(gh)=\rho_{x}^{\ss}(g)\rho_{x}^{\ss}(h)$ gives $\xi_x(g,h)=b(g)c(h)$.
The identities \eqref{eq:CM1b}--\eqref{eq:CM3} now follow immediately from matrix multiplication.  
The identities involving $1$ and $c$ are
also immediate.
Thus every difference between the two sides of each required identity is an analytic function on $X$ which vanishes at every closed point. 
Since $X$ is reduced, such a function is zero. 
Hence the identities hold globally.
\end{proof}

Define the coherent ideal sheaf $\CI_r=\bigl(\xi(g,h): \;g,h\in G\bigr)\subset \CO_X$. 
Then for a closed point $x\in X$, $x\notin V(\CI_r)$ if and only if the semisimple representation associated to $T_x$ is absolutely irreducible.

\subsection{From odd Pseudo-characters to families of representations}

Let $T$ still be a continuous two-dimensional odd pseudo-character of $G$ on a rigid space $X$ over some finite extension $E$ of $\BQ_p$. 

\begin{theorem}\label{thm:global realization and generic ss}
Suppose $X$ is smooth rigid analytic curve over $E$. Then there exists:
\begin{itemize}
\item a family of two dimensional $p$-adic representations $V_X$ of $G$ on $X$, 
\item a discrete subset $E_{\mathrm{exc}}\subset X$;
\end{itemize}
such that:
\begin{enumerate}
\item the pseudo-character associated to $V_X$ is $T$;
\item for every closed point $x\in X\setminus E_{\mathrm{exc}}$, the specialization $V_x$ is semisimple. 
\end{enumerate}
More precisely, to define $Z$, on every connected component $C$ of $X$, either $\CI_r|_C=0$, or $\CI_r|_C\neq0$, then $V(\CI_r|_C)$ is discrete. 
One may take $E_{\mathrm{exc}}=\bigcup_{\CI_r|_C\neq0} V(\CI_r|_C)$. 
In particular, since we are working over characteristic zero, for any $x\notin E_{\mathrm{exc}}$ and any finite extension $E'$ of $k(x)$, $V_x\otimes_{k(x)}E'$ is still semisimple. 
\end{theorem}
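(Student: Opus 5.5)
We reduce to a single connected component $C$ of $X$; since $X$ is a smooth curve, each connected component is irreducible and $\CO_X$ is locally a Dedekind domain (a discrete valuation ring at every closed point). We then split into the two cases $\CI_r|_C\neq0$ and $\CI_r|_C=0$ from the statement.

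\emph{Generically irreducible case.} If $\CI_r|_C\neq0$, then $V(\CI_r|_C)$ is a proper Zariski-closed subset of an irreducible smooth curve, hence discrete. Put $E_{\mathrm{exc}}\cap C=V(\CI_r|_C)$. By Lemma~\ref{lem:CM-pseudorep}, the triple $r=(\xi,\alpha,\delta)$ is a Coleman--Mazur pseudo-representation, so we can run the Coleman--Mazur construction. Locally on an affinoid $U\subset C$ we choose $g_0,h_0$ with $\xi(g_0,h_0)\not\equiv0$, set $b(g)=\xi(g,h_0)$ and $c(h)=\xi(g_0,h)/\xi(g_0,h_0)$, and let $\rho(g)=\begin{pmatrix}\alpha(g)&b(g)\\c(g)&\delta(g)\end{pmatrix}$. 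Identities \eqref{eq:CM1a}--\eqref{eq:CM3} show that $\rho$ is multiplicative, but only over $\CO(U)[1/\xi(g_0,h_0)]$. To obtain a genuine lattice, we take the $\CO_U$-submodule of $\CO_U^2\otimes\mathrm{Frac}$ generated by $\rho(G)\cdot\CO_U^2$. This submodule is $G$-stable and finitely generated, which is where we need continuity of $T$, boundedness of the entries, and compactness or finite generation of the relevant ideal. It is torsion-free over a Dedekind ring, hence locally free of rank $2$.

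These local lattices must then be glued. The main obstacle is that different local choices give lattices that are isomorphic only generically. We handle this by working on the open cover and noting that over $C\setminus V(\CI_r)$ the representation is absolutely irreducible with trace $T$, so it is unique up to scalars by Schur's lemma. The transition isomorphisms therefore exist on overlaps minus a discrete set. We extend them across that set by modifying the lattices at the finitely many points of each affinoid where they disagree, using that over a DVR any two $G$-stable lattices in the same irreducible generic fibre differ by elementary modifications, which can be adjusted coherently. Alternatively, we can cite \cite{ColemanMazur}'s global result for pseudo-representations on smooth curves, which produces $V_C$ directly. Off $E_{\mathrm{exc}}$, the specialization $V_x$ is irreducible, hence semisimple, and its trace is $\alpha+\delta=T$.

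\emph{Reducible case.} If $\CI_r|_C=0$, then $\xi\equiv0$, so $\alpha$ and $\delta$ are multiplicative by \eqref{eq:CM1a} and \eqref{eq:CM1b}. They are continuous characters $G\to\CO(C)^\times$ (invertibility follows from $\alpha(g)\alpha(g^{-1})=\alpha(1)=1$). We take $V_C=\alpha\oplus\delta$, which is semisimple at every point, so we may put $E_{\mathrm{exc}}\cap C=\emptyset$.

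Taking the union over components gives $V_X$ and a discrete set $E_{\mathrm{exc}}$ satisfying (1) and (2). The final claim follows because, in characteristic zero, semisimplicity is preserved under field extension: an irreducible specialization stays absolutely irreducible, since $\xi_x\neq0$ detects absolute irreducibility, and a direct sum of characters stays a direct sum of characters.
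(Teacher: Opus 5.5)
Your fallback sentence, which applies Coleman--Mazur's result for pseudo-representations on smooth curves to the triple $r=(\xi,\alpha,\delta)$ of Lemma~\ref{lem:CM-pseudorep}, is exactly the paper's proof. The paper's proof consists only of that citation (\cite[Theorem~5.1.2]{ColemanMazur} and the remark after it; cf.\ \cite[Remark~3.1.2]{JohanssonNewton}). The rest of your outline is also correct: treating components with $\CI_r|_C=0$ by $\alpha\oplus\delta$, the discreteness of $V(\CI_r|_C)$, and the irreducibility of $V_x$ off $E_{\mathrm{exc}}$ (any representation with trace $T_x$ is irreducible once $T_x$ is the trace of an absolutely irreducible representation). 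The self-contained construction you put first, however, is not a proof at the gluing step.

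\emph{The gap.} Your transition maps come from Schur's lemma, so on each overlap they are fixed only up to units. You never check the cocycle condition on triple overlaps. Moreover, ``modifying the lattices at finitely many points \ldots\ adjusted coherently'' is not an argument: a modification on one chart changes its compatibility with every other chart, and the admissible cover may be infinite.

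\emph{The fix.} In fact no modification is needed, but seeing this requires a normalization you did not use. With your choices, $\rho_U(h_0)e_1=\alpha(h_0)e_1+e_2$, so $e_2\in\CO_U[G]e_1$. Hence your lattice is the cyclic one, $M_U=\CO_Ue_1\oplus J_Ue_2$, where $J_U\ni 1$ is the fractional ideal spanned by the lower-left entries $c_U(h)=\xi(g_0,h)/\xi(g_0,h_0)$. (To split it this way, use the odd involution acting by $\mathrm{diag}(1,-1)$ and the invertibility of $2$.)

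Now take a second chart built from $(g_0',h_0')$. By \eqref{eq:CM3}, $c_U=\mu\,c_{U'}$ and $b_U=\mu^{-1}b_{U'}$, with $\mu=\xi(g_0,h_0')/\xi(g_0,h_0)$; here $\mu\neq0$ because $c_U(h_0)=1$. So $P=\mathrm{diag}(1,\mu)$ satisfies $P\rho_{U'}P^{-1}=\rho_U$. Since $\mu J_{U'}=J_U$, the matrix $P$ maps $M_{U'}$ isomorphically onto $M_U$ on the whole overlap, including the points of $V(\CI_r)$. Because $\mu_{UU'}=c_U(h)/c_{U'}(h)$, these normalized maps (those with $e_1\mapsto e_1$) satisfy $\mu_{UU'}\mu_{U'U''}=\mu_{UU''}$, so the cocycle condition is automatic.

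Intrinsically, the glued sheaf is $\CO_Xe_+\oplus\mathcal{L}$. Here $\mathcal{L}$ is the $\CO_X$-span of the functions $\xi(\cdot,h)$ on $G$, and the action is $\tau e_+=\alpha(\tau)e_++\xi(\cdot,\tau)$, $\tau f=f(\tau)e_++\delta(\tau)f$. The identities \eqref{eq:CM1a}--\eqref{eq:CM3} are exactly what make this a representation with trace $T$.

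\emph{Two smaller points.}
\begin{enumerate}
\item Finite generation of $M_U$ is just Noetherianity, since $M_U\subset\CO_U\oplus\xi(g_0,h_0)^{-1}\CO_U$. Continuity is needed only for the continuity of the action, not for finiteness.
\item In the paper's application $X=\CD$ is a connected affinoid with $\CO(\CD)$ a PID. So your single-chart construction already gives a free $V_{\CD}$, and no gluing is needed there.
\end{enumerate}
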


\begin{proof}
This follows from applying \cite[Theorem~5.1.2]{ColemanMazur} and the subsequent remark to $r=(\xi,\alpha,\delta)$. 
Also, see \cite[Remark~3.1.2]{JohanssonNewton}
\end{proof} 

\bibliographystyle{amsplain}
\bibliography{References}

\providecommand{\bysame}{\leavevmode\hbox to3em{\hrulefill}\thinspace}
\providecommand{\MR}{\relax\ifhmode\unskip\space\fi MR }
\providecommand{\MRhref}[2]{%
  \href{http://www.ams.org/mathscinet-getitem?mr=#1}{#2}
}
\providecommand{\href}[2]{#2}
\begin{thebibliography}{10}

\bibitem{AIP}
Fabrizio Andreatta, Adrian Iovita, and Vincent Pilloni, \emph{On overconvergent hilbert modular cusp forms}, Astérisque \textbf{382} (2016), 163--193. \MR{3581177}

\bibitem{Bellaiche}
Jo{\"e}l Bella{\"i}che, \emph{The eigenbook: {Eigenvarieties}, families of {Galois} representations, {$p$}-adic {$L$}-functions}, Springer, 2021.

\bibitem{BC}
Jo{\"e}l Bella{\"i}che and Ga{\"e}tan Chenevier, \emph{Families of {Galois} representations and {Selmer} groups}, Ast{\'e}risque, no. 324, Soci{\'e}t{\'e} Math{\'e}matique de France, 2009.

\bibitem{Bellovin15}
Rebecca Bellovin, \emph{{$p$}-adic {Hodge} theory in rigid analytic families}, Algebra \& Number Theory (2015), 371--433.

\bibitem{Bellovin}
\bysame, \emph{Cohomology of {$(\varphi,\Gamma)$}-modules over pseudorigid spaces}, International Mathematics Research Notices \textbf{2024} (2024), no.~4, 2999--3051.

\bibitem{Berger}
Laurent Berger, \emph{Repr{\'e}sentations {$p$}-adiques et {\'e}quations diff{\'e}rentielles}, Inventiones Mathematicae \textbf{148} (2002), 219--284.

\bibitem{BergerColmez}
Laurent Berger and Pierre Colmez, \emph{Familles de repr{\'e}sentations de de {Rham} et monodromie {$p$}-adique}, Ast{\'e}risque (2008), no.~319, 303--337.

\bibitem{Bij}
St{\'e}phane Bijakowski, \emph{Classicit{\'e} de formes modulaires de {Hilbert}}, Ast{\'e}risque (2016), no.~382, 49--71.

\bibitem{Birkbeck}
Christopher Birkbeck, \emph{The {Jacquet--Langlands} correspondence for overconvergent {Hilbert} modular forms}, International Journal of Number Theory \textbf{15} (2019), no.~3, 479--504.

\bibitem{Blasius-Rogawski}
Don Blasius and Jonathan~D. Rogawski, \emph{Motives for {Hilbert} modular forms}, Inventiones Mathematicae \textbf{114} (1993), no.~1, 55--87.

\bibitem{BGR}
Siegfried Bosch, Ulrich G{\"u}ntzer, and Reinhold Remmert, \emph{Non-archimedean analysis: A systematic approach to rigid analytic geometry}, Springer-Verlag, Berlin, 1984.

\bibitem{Buzzard}
Kevin Buzzard, \emph{Eigenvarieties}, {$L$}-Functions and {Galois} Representations (David Burns, Kevin Buzzard, and Jan Nekov{\'a}{\v r}, eds.), London Mathematical Society Lecture Note Series, Cambridge University Press, 2007, pp.~59--120.

\bibitem{BuzzardCalegari}
Kevin Buzzard and Frank Calegari, \emph{The {$2$}-adic eigencurve is proper}, Documenta Mathematica (2006), 211--232.

\bibitem{Calegari}
Frank Calegari, \emph{The {Coleman--Mazur} eigencurve is proper at integral weights}, Algebra \& Number Theory \textbf{2} (2008), 209--215.

\bibitem{Carayol}
Henri Carayol, \emph{Sur les repr{\'e}sentations {$l$}-adiques associ{\'e}es aux formes modulaires de {Hilbert}}, Annales scientifiques de l'{\'E}cole Normale Sup{\'e}rieure \textbf{19} (1986), no.~3, 409--468.

\bibitem{ChenevierGLn}
Ga{\"e}tan Chenevier, \emph{Familles {$p$}-adiques de formes automorphes pour {${\rm GL}_n$}}, Journal f{\"u}r die reine und angewandte Mathematik \textbf{570} (2004), 143--217.

\bibitem{ColemanMazur}
Robert~F. Coleman and Barry Mazur, \emph{The eigencurve}, Galois Representations in Arithmetic Algebraic Geometry (A.~J. Scholl and R.~L. Taylor, eds.), London Mathematical Society Lecture Note Series, vol. 254, Cambridge University Press, 1998, pp.~1--113.

\bibitem{DiaoLiu}
Hansheng Diao and Ruochuan Liu, \emph{The eigencurve is proper}, Duke Mathematical Journal \textbf{165} (2016), no.~7, 1381--1395.

\bibitem{Fontaine}
Jean-Marc Fontaine, \emph{Sur certains types de repr{\'e}sentations {$p$}-adiques du groupe de {Galois} d'un corps local; construction d'un anneau de {Barsotti--Tate}}, Annals of Mathematics \textbf{115} (1982), no.~3, 529--577.

\bibitem{Hattori}
Shin Hattori, \emph{On a properness of the {Hilbert} eigenvariety at integral weights: the case of quadratic residue fields}, Journal of the Institute of Mathematics of Jussieu \textbf{22} (2023), no.~6, 2645--2716.

\bibitem{Hida}
Haruzo Hida, \emph{Nearly ordinary {Hecke} algebras and {Galois} representations of several variables}, Algebraic Analysis, Geometry, and Number Theory, Johns Hopkins University Press, Baltimore, MD, 1989, pp.~115--134.

\bibitem{JohanssonNewton}
Christian Johansson and James Newton, \emph{Parallel weight {$2$} points on {Hilbert} modular eigenvarieties and the parity conjecture}, Forum of Mathematics, Sigma (2019), e27.

\bibitem{KedlayaLiu}
Kiran~S. Kedlaya and Ruochuan Liu, \emph{On families of {$(\varphi,\Gamma)$}-modules}, Algebra \& Number Theory (2010), 943--967.

\bibitem{KedlayaPottharst}
Kiran~S. Kedlaya and Jonathan Pottharst, \emph{On categories of {$(\varphi,\Gamma)$}-modules}, 2016.

\bibitem{KPX}
Kiran~S. Kedlaya, Jonathan Pottharst, and Liang Xiao, \emph{Cohomology of arithmetic families of {$(\varphi,\Gamma)$}-modules}, Journal of the American Mathematical Society \textbf{27} (2014), no.~4, 1043--1115.

\bibitem{Liu}
Ruochuan Liu, \emph{Triangulation of refined families}, Commentarii Mathematici Helvetici \textbf{90} (2015), 831--904.

\bibitem{TLiu}
Tong Liu, \emph{Lattices in filtered {$(\varphi,N)$}-modules}, Journal of the Institute of Mathematics of Jussieu \textbf{11} (2012), no.~3, 659--693.

\bibitem{McDonald}
Vaughan McDonald, \emph{Eigenvarieties over {CM} fields and trianguline representations}, 2025.

\bibitem{P}
Vincent Pilloni, \emph{Formes modulaires {$p$}-adiques de {Hilbert} de poids {$1$}}, Inventiones Mathematicae (2017), 622--676.

\bibitem{Saito}
Takeshi Saito, \emph{{Hilbert} modular forms and {$p$}-adic {Hodge} theory}, Compositio Mathematica \textbf{145} (2009), no.~5, 1081--1113.

\bibitem{Skinner}
Christopher Skinner, \emph{A note on the {$p$}-adic {Galois} representations attached to {Hilbert} modular forms}, Documenta Mathematica \textbf{14} (2009), 241--258.

\bibitem{Taylor89}
Richard Taylor, \emph{On {Galois} representations associated to {Hilbert} modular forms}, Inventiones Mathematicae \textbf{98} (1989), 265--280.

\bibitem{Taylor91}
\bysame, \emph{{Galois} representations associated to {Siegel} modular forms of low weight}, Duke Mathematical Journal \textbf{63} (1991), no.~2, 281--332.

\bibitem{TianXiao}
Yichao Tian and Liang Xiao, \emph{{$p$}-adic cohomology and classicality of overconvergent {Hilbert} modular forms}, Ast{\'e}risque (2016), no.~382, 73--162.

\bibitem{Wiles}
Andrew Wiles, \emph{On ordinary {$\lambda$}-adic representations associated to modular forms}, Inventiones Mathematicae \textbf{94} (1988), no.~3, 529--573.

\end{thebibliography}
\end{document}